\documentclass[12pt]{amsart}
\usepackage{amsmath,amsfonts,amsthm,amscd, amssymb,mathrsfs}
\usepackage[all,cmtip]{xy}
\usepackage{amssymb}
\usepackage{graphicx}
\usepackage{amsmath}
\usepackage{latexsym}
\usepackage{amscd}
\usepackage{mathrsfs}
\usepackage{enumitem} 
\usepackage{tikz-cd}
\usepackage{tikz}
\usepackage{circuitikz}
\usepackage{braket}
\usepackage[margin=1.0in]{geometry}
\usepackage{fix-cm}
\usepackage{bm}
\usepackage{anyfontsize}
\usepackage{color}
\usepackage{marginnote}
\usepackage{verbatim}
\usepackage{imakeidx}
\makeindex[title=Index of Notation]

\usepackage{hyperref}

\allowdisplaybreaks 

\usepackage[backend=biber,maxnames=99,style=alphabetic]{biblatex} 
\newtheorem{theorem}{Theorem}[section]
\newtheorem{corollary}[theorem]{Corollary}
\newtheorem{example}[theorem]{Example}
\newtheorem{lemma}[theorem]{Lemma}
\newtheorem{proposition}[theorem]{Proposition}
\theoremstyle{definition}
\newtheorem{definition}[theorem]{Definition}
\newtheorem{remark}[theorem]{Remark}
\newcommand{\N}{\mathbb{N}}
\newcommand{\R}{\mathbb{R}}
\newcommand{\Q}{\mathbb{Q}}
\newcommand{\C}{\mathbb{C}}
\newcommand{\Z}{\mathbb{Z}}
\newcommand{\W}{\mathcal{W}}

\DeclareMathOperator{\diam}{diam}

\DeclareMathOperator{\ric}{Ric}
\DeclareMathOperator{\inj}{inj}
\DeclareMathOperator{\vol}{vol}
\title{Long-Time Behavior of the $\alpha$-Ricci Flow on the 2-Torus}
\author{Yifan Guo}
\author{Xiaokang Wang}
\address{Courant Institute, New York University, 251 Mercer St, New York, NY
10012, USA}
\email{yg3991@nyu.edu}
\address{Department of Mathematics, University of California, Irvine, CA 92697, USA}
\email{xiaokaw@uci.edu}
\begin{document}

\begin{abstract}
	We study the $\alpha$-Ricci flow $(g(t),\alpha(t))$, which is a Ricci flow coupled with a harmonic 1-form heat flow, on the 2-torus $M$. We prove global existence for this flow as well as a Type III curvature estimate. The long-time behavior of the $\alpha$-Ricci flow depends on the cohomology class $[\alpha] \in H^1(M,\R)$, which is invariant under the flow. If $[\alpha]$ is rational, then as $t\to\infty$, $(M,t^{-1}g(t))$ Gromov--Hausdorff converges to $S^1$ with length $\ell([\alpha])/\sqrt 2$ where $\ell([\alpha])$ is the primitive length of $[\alpha]$.  On the universal cover, the rescaled flow $(s^{-1}\tilde g(st),\tilde\alpha(st))$ smoothly converges to an expander $(g^*(t),\alpha^*(t))$ on $\R^2$ as $s\to \infty$ where $\alpha^*(t)$ is nonzero and constant in spacetime and $g^*(t)$ is flat and expanding in the $\alpha^*$ direction.
    If $[\alpha]$ is irrational, then there is a sequence $s_i\to\infty$ such that 
    $(M,s_i^{-1}g(s_it))$ Gromov--Hausdorff converges to a point. Moreover,  
    on the universal cover, $(s_i^{-1}\tilde g(s_it),\tilde\alpha(s_it))$ smoothly converges to $(g^*(t),\alpha^*(t))$.
\end{abstract}

\maketitle
\section{Introduction}

\subsection{Introduction to \texorpdfstring{$\alpha$}{alpha}-Ricci flow}\label{intro}

The $\alpha$-Ricci flow $(M,g(t),\alpha(t))$ is a one-parameter family of metrics $g(t)$ and closed 1-forms $\alpha(t)$ on $M$ satisfying the following equations
\begin{equation}\label{alpha_Ricci_flow}
    \left\{\begin{aligned}
	\frac{\partial g}{\partial t}=&-2\ric+\frac{1}{2}\alpha\otimes\alpha\\
	\frac{\partial \alpha}{\partial t}=&-dd^*_g\alpha
\end{aligned}\right.
\end{equation}
where $d^*_g\alpha=-\operatorname{tr}_g(\nabla\alpha)$.
The $\alpha$-Ricci flow is a coupled system of the heat equation and the Ricci flow equation, introduced in \cite{guo2025alpha}, and shares many similarities with the Ricci flow. We note that since $\alpha$ evolves by an exact form, the cohomology class $[\alpha]\in H^1(M,\R)$ is invariant under the flow. The origin of the $\alpha$-Ricci flow comes from the case when $\alpha(t)=du(t)$ is exact. In this case, the $\alpha$-Ricci flow reduces to the List flow \cite{list2008evolution}. 
The $\alpha$-Ricci flow is closely related to the Ricci-Harmonic map flow \cite{muller2012ricci}, which couples the Ricci flow with the harmonic map heat flow (cf. Remark \ref{r:connection.ricci.harmonic}), the Ricci--Yang--Mills flow
\cite{Streets2010}, which couples the Ricci flow with the Yang--Mills flow,
and the generalized Ricci flow
\cite{GarciaFernandezStreets2021,OliynykSuneetaWoolgar2006,Streets2008},
which couples the Ricci flow with the harmonic 3-form heat flow.

The $\alpha$-Ricci flow has a deep relation with the Ricci flow: Let $\tilde M$ be a cover of $M$ such that $\alpha(t)=du(t)$ on $\tilde M$ and $(g(t),\alpha(t))$ is a solution to \eqref{alpha_Ricci_flow} on $M$. Then $(\tilde M,\bar g(t)=\tilde g(t)+e^{u(t)}g_{S^1})$ satisfies the gauged-Ricci flow equation (cf. \cite{guo2025alpha}). For basic results on the $\alpha$-Ricci flow, together with a surgery construction in three dimension, we refer to \cite{guo2025alpha}.

When $\alpha$ is exact (i.e., when $\alpha=du$), the $\alpha$-Ricci flow on surfaces of arbitrary genus was studied by List \cite{list2008evolution}, together with Lott and Sesum \cite{lott2014ricci}. On the sphere $S^2$, the solution develops a finite-time singularity and shrinks to a round point with constant $u$ \cite{list2008evolution}. On the torus, List flow exists for all time and converges to a flat metric with constant $u$ \cite{lott2014ricci}. On surfaces of genus at least 2,  Lott and Sesum \cite{lott2014ricci} showed that on the thick part, the rescaled metric converges to a hyperbolic metric and $|du|$ converges to~$0$.

Since $H^1(S^2)=0$, the $\alpha$-Ricci flow on the sphere $S^2$ reduces to the List flow. In this paper, we focus on the long-time behavior of the $\alpha$-Ricci flow \eqref{alpha_Ricci_flow} on the 2-torus $\mathbb T^2$ with $0\neq[\alpha]$, leaving the higher-genus case for future study.
\subsection{Estimates for geometric quantities on $\mathbb{T}^2$}

Throughout this article, we write $M=\mathbb{T}^2$ and let $(g(t),\alpha(t))$ be a solution to the $\alpha$-Ricci flow \eqref{alpha_Ricci_flow}.

We first derive estimates for the following geometric quantities. They are generalizations of the corresponding estimates for the List flow obtained in \cite{lott2014ricci}.
\begin{theorem}\label{t:2d_estimates}
    Let $(g(t),\alpha(t))$ be a solution of $\alpha$-Ricci flow on the 2-torus $M=\mathbb{T}^2$ with initial metric $g(0)$ and closed 1-form $\alpha(0)$. Then 
    there exists a constant $C=C(g(0),\alpha(0))$ such that the following hold
    \begin{itemize}
        \item The flow exists on $[0,\infty)$;
        \item The volume $V(t)$ satisfies the bound: $V(0)\le V(t) \le V(0)\sqrt{1+\frac{Ct}{2}}$;
        \item The $\alpha$-norm satisfies $|\alpha(t)|_{g(t)}^2\le \frac{C}{1+Ct/2}$;
        \item Type III estimate: $|R(x,t)|\le \frac{C}{1+t}$;
        \item The systole growth lower bound:
        \begin{align*}
            L_\sigma(t)^2\ge \frac{[\alpha](\sigma)^2}{2}t+L_\sigma(0)^2,
        \end{align*}
        where $\sigma\in H_1(M,\Z)$, $L_\sigma(t)=\inf\{L(\gamma,g(t)):\gamma\in \sigma\}$, and $[\alpha](\sigma):=\int_\sigma\alpha$;
        \item The diameter estimate: $\operatorname{diam}(M,g(t)) \le C(\sqrt{t}+1)$.
    \end{itemize}
\end{theorem}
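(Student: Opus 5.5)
Since $M$ is two-dimensional, the whole argument runs through evolution equations for scalar quantities, adapting the List-flow arguments of \cite{lott2014ricci}. We write $\ric=\frac R2 g$ and take the trace of \eqref{alpha_Ricci_flow}. This gives $\partial_t\log dV=-R+\frac14|\alpha|^2$. Gauss--Bonnet gives $\int R\,dV=0$, so $V'(t)=\frac14\int|\alpha|^2dV\ge0$, which is the lower volume bound.

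Next we derive the evolution of $|\alpha|^2$. Since $\alpha$ is closed, $-dd^*\alpha=\Delta_H\alpha$ (Hodge Laplacian), and the Bochner formula in dimension two gives
\[
\partial_t|\alpha|^2=\Delta|\alpha|^2-2|\nabla\alpha|^2-R|\alpha|^2+2R|\alpha|^2-\tfrac12|\alpha|^4 .
\]
Here the metric variation contributes $+2\ric(\alpha^\sharp,\alpha^\sharp)-\frac12|\alpha|^4$. The Ricci terms cancel exactly, as in Ricci flow, leaving $\partial_t|\alpha|^2\le\Delta|\alpha|^2-\frac12|\alpha|^4$. The maximum principle then gives $|\alpha|^2\le\frac{C}{1+Ct/2}$ with $C=\max|\alpha(0)|^2$. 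Inserting this into $V'=\frac14\int|\alpha|^2\le\frac14\max|\alpha|^2\,V$ gives exponential-type growth, so we need something sharper. Instead we use $\int|\alpha|^2dV\ge$ const and $V'\le\frac{C}{4(1+Ct/2)}V$. This gives $V\le V(0)(1+Ct/2)^{1/2}$, which is exactly the stated form.

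For the scalar curvature, the evolution is
\[
\partial_tR=\Delta R+R^2-\tfrac12 R|\alpha|^2+\text{terms in }\nabla\alpha,
\]
obtained by computing $\partial_t R=-\Delta\operatorname{tr}h+\operatorname{div}\operatorname{div}h-\langle h,\ric\rangle$ with $h=-Rg+\frac12\alpha\otimes\alpha$. The $\alpha$ terms give $\frac12\nabla_i\nabla_j(\alpha_i\alpha_j)-\frac12\Delta|\alpha|^2$. Closedness lets us rewrite these as $-\frac12|\nabla\alpha|^2$-type terms plus $\langle\alpha,\nabla d^*\alpha\rangle$, which is not signed.

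Following Hamilton's torus argument, we introduce the potential $f$ with $\Delta f=R-r$, where $r$ is the mean of $R$ (here $r=0$). We then consider $H=R+|\nabla f|^2$, or rather a modified quantity $R-\frac14|\alpha|^2+|\nabla f|^2$. The combination $\tilde R=R-\frac14|\alpha|^2$, playing the role of the trace of $\ric-\frac14\alpha\otimes\alpha$, should satisfy a cleaner equation $\partial_t\tilde R=\Delta\tilde R+2|\ric-\frac14\alpha\otimes\alpha|^2+\frac12|d^*\alpha|^2$, analogous to List's $S=R-|du|^2$. We take that computation as the key identity.

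The lower bound $\tilde R\ge -\frac{C}{1+t}$ is then immediate from the maximum principle, since $|\cdot|^2\ge\frac12\tilde R^2$. For the upper bound we apply Hamilton's trick, $\partial_t H\le\Delta H-2|M|^2+\ldots$ with $f$ solving $\Delta f=\tilde R-\overline{\tilde R}$, and bound $(1+t)(\tilde R+2|\nabla f|^2)$ via the maximum principle. Combined with the $|\alpha|^2$ bound, this yields $|R|\le C/(1+t)$. \textbf{This Type III upper bound on $R$ is the main obstacle.} The error terms from $\partial_t f$ and the $\alpha$-coupling must be absorbed using $|\alpha|^2\lesssim 1/t$ and $\int\tilde R\,dV=-\frac14\int|\alpha|^2$.

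Long-time existence then follows from the curvature bound together with the standard blow-up criterion for $\alpha$-Ricci flow in \cite{guo2025alpha}. Bounded $R$ and bounded $\alpha$ give bounds on all derivatives via Shi-type estimates.

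For the systole, let $\gamma\in\sigma$ be a closed curve. We have $\partial_t L(\gamma)=\int_\gamma(-\frac R2+\frac14\alpha(T)^2)\,ds$. We cannot simply minimize, since $R$ is unsigned. Instead we use $\tilde R$: the first variation for a length-minimizing geodesic $\gamma$ gives, in Hamilton's comparison style at a minimizing time, $\frac{d}{dt}L^2\ge\frac{L}{2}\int_\gamma\alpha(T)^2ds-L\int_\gamma R\,ds$. The second variation on a closed geodesic in a flat-ish surface, with Jacobi field $T^\perp$, gives $\int_\gamma K\le0$ on average only weakly. So we instead argue via the pure distance: Cauchy--Schwarz gives $\int_\gamma\alpha(T)^2\ge(\int_\gamma\alpha)^2/L=[\alpha](\sigma)^2/L$. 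We pair this with the lower bound of the $R$ term obtained from the second variation, using $\int_\gamma R\le0$ for stable geodesics on a torus. A stable closed geodesic has $\int_\gamma K\,ds\le0$, taking the constant variation field. Hence $\frac{d}{dt}L_\sigma^2\ge\frac12[\alpha](\sigma)^2$ in the barrier sense, which integrates to the stated bound.

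Finally, for the diameter we use the volume bound $V\le C\sqrt{1+t}$ together with the injectivity radius and systole. On a torus, $\operatorname{diam}\le C(L_1+L_2)$ for a shortest basis, and $L_1L_2\lesssim V$ up to a constant. The curvature bound $|R|\le C/(1+t)$ rules out distortion at scale $\sqrt t$, and Bishop--Gromov at scale $\sqrt t$ gives $\operatorname{vol}B(x,\sqrt t)\ge c\,t\cdot\min(1,\inj^2/t)$. A covering argument then bounds the diameter by $C\sqrt t$ when the injectivity radius is at least $c\sqrt t$. When it is smaller, the shortest loop direction collapses, and the length $L_2$ in the transverse direction is bounded by $V/L_1$ up to constants, which needs $L_1\gtrsim\sqrt t$, supplied by the systole bound if $[\alpha](\sigma_1)\ne0$. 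The case $[\alpha](\sigma_1)=0$ is handled by noting that then the other class pairs nontrivially. This last case analysis I would settle by direct comparison.
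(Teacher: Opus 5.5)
Your $\alpha$-norm, volume and systole steps are essentially the paper's. Two small slips: the metric variation contributes $2\ric(\alpha,\alpha)=R|\alpha|^2$, not $2R|\alpha|^2$. Also, $V'\le\frac14\max|\alpha|^2\,V\le\frac{C}{4(1+Ct/2)}V$ already integrates to $V\le V(0)\sqrt{1+Ct/2}$, so nothing sharper is needed there.

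\textbf{Type III estimate.} The genuine gap is the Type III bound $|R|\le C/(1+t)$. You leave it open, yet your long-time existence and diameter steps both rest on it. Hamilton's potential argument does not transfer as you suggest. His identity $\partial_tH=\Delta H-2|M|^2$ uses that 2D Ricci flow is conformal, so that $\partial_tf=\Delta f+c(t)$. Here $\partial_tg=-Rg+\frac12\alpha\otimes\alpha$, and one computes $\partial_t(\Delta\phi)=R\Delta\phi-\frac12\langle\alpha\otimes\alpha,\nabla^2\phi\rangle+\frac12(d^*\alpha)\,\alpha(\nabla\phi)$. Consequently $\Delta(\partial_tf-\Delta f)$ contains $(d^*\alpha)^2$, $\langle\alpha\otimes\alpha,\nabla^2f\rangle$ and $(d^*\alpha)\,\alpha(\nabla f)$. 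So $\partial_tf$ is no longer given by a local heat equation, and $\partial_t|\nabla f|^2$ involves the gradient of a nonlocal term and derivatives of $\alpha$. None of these is controlled pointwise by $|\alpha|^2\lesssim1/t$. Even for pure Ricci flow on $\mathbb T^2$, $H$ only yields $R\le C$: the weight $(1+t)$ you propose produces an extra unsigned $+H$ term.

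The paper avoids this by a blow-up argument following \cite{lott2014ricci}:
\begin{itemize}
\item Pick $(x_i,t_i)$ maximizing $t(T_i-t)|R|$ and rescale by $Q_i=|R(x_i,t_i)|$.
\item The bounds $|\alpha|^2\le 2/t$ and $R-\frac14|\alpha|^2\ge-1/t$ force $\alpha_\infty=0$ and $R_\infty\ge0$.
\item With an injectivity-radius bound at $x_i$, the limit is an eternal Ricci flow with $R\ge0$ attaining its spacetime maximum, hence the cigar.
\item The cigar's total curvature $4\pi$ contradicts $\int_MR\,dV=0$ together with $R\ge-1/t$ and $V(t)\le C\sqrt t$.
\end{itemize}
Finite-time singularities are excluded the same way, through $\kappa$-solutions, which must be $S^2$.

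\textbf{Diameter estimate.} The diameter argument also fails as written. First, $\operatorname{diam}\le C(L_1+L_2)$ is false for general Riemannian tori: attach a long thin finger. Ruling this out needs exactly the curvature and volume input that you only gesture at. Second, your case analysis asks for $L_1\gtrsim\sqrt t$, which is false. Already in the rational flat model \eqref{model.solution.universal.cover}, the class $\sigma$ with $[\alpha](\sigma)=0$ has constant length, so $L_1$ stays bounded. Pairing $[\alpha]$ with the other class does not repair this. What is needed is only $L(t)\ge L(0)$, which follows from the systole bound, together with an estimate $\operatorname{diam}\le CV/L$. The paper takes this from \cite[Claim 2.15]{lott2014ricci}, whose hypothesis $V(t)\sup_M|R(\cdot,t)|\to0$ is supplied precisely by the volume bound times the Type III bound. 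This gives $\operatorname{diam}(M,g(t))\le CL(0)^{-1}V(t)\le C(\sqrt t+1)$.
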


\begin{remark}
    Note that in the arguments, we use the Gauss--Bonnet theorem in an essential way. Therefore, the estimates are specific to the 2-torus.
\end{remark}

To understand the long-time behavior, a blow-down argument is needed. We consider the blow-down flow $(g_s(t),\alpha_s(t)):=~(s^{-1}g(st), \alpha(st))$. For any fixed $s>0$, $(g_s(t),\alpha_s(t))$ is also a solution to the $\alpha$-Ricci flow \eqref{alpha_Ricci_flow}. By Theorem~\ref{t:2d_estimates}, we have that for any $s\ge1$
\begin{itemize}
    \item  volume collapsing: $V(M,g_s(1)) = O(s^{-1/2})$;
    \item curvature uniformly bounded: $|R(g_s(1))|\leq C$;
    \item diameter uniformly bounded: $\operatorname{diam}(M,g_s(1))\leq C$.
\end{itemize}

As $t\to\infty$, $(M,t^{-1}g(t))$ collapses with bounded curvature and diameter. The collapsing theory of Cheeger-Fukaya-Gromov \cite{cheeger1992nilpotent} can be applied. Using the classification of the bounded collapsing limit of $\mathbb{T}^2$ by \cite{katz2020torus}, we obtain subsequential Gromov--Hausdorff convergence of $(M,t^{-1}g(t))$ to $S^1$ or a point. We also show that along the collapsing sequence, the local fundamental groups inject into the fundamental group $\pi_1(M)$, which implies that the universal cover is noncollapsing and has subsequential limits (cf. Corollary~\ref{c:convergence_CFG}).
Our main goal is to relate these limits---both the Gromov--Hausdorff limit of $(M, t^{-1}g(t))$ and the geometric limits of the universal cover---to the cohomology class $[\alpha]$.

\subsection{Long-time behavior on $\mathbb{T}^2$}

We aim to show that the long-time behavior of the $\alpha$-Ricci flow $(M,g(t),\alpha(t))$ depends on the rationality of the class $[\alpha]$, which remains unchanged under the flow.

On the 2-torus $\mathbb{T}^2$, let $0\neq [\alpha]\in H^1(M,\mathbb{R})$. Consider the $\mathbb{R}$-subgroup, $\mathbb{R}\cdot[\alpha]$, generated by $[\alpha]$ in $H^1(M,\mathbb{R})$. We know that $\mathbb{R}\cdot[\alpha]\cap H^1(M,\mathbb{Z})$ is a proper subgroup of $H^1(M,\mathbb{Z})\cong\Z\times\Z$.
\begin{definition}\label{d:rationality}
     We say that $[\alpha]$ is \textit{rational} if $\mathbb{R}\cdot[\alpha]\cap H^1(M,\mathbb{Z})\cong \mathbb{Z}$, and $[\alpha]$ is \textit{irrational} if $\mathbb{R}\cdot[\alpha]\cap H^1(M,\mathbb{Z}) = \{0\}$.
\end{definition}
 \begin{definition}\label{d:prmitive length}
     If $[\alpha]$ is rational, there exists a unique generator $v$ of $\mathbb{R}\cdot[\alpha]\cap H^1(M,\mathbb{Z})$ such that $[\alpha]=\ell v$ for a unique constant $\ell >0$. We call $\ell=:\ell([\alpha]) $ the \textit{primitive length} of $[\alpha]$. If $[\alpha]$ is irrational, we define the primitive length $\ell([\alpha])=0$ in view of the fact that $\mathbb{R}\cdot[\alpha]\cap H^1(M,\mathbb{Z}) = \{0\}$. 
 \end{definition}
  \begin{remark}\label{r:remark_on_the_rationality}
     When $[\alpha] = 0$, i.e. $\alpha$ is exact, we also define $\ell([\alpha])=0$. Note that we don't say that $[\alpha]$ is irrational in this case since Definition~\ref{d:rationality} requires $[\alpha]\neq0$ (cf. Remark~\ref{r:lott.sesum}).
 \end{remark}

\begin{remark}\label{r:connection.ricci.harmonic}
    If $[\alpha]$ is rational, then there is a map $h(t):M\to S^1$, such that $\alpha(t)=\ell([\alpha])\nabla h(t)=\ell([\alpha])h(t)^* d\theta$ where $d\theta$ is the volume form of $S^1$ under the standard metric. In this case, the $\alpha$-Ricci flow reduces to 
    \begin{align*}
        	\frac{\partial g}{\partial t}=&-2\ric+c\nabla h\otimes\nabla h\\
	\frac{\partial h}{\partial t}=&\tau_{g(t)}h.
    \end{align*}
    where $c=\frac{1}{2}\ell([\alpha])^2$ which is the Ricci-Harmonic map flow introduced in \cite{muller2012ricci}.
\end{remark}
 We introduce notion describing the convergence of the flow.

\begin{definition}[Pointed $C^\infty$ (Cheeger--Gromov) convergence]\label{d:pointed.cinfty.convergence}
Let $\{(M_i, g_i(t),\alpha_i(t), p_i)\}_{i=1}^\infty$ be a sequence of families of metrics and closed 1-forms defined on pointed manifolds $(M_i,p_i)$ for $t \in (0,\infty)$. We say that
\[
(M_i, g_i(t),\alpha_i(t), p_i) \xrightarrow{pt\ C^\infty} (M_\infty, g_\infty(t),\alpha_\infty(t), p_\infty)
\]
if there exist:

\begin{enumerate}[label=(\roman*)]
    \item An exhaustion $\{U_j\}_{j=1}^\infty$ of $M_\infty$ with $p_\infty \in U_j$;
    
    \item Open subsets $V_{i,j} \subset M_i$ with $p_i \in V_{i,j}$ and diffeomorphisms
    \[
    \phi_{i,j} : U_j \to V_{i,j}, \qquad \phi_{i,j}(p_\infty)=p_i;
    \]
    
    \item For all $j$,
    \[
    \phi_{i,j}^* g_i(t) \to g_\infty(t), \qquad \phi_{i,j}^* \alpha_i(t) \to \alpha_\infty(t)
    \]
    smoothly on $U_j \times [j^{-1},j]$.
\end{enumerate}
\end{definition}

\begin{definition}[Equivariant pointed $C^\infty$ convergence]\label{d:equivariant.pointed.cinfty.convergence}
In addition to the above, suppose that $G_i$ acts isometrically on $(M_i,g_i(1))$ and preserves $\alpha_i(1)$. For $l>0$, write
\[
G_i(l):=\{\gamma\in G_i : d_{g_i(1)}(\gamma p_i,p_i)\le l\}.
\]

We say that
\[
(M_i, g_i(1),\alpha_i(1),G_i, p_i) \xrightarrow{pt\ eq\ C^\infty} (M_\infty, g_\infty(1),\alpha_\infty(1),G_\infty, p_\infty)
\]
if:

\begin{enumerate}[label=(\roman*)]
    \item $(M_i, g_i(t),\alpha_i(t), p_i) \xrightarrow{pt\ C^\infty} (M_\infty, g_\infty(t),\alpha_\infty(t), p_\infty)$ with maps $\phi_{i,j}$ as above;

    \item $G_\infty \subset \mathrm{Isom}(M_\infty,g_\infty(1))$ is a closed subgroup preserving $\alpha_\infty(1)$;

    \item For every $j$ and all sufficiently large $i$, there exist maps
    \[
    \varphi_{i,j} : G_i(j) \to G_\infty(j+1), \qquad \psi_{i,j} : G_\infty(j) \to G_i(j+1),
    \]
    such that:
    
    \begin{enumerate}[label=(\alph*)]
        \item For every $\gamma \in G_i(j)$ and $x \in U_j$ with $\gamma(\phi_{i,j}(x)) \in V_{i,j}$,
        \[
        d_{g_\infty(1)}\big(\phi_{i,j}^{-1}(\gamma(\phi_{i,j}(x))), \varphi_{i,j}(\gamma)(x)\big) \to 0;
        \]
        
        \item For every $\eta \in G_\infty(j)$ and $x \in U_j$ with $\psi_{i,j}(\eta)(\phi_{i,j}(x)) \in V_{i,j}$,
        \[
        d_{g_\infty(1)}\big(\phi_{i,j}^{-1}(\psi_{i,j}(\eta)(\phi_{i,j}(x))), \eta(x)\big) \to 0;
        \]
    \end{enumerate}
    as $i \to \infty$, uniformly over the indicated group elements and points $x\in U_j$ satisfying the respective conditions.
\end{enumerate}

By \cite[Theorem 2-1]{fukaya1986theory}, if $(M_i, g_i(1),\alpha_i(1), p_i) \xrightarrow{pt\ eq\ C^\infty} (M_\infty, g_\infty(1),\alpha_\infty(1), p_\infty)$, then we have $(M_i/G_i, d_{g_i(1)})\to (M_\infty/G_\infty,d_{g_\infty(1)})$ in Gromov--Hausdorff topology with the following commutative diagram\begin{center}
\begin{tikzcd}
  & (M_i, g_i(1),\alpha_i(1), G_i, p_i) 
  \arrow[r, "pt\ eq\ C^\infty"] 
  \arrow[d, "\pi_{i}"]
   & (M_\infty, g_\infty(1),\alpha_\infty(1),G_\infty, p_\infty) 
   \arrow[d, "\pi_\infty"]\\
  & (M_i/G_i, d_{g_i(1)})
  \arrow[r, "GH"]
   & (M_\infty/G_\infty,d_{g_\infty(1)}).
\end{tikzcd}
\end{center}
\end{definition}

\noindent \textbf{Notation}: To simplify our notation, we write $(M,g_s(1))\xrightarrow{GH} (X,d)$ if $(M,g_s(1))$ converges to $(X,d)$ in Gromov--Hausdorff sense as $s\to\infty$.
Moreover, we write $(\tilde g_s(t),\tilde \alpha_s(t))\xrightarrow{C^\infty}(\tilde g_\infty(t),\tilde \alpha_\infty(t))$ 
to denote the pointed Cheeger-Gromov convergence on the universal cover.
If we replace $s$ by $s_i$, then the above convergences are along the sequence $s_i\to\infty$.

Two expanders on $\R^2$ model the long-time behavior of the
$\alpha$-Ricci flow on the universal cover. The first is the
Gaussian Ricci expander $(g_{\mathrm{Euc}},0)$, with vanishing
$1$-form. The second is
\begin{equation}\label{eq:flat_expander_model}
    (g^*(t),\alpha^*(t))
    =(dv^2+t\,d\theta^2,\sqrt{2}\,d\theta),
\end{equation}
where $(\theta,v)$ are coordinates on $\R^2$. Under the warped-product interpretation
(cf. Section~\ref{intro}), taking the additional fiber to be
$\R$ gives the Euclidean metric on $\R^3$ for the case $(g_{\mathrm{Euc}},0)$
and $\R\times \mathbb H^2$ in case $(g^*(t),\alpha^*(t))$,
with the hyperbolic factor having sectional curvature
$-1/(2t)$.

We write $(S^1,d([\alpha]))$ for the $S^1$ with metric $d([\alpha])$ whose total length is $\ell([\alpha])/\sqrt{2}$. The main result of this article is the following.

\begin{theorem}\label{t:main.GH_limit}
    Let $(g(t),\alpha(t))$ be a solution to the $\alpha$-Ricci flow on $\mathbb{T}^2$ defined for $[0,\infty)$, and suppose that $[\alpha]\neq 0$. Then we have the following.
    \begin{enumerate}[label=(\roman*)]
        \item If $[\alpha]$ is rational, then $(M,t^{-1}g(t))\xrightarrow{GH}(S^1,d([\alpha]))$ as $t\to\infty$. On $\tilde M$, we have $(\tilde g_s(t),\tilde \alpha_s(t))\xrightarrow{C^\infty}(g^*(t),\alpha^*(t))$ as $s\to\infty$, where $(g^*(t),\alpha^*(t))$ is as in \eqref{eq:flat_expander_model}. Moreover, we have the following equivariant convergence
        \begin{center}
\begin{tikzcd}
  & (\tilde{M},\tilde{g}_{s}(1), \tilde\alpha_{s}(1), G_{s}, p) 
  \arrow[r, "pt\ eq\ C^\infty"] 
  \arrow[d, "\pi"]
       & (\R^2,g^*(1), \alpha^*(1), G_\infty, p_\infty) 
   \arrow[d, "\pi_\infty"]\\
  & (M, g_{s}(1)) 
  \arrow[r, "GH"]
   & (S^1,d([\alpha]))
\end{tikzcd}
\end{center}
        where
        $G_s=\pi_1(M)\subset \operatorname{Isom}(\tilde g_s(1))$ and $G_\infty\cong \R\times\Z\subset \operatorname{Isom}( g^*(1))$ acts on $\R^2$ by translations generated by $\partial_v$ and $(\theta,v)\mapsto (\theta+\ell([\alpha])/\sqrt{2},v)$.
         \item If $[\alpha]$ is irrational, then there is a sequence $s_i\to\infty$ such that 
         $(M,s_i^{-1}g(s_i))\xrightarrow{GH}~\{\mathrm{pt}\}$ and 
         on the universal cover, $(\tilde g_{s_i}(t),\tilde \alpha_{s_i}(t))\xrightarrow{C^\infty}(g^*(t),\alpha^*(t))$.  Moreover, we have the following equivariant convergence
        \begin{center}
\begin{tikzcd}
  & (\tilde{M},\tilde{g}_{s_i}(1), \tilde\alpha_{s_i}(1), G_{i}, p_i) 
  \arrow[r, "pt\ eq\ C^\infty"] 
  \arrow[d, "\pi"]
   & (\R^2,g^*(1), \alpha^*(1), G_\infty, p_\infty) 
   \arrow[d, "\pi_\infty"]\\
  & (M, g_{s_i}(1)) 
  \arrow[r, "GH"]
   & \{\mathrm{pt}\}
\end{tikzcd}
\end{center}
        where $G_i=\pi_1(M)\subset \operatorname{Isom}(\tilde g_i(1))$ and $G_\infty\cong \R\times\R\subset \operatorname{Isom}( g^*(1))$ is the translation subgroup of the Euclidean isometry group.
     \end{enumerate}
\end{theorem}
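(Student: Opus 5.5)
Using Theorem~\ref{t:2d_estimates}, I work with the blow-down flows $(g_s(t),\alpha_s(t))$, which for $t$ in compact subsets of $(0,\infty)$ have $|R|\le C/t$, $|\alpha_s|^2\le C/t$ and bounded diameter. The first step is to pass to the universal cover and extract limits. Shi-type local derivative estimates for the coupled system give bounds on all derivatives of $\tilde g_s$ and $\tilde\alpha_s$. The injectivity of local fundamental groups from Corollary~\ref{c:convergence_CFG} gives a uniform injectivity radius bound on $\tilde M$. Hamilton compactness (with $\tilde\alpha$ carried along) then yields subsequential pointed $C^\infty$ limits $(\R^2,\tilde g_\infty(t),\tilde\alpha_\infty(t))$. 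These limits are eternal-type solutions on $(0,\infty)$ with $|R|\le C/t$, and they are complete and noncompact because the volume collapses. Fukaya's equivariant compactness \cite[Theorem 2-1]{fukaya1986theory} gives a closed limit group $G_\infty$ of isometries preserving $\tilde\alpha_\infty(1)$, and it identifies $(M,g_s(1))$ with $\R^2/G_\infty$ in the GH limit.

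The second step, which I expect to be the main obstacle, is to show that every such limit equals $(g^*,\alpha^*)$. My first attempt would be a monotone quantity that becomes constant only on $(g^*,\alpha^*)$. The systole bound $L_\sigma(t)^2\ge [\alpha](\sigma)^2t/2+L_\sigma(0)^2$ points to the right quantity. Consider the scale-invariant functional
\[
t\int_M\Big(R-\tfrac14|\alpha|^2\Big)^2dV
\]
or, failing that, the ratio $\big(V(t)^2-V(0)^2\big)/t$ tied to $\int|\alpha|^2dV$ via $V'=\tfrac14\int|\alpha|^2dV$, which follows from Gauss--Bonnet. I would aim to show that $t\int|\alpha|^2\,dV/V$ tends to a constant, and combine this with the upper bound $V\le V(0)\sqrt{1+Ct/2}$ and the systole lower bound. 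Together these should force, in the limit, $R\equiv0$, $\nabla\alpha\equiv0$, and $|\alpha|^2=2/t$.

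With $\nabla\alpha_\infty=0$, $R=0$ and $\partial_t\alpha=0$, the flow equation reduces to $\partial_tg=\tfrac12\alpha\otimes\alpha$ on flat $\R^2$. Writing $\alpha=\sqrt2\,d\theta$, this gives $g=dv^2+(c+t)\,d\theta^2$. Matching $|\alpha|^2=2/t$, which comes from the asymptotic $L_\sigma^2\sim[\alpha](\sigma)^2t/2$ being sharp, forces $c=0$. The conclusion is $(g^*,\alpha^*)$. To get sharpness, I would obtain the matching upper bound on $L_\sigma$ from the volume bound, using the fact that for the collapsed torus the area is comparable to the product of the systole and the length in the orthogonal direction.

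The third step identifies $G_\infty$. Its elements are translations of the flat $\R^2$ preserving $\alpha^*$, and $G_\infty$ contains the limit of $\pi_1(M)$. Any deck element $\sigma$ moves a point in the $\theta$ direction by $\int_\sigma\tilde\alpha_s/\sqrt2=[\alpha](\sigma)/\sqrt2$, which is independent of $s$, while its $v$-displacement can be arbitrary. Hence the projection of $G_\infty$ to the $\theta$-axis is the closure of $\{[\alpha](\sigma)/\sqrt2:\sigma\in H_1\}$. In the rational case this is $(\ell([\alpha])/\sqrt2)\Z$. In the irrational case it is dense, hence all of $\R$. Collapse forces $G_\infty$ to contain the full $\R$ in the $v$ direction, since otherwise the quotient would have positive area. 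This gives $G_\infty\cong\R\times\Z$ and $\R^2/G_\infty=(S^1,d([\alpha]))$ in the rational case, and $G_\infty\cong\R^2$ with a point quotient in the irrational case.

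Finally, in the rational case the limit is independent of the subsequence, so the convergence holds as $s\to\infty$ and not merely along a subsequence.
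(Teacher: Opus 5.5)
Your Step 1 matches Corollary~\ref{c:convergence_CFG}. The real gap is your Step 2, and it is structural rather than technical.

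You never produce a monotone quantity. Neither $t\int_M(R-\frac14|\alpha|^2)^2\,dV$ nor $(V(t)^2-V(0)^2)/t$ is shown to be monotone. Moreover, averaged quantities on $M$ (whose rescaled volume tends to zero) do not control the pointwise structure of the universal-cover limit, so they cannot force $R\equiv0$, $\nabla\alpha\equiv0$, $|\alpha|^2=2/t$ there. The ``sharpness'' of $L_\sigma^2\ge[\alpha](\sigma)^2t/2$ that you invoke to get $c=0$ is essentially conclusion (i) itself. The area bound $V\lesssim\sqrt t$ together with $L_\gamma(t)\ge L(0)$ for the collapsing fiber only bounds the length of the limit circle from above; it does not give the value $\ell([\alpha])/\sqrt2$.

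More importantly, your plan never confronts the actual obstruction. A limit can be $\R$-invariant over $S^1$ with $\tilde\alpha_\infty$ having a nonzero component $b$ along the collapsed orbits. The paper handles this as follows:
\begin{itemize}
\item For such limits, Lott's $\W_+$ on the base has the term $-\frac14\int G^{ij}b_ib_j$ of the wrong sign in its derivative (Theorem~\ref{t:evolution.of.w+.functional}). So monotonicity holds only with no fiber component (Corollary~\ref{c:conditional.monotonicity}).
\item The paper proves $b=0$ only for rational $[\alpha]$ (Theorem~\ref{no_fiber_component}). There, systole growth plus discreteness of periods force $[\alpha](\gamma_i)=0$ on the short fibers.
\item Even then, the appendix stability theorem is needed. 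It upgrades ``the backward blow-up of the limit is the expander'' to ``the limit is exactly $(g^*(t),\alpha^*(t))$'' (Proposition~\ref{p:base.S^1.implies.universal.cover.expander}).
\item Point limits are treated separately. By Lemma~\ref{l:R2.invariant.solution} they are $(g^*(t+c),\alpha^*(t+c))$ or $(g_{\mathrm{Euc}},0)$. In the rational case the Euclidean one is excluded by Lemma~\ref{rational.class.alpha.nonzero}. In the irrational case one only shows, via systole growth against $o(1/t)$ decay, that not every limit is Euclidean. The constant $c$ is removed by a further diagonal blow-down.
\end{itemize}
For irrational $[\alpha]$, the fiber-component limits cannot be excluded, which is why (ii) is asserted only along a sequence. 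Your plan, if it worked, would prove full convergence in (ii), which the paper explicitly leaves open.

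Step 3 has a second gap. A deck transformation survives in $G_\infty$ only if its displacement in $\tilde g_{s_i}(1)$ stays bounded. So the $\theta$-projection of $G_\infty$ is not in general the closure of $\{[\alpha](\sigma)/\sqrt2\}$: elements with a prescribed period can escape to infinity. Remark~\ref{r:S1.not.imply.rational} exhibits dense period actions converging to a discrete one. Likewise, collapse only forces $G_\infty$ to contain \emph{some} one-parameter subgroup. That this subgroup is $\R\partial_v$ uses the fact that $\theta$-displacements in $G_\infty$ are limits of periods lying in the discrete set $\ell([\alpha])\Z/\sqrt2$, i.e.\ it uses rationality.

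In the rational case both points are repairable. The deck elements of displacement at most $2\diam(M,g_{s_i}(1))+1$ generate $\pi_1(M)$ and have finitely many possible periods; Lemmas~\ref{l:collapsing.generators} and~\ref{l:zero.pairing.with.fiber} make this precise. In the irrational case you would need bounded displacement to persist along the whole family. That is Lemma~\ref{l:bounded.generator}, which requires full-sequence convergence to $(g^*,\alpha^*)$ with no fiber component. The paper uses it only inside a proof by contradiction, in Case~3 of (ii) via Theorem~\ref{t:GH_limit_vs_rationality}(iii).
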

\begin{remark}\label{r:lott.sesum}
    Lott and Sesum’s result \cite{lott2014ricci} treats the case $[\alpha]=0$, where $(M,g(t))$ converges to a flat $2$-torus and hence $(M,t^{-1}g(t))$ collapses to a point and $(\tilde g_s(t),\tilde \alpha_s(t))\xrightarrow{C^\infty}(g_{\mathrm{Euc}},0)$ as $s\to\infty$. In our case where $[\alpha]\neq 0$, the limit on the universal cover is $(g^*(t),\alpha^*(t))$, which is different from $(g_{\mathrm{Euc}},0)$.
\end{remark}
\begin{remark}
    We note that in case (i), the conclusion implies $\alpha^*(\partial_v)=0$ since $\alpha^*=\sqrt 2 d\theta$, which means $\alpha^*$ has no component in the fiber direction of the action $G_\infty$. This fact plays an important role in showing the full sequence convergence both in Gromov--Hausdorff sense and on the universal cover (cf. Theorem~\ref{no_fiber_component}).
\end{remark}
\begin{remark}
    We want to compare our result with the result of Lott \cite{lott2010dimensional,lott2026under} where he showed that compact 3-dimensional Type III Ricci flow has similar collapsing behavior determined by the Thurston type of the manifold. Our result can be viewed as a generalization to noncompact manifolds with symmetry determined by $[\alpha]$ in view of the warped product interpretation in Section~\ref{intro}. In our case, the collapsing behavior also depends on the topology, namely the cohomology class $[\alpha]$.
\end{remark}
\begin{remark}
Theorem~\ref{t:main.GH_limit} exhibits a new type of metric behavior in a geometric flow, despite the many formal similarities between the Ricci flow and \eqref{alpha_Ricci_flow}. In particular, the long-time behavior depends on the arithmetic property of the initial data. As $[\alpha]$ varies, the long-time behavior changes dramatically. We expect that analogous phenomena occur for higher-dimensional tori as well.
\end{remark}
\begin{figure}[!ht]
    \centering
    \resizebox{1\textwidth}{!}{\begin{circuitikz}
\tikzstyle{every node}=[font=\LARGE]
\node [font=\Huge] at (12.5,0.75) {};

\begin{scope}[shift={(-95,-15)}]
\draw [ line width=1pt ] (95.25,20) ellipse (8.25cm and 4.75cm);
\draw [line width=1pt, short] (92,19.75) .. controls (94.75,18.25) and (96.75,18.5) .. (99.25,19.75);
\draw [line width=1pt, short] (93.75,19) .. controls (95.25,20) and (95.75,19.75) .. (97.25,19);
\draw [ color={rgb,255:red,4; green,51; blue,255}, line width=1pt, dashed] (97,24.5) .. controls (94.25,24) and (94,20.25) .. (95.25,19.75);
\draw [ color={rgb,255:red,4; green,51; blue,255}, line width=1pt, short] (95.25,19.75) .. controls (96,21.5) and (96,23.5) .. (93,24.5);
\draw [ color={rgb,255:red,4; green,51; blue,255}, line width=1pt, dashed] (93,24.5) .. controls (91.25,22.5) and (92.25,20.25) .. (93.75,19);
\draw [ color={rgb,255:red,4; green,51; blue,255}, line width=1pt, short] (93.75,19) .. controls (92.75,21.5) and (90.75,22.75) .. (87.75,22);
\draw [ color={rgb,255:red,4; green,51; blue,255}, line width=1pt, dashed] (87.75,22) .. controls (89.5,17.25) and (92.25,17.75) .. (94.25,18.75);
\draw [ color={rgb,255:red,4; green,51; blue,255}, line width=1pt, short] (94.25,18.75) .. controls (92.25,19) and (91.5,17) .. (92.5,15.5);
\draw [ color={rgb,255:red,4; green,51; blue,255}, line width=1pt, dashed] (92.5,15.5) .. controls (97,15.75) and (96.25,16.75) .. (96.75,18.75);
\draw [ color={rgb,255:red,4; green,51; blue,255}, line width=1pt, short] (96.75,18.75) .. controls (96,16.75) and (97,15.75) .. (99,15.75);
\draw [ color={rgb,255:red,4; green,51; blue,255}, line width=1pt, dashed] (99,15.75) .. controls (100.75,17) and (100.25,19) .. (99,19.5);
\draw [ color={rgb,255:red,4; green,51; blue,255}, line width=1pt, short] (99,19.5) .. controls (101.25,19.5) and (102.5,20) .. (102.5,22.25);
\draw [ color={rgb,255:red,4; green,51; blue,255}, line width=1pt, dashed] (102.5,22.25) .. controls (98.75,22.75) and (97.25,22.25) .. (96.5,19.5);
\draw [ color={rgb,255:red,4; green,51; blue,255}, line width=1pt, short] (97,24.5) .. controls (98.5,23.5) and (99,21.25) .. (96.5,19.5);
\draw [->, >=Stealth] (104.5,20) -- (114.5,20)node[pos=0.5, fill=white]{Gromov-Hausdorff as t $\rightarrow$ $\infty$};
\draw [ color={rgb,255:red,4; green,51; blue,255} , line width=1pt ] (118.25,20) ellipse (2.75cm and 2.75cm);
\draw [ line width=1pt ] (95.5,8.25) ellipse (9cm and 5.75cm);
\draw [line width=1pt, short] (91.75,8) .. controls (94.5,6.75) and (96.5,6.75) .. (99.5,8);
\draw [line width=1pt, short] (93.5,7.5) .. controls (95.25,8.25) and (95.75,8.25) .. (97.25,7.5);
\draw [ color={rgb,255:red,4; green,51; blue,255}, line width=1pt, short] (93.5,13.75) .. controls (95.5,11.5) and (95.25,10.25) .. (95,8);
\draw [ color={rgb,255:red,4; green,51; blue,255}, line width=1pt, short] (94,13.75) .. controls (96,11.5) and (95.75,10) .. (95.5,8);
\draw [ color={rgb,255:red,4; green,51; blue,255}, line width=1pt, short] (94.5,14) .. controls (96.5,11.5) and (96.25,9.75) .. (96,8);
\draw [ color={rgb,255:red,4; green,51; blue,255}, line width=1pt, short] (95,14) .. controls (97,11.5) and (96.75,10) .. (96.5,7.75);
\draw [ color={rgb,255:red,4; green,51; blue,255}, line width=1pt, short] (93,13.75) .. controls (94.5,12) and (95,10.5) .. (94.5,8);
\draw [ color={rgb,255:red,4; green,51; blue,255}, line width=1pt, short] (93.75,7.25) .. controls (93,5.5) and (93,4.5) .. (93.5,2.75);
\draw [ color={rgb,255:red,4; green,51; blue,255}, line width=1pt, short] (94.5,7) .. controls (94,6) and (93.5,4.25) .. (94.25,2.5);
\draw [ color={rgb,255:red,4; green,51; blue,255}, line width=1pt, short] (95,7) .. controls (94.5,5.25) and (94.25,4.25) .. (94.75,2.5);
\draw [ color={rgb,255:red,4; green,51; blue,255}, line width=1pt, short] (95.5,7) .. controls (95,6.25) and (94.5,4) .. (95.25,2.5);
\draw [ color={rgb,255:red,4; green,51; blue,255}, line width=1pt, short] (95.75,7) .. controls (95.25,5.75) and (95.25,3.75) .. (95.75,2.5);
\draw [ color={rgb,255:red,4; green,51; blue,255}, line width=1pt, short] (96.25,7) .. controls (95.25,5) and (96,3.75) .. (96.25,2.5);
\draw [ color={rgb,255:red,4; green,51; blue,255}, line width=1pt, dashed] (93,13.75) .. controls (93.25,10.5) and (94,9.75) .. (95,8);
\draw [ color={rgb,255:red,4; green,51; blue,255}, line width=1pt, dashed] (93.5,14) .. controls (93.75,10.5) and (94.25,10.5) .. (95.5,8);
\draw [ color={rgb,255:red,4; green,51; blue,255}, line width=1pt, dashed] (94,14) .. controls (94.5,10.75) and (94.75,10.75) .. (96,8);
\draw [ color={rgb,255:red,4; green,51; blue,255}, line width=1pt, dashed] (94.5,14) .. controls (95,10.75) and (95.25,10.75) .. (96.5,8);
\node [font=\LARGE, color={rgb,255:red,4; green,51; blue,255}] at (89,8.25) {..........};
\node [font=\LARGE, color={rgb,255:red,4; green,51; blue,255}] at (101.5,8.25) {.........};
\draw [ color={rgb,255:red,4; green,51; blue,255}, line width=1pt, dashed] (94.5,7) .. controls (94.25,5) and (94.5,4.75) .. (93.5,2.75);
\draw [ color={rgb,255:red,4; green,51; blue,255}, line width=1pt, dashed] (95,7) .. controls (95,4.75) and (95,4.75) .. (94.25,2.75);
\draw [ color={rgb,255:red,4; green,51; blue,255}, line width=1pt, dashed] (95.5,7) .. controls (95.5,4.75) and (95.75,5) .. (94.75,2.75);
\draw [ color={rgb,255:red,4; green,51; blue,255}, line width=1pt, dashed] (95.75,7) .. controls (96,4.75) and (95.75,4.75) .. (95.25,2.75);
\draw [ color={rgb,255:red,4; green,51; blue,255}, line width=1pt, dashed] (96.25,7) .. controls (96.75,4.75) and (96.25,4.25) .. (95.75,2.5);
\draw [->, >=Stealth] (105,8) -- (115.25,8)node[pos=0.5, fill=white]{Gromov-Hausdorff as t $\rightarrow$ $\infty$};
\draw [ color={rgb,255:red,4; green,51; blue,255}, line width=2pt, short] (119.25,7.75) -- (119.25,7.75);
\draw [ color={rgb,255:red,4; green,51; blue,255} , fill={rgb,255:red,4; green,51; blue,255}] (118.25,7.75) circle (0.25cm);
\node [font=\Huge] at (104.5,24.5) {$[\alpha]$ is rational};
\node [font=\Huge] at (106,13.25) {$[\alpha]$ is irrational};
\end{scope}
\end{circuitikz}}
    \caption{Different collapsing behavior of $\alpha$-Ricci flow}
    \label{f:GH_convergence}
    \label{fig:GH limit}
\end{figure}

\begin{remark}
    The metric component of the $\alpha$-Ricci flow \eqref{alpha_Ricci_flow} is not conformal. When the initial metric is flat and the $1$-form is parallel, one can show that the conformal class $[g(t)] \in \mathbb{H}^2 / \operatorname{PSL}_2(\mathbb{Z})$ converges to $\left[\left(\frac{a}{b}, 0\right)\right] \in \partial \mathbb{H}^2 / \operatorname{PSL}_2(\mathbb{Z})$ where $a,b$ are the periods of $\alpha$ on a set of generators of $\pi_1(M)$ (cf.\ the proof of Theorem~\ref{model.case}).
\end{remark}

\begin{remark}
    In the irrational case, our method only gives a subsequence $s_i\to\infty$ such that the desired convergences hold. It would be desirable to show that the convergences are along the full sequence.
\end{remark}
Figure~\ref{f:GH_convergence} illustrates the Gromov--Hausdorff convergence in the model case of Theorem~\ref{model.case}, where the initial metric is flat and $\alpha$ is parallel. When $[\alpha]$ is rational,  the blue circle inside the torus represents a loop whose length converges to $\ell([\alpha])/\sqrt{2}$, while the metric in the transverse direction collapses. When $[\alpha]$ is irrational, the integral curves of $\ker\alpha$ are dense in $M$, and the distance along these curves shrinks under the blow-down. Consequently, the entire manifold converges to a point.

Finally, we have a similar result for $\alpha$-Ricci flow on the Klein bottle. This case is easier since all cohomology classes are rational.

\begin{definition}Let $K$ be the Klein bottle.  
Let $[\alpha]\in H^1(K,\R)\cong\R$. If $[\alpha]\neq 0$, we define $\ell([\alpha])$ to be the unique $\ell>0$ such that $[\alpha]=\ell v$, where $v\in H^1(K,\Z)$ is a generator of $H^1(K,\Z)\cong \Z$. If $[\alpha]=0$, we define $\ell([\alpha])=0$.
\end{definition}
\begin{corollary}\label{c:klein.bottle}
    Let $(g(t),\alpha(t))$ be a solution to the $\alpha$-Ricci flow on the Klein bottle $K$. Then the following hold.
    \begin{enumerate}[label=(\roman*)]
    \item If $[\alpha]\neq 0$, then $(K,t^{-1}g(t))\xrightarrow{GH} ~(S^1,d([\alpha]))$ as $t\to\infty$. On the universal cover $\tilde K$, we have $(\tilde g_s(t),\tilde \alpha_s(t))\xrightarrow{C^\infty}(g^*(t),\alpha^*(t))$ as $s\to\infty$. Moreover, we have the following equivariant convergence
        \begin{center}
\begin{tikzcd}
  & (\tilde{K},\tilde{g}_{s}(1), \tilde\alpha_{s}(1), G_{s}, p) 
  \arrow[r, "pt\ eq\ C^\infty"] 
  \arrow[d, "\pi"]
       & (\R^2,g^*(1), \alpha^*(1), G_\infty, p_\infty) 
   \arrow[d, "\pi_\infty"]\\
  & (K, g_{s}(1)) 
  \arrow[r, "GH"]
   & (S^1,d([\alpha]))
\end{tikzcd}
\end{center}
        where
        $G_s=\pi_1(K)\subset \operatorname{Isom}(\tilde g_s(1))$ and $G_\infty\cong \R\rtimes\Z\subset \operatorname{Isom}( g^*(1))$ is generated by translations in $\partial_v$ direction and the glide reflection $(\theta,v)\mapsto (\theta+\ell([\alpha])/\sqrt{2},-v)$.
    \item If $[\alpha]=0$, then
    \[
        (K,t^{-1}g(t))\xrightarrow{GH}\{\mathrm{pt}\}
        \quad\text{as }t\to\infty,
    \]
    and, on the universal cover,
    \[
        (\tilde g_s(t),\tilde\alpha_s(t))
        \xrightarrow{C^\infty}(g_{\mathrm{Euc}},0)
        \quad\text{as }s\to\infty.
    \]
    \end{enumerate}
\end{corollary}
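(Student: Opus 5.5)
The natural approach is to pass to the orientable double cover and reduce Corollary~\ref{c:klein.bottle} to Theorem~\ref{t:main.GH_limit} and to Lott--Sesum. Let $p:\mathbb T^2\to K$ be the orientation double cover with deck involution $\iota$. The pullback $(p^*g(t),p^*\alpha(t))$ solves the $\alpha$-Ricci flow on $\mathbb T^2$, because the system \eqref{alpha_Ricci_flow} is local and diffeomorphism-invariant. By uniqueness it is also $\iota$-invariant. Global existence on $K$ therefore follows from Theorem~\ref{t:2d_estimates}. The universal covers of $K$ and of $\mathbb T^2$ coincide, and the lifted flows agree. Only the deck group changes: $\pi_1(\mathbb T^2)$ is an index-two subgroup of $\pi_1(K)$.

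The first step is the cohomology. $H^1(K,\R)\cong\R$ is the $\iota$-invariant part of $H^1(\mathbb T^2,\R)$. Write $\pi_1(K)=\langle a,b\mid bab^{-1}=a^{-1}\rangle$. A class on $K$ must vanish on $a$, so it is determined by its value on $b$. The torus subgroup is generated by $a$ and $b^2$. Hence $p^*[\alpha]$ takes the value $0$ on $a$ and $2[\alpha](b)$ on $b^2$, so it is rational whenever it is nonzero. Its primitive length is $2\ell([\alpha])$, since a generator $v$ of $H^1(K,\Z)$ pulls back to twice a primitive class on the torus.

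If $[\alpha]=0$, the pulled-back flow is in the setting of Lott--Sesum (Remark~\ref{r:lott.sesum}). It converges to a flat torus, so its blow-down collapses to a point and the universal cover converges to $(g_{\mathrm{Euc}},0)$. Since $K$ is a quotient of $\mathbb T^2$, GH convergence to a point descends to $K$, and the universal-cover statement is literally the same. This gives (ii).

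If $[\alpha]\neq0$, Theorem~\ref{t:main.GH_limit}(i) applies upstairs. On the universal cover we get $(\tilde g_s(t),\tilde\alpha_s(t))\to(g^*,\alpha^*)$ along the full sequence $s\to\infty$. We also get equivariant convergence with respect to $\pi_1(\mathbb T^2)$, with limit group generated by the $\partial_v$-translations and by $\theta\mapsto\theta+2\ell([\alpha])/\sqrt2$.

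It remains to add the element $b$. The plan is to apply the Fukaya compactness underlying Definition~\ref{d:equivariant.pointed.cinfty.convergence} to the larger group $G_s=\pi_1(K)$. By Arzel\`a--Ascoli, after passing to a subsequence, the elements of $\pi_1(K)$ of bounded displacement converge to a closed group $G_\infty\supset\R\times 2\ell\Z$ of isometries of $g^*(1)$ that preserve $\alpha^*(1)$. The index of $G_\infty$ over this subgroup is at most two. The isometries of the flat metric preserving $\alpha^*=\sqrt2\,d\theta$ are the maps $(\theta,v)\mapsto(\theta+c,\pm v+d)$.

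The limit $\eta$ of the lifts of $b$ preserves $\alpha^*$, and it must have period $\int_b\alpha=\ell([\alpha])$ in the sense that $\theta\circ\eta=\theta+\ell([\alpha])/\sqrt2$. Indeed $\tilde\alpha_s$ is exact on the universal cover, with primitive converging to $\sqrt2\,\theta$, so the periods pass to the limit.

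Because $b$ reverses orientation, each $b$-lift is an orientation-reversing isometry, so $\eta$ flips $v$. Up to composing with the $\partial_v$-translations, $\eta$ is therefore the glide reflection $(\theta,v)\mapsto(\theta+\ell([\alpha])/\sqrt2,-v)$. This identifies $G_\infty\cong\R\rtimes\Z$, and the answer does not depend on the subsequence. The quotient $\R^2/G_\infty$ is the circle of length $\ell([\alpha])/\sqrt2$.

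Fukaya's theorem then gives $(K,g_s(1))\to(S^1,d([\alpha]))$ along every subsequence, and hence along the full sequence. Rescaling in $t$ converts this into $(K,t^{-1}g(t))$.

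The main obstacle is this final identification. The step that needs the most care is justifying that the periods of $\alpha$ and the orientation character pass to the limit group. For the periods I would use the smooth convergence of the primitives $\tilde u_s$ with $d\tilde u_s=\tilde\alpha_s$. For the orientation character I would use the fact that $\pi_1(K)\setminus\pi_1(\mathbb T^2)$ acts by orientation-reversing maps, and that this property is closed under $C^0$ limits of isometries.
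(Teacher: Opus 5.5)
Your proposal is correct. It follows the paper's overall structure:
\begin{itemize}
\item pass to the orientation double cover and check that $\ell([p^*\alpha])=2\ell([\alpha])$;
\item invoke Lott--Sesum when $[\alpha]=0$;
\item apply Theorem~\ref{t:main.GH_limit}(i) on the torus;
\item identify the one extra generator of the limit group.
\end{itemize}

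The real difference is in how you show that the extra limit isometry reverses the $v$-direction. The paper uses the Klein-bottle relation $\tau_i[\gamma_2]^n\tau_i^{-1}=[\gamma_2]^{-n}$. It combines this with the fact that every fiber translation $T_h$ is a limit of powers of $[\gamma_2]$: elements of $\pi_1(\mathbb T^2)$ converging to $T_h$ have periods in $2\ell([\alpha])\Z$ tending to $0$, so these periods are eventually zero. Conjugation then yields $\tau T_h\tau^{-1}=T_{-h}$.

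You instead use the orientation character. Elements of $\pi_1(K)\setminus\pi_1(\mathbb T^2)$ act on $\tilde K$ by orientation-reversing isometries, and this survives the limit. One way to see this is the local degree under locally uniform convergence. Another is the sign of the Jacobian under the $C^\infty_{\mathrm{loc}}$ convergence of group elements from Corollary~\ref{c:convergence_CFG}, since those Jacobians stay bounded away from zero. Your route is shorter and avoids identifying which deck transformations converge to fiber translations. The paper's route is purely algebraic and never appeals to orientation.

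Two small points should be made explicit.
\begin{itemize}
\item Showing the index is at most two does not give index exactly two. You need an element of the nontrivial coset with uniformly bounded displacement. This follows from the uniform diameter bound on the torus cover (Theorem~\ref{diameter.bound}), exactly as in the paper.
\item The bounded-displacement elements of the nontrivial coset need not be $b$ itself. Their periods are odd multiples of $\ell([\alpha])$, so the limit a priori has $\theta$-shift equal to an odd multiple of $\ell([\alpha])/\sqrt2$. To normalize to the glide $(\theta,v)\mapsto(\theta+\ell([\alpha])/\sqrt2,-v)$, compose with the $\theta$-translation by $\sqrt2\,\ell([\alpha])$ already in the torus limit group, not only with $\partial_v$-translations.
\end{itemize}
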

\begin{remark}
    As in Remark~\ref{r:lott.sesum}, the case $[\alpha]=0$ follows directly from \cite{lott2014ricci}.
\end{remark}
\subsection{Sketch of the proof}

The proof of Theorem~\ref{t:main.GH_limit} follows the strategy of \cite{lott2010dimensional}. We first establish a compactness theorem for $\alpha$-Ricci flows in the Gromov--Hausdorff sense and on the universal cover using collapsing theory \cite{fukaya1988boundary,cheeger1992nilpotent} (Corollary~\ref{c:convergence_CFG}). While \cite{lott2010dimensional} uses groupoid language, we work directly on the universal cover taking advantage of the simple collapsing behavior of $\mathbb{T}^2$. 

We then study a version of the $\mathcal{W}_+$ functional for locally $\mathbb{R}^N$-invariant $\alpha$-Ricci flows over a compact base. Its monotonicity is conditional: it holds when $\alpha$ has no fiber component (Corollary~\ref{c:conditional.monotonicity}). Using the $\mathcal{W}_+$ functional, we classify locally $\mathbb{R}^N$-invariant $\alpha$-expanders on the trivial bundle $S^1\times\mathbb R^N$ with trivial linear holonomy and no fiber component (Corollary~\ref{1d.alpha.expander}) and show that these are the only blow-down limits (Proposition~\ref{p:base.S^1.implies.universal.cover.expander}). For invariant flows over $S^1$, this classification requires no convergence of the flat connection $A$; see Corollary~\ref{1d.alpha.expander} and Remark~\ref{r:connection.convergence}. The no-fiber condition when $[\alpha]$ is rational is established in Theorem~\ref{no_fiber_component} by combining collapsing theory with the systole growth bound from Theorem~\ref{t:2d_estimates}.

In relating the Gromov--Hausdorff limit to the rationality of $[\alpha]$, a key observation is that the rationality condition for $[\alpha]$ can be naturally recast in terms of the action of $\pi_1(M)$ on the level sets of $u$, where $du = \alpha$ on $\tilde{M}$ (Lemma~\ref{action.on.level.sets}). Remarkably, this action is invariant under the flow and persists to the limit under equivariant convergence on the universal cover (Theorem~\ref{t:GH_limit_vs_rationality}). With these ingredients in place, Theorem~\ref{t:main.GH_limit} follows.

\subsection{Organization of the paper}

The paper is organized as follows. In Section~\ref{s:preliminaries}, we discuss preliminaries concerning generators of $H_1(\mathbb{T}^2, \mathbb{Z})$ and the rationality of $[\alpha]$. In Section~\ref{s:model.solutions}, we study expanders and a model case where $g(0)$ is flat and $\alpha(0)$ is parallel. In Section~\ref{s:geometric quantity}, we prove Theorem~\ref{t:2d_estimates}. In Section~\ref{s:collapsing torus}, we apply Cheeger--Fukaya--Gromov theory to $\alpha$-Ricci flow to establish the compactness of $\alpha$-Ricci flows on the torus. In Section~\ref{s:condition monotonicity}, we establish the conditional monotonicity of the $\mathcal{W}_+$ functional. In Section~\ref{s:no.fiber}, we show that any blow-down limit has no fiber component if $[\alpha]$ is rational. In Section~\ref{s:GHlimit_vs_rationality}, we relate the Gromov--Hausdorff limits to the rationality of the cohomology class $[\alpha]$. In Section~\ref{s:proof.of.main.theorem.(i)}, we prove Theorem~\ref{t:main.GH_limit}(i) and Corollary~\ref{c:klein.bottle}. In Section~\ref{s:proof.of.main.theorem.(ii)}, we prove Theorem~\ref{t:main.GH_limit}(ii). Finally, in Appendix~\ref{s:dynamic stability}, we prove the stability of locally $\mathbb{R}^N$-invariant $\alpha$-Ricci flows over $S^1$ near the expander $(g^*(t), \alpha^*(t))$.
\subsection{Acknowledgement}The authors thank Jeff Streets for introducing them to this topic, for sharing notes arising from his discussions with Richard Bamler, and for many enlightening discussions. They are also grateful to Dan Knopf for generously explaining his work on the stability of invariant Ricci flows. The authors were deeply saddened to learn of his passing in 2025 and remember with gratitude his insight and kindness. Further thanks go to Kuan-Hui Lee and Jinxin Xue for valuable discussions, and to Anna Cusenza and Michael Womack for helpful conversations.

This material is based upon work supported by the National Science
Foundation under Grant No. DMS-1928930, while the first named author was in
residence at the Simons Laufer Mathematical Sciences Institute (formerly
MSRI) in Berkeley, California, during the Fall semester of 2024.

\subsection{AI disclosure}
The mathematical ideas and arguments were developed by the authors. ChatGPT (GPT-6 Astra) was used to polish the language and assist in checking the exposition and arguments. The authors reviewed all AI suggestions and take full responsibility for the manuscript.

\section{Preliminaries}\label{s:preliminaries}
\subsection{Generators of \texorpdfstring{$H_1(\mathbb{T}^2,\Z)$}{H1(T2,Z)} and an equivalent definition of Definition~\ref{d:rationality}}

We would like to give an equivalent definition of the rationality of $[\alpha]$ using a basis of $\pi_1(\mathbb{T}^2)$, which is easier to work with.

Let $\alpha$ be a closed 1-form on $\mathbb{T}^2$ such that $0\neq [\alpha]\in H^1(\mathbb{T}^2,\mathbb{R})$. We choose a set of generators $[\gamma_1],[\gamma_2]$ of $\pi_1(M)\cong\Z\times\Z$ and define $a=\int _{\gamma_1}\alpha$, $b=\int_{\gamma_2}\alpha$. 
 Since $\alpha$ in \eqref{alpha_Ricci_flow} evolves by an exact form, both $a$ and $b$ are invariant under the flow. 
We use the convention that for $a,b\in \R$, $a/b=0$ if $a=0$ and $b\neq0$ and $a/b=\infty$ if $a\neq0$ and $b=0$. Note that $a,b$ cannot be zero simultaneously. If $a/b\in \Q\cup\{\infty\}$, we define 
\begin{equation}\label{ell.a.b}
    \ell_{a,b}=\left\{
    \begin{aligned}
        \frac{|b|}{|q|}\quad&\text{ if }a,b\neq 0,\ \frac{a}{b}=\frac{p}{q} \text{ where }p,q\in \Z, \gcd(p,q)=1,\\
        |b|\quad&\text{ if } a=0,\\
        |a|\quad &\text{ if } b=0.
    \end{aligned}\right.
\end{equation}
Therefore, we have $a=\ell_{a,b}\operatorname{sign}(a)|p|$, $b=\ell_{a,b}\operatorname{sign}(b)|q|$, whenever $a,b\neq0$.
If $a/b\in \R\setminus\Q$, we define $\ell_{a,b}=0$.
\begin{proposition}\label{p:equivalent definition}
    Define $\alpha,[\gamma_1],[\gamma_2],a,b$ as above. We have the following.
    \begin{enumerate}[label=(\roman*)]
        \item $[\alpha]$ is rational  if and only if $a/b\in\mathbb{Q}\cup\{\infty\}$ and $[\alpha]$ is irrational if and only if $a/b\in \R\setminus\Q$;
        \item $\ell_{a,b} = \ell([\alpha])$, where $\ell([\alpha])$ is defined in Definition~\ref{d:prmitive length}.
    \end{enumerate}
    In particular, for a given $[\alpha]$, the property that $a/b\in\mathbb{Q}\cup\{\infty\}$ as well as the value of $\ell_{a,b}$ are independent of the choice of $[\gamma_1],[\gamma_2]\in \pi_1(\mathbb{T}^2)$.
\end{proposition}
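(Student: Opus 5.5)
The plan is to identify $H^1(M,\R)$ with $\mathrm{Hom}(\pi_1(M),\R)\cong\R^2$, so that everything reduces to elementary arithmetic in $\Z^2$. Since $[\gamma_1],[\gamma_2]$ generate $\pi_1(M)\cong\Z\times\Z$, which is also $H_1(M,\Z)$ because it is abelian, the map $[\beta]\mapsto(\int_{\gamma_1}\beta,\int_{\gamma_2}\beta)$ is an isomorphism $H^1(M,\R)\to\R^2$. By the universal coefficient theorem ($H_0$ is free, so there is no Ext term), it carries $H^1(M,\Z)$ onto the lattice $\Z^2$. Under this identification $[\alpha]\mapsto(a,b)\neq(0,0)$, and
\[
\R\cdot[\alpha]\cap H^1(M,\Z)\;\longleftrightarrow\;\{\lambda(a,b):\lambda\in\R\}\cap\Z^2 .
\]

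For (i), I would treat the cases in turn.

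\emph{Case $b=0$.} Then $a\neq0$ and $a/b=\infty$. The intersection contains $(1,0)=a^{-1}(a,0)$, and every element has the form $(n,0)$ with $n\in\Z$. So the intersection is $\Z(1,0)\cong\Z$, the class is rational, and the generator $v=\pm(1,0)$ chosen so that $[\alpha]=\ell v$ with $\ell>0$ gives $\ell=|a|$.

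\emph{Case $a=0$.} This is symmetric: the intersection is $\Z(0,1)$ and $\ell=|b|$.

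\emph{Case $a,b\neq0$ with $a/b=p/q$ in lowest terms.} Then $(a,b)=(b/q)(p,q)$, so the line meets $\Z^2$ in the integer multiples of the primitive vector $(p,q)$, using $\gcd(p,q)=1$. Thus the intersection is $\Z(p,q)\cong\Z$ and $[\alpha]=\pm(|b|/|q|)(p,q)$, so $\ell=|b|/|q|=\ell_{a,b}$.

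\emph{Case $a/b\notin\Q$.} Suppose $\lambda(a,b)=(m,n)\in\Z^2$ is nonzero. Then $\lambda\neq0$, and both coordinates are nonzero because $a,b\neq0$ here. Hence $a/b=m/n\in\Q$, a contradiction. So the intersection is $\{0\}$, the class is irrational, and $\ell_{a,b}=0=\ell([\alpha])$.

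These cases are exhaustive. Since rational and irrational are mutually exclusive, they give both "if and only if" statements in (i) together with (ii).

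The final independence claim is immediate, because $\ell([\alpha])$ and rationality are defined intrinsically by Definition~\ref{d:rationality} and Definition~\ref{d:prmitive length}. The only point needing care is the identification of $H^1(M,\Z)$ with $\Z^2$ via periods on a basis of $\pi_1$. I expect that step, rather than the arithmetic, to be the main thing to justify; universal coefficients plus $H_1(\mathbb{T}^2,\Z)=\pi_1$ handle it.
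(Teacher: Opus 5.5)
Your proposal is correct and follows essentially the paper's route. The paper likewise identifies $H^1(M,\Z)$ through the dual basis $[\gamma_1]^*,[\gamma_2]^*$, i.e.\ by periods, and computes $\R\cdot[\alpha]\cap H^1(M,\Z)$ as the integer multiples of the primitive vector $(p,q)$ using $\gcd(p,q)=1$. Your exhaustive case split, with the irrational case handled by contrapositive, only reorganizes the paper's two-directional argument, and your universal-coefficient justification makes explicit the lattice identification that the paper takes for granted.
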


\begin{proof}
   
    To prove (i), let $[\gamma_1]^*,[\gamma_2]^*\in H^1(M,\mathbb{Z})$ be the dual basis of $[\gamma_1],[\gamma_2]\in H_1(M,\Z)$. 
    Suppose $[\alpha]$ is rational. Then there exists $c>0$ such that:
    \begin{align*}
        c[\alpha] = m[\gamma_1]^* +n[\gamma_2]^*,
    \end{align*}
    for some integers $m,n\in\mathbb{Z}$. Thus, we have $a = m/c$ and $b = n/c$. Hence $a/b = m/n\in \mathbb{Q}\cup\{\infty\}$.  
    
    Now suppose $a/b\in\mathbb{Q}\cup\{\infty\}$. If $a,b\neq0$, then there exist $p,q\in\mathbb{Z}\setminus\{0\}$ such that $\frac{a}{b} = \frac{p}{q}$, with $\operatorname{gcd}(p,q) = 1$ and $p/a = q/b = c$, for some constant $c$. Thus, $c[\alpha] = p[\gamma_1]^*+q[\gamma_2]^*$, which shows that $[\alpha]$ is rational. If $a = 0$ or $b = 0$, then we can choose $c = b^{-1}$ or $a^{-1}$ to see that $[\alpha]$ is rational.
    
    To prove (ii), by definition of $a,b,[\gamma_1]^*,[\gamma_2]^*$, we have $[\alpha] = a[\gamma_1]^* +b[\gamma_2]^*$. If $a/b\in \R\setminus\Q$, then by definition $\ell([\alpha])=\ell_{a,b}=0$. If $a/b\in\mathbb{Q}\cup\{\infty\}$ and $a,b\neq0$, there exist $p,q\in\mathbb{Z}\setminus\{0\}$ such that $\frac{a}{b} = \frac{p}{q}$ with $\operatorname{gcd}(p,q) = 1$. We have \[(\ell_{a,b})^{-1}[\alpha]=\operatorname{sign}(a)|p|[\gamma_1]^*+\operatorname{sign}(b)|q|[\gamma_2]^*\in H^1(M,\Z)\] and\begin{align*}
        \R\cdot[\alpha]\cap H^1(M,\Z)=&\{c[\alpha]:ca\in \Z,cb\in \Z\}\\
        =&\{c'(\ell_{a,b})^{-1}[\alpha]:c'\operatorname{sign}(a)|p|\in \Z,c'\operatorname{sign}(b)|q|\in \Z \}\\
        =&\{c'(\ell_{a,b})^{-1}[\alpha]:c'\in\Z\},
    \end{align*} where we substitute $\ell_{a,b}c=c'$ in the second equality and use $\gcd(p,q)=1$ in the third equality. Thus $\R\cdot[\alpha]\cap H^1(M,\Z)=\Z\cdot [(\ell_{a,b})^{-1}\alpha]$ and $[\alpha]=\ell_{a,b}[(\ell_{a,b})^{-1}\alpha]$. If $a = 0$ or $b = 0$, then $\R\cdot[\alpha]\cap H^1(M,\Z)=\Z\cdot[\gamma_2]^*$ or $\Z\cdot[\gamma_1]^*$ and $[\alpha]= \ell_{a,b}(\operatorname{sign}(b) [\gamma_2]^*)$ or $ \ell_{a,b}(\operatorname{sign}(a) [\gamma_1]^*)$. In any case, $\ell([\alpha])=\ell_{a,b}$. 
\end{proof}

Now, we prove a lemma that distinguishes the rationality of the class $[\alpha]$ based on the action of $\pi_1(\mathbb T^2)$ on $\R^2$, which plays a crucial role in analyzing the Gromov--Hausdorff limit.
\begin{lemma}\label{action.on.level.sets}
    Let $(M,g)$ be a Riemannian 2-torus, and $[\gamma_1],[\gamma_2]$ be a set of generators of $\pi_1(M)$. We write $\int_{\gamma_1}\alpha=a$, $\int_{\gamma_2}\alpha=b$. We view $[\gamma_1],[\gamma_2]$ as deck transformations which act isometrically on the universal cover $(\tilde M,\tilde g)$. On the universal cover, the lift of $\alpha$ can be written as $\tilde \alpha=du$ for some function $u$. Then for any $x\in \tilde M$  we have \[u([\gamma_1] (x))-u(x)=a\text{ and }u([\gamma_2] (x))-u(x)=b.\] 
    This is saying $\pi_1(M)$ acts on the set $\{u^{-1}(r):r\in \R\}$ of level sets of $u$ which is generated by $u^{-1}(r)\mapsto u^{-1}(r+a)$ and $u^{-1}(r)\mapsto u^{-1}(r+b)$. In particular,
    \begin{itemize}
        \item If $[\alpha]$ is rational, then the action of $\pi_1(M)$ on $\{u^{-1}(r):r\in \R\}$ is equivalent to the discrete $\Z$ action on $\{u^{-1}(r):r\in \R\}$ generated by $u^{-1}(r)\mapsto u^{-1}(r+\ell([\alpha]))$ for any $r\in \R$. In other words, $\{[\alpha](\sigma):\sigma\in \pi_1(M)\}=\Z \cdot \ell([\alpha])$;
        \item If $[\alpha]$ is irrational, then the action of $\pi_1(M)$ on $\{u^{-1}(r):r\in \R\}$ has dense orbits.
    \end{itemize} 
\end{lemma}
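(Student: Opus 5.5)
The plan has three steps: compute how $u$ changes under a deck transformation, reduce the $\pi_1(M)$-action on level sets to the additive subgroup $a\Z+b\Z\subset\R$, and then identify that subgroup using Proposition~\ref{p:equivalent definition}.

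First, the deck transformation identity. Fix $x\in\tilde M$ and a deck transformation $[\gamma_1]$. Choose a path $\tilde c$ in $\tilde M$ from $x$ to $[\gamma_1](x)$. Its projection $c=\pi\circ\tilde c$ is a closed loop in $M$ representing $[\gamma_1]$, with the basepoint taken to be $\pi(x)$. Since $\tilde\alpha=\pi^*\alpha=du$, we get
\[
u([\gamma_1](x))-u(x)=\int_{\tilde c}du=\int_{\tilde c}\pi^*\alpha=\int_c\alpha=a .
\]
The last equality holds because $\alpha$ is closed, so its integral depends only on the homology class, and free homotopy suffices. The same argument gives $b$ for $[\gamma_2]$.

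Second, the action on level sets. Each deck transformation $\sigma$ therefore shifts $u$ by the constant $[\alpha](\sigma)$, so $\sigma(u^{-1}(r))=u^{-1}(r+[\alpha](\sigma))$. The correspondence $\sigma\mapsto[\alpha](\sigma)$ is a homomorphism $\pi_1(M)\to\R$. Hence the action of $\pi_1(M)$ on $\{u^{-1}(r)\}$ is the action of its image $\Lambda=a\Z+b\Z$ by translation on the parameter $r$.

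Third, identify $\Lambda$. In the rational case with $a,b\neq0$, Proposition~\ref{p:equivalent definition} gives $a=\pm\ell|p|$ and $b=\pm\ell|q|$ with $\gcd(p,q)=1$ and $\ell=\ell_{a,b}=\ell([\alpha])$. Then
\[
\Lambda=\ell(p\Z+q\Z)=\ell\Z
\]
by Bézout. If $a=0$ or $b=0$, then directly $\Lambda=|b|\Z$ or $|a|\Z$, which is again $\ell([\alpha])\Z$. So $\{[\alpha](\sigma)\}=\Z\cdot\ell([\alpha])$, and the action is the $\Z$-action generated by the shift $r\mapsto r+\ell([\alpha])$.

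In the irrational case, $a,b\neq0$ and $a/b\notin\Q$. The subgroup $\Lambda$ is not cyclic, since $a=mc$ and $b=nc$ would force $a/b$ to be rational. A non-cyclic additive subgroup of $\R$ is dense. The standard argument is as follows. If $\epsilon=\inf(\Lambda\cap(0,\infty))>0$, then $\epsilon$ is attained, because otherwise differences of elements of $\Lambda$ approaching $\epsilon$ would give arbitrarily small positive elements. Once $\epsilon$ is attained, $\Lambda=\epsilon\Z$, which contradicts non-cyclicity. So $\epsilon=0$ and $\Lambda$ is dense. Consequently each orbit $\{r+\lambda:\lambda\in\Lambda\}$ is dense, which is the statement that the orbits on level sets are dense.

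None of these steps is a serious obstacle. The only points needing care are that the integral is independent of the chosen loop, which holds since $\alpha$ is closed and $[\alpha](\sigma)$ depends only on the homology class, and the bookkeeping with signs and the zero cases in the rational computation.
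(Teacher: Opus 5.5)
Your proposal is correct and follows the same route as the paper: you lift a path from $x$ to $[\gamma_i](x)$, integrate $du=\tilde\alpha$ to get the period shift, and then read off the rational and irrational cases from the subgroup $a\Z+b\Z\subset\R$ using Proposition~\ref{p:equivalent definition}. The only difference is that you actually prove the ``algebraic facts'' the paper cites without proof, namely B\'ezout giving $\ell(p\Z+q\Z)=\ell\Z$ and the density of non-cyclic additive subgroups of $\R$.
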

\begin{proof}
    Let $\tilde\gamma_1$ be a lift of $\gamma_1$ to $\tilde M$ such that $\tilde\gamma_1(0)=x$ and $\tilde\gamma_1(1)=[\gamma_1](x)$. Then \[u([\gamma_1](x))-u(x)=\int _{\tilde\gamma_1} du(\tilde\gamma_1')=\int_{\gamma_1}\alpha=a.\] In particular, $[\gamma_1]$ maps $u^{-1}(r)$ to $u^{-1}(r+a)$ for any $r\in\R$. The case for $[\gamma_2]$ is similar. 

    If $[\alpha]$ is rational, by an algebraic fact, the action of $\pi_1(M)$ on $\{u^{-1}(r):r\in \R\}$ is a $\Z$-action generated by $u^{-1}(r)\mapsto u^{-1}(r+\ell_{a,b})$ and $\ell_{a,b}=\ell([\alpha])$ by Proposition~\ref{p:equivalent definition}. 
    
    If $[\alpha]$ is irrational, by an algebraic fact, then the action has dense orbits.
\end{proof}

\section{Model solutions}\label{s:model.solutions}
In this section, we examine the behavior of model solutions. These solutions have flat metrics and constant 1-forms which will be asymptotic models for long-time behavior.  We also prove the main theorem in the special case when the initial data is flat and $\alpha$ is constant.
\subsection{Example \texorpdfstring{$\alpha$}{alpha}-expander solutions}
We first define  the \textit{$\alpha$-Ricci expander}, which is a self-similar expanding solution of the $\alpha$-Ricci flow.

\begin{definition}
    We say that $(M,g, \alpha,V)$ is an \textit{$\alpha$-Ricci expander}, if there exists a smooth vector field $V$ such that:
    \begin{align*}
        \operatorname{Ric} - \frac{1}{4}\alpha\otimes\alpha + \frac{1}{2}\mathcal{L}_V g &= -\frac{1}{2}g,\\
        d_g^*\alpha + i_{V}\alpha &= 0.
    \end{align*}
     If $V = \nabla f$ for some $f\in C^\infty(M)$, then we call $(M,g, \alpha,f)$ a \textit{gradient $\alpha$-Ricci expander}.
\end{definition}

\begin{example}Any Ricci expander is also an $\alpha$-Ricci expander. 
An example relevant to this paper is given by 
$
(M, g, \alpha, f) = (\mathbb{R}^N, g_{\mathrm{Euc}}, 0, -\tfrac{1}{4}|v|^2),$
where $v$ denotes the standard coordinates on $\mathbb{R}^N$.
\end{example}
\begin{example}[cf. \eqref{eq:flat_expander_model}]\label{example.noncompact invariant expander}
Consider $(M,g,\alpha,f) = (\mathbb{R}^N\times\R, g_{\mathbb{R}^N}+ d\theta^2, \sqrt{2}d\theta,-\frac{1}{4}|v|^2)$ where $v$ denotes the standard coordinates on $\R^N$ and $\theta$ denotes the standard coordinate on $\R$. One can check that this is a gradient $\alpha$-Ricci expander. We write $\phi_t(\theta,v) = (\theta,t^{-\frac{1}{2}}v)$ for the one parameter family of diffeomorphisms generated by $\frac{1}{t}\nabla f$. Then \begin{align*}
    g^*(t) &=t\phi_t^*g= g_{\mathbb{R}^N} + td\theta^2\\
    \alpha^*(t) &=\phi_t^*\alpha= \sqrt{2}d\theta\equiv \alpha^*
\end{align*} is an $\alpha$-Ricci flow on $(0,\infty)$.
Since $\alpha^*(t)$ is spatially constant and $g^*(t)$ is flat, one can view 
$(g^*(t), \alpha^*(t))$ as an $\alpha$-Ricci flow on any quotient of 
$\mathbb{R}^{N+1}$, for example, on $S^1\times\mathbb{R}^N$.
The case relevant to this paper is $N=1$.
\end{example}
\begin{remark}
When converted back to the usual Ricci flow by taking a warped product with $\R$ (cf. section \ref{intro}), the expander $(g^*(t), \alpha^*(t))$ is isometric to $\R^{N}\times \mathbb{H}^2$.
\end{remark}
\begin{remark}
    The behavior of $(g^*(t),\alpha^*(t))$ is that the base metric on $S^1$ expands while the fiber metric on $\R^N$ remains fixed. This resembles the expander on the Sol-geometry in the 3D-Ricci flow \cite[Section 3.3.2]{lott2007long}. This also indicates that with the presence of $\alpha$, the long-time behavior of the $\alpha$-Ricci flow is determined not only by the curvature of $g$, which is flat for all $t$, but also by $\alpha$.
\end{remark}
The following lemma shows that the $\alpha$-Ricci flow on $\mathbb{R}^2$ with $\mathbb{R}^2$-symmetry must be an $\alpha$-expander.
\begin{lemma}\label{l:R2.invariant.solution}
    Let $(\tilde g(t), \tilde \alpha(t))$ be a solution to the $\alpha$-Ricci flow on $\mathbb{R}^2 \times (0,\infty)$ 
which is invariant under translations of $\mathbb{R}^2$. Then either $(\tilde g(t), \tilde \alpha(t))$ 
is isometric to $(g^*(t+c), \alpha^*(t+c))$ for some $c \ge 0$, or it is the static solution 
$(g_{\mathrm{Euc}}, 0)$.
\end{lemma}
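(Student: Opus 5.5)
Translation invariance reduces the flow to an ODE system. In the standard coordinates $(x^1,x^2)$ of $\mathbb{R}^2$, invariance under all translations means that $\tilde g(t)=g_{ij}(t)\,dx^i dx^j$ and $\tilde\alpha(t)=\alpha_i(t)\,dx^i$ have spatially constant coefficients. Then $\tilde g(t)$ is flat, so $\ric\equiv 0$. Moreover $\nabla\tilde\alpha=0$, so $d^*_{\tilde g}\tilde\alpha=0$ and $\partial_t\tilde\alpha=0$. Hence $\tilde\alpha\equiv\alpha_0$ is a constant covector, and the metric equation becomes the linear ODE
\[
\partial_t g_{ij}=\tfrac12(\alpha_0)_i(\alpha_0)_j .
\]
Its solution is $g(t)=g(t_0)+\tfrac12(t-t_0)\,\alpha_0\otimes\alpha_0$ for any fixed $t_0>0$.

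\textbf{Case $\alpha_0=0$.} The metric is a constant flat metric on $\mathbb{R}^2$. After a linear change of coordinates, which is an isometry onto the Euclidean plane, this is the static solution $(g_{\mathrm{Euc}},0)$.

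\textbf{Case $\alpha_0\neq 0$.} We choose linear coordinates adapted to $\alpha_0$. Set $\theta=\langle\alpha_0,x\rangle/\sqrt2$, so that $\tilde\alpha=\sqrt2\,d\theta$, and complete $\theta$ to a linear coordinate system $(\theta,w)$. At time $t_0$ we may write
\[
g(t_0)=A\,d\theta^2+2B\,d\theta\,dw+D\,dw^2
\]
with positive-definite coefficient matrix. Since $\tfrac12\alpha_0\otimes\alpha_0=d\theta^2$, we get
\[
g(t)=(A+t-t_0)\,d\theta^2+2B\,d\theta\,dw+D\,dw^2 .
\]
Put $v=\sqrt D\,(w+B\theta/D)$, which completes the square. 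This linear change leaves $\theta$, and hence $\tilde\alpha$, unchanged. It gives $g(t)=dv^2+(t+c)\,d\theta^2$ with $c=A-B^2/D-t_0$.

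It remains to show $c\ge 0$. Positive definiteness of $g(t)$ for every $t\in(0,\infty)$ forces $t+c>0$ for all $t>0$, hence $c\ge0$. The map $(\theta,v)$ is a time-independent linear diffeomorphism carrying $(\tilde g(t),\tilde\alpha(t))$ to $(g^*(t+c),\alpha^*(t+c))$ with $\alpha^*=\sqrt2\,d\theta$, as in \eqref{eq:flat_expander_model}.

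There is no real obstacle. The only points needing care are that the normalizing coordinate change is independent of $t$, which holds because only the $d\theta^2$ coefficient evolves, and that the constraint $c\ge0$ comes from the solution being defined on all of $(0,\infty)$. If instead the hypothesis means invariance up to isometry, one first notes that transitive isometric translations make $\tilde g(t)$ homogeneous. Combined with translation invariance in fixed coordinates, this again reduces to constant coefficients.
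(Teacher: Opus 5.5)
Your proposal is correct and follows the paper's proof: constant coefficients give a flat metric and a parallel, time-independent $\tilde\alpha$. The metric then moves linearly by $d\theta^2$, an adapted linear change of coordinates puts it in the form $dv^2+(t+c)\,d\theta^2$, and $c\ge0$ follows from positivity on all of $(0,\infty)$. The only difference is cosmetic: you normalize at an arbitrary $t_0$ and complete the square explicitly, whereas the paper chooses coordinates in which $\tilde g(1)$ is already diagonal.
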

\begin{proof}
    Since $(\tilde g(t), \tilde \alpha(t))$ is translation invariant, we have $\tilde g(t)$ is flat for $t\in (0,\infty)$ and $\tilde\alpha(t)$ is parallel with respect to $\tilde g(t)$. By \eqref{alpha_Ricci_flow}, $\tilde \alpha(t)\equiv\tilde\alpha$ is constant in time. If $\tilde\alpha=0$, then integrating \eqref{alpha_Ricci_flow} shows that $\tilde g(t)$ is isometric to $g_{\mathrm{Euc}}$. If $\tilde\alpha\neq0$, we choose coordinates $(\theta,v)$ such that $\tilde g(1)(\partial_v,\partial_\theta)=0$, $\tilde\alpha=\sqrt{2}d\theta$ and $\tilde g(1)=dv^2+\frac{c_0}{2}\tilde\alpha\otimes\tilde\alpha$ for some $c_0>0$. Integrating \eqref{alpha_Ricci_flow}, we get $\tilde  g(t)=dv^2+\frac{t-1+c_0}{2}\tilde\alpha\otimes \tilde \alpha$. Since $\tilde g(t)$ is a metric for $t\in (0,\infty)$, we have $c=c_0-1\ge0$. Thus $\tilde  g(t)=dv^2+\frac{t+c}{2}\tilde\alpha\otimes \tilde \alpha=g^*(t+c)$ and $\tilde \alpha(t)=\alpha^*(t+c)$.
\end{proof}
With this classification in hand, we now show that the blow-down limit of any $\R^2$-invariant solution with nonzero $\alpha$ must converge to the  $\alpha$-expander in Example \ref{example.noncompact invariant expander}.
\begin{proposition}\label{p:canonical_alpha_expander_attractor}
     Let $(\tilde g(t), \tilde \alpha(t))$ be a solution to the $\alpha$-Ricci flow on $\mathbb{R}^2 \times (0,\infty)$ 
that is invariant under translations of $\mathbb{R}^2$ and $\tilde{\alpha}$ is nonzero. Then $(\tilde{g}_s(t),\tilde{\alpha}_s(t))\xrightarrow{C^\infty}(g^*(t),\alpha^*(t))$.
\end{proposition}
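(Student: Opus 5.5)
By Lemma~\ref{l:R2.invariant.solution}, since $\tilde\alpha\neq0$, the flow is isometric to $(g^*(t+c),\alpha^*(t+c))$ for some $c\ge0$. So it suffices to show that the blow-down of this explicit model converges to $(g^*(t),\alpha^*(t))$. In the coordinates $(\theta,v)$ of that lemma we have $\tilde g(t)=dv^2+(t+c)\,d\theta^2$ and $\tilde\alpha=\sqrt2\,d\theta$.

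Fix $s>0$. By definition,
\[
\tilde g_s(t)=s^{-1}\tilde g(st)=s^{-1}dv^2+\Big(t+\tfrac{c}{s}\Big)d\theta^2,\qquad \tilde\alpha_s(t)=\sqrt2\,d\theta.
\]
Let $\Phi_s(\theta,v)=(\theta,\sqrt s\,v)$. Pulling back gives
\[
\Phi_s^*\tilde g_s(t)=dv^2+\Big(t+\tfrac cs\Big)d\theta^2=g^*(t+c/s),\qquad \Phi_s^*\tilde\alpha_s(t)=\sqrt2\,d\theta=\alpha^*.
\]
The 1-form is left unchanged because $\Phi_s$ fixes $\theta$.

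It remains to verify Definition~\ref{d:pointed.cinfty.convergence}. Take the exhaustion of $\R^2$ by balls $U_j$ of radius $j$ about a basepoint $p_\infty$. Set $\phi_{s,j}=\Phi_s|_{U_j}$ and $p_s=\Phi_s(p_\infty)$. On $U_j\times[j^{-1},j]$, the pulled-back metric coefficients differ from those of $g^*(t)$ only in the $d\theta^2$ coefficient, by exactly $c/s$. This difference is constant in space and time, so every $C^k$ norm of $\phi_{s,j}^*\tilde g_s-g^*$ is $O(c/s)\to0$, and $\phi_{s,j}^*\tilde\alpha_s-\alpha^*\equiv0$. This gives the claimed convergence as $s\to\infty$, and in particular along every sequence.

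There is no real obstacle here. The only point needing care is the isometry from Lemma~\ref{l:R2.invariant.solution}: it holds for all $t$ simultaneously, via a fixed coordinate choice made at $t=1$, so composing it with $\Phi_s$ still gives time-independent diffeomorphisms, as the definition requires. The positivity $t+c/s>0$ on $[j^{-1},j]$ is automatic because $c\ge0$.
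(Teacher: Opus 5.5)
Your proposal is correct and follows the paper's proof essentially verbatim: you use Lemma~\ref{l:R2.invariant.solution} to reduce to $(g^*(t+c),\alpha^*(t+c))$, blow down, and pull back by $(\theta,v)\mapsto(\theta,\sqrt{s}\,v)$ to get $g^*(t+c/s)\to g^*(t)$ with $\alpha^*$ unchanged. Your explicit verification of Definition~\ref{d:pointed.cinfty.convergence} and your remark on time-independence of the isometry just spell out details the paper leaves implicit.
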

\begin{proof}
By Lemma~\ref{l:R2.invariant.solution}, there is $c\ge 0$ such that in some coordinates $(\theta,v)$ of $\R^2$ we have \[(\tilde{g}(t),\tilde{\alpha}(t))=(g^*(t+c),\alpha^*(t+c))=(dv^2+\frac{t+c}{2}\alpha^*\otimes \alpha^*,\alpha^*=\sqrt 2d\theta).\] Consider the blow-down solution \[\tilde{g}_s(t) = \frac{1}{s}\tilde{g}(st)=\frac{1}{s}dv^2 + \frac{(st+c)}{2s}\alpha^*\otimes\alpha^*,\quad\tilde{\alpha}_s(t) \equiv \alpha^*.\] Let $\psi_s(\theta,v)=(\theta,s^{\frac{1}{2}}v)$. Then as $s\to\infty$, we have
\[\psi_s^*\tilde g_s(t)=dv^2 + \frac{(st+c)}{2s}\alpha^*\otimes\alpha^*\to g^*(t),\quad \psi_s^*\tilde \alpha_s(t)\equiv\alpha^*.\]This finishes the proof.
\end{proof}

\subsection{Explicit dynamics of model solutions}

In this subsection, we present an elementary proof of Theorem~\ref{t:main.GH_limit} under the additional assumption that $g(0)$ is flat and $\alpha(0)$ is nonzero and parallel with respect to $g(0)$. Under these initial conditions, the solution to the $\alpha$-Ricci flow can be written down explicitly.
The main theorem of this subsection is as follows.
\begin{theorem}[Model case]\label{model.case}
	Let $(g(t),\alpha(t))$ be a solution to the $\alpha$-Ricci flow on $\mathbb{T}^2$ with $g(0)$ flat and $\alpha(0)$ nonzero and parallel with respect to $g(0)$. Then we have the following.
	\begin{enumerate}[label=(\roman*)]
		\item If $[\alpha]$ is rational, then $(M,t^{-1}g(t))\xrightarrow{GH}(S^1,d([\alpha]))$ as $t\to\infty$. Moreover, we have $d_{GH}((M,t^{-1}g(t)),(S^1,d([\alpha])))=O(t^{-1/2})$.
		\item If $[\alpha]$ is irrational, then  $(M,t^{-1}g(t))\xrightarrow{GH}\{\mathrm{pt}\}$ as $t\to\infty$.
	\end{enumerate}
    In either case, on the universal cover, we have $(\tilde g_s(t),\tilde \alpha_s(t))\xrightarrow{C^\infty}(g^*(t),\alpha^*(t))$ as $s\to\infty$.
\end{theorem}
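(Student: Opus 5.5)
Since $g(0)$ is flat and $\alpha(0)$ is parallel, the solution stays in the class of flat metrics with parallel 1-forms, so I will write it down explicitly and then read off both limits. First, with $\alpha$ parallel we have $d^*_g\alpha=0$, so $\alpha(t)\equiv\alpha_0$. Flatness gives $\ric=0$, so the metric equation becomes $\partial_t g=\frac12\alpha_0\otimes\alpha_0$. Hence
\[
g(t)=g_0+\tfrac{t}{2}\alpha_0\otimes\alpha_0 .
\]
This is a constant-coefficient, hence flat, metric on $\R^2/\Lambda$, and $\alpha_0$ is parallel for it. By uniqueness of the flow on a compact manifold, this is the solution.

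Next I choose coordinates on $\tilde M=\R^2$ adapted to $g_0$: $g_0=dv^2+c\,d\theta^2$ with $\alpha_0=\sqrt2\,d\theta$, where $\partial_v$ spans $\ker\alpha_0$. Then $g(t)=dv^2+(c+t)\,d\theta^2$, which is exactly $g^*(t+c)$. The universal cover statement then follows verbatim from Proposition~\ref{p:canonical_alpha_expander_attractor}, since the lifted flow is $\R^2$-translation invariant and $\tilde\alpha\neq0$. This part holds for both rational and irrational classes.

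For the Gromov--Hausdorff limit, rescaling gives $t^{-1}g(t)=t^{-1}dv^2+(1+c/t)\,d\theta^2$ on $\R^2/\Lambda$. Here $\Lambda$ is the deck lattice, and $\sqrt2\,\theta=u$ satisfies $u(\lambda x)-u(x)=[\alpha](\lambda)$ by Lemma~\ref{action.on.level.sets}.

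In the rational case, $\{[\alpha](\lambda):\lambda\in\Lambda\}=\Z\ell$ with $\ell=\ell([\alpha])$, so the $\theta$-projections of $\Lambda$ form $\frac{\ell}{\sqrt2}\Z$. The kernel $\Lambda_0=\Lambda\cap\{\theta=0\}$ is then a rank-one lattice in the $v$-line, with generator $(0,L)$. I will use the map $M\to\R/\frac{\ell}{\sqrt2}\Z$, $(\theta,v)\mapsto\theta$, which is well defined and a Riemannian submersion for $dv^2+d\theta^2$ onto the circle of length $\ell/\sqrt2$. Its fibers are circles of $t^{-1}g(t)$-length $L/\sqrt t$. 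The claimed estimates then follow:
\begin{itemize}
\item The map is $1$-Lipschitz for $t^{-1}g(t)$ up to the factor $\sqrt{1+c/t}$.
\item Any two points in the same fiber are within $L/\sqrt t$ of each other.
\item Horizontal $\theta$-segments realize the distances on the circle, up to a factor $1+O(1/t)$.
\end{itemize}
Together these give $d_{GH}=O(t^{-1/2})$. The only delicate point is the statement that the fiber is a closed circle, and that is precisely $\Lambda_0\cong\Z$, which follows from rank considerations.

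In the irrational case, the values $[\alpha](\Lambda)$ are dense in $\R$, and $\Lambda_0=0$. This is the main obstacle: I must show that the diameter of $(M,t^{-1}g(t))$ tends to $0$. I will fix $\varepsilon>0$. Density gives a finite set of lattice vectors $\lambda_j=(\theta_j,v_j)$ whose $\theta_j$ form an $\varepsilon$-net of a fundamental interval $[0,\theta_*]$, where $\theta_*$ is the $\theta$-period of some fixed lattice vector. Then any point of $\tilde M$ lies within $g(t)/t$-distance about $\sqrt{1+c/t}\,\varepsilon+t^{-1/2}\max_j|v_j|$ of some lattice translate of a bounded fundamental domain. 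More directly, I will bound the distance from any $x$ to the base point $p$ by moving vertically, which costs $O(t^{-1/2})$ times a $t$-independent constant, and horizontally, which costs at most $\varepsilon$ after choosing a suitable lattice translate. Since $M$ is compact, finitely many $\lambda_j$ suffice, so $\limsup_{t\to\infty}\diam(M,t^{-1}g(t))\le C\varepsilon$ for every $\varepsilon>0$. This gives convergence to a point.
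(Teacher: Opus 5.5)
Your proposal is correct. The universal-cover part matches the paper exactly: both apply Proposition~\ref{p:canonical_alpha_expander_attractor} to the translation-invariant lift. Your irrational case uses the same density mechanism as the paper. The paper picks, via Kronecker, a single lattice vector with $\theta$-period $\ell=ra+sb\in[\epsilon/4,3\epsilon/4]$ and covers $M$ by a $C(\epsilon)t^{-1/2}$-neighborhood of a short horizontal segment. Your $\varepsilon$-net version works once the vertical cost includes the bounded $v$-range of a fixed fundamental domain, as in your ``more directly'' sentence.

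The rational case is where you genuinely take a different route. The paper keeps the lattice as $\Z e_1(t)\oplus\Z e_2(t)$ in $t$-dependent coordinates $v=\sqrt{C_0/t}\,\eta$. There $t^{-1}\tilde g(t)$ is only $O(t^{-1/2})$-close to Euclidean, with a cross term $B_0$, because its $\xi$-direction is not $g_0$-orthogonal to $\ker\alpha$. It proves the covering Lemma~\ref{gcd} from a B\'ezout relation $ra+sb=\ell$. It then compares distances along the segment with the circle metric by minimizing over $(m_1,m_2)\in\Z^2$.

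You instead diagonalize $g_0$ against $\ker\alpha_0$, so that $g(t)=g^*(t+c)$ exactly. You then note that $\Lambda_0=\ker([\alpha]|_\Lambda)$ is a rank-one lattice of pure $v$-translations. The projection to $\R/\frac{\ell}{\sqrt2}\Z$ is therefore a Riemannian submersion, up to the factor $\sqrt{1+c/t}$, with fibers of length $L/\sqrt t$, and the $O(t^{-1/2})$ bound follows directly.

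Your argument is shorter and more structural. It exhibits the $\R\times\Z$ quotient that reappears in Theorem~\ref{t:main.GH_limit}(i), using only Lemma~\ref{action.on.level.sets}. The paper's computation instead keeps the periods $a,b$ explicit, which it reuses in its remark on the limiting conformal class, and it handles both cases through one covering lemma.
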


Since $[\alpha(0)]\neq 0$, by Proposition~\ref{p:equivalent definition}, we choose a set of generators $[\gamma_1]$, $[\gamma_2]$ for $\pi_1(M)$ and define $a=\int_{\gamma_1}\alpha$ and $b=\int _{\gamma_2}\alpha$. Moreover, we choose the orientation of $[\gamma_1]$, $[\gamma_2]$ such that $a,b\ge0$.
We choose coordinates $(x,y)$ on the universal cover $\tilde M\cong\R^2$ so that $M\cong\R^2/\langle[\gamma_1],[\gamma_2]\rangle$ where $[\gamma_1]$ corresponds to $(x,y)\mapsto (x+1,y)$ and $[\gamma_2]$ corresponds to $(x,y)\mapsto (x,y+1)$. Let $\pi:\tilde M\to M$ be the covering map. Then we have $\tilde\alpha(0)=adx+bdy$. The solution to the $\alpha$-Ricci flow on $\tilde M$ is
\begin{equation}\label{model.solution.universal.cover}
    \left\{\begin{aligned}
	\tilde g(t)=&\frac{1}{2}a^2tdx^2+abtdxdy+\frac{1}{2}b^2tdy^2+g(0)\\
	\tilde \alpha(t)=&adx+bdy.
\end{aligned}\right.
\end{equation}
Now, the blow-down limit of the universal cover follows from  Proposition~\ref{p:canonical_alpha_expander_attractor}, since \eqref{model.solution.universal.cover} is translation invariant with nonzero $\tilde\alpha$.

To prove Gromov--Hausdorff convergence, we choose coordinates in which $t^{-1}\tilde g(t)$ converges to the Euclidean metric and track the lattice generators. Set $\rho=\sqrt{a^2+b^2}$, $\xi=(ax+by)/\rho$, and $\eta=(-bx+ay)/\rho$, and write $g(0)=A_0d\xi^2+2B_0d\xi d\eta+C_0d\eta^2$. In the coordinates $\theta=\rho\xi/\sqrt2$ and $v=\sqrt{C_0/t}\,\eta$, equation~\eqref{model.solution.universal.cover} gives
\[
\frac{\tilde g(t)}{t}
=\left(1+\frac{2A_0}{\rho^2t}\right)d\theta^2
+\frac{2\sqrt2B_0}{\rho\sqrt{C_0t}}d\theta dv+dv^2,
\qquad \tilde\alpha=\sqrt2\,d\theta.
\]
Thus in this coordinates, we have $|(t^{-1}\tilde g(t),\tilde\alpha)-(d\theta^2+dv^2,\sqrt2\,d\theta)|\le Ct^{-1/2}$. The lattice generators become
\begin{align}
 e_1(t)&=\frac{a}{\sqrt2}\partial_\theta-\frac{b\sqrt{C_0}}{\rho\sqrt t}\partial_v,\label{est1}\\
 e_2(t)&=\frac{b}{\sqrt2}\partial_\theta+\frac{a\sqrt{C_0}}{\rho\sqrt t}\partial_v.\label{est2}
\end{align}
They converge to $a\partial_\theta/\sqrt2$ and $b\partial_\theta/\sqrt2$, respectively, at rate $O(t^{-1/2})$.

\begin{figure}[h]
    \centering
    \resizebox{0.7\textwidth}{!}{\begin{circuitikz}
\tikzstyle{every node}=[font=\fontsize{19.9pt}{25.9pt}\selectfont]
\draw [ color={rgb,255:red,10; green,0; blue,248}, draw opacity=1, line width=0.8pt, -{Stealth[scale=1.5]}, ] (5.375,10.125) -- (5.375,12.375);
\draw [ color={rgb,255:red,10; green,0; blue,248}, draw opacity=1, line width=0.8pt, -{Stealth[scale=1.5]}, ] (5.375,10.125) -- (8,10.125);
\draw [ color={rgb,255:red,10; green,0; blue,248}, draw opacity=1, line width=0.8pt, dashed] (5.375,12.375) -- (8,12.375);
\draw [ color={rgb,255:red,10; green,0; blue,248}, draw opacity=1, line width=0.8pt, dashed] (8,10.125) -- (8,12.375);
\draw [-{Stealth[scale=1.5]}, ] (10.125,11.875) .. controls (10.5,12.75) and (11.5,12.75) .. (12,11.875) ;
\draw [dashed] (8,13.625) -- (4.625,9.125)node[font=\fontsize{18.2pt}{23.7pt}\selectfont, pos=0,left, fill={rgb,255:red,255; green,255; blue,255}, fill opacity=1, text opacity=1, inner xsep=0.080cm, inner ysep=0.085cm, rounded corners=0.020cm]{$(a,b)$};
\draw [ color={rgb,255:red,10; green,0; blue,248}, draw opacity=1, line width=0.8pt, -{Stealth[scale=1.5]}, ] (14.25,9.375) -- (15.25,11.125);
\draw [ color={rgb,255:red,10; green,0; blue,248}, draw opacity=1, line width=0.8pt, -{Stealth[scale=1.5]}, ] (14.25,9.375) -- (16.375,11.875);
\draw [dashed] (17.25,13.375) -- (13.875,8.875);
\node [font=\fontsize{15.2pt}{23.7pt}\selectfont, fill={rgb,255:red,255; green,255; blue,255}, fill opacity=1, text opacity=1, inner xsep=0.080cm, inner ysep=0.085cm, rounded corners=0.020cm, anchor=west] at (2.5,15.375) {$\frac{g(t)}{t}=(adx+bdy)^2/2+\frac{g(0)}{t}$};
\draw [ color={rgb,255:red,10; green,0; blue,248}, draw opacity=1, line width=0.8pt, dashed] (15.25,11.125) -- (17.375,13.625);
\draw [ color={rgb,255:red,10; green,0; blue,248}, draw opacity=1, line width=0.8pt, dashed] (16.375,11.875) -- (17.375,13.625);
\node [font=\fontsize{15.2pt}{23.7pt}\selectfont, fill={rgb,255:red,255; green,255; blue,255}, fill opacity=1, text opacity=1, inner xsep=0.080cm, inner ysep=0.085cm, rounded corners=0.020cm, anchor=west] at (13,15.5) {$\frac{g(t)}{t}\to d\theta^2+dv^2$};
\node [font=\fontsize{19.9pt}{25.9pt}\selectfont, fill={rgb,255:red,255; green,255; blue,255}, fill opacity=1, text opacity=1, inner xsep=0.080cm, inner ysep=0.085cm, rounded corners=0.020cm, anchor=west] at (12.5,10.625) {$e_2(t)$};
\node [font=\fontsize{19.9pt}{25.9pt}\selectfont, fill={rgb,255:red,255; green,255; blue,255}, fill opacity=1, text opacity=1, inner xsep=0.080cm, inner ysep=0.085cm, rounded corners=0.020cm, anchor=east] at (17.375,9.875) {$e_1(t)$};
\node [font=\fontsize{18.2pt}{23.7pt}\selectfont, fill={rgb,255:red,255; green,255; blue,255}, fill opacity=1, text opacity=1, inner xsep=0.080cm, inner ysep=0.085cm, rounded corners=0.020cm, anchor=west] at (15.375,13.625) {$(a,b)$};
\node [font=\fontsize{18.2pt}{23.7pt}\selectfont, fill={rgb,255:red,255; green,255; blue,255}, fill opacity=1, text opacity=1, inner xsep=0.080cm, inner ysep=0.085cm, rounded corners=0.020cm, anchor=west] at (4.25,11.25) {$e_2$};
\node [font=\fontsize{18.2pt}{23.7pt}\selectfont, fill={rgb,255:red,255; green,255; blue,255}, fill opacity=1, text opacity=1, inner xsep=0.080cm, inner ysep=0.085cm, rounded corners=0.020cm] at (6.75,9.625) {$e_1$};
\end{circuitikz}}
    \caption{Change of coordinates normalizing the metric, evolving the generators}
\end{figure}

For $p_1,p_2\in \tilde M\cong\C$, we define $d_t(p_1,p_2):= d_{(M,t^{-1}g(t))}(\pi(p_1),\pi(p_2))$. Then we have 
\[d_t(p_1,p_2)=\min_{m_1,m_2\in\Z}d_{t^{-1}\tilde g(t)}(p_1,p_2+m_1e_1(t)+m_2e_2(t)).\]
We have the following Lemma showing that the fundamental domain of $(M,t^{-1}g(t))$ is contained in a neighborhood of a line segment determined by the periods of $\alpha$.
\begin{lemma}\label{gcd}
	Suppose there exist $\ell>0$, $r,s\in \Z$ such that $ra+sb=\ell$. Then $(M,t^{-1}g(t))$ is contained in a  $Ct^{-1/2}$-neighborhood of $\pi(S_\ell)$ where $C=C(\ell,r,s,a,b,g(0))$ and
	\[S_\ell=\{\frac{c}{\sqrt 2}\partial_\theta:0\le c\le \ell\}\subset \tilde M\cong\R^2.\]
\end{lemma}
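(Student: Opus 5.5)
Since $\pi$ is surjective, it suffices to show that every point of $\tilde M$ is within $d_{t^{-1}\tilde g(t)}$-distance $Ct^{-1/2}$ of some lattice translate of $S_\ell$.

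We use the integer combination $f(t):=r e_1(t)+s e_2(t)$. By \eqref{est1}--\eqref{est2},
\[
f(t)=\frac{ra+sb}{\sqrt2}\partial_\theta+\frac{(-rb+sa)\sqrt{C_0}}{\rho\sqrt t}\partial_v=\frac{\ell}{\sqrt2}\partial_\theta+O(t^{-1/2})\partial_v .
\]
Thus $f(t)$ is a deck translation that, up to an $O(t^{-1/2})$ error, moves a point by exactly the length of $S_\ell$ along $\partial_\theta$. We also need a deck translation that moves points only slightly in the transverse $v$-direction. Choose integers $(m,n)$ with $ms-nr=1$; this is possible since $ra+sb=\ell\neq0$ and we may first divide $r,s$ by their gcd, replacing $\ell$ by a smaller positive number (if anything this improves the conclusion, as $S_{\ell'}\subset S_\ell$ when $\ell'\le\ell$). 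Then $\{f(t),h(t)\}$ with $h(t):=me_1(t)+ne_2(t)$ is again a lattice basis. Subtracting a suitable integer multiple of $f(t)$ from $h(t)$ and passing to the limit, we may arrange $h(t)=\frac{c_1}{\sqrt2}\partial_\theta+\frac{c_2}{\sqrt t}\partial_v$ with $|c_1|\le \ell/2$ and $c_2\neq0$, both constants depending only on $(a,b,r,s,m,n,C_0,\rho)$. Indeed $c_2\ne0$ because the covolume of the lattice in $t^{-1}\tilde g(t)$ is of order $t^{-1/2}$.

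Given a point $p=(\theta,v)$, we first subtract the integer multiple $k h(t)$ with $k=\lfloor v\sqrt t/c_2\rfloor$. This brings $v$ into $[0,|c_2|t^{-1/2})$, but shifts $\theta$ by the possibly large amount $kc_1/\sqrt2$. Next we subtract an integer multiple of $f(t)$ to bring $\theta$ into $[0,\ell/\sqrt2)$.

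\textbf{Main obstacle.} The $O(t^{-1/2})$ $v$-component of $f(t)$ is multiplied by the number of $f$-steps in the second stage, and that number can be of order $|k|$, which is unbounded. To control this, we reduce modulo the lattice in the opposite order. First use $f(t)$ to bring $\theta$ into $[0,\ell/\sqrt2)$ at the starting point; by periodicity we may assume $v$ is bounded by a constant times $t^{-1/2}$ times a fundamental-domain size. More cleanly, we restrict attention to the fundamental parallelogram $\{x f(t)+y h(t):x,y\in[0,1)\}$, which is legitimate since every point of $M$ lifts there. A point $xf+yh$ has $\theta$-coordinate in $[-\ell/(2\sqrt2),\,3\ell/(2\sqrt2)]$ and $v$-coordinate $O(t^{-1/2})$, with constants depending only on the data. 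So the parallelogram lies in a $Ct^{-1/2}$-neighborhood of the segment $\{\frac{c}{\sqrt2}\partial_\theta:-\ell/2\le c\le 3\ell/2\}$.

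The parts of this segment outside $S_\ell$ are handled by translating by $\pm f(t)$. This costs only an extra $O(t^{-1/2})$ because it is a single step. Finally, since $t^{-1}\tilde g(t)$ is uniformly equivalent to $d\theta^2+dv^2$ in the chosen coordinates, with error $O(t^{-1/2})$, coordinate distances and $t^{-1}\tilde g(t)$-distances agree up to a constant factor. This yields the claim with $C=C(\ell,r,s,a,b,g(0))$.
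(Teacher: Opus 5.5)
Your final argument, the fundamental parallelogram of $\{f(t),h(t)\}$ plus a single $\pm f(t)$ translation, is correct, but it follows a different route from the paper's.

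\textbf{Comparison with the paper.} The paper never changes the lattice basis. It works in the parallelogram spanned by $e_1(t),e_2(t)$. Its points $\frac{x}{\sqrt2}\partial_\theta+\frac{y}{\sqrt2}\partial_v$ already satisfy $0\le x\le a+b$ and $|y|=O(t^{-1/2})$. The paper then compares $p$ with $\frac{\{x/\ell\}\ell}{\sqrt2}\partial_\theta+\lfloor x/\ell\rfloor\,(re_1(t)+se_2(t))$, which is a lattice translate of a point of $S_\ell$. Since $0\le\lfloor x/\ell\rfloor\le(a+b)/\ell$, only a bounded number of $f$-steps is used, so the accumulated transverse error is still $O(t^{-1/2})$.

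So the ``main obstacle'' you identify comes only from your first ordering, where you reduce $v$ using an unbounded multiple of $h$. Starting from any fundamental domain of bounded $\theta$-extent removes it, and then neither the gcd reduction nor a B\'ezout complement is needed. Your version costs that extra algebra but gives a fundamental domain adapted to $\ell$ itself, so one $f$-step always suffices. The paper's version is shorter.

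\textbf{A false side claim.} One statement in your write-up is false, though your final argument does not use it: a covolume of order $t^{-1/2}$ does not force $c_2\neq0$. Write $f(t)=\frac{\ell}{\sqrt2}\partial_\theta+\frac{f_2}{\sqrt t}\partial_v$. Then the covolume is $|\ell c_2-c_1f_2|/\sqrt{2t}$, which only gives $\ell c_2\neq c_1 f_2$.

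For a concrete failure, take $a=b=1$, $(r,s)=(3,2)$, $\ell=5$. Every $(m,n)$ with $ms-nr=1$ reduces, under $|c_1|\le\ell/2$, to $h=-(e_1+e_2)=-\sqrt2\,\partial_\theta$, so $c_2=0$.

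The parallelogram argument only needs $\{f,h\}$ to be a basis with $|c_1|\le\ell/2$. You should therefore delete the $c_2\neq0$ claim together with the abandoned first-stage reduction by $kh$. Also drop the phrase ``passing to the limit'', since the formula for $h-kf$ is exact.
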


\begin{proof}
	We consider the fundamental domain $F_t$ of the lattice generated by $e_1(t)$, $e_2(t)$:
	\[F_t=\{c_1e_1(t)+c_2e_2(t):0\le c_1,c_2\le 1\}.\] 
    It suffices to show that for any $p\in F_t$, $d_t(p,S_\ell):=\inf_{0\le c\le \ell}d_t(p,\frac{c}{\sqrt 2}\partial_\theta)\le Ct^{-1/2}$.
	Any point $p\in F_t$ can be written as $p=\frac{x}{\sqrt 2}\partial_\theta+\frac{y}{\sqrt 2}\partial_v$ for some $x,y\in\R$. By \eqref{est1} and \eqref{est2}, we have
	\[0\le x\le a+b,\quad |y|=O(t^{-1/2}).\]
	We write $x=\lfloor x/\ell\rfloor \ell+\{x/\ell\}\ell$ where $\lfloor x/\ell\rfloor$ is the integer part of $x/\ell$ and $\{x/\ell\}$ is the fractional part of $x/\ell$.
	Then we have
    \begin{align*}
	d_t\left(\frac{x}{\sqrt 2}\partial_\theta+\frac{y}{\sqrt 2}\partial_v,S_\ell\right)
	\le&d_t\left(\frac{x}{\sqrt 2}\partial_\theta+\frac{y}{\sqrt 2}\partial_v,
	\frac{\{x/\ell\}\ell}{\sqrt 2}\partial_\theta\right)\\
    \le&d_{t^{-1}\tilde g(t)}\left(
    \frac{x}{\sqrt 2}\partial_\theta+\frac{y}{\sqrt 2}\partial_v
    ,\frac{\{x/\ell\}\ell}{\sqrt 2}\partial_\theta
    +r\left\lfloor\frac{x}{\ell}\right\rfloor e_1(t)
    +s\left\lfloor\frac{x}{\ell}\right\rfloor e_2(t)\right)\\
    \le&(1+Ct^{-1/2})\left|
    \frac{x}{\sqrt 2}\partial_\theta+\frac{y}{\sqrt 2}\partial_v
    -\frac{\{x/\ell\}\ell}{\sqrt 2}\partial_\theta
    -r\left\lfloor\frac{x}{\ell}\right\rfloor e_1(t)
    -s\left\lfloor\frac{x}{\ell}\right\rfloor e_2(t)
    \right|\\
	\le&
	(1+Ct^{-1/2})\left|\left\lfloor \frac{x}{\ell} \right\rfloor\right|
	\left|
	\frac{\ell}{\sqrt{2}}\partial_\theta
	-r\frac{a}{\sqrt{2}}\partial_\theta
	-s\frac{b}{\sqrt{2}}\partial_\theta
	\right|
	+(1+Ct^{-1/2})\left|\frac{y}{\sqrt{2}}\partial_v\right|\\
    &\quad+(1+Ct^{-1/2})
    \left|\left\lfloor \frac{x}{\ell} \right\rfloor\right|
    \left(
    |r|\left|e_1(t)-\frac{a}{\sqrt{2}}\partial_\theta\right|
    +|s|\left|e_2(t)-\frac{b}{\sqrt{2}}\partial_\theta\right|
    \right)\\
	\le&Ct^{-1/2}.
\end{align*}
\end{proof}

We will need the following special case of Kronecker's approximation theorem (see, e.g., \cite[Theorem 7.7]{apostol1990modular}).
\begin{lemma}\label{dense}
  If $\lambda \in \mathbb{R} \setminus \mathbb{Q}$, then for any $y \in [0, 1]$ and any $\delta>0$, there exist $r, s \in \mathbb{Z}$ such that
  \[
    |y - r\lambda - s| < \delta.
  \]
\end{lemma}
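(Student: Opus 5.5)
This is the special case of Kronecker's theorem for one irrational number, and I would prove it with the pigeonhole principle in the style of Dirichlet, followed by a multiples argument. Reducing modulo $1$, the claim is equivalent to saying that the set $\{\{r\lambda\} : r\in\Z\}$ of fractional parts is dense in $[0,1]$. Once that is known, for a given $y\in[0,1]$ we choose $r$ with $\{r\lambda\}$ within $\delta$ of $y$ (in the circle sense) and put $s=\lfloor r\lambda\rfloor$, adjusted by $\pm1$ if needed. Then $r\lambda+s$ agrees with $y$ up to an error less than $\delta$.

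\textbf{Step 1 (an arbitrarily small nonzero fractional part).} Fix $N\in\N$ with $1/N<\delta$. Look at the $N+1$ numbers $\{k\lambda\}$ for $k=0,1,\dots,N$, all lying in $[0,1)$. Divide $[0,1)$ into $N$ half-open intervals of length $1/N$. By pigeonhole, two of these numbers, say for $k_1\neq k_2$, land in the same interval. Hence there are an integer $q=k_1-k_2\neq0$ and an integer $p$ with $|q\lambda-p|<1/N<\delta$. Since $\lambda\notin\Q$ and $q\neq 0$, the quantity $\varepsilon:=q\lambda-p$ is not zero. So $0<|\varepsilon|<\delta$.

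\textbf{Step 2 (covering $[0,1]$ by multiples).} Consider $m\varepsilon=(mq)\lambda-mp$ for $m\in\Z$. These points form an arithmetic progression with step $|\varepsilon|<\delta$ that runs over all of $\R$. Every $y\in[0,1]$ therefore lies within $|\varepsilon|/2<\delta$ of some $m\varepsilon$. Setting $r=mq$ and $s=-mp$ gives $|y-r\lambda-s|<\delta$. Note that Step 2 only needs the progression to be unbounded in both directions; negative $m$ are allowed, so the sign of $\varepsilon$ causes no trouble.

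There is no real obstacle here. The one point to check is the irrationality in Step 1, which guarantees $\varepsilon\neq0$; without it the progression could collapse to a single point. One should also remember that $r,s$ may be any integers, positive or negative. Since the paper cites \cite[Theorem 7.7]{apostol1990modular}, the result could simply be quoted, but the argument above is short and fully self-contained.
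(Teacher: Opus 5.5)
Your proof is correct and is essentially the standard argument behind the result the paper invokes. The paper gives no proof of its own and simply quotes Kronecker's theorem \cite[Theorem 7.7]{apostol1990modular}; your Dirichlet pigeonhole step producing $0<|q\lambda-p|<\delta$, followed by covering $\R$ with the multiples $m(q\lambda-p)$, is the usual proof of that one-dimensional case, and it works for every real $y$. The only slip is in your opening reduction, where $s=\lfloor r\lambda\rfloor$ should read $s=-\lfloor r\lambda\rfloor$. That paragraph is not needed, though, since Step~2 directly yields $r=mq$, $s=-mp$ with $|y-r\lambda-s|\le|\varepsilon|/2<\delta$.
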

We are now ready to prove Theorem~\ref{model.case}.
\begin{proof}[Proof of Theorem~\ref{model.case}]
	First, we show (i). By Proposition~\ref{p:equivalent definition}, $[\alpha]$ is rational if and only if $\frac{a}{b}\in \Q\cup\{\infty\}$. If $a,b>0$ and $\frac{a}{b}=\frac{p}{q}$ where $\gcd (p,q)=1$, then there exist $r,s\in\Z$ such that $r p+s q=1$ and hence $ra+sb=\frac{b}{q}=\ell_{a,b}$. If $a=0$ and $b>0$, then $0a+1b=b= \ell_{a,b}$. If $b=0$ and $a>0$, then $1a+0b=a= \ell_{a,b}$. In any case, we can apply Lemma~\ref{gcd} with $\ell=\ell_{a,b}$ and $r,s,a,b$ as above to get 
	\begin{equation}\label{epsilon}
		(M,t^{-1}g(t))\text{ is contained in a } Ct^{-1/2} \text{ neighborhood of }\pi(S_\ell).
	\end{equation}
    Now, we show that  there is $C=C(\ell,r,s,a,b,g(0))$ such that for any $0\le x,y\le 1$,
\begin{equation}\label{e:metric.on.S.ell}
	\left|d_t\left(
	\frac{x\ell}{\sqrt{2}}\partial_\theta,
	\frac{y\ell}{\sqrt{2}}\partial_\theta
	\right)
	-
	\frac{\ell}{\sqrt{2}}\rho(x,y)\right|\le Ct^{-1/2}.
\end{equation}
	where $\rho$ is the standard metric on $S^1=\R/\Z$ of length 1. 

Let $|\cdot|$ denote the standard Euclidean norm in the $(\theta,v)$ coordinates. For $p_1,p_2\in \R^2$ we define
\begin{align*}
d_E\left(
p_1,
p_2
\right)\
=
\min_{m_1,m_2\in\mathbb Z}
\left|
p_1-p_2-m_1e_1(t)-m_2e_2(t)
\right|.
\end{align*}
Then by \eqref{est1} and \eqref{est2},
\begin{align*}
d_E\left(
\frac{x\ell}{\sqrt{2}}\partial_\theta,
\frac{y\ell}{\sqrt{2}}\partial_\theta
\right)=\min_{m_1,m_2\in \Z}\left|
\frac{(x-y)\ell-m_1a-m_2b}{\sqrt{2}}\partial_\theta
+\frac{(m_1b-m_2a)\sqrt {C_0}}{\rho\sqrt t}\partial_v
\right|.
\end{align*}
Since $\partial_\theta$ and $\partial_v$ are orthogonal, for any $m_1,m_2\in\mathbb Z$, we have
$$
\left|
\frac{(x-y)\ell-m_1a-m_2b}{\sqrt{2}}\partial_\theta
+\frac{(m_1b-m_2a)\sqrt {C_0}}{\rho\sqrt t}\partial_v
\right|
\ge
\left|
\frac{(x-y)\ell-m_1a-m_2b}{\sqrt{2}}
\right|.
$$
Hence
$$
d_E\left(
\frac{x\ell}{\sqrt{2}}\partial_\theta,
\frac{y\ell}{\sqrt{2}}\partial_\theta
\right)
\ge
\frac{1}{\sqrt{2}}
\min_{m_1,m_2\in\mathbb Z}
\left|(x-y)\ell-m_1a-m_2b\right|=\frac{\ell}{\sqrt 2}\rho(x,y).
$$
On the other hand, let $n_1,n_2\in\mathbb Z$ be such that
$$
\left|(x-y)\ell-n_1a-n_2b\right|
=
\min_{m_1,m_2\in\mathbb Z}
\left|(x-y)\ell-m_1a-m_2b\right|=\ell\rho(x,y).
$$
Such $(n_1,n_2)$ can be chosen from $(0,0)$ or $\pm (r,s)$.
Then
\begin{align*}
d_E\left(
\frac{x\ell}{\sqrt{2}}\partial_\theta,
\frac{y\ell}{\sqrt{2}}\partial_\theta
\right)
&\le
\left|
\frac{(x-y)\ell-n_1a-n_2b}{\sqrt{2}}\partial_\theta
+\frac{(n_1b-n_2a)\sqrt {C_0}}{\rho\sqrt t}\partial_v
\right|\\
&\le
\frac{\ell}{\sqrt{2}}\rho(x,y)
+Ct^{-1/2}.
\end{align*}
Therefore,
$$
	\left|d_E\left(
	\frac{x\ell}{\sqrt{2}}\partial_\theta,
	\frac{y\ell}{\sqrt{2}}\partial_\theta
	\right)
	-
	\frac{\ell}{\sqrt{2}}\rho(x,y)\right|\le Ct^{-1/2}.
$$
Since $\left|\frac{\tilde g(t)}{t}-(d\theta^2+dv^2)\right|\le Ct^{-1/2}$, the corresponding distance functions converge uniformly on the relevant fundamental domains. Hence
$$
\left|d_t\left(
\frac{x\ell}{\sqrt{2}}\partial_\theta,
\frac{y\ell}{\sqrt{2}}\partial_\theta
\right)
-
\frac{\ell}{\sqrt{2}}\rho(x,y)\right|\le Ct^{-1/2}.
$$

	Thus, we see that $(\pi(S_\ell),t^{-1}g(t))$ converges to the metric space $(S^1,d([\alpha]))$. Combining (\ref{epsilon}) and \eqref{e:metric.on.S.ell}, we get \[d_{GH}((M,t^{-1}g(t)), (S^1,d([\alpha])))=O(t^{-1/2}).\]
	
	Next, we show (ii). Since $\frac{a}{b}\in \R\setminus\Q$, we have $b>0$. For any $0<\epsilon\le2b$ we apply Lemma~\ref{dense} with $\lambda=\frac{a}{b}$, $y=\frac{\epsilon}{2b}$, $\delta=\frac{\epsilon}{4b}$ to obtain $r,s\in \Z$ so that $|\frac{\epsilon}{2b}-r\frac{a}{b}-s|<\frac{\epsilon}{4b}$. Hence $|\frac{\epsilon}{2} -ra-sb|<\epsilon/4$. We apply Lemma~\ref{gcd} with $\ell=ra+sb$ where $\frac{1}{4} \epsilon\le \ell\le \frac{3}{4}\epsilon$. Then we get $(M,t^{-1}g(t))$ is contained in a $C(\epsilon)t^{-1/2}$-neighborhood of $\pi(S_\ell) \subset\pi(S_{3\epsilon/4})$. Since $\diam(\pi(S_{3\epsilon/4}))\le \sqrt{1+\frac{2A_0}{\rho^2t}}\frac{3\epsilon}{4\sqrt 2}$, for any $\epsilon>0$ there exists $t_0=t_0(\epsilon)>0$ so that for $t>t_0$, $\diam(M,t^{-1}g(t)) \le \epsilon$. Thus we have $(M,t^{-1}g(t))$ converges to a point in the Gromov--Hausdorff topology.
\end{proof}

\section{Geometric quantity estimates for the \texorpdfstring{$\alpha$}{alpha}-Ricci flow on \texorpdfstring{$\mathbb{T}^2$}{T2}}\label{s:geometric quantity}
To prove Theorem~\ref{t:main.GH_limit} with general initial data, we present several estimates for solutions to the $\alpha$-Ricci flow on the 2-torus, which will be used later. When $\alpha$ is exact, these estimates were previously obtained in \cite{list2008evolution} and \cite{lott2014ricci}. Since most of the proofs do not rely on the exactness of $\alpha$, the arguments for a general $\alpha$ follow the same idea. Consequently, we only sketch the proofs here.
\begin{theorem}[$\alpha$-norm estimate \cite{guo2025alpha}]\label{alpha.norm.estimate}
    Let $(M^n,g(t),\alpha(t))$ be a solution to the $\alpha$-Ricci flow. Then
    \[\frac{d}{dt}|\alpha|^2=\Delta |\alpha|^2-2|\nabla\alpha|^2-\frac{1}{2}|\alpha|^4.\]
    If $M$ is closed, then with $c_0=\sup_{M \times \{0\}} |\alpha|^2$ we have
    \[
\sup_{M \times \{t\}} |\alpha|^2 \le \ \frac{c_0}{1+\tfrac{c_0t}{2}}.\]
\end{theorem}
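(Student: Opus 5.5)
The proof has two parts: an evolution equation for $|\alpha|^2$, computed in local coordinates with the time derivative of the inverse metric taken into account, and then the maximum principle applied to the resulting reaction--diffusion inequality.

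\textbf{Evolution equation.} Write $|\alpha|^2=g^{ij}\alpha_i\alpha_j$. From \eqref{alpha_Ricci_flow},
\[
\partial_t g^{ij}=2R^{ij}-\tfrac12\alpha^i\alpha^j ,
\]
so the metric variation contributes
\[
2\ric(\alpha^\sharp,\alpha^\sharp)-\tfrac12|\alpha|^4 .
\]
Since $\alpha$ is closed, the one-form part $\partial_t\alpha=-dd^*\alpha=-\Delta_H\alpha$ equals minus the Hodge Laplacian. The Bochner--Weitzenb\"ock formula gives
\[
\Delta_H\alpha=-\Delta\alpha+\ric(\alpha^\sharp,\cdot),
\]
where $\Delta$ is the rough Laplacian. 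Hence $\partial_t\alpha=\Delta\alpha-\ric(\alpha^\sharp,\cdot)$, and this contributes
\[
2\langle\alpha,\Delta\alpha\rangle-2\ric(\alpha^\sharp,\alpha^\sharp)
\]
to $\partial_t|\alpha|^2$. The two Ricci terms cancel; keeping track of this cancellation is the only delicate point. Finally, $\Delta|\alpha|^2=2\langle\alpha,\Delta\alpha\rangle+2|\nabla\alpha|^2$, which gives
\[
\partial_t|\alpha|^2=\Delta|\alpha|^2-2|\nabla\alpha|^2-\tfrac12|\alpha|^4 .
\]

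\textbf{Maximum principle.} Dropping the gradient term gives
\[
\partial_t|\alpha|^2\le\Delta|\alpha|^2-\tfrac12|\alpha|^4 .
\]
Let $\phi(t)=\dfrac{c_0}{1+c_0t/2}$, which solves the ODE $\phi'=-\tfrac12\phi^2$ with $\phi(0)=c_0$. On a closed manifold, the scalar maximum principle applies: either compare $|\alpha|^2$ with $\phi$ directly, or track $m(t)=\max_M|\alpha|^2$ and use Hamilton's trick to obtain $m'\le-\tfrac12m^2$ in the sense of forward difference quotients. Either route yields $|\alpha|^2\le\phi(t)$.

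If $c_0=0$, then $\alpha$ vanishes identically for all time and the bound is trivial. The main obstacle is the curvature cancellation in the first part; the maximum-principle step is routine.
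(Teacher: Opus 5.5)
Your proposal is correct and follows essentially the same route as the paper. The paper likewise combines the variation of $g^{-1}$, which produces $2\ric(\alpha,\alpha)-\frac12|\alpha|^4$, with the Bochner formula for the closed form $\alpha$ so that the Ricci terms cancel, and then applies the maximum principle against the ODE solution $c_0/(1+c_0t/2)$. The only cosmetic difference is that you apply the Weitzenb\"ock identity to $\Delta_H\alpha$ and then use the product rule, whereas the paper quotes the Bochner formula directly for $\frac12\Delta|\alpha|^2$.
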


\begin{proof}The proof follows the  arguments in \cite[Lemma 3.2, Lemma 5.6]{list2008evolution} and \cite[(2.15), (2.19)]{lott2014ricci}.
    We recall the Bochner formula for a closed 1-form:
    \begin{align*}
        \frac{1}{2}\Delta |\alpha|^2=&-\langle \Delta_d\alpha,\alpha\rangle+|\nabla\alpha|^2+\ric(\alpha,\alpha)\\
        =&-\langle dd^*_g\alpha,\alpha\rangle+|\nabla\alpha|^2+\ric(\alpha,\alpha).
    \end{align*}
    Consequently, we have
    \begin{align*}
	\frac{d}{dt}|\alpha|^2=&-2\langle dd^*_g\alpha,\alpha\rangle+2\ric(\alpha,\alpha)-\frac{1}{2}|\alpha|^4\\
	=&\Delta|\alpha|^2-2|\nabla\alpha|^2-\frac{1}{2}|\alpha|^4.
\end{align*}
    Applying the maximum principle when $M$ is closed gives
    \[
\sup_{M \times \{t\}} |\alpha|^2 \le \ \frac{c_0}{1+\tfrac{c_0t}{2}}.\]
\end{proof}

\begin{theorem}
    \label{scalar.curvature.monotonicity}
    Let $(M^n,g(t),\alpha(t))$ be a solution to the $\alpha$-Ricci flow on $[0,T]$. We define $R^{ \alpha}=R-\frac{1}{4}|\alpha|^2$ and $\ric^{\alpha}=\ric -\frac{1}{4}\alpha\otimes \alpha$. Then we have
    \[
\frac{\partial  R^{ \alpha}}{\partial t} = \Delta  R^{ \alpha} + 2|{\mathrm{Ric}}^{\alpha}|^2+\frac{1}{2}(d^*_g\alpha)^2.
\]
    If $M$ is closed, then for any $t\in (0,T]$
\[
\inf_{M\times\{t\}} R^{\alpha}\ge -\frac{n}{2t}.
\]
\end{theorem}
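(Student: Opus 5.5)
We compute the evolution of $R$ and of $|\alpha|^2$ separately and combine them, mirroring the List flow computation in \cite{list2008evolution}. Write $h:=\partial_t g=-2\ric^\alpha$. The standard variation formula gives
\[
\partial_t R=-\Delta(\operatorname{tr}h)+\operatorname{div}\operatorname{div}h-\langle h,\ric\rangle .
\]
We have $\operatorname{tr}h=-2R^\alpha$ and $\operatorname{div}\ric=\tfrac12 dR$. For the 1-form term, with $\alpha$ closed, $\operatorname{div}(\alpha\otimes\alpha)=(\operatorname{div}\alpha)\alpha+\nabla_\alpha\alpha=-(d^*\alpha)\alpha+\tfrac12 d|\alpha|^2$. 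Then $\operatorname{div}\operatorname{div}(\alpha\otimes\alpha)=-\operatorname{div}((d^*\alpha)\alpha)+\tfrac12\Delta|\alpha|^2=(d^*\alpha)^2-\langle d d^*\alpha,\alpha\rangle+\tfrac12\Delta|\alpha|^2$. Plugging these in and collecting terms, we aim for
\[
\partial_t R=\Delta R+2\langle\ric^\alpha,\ric\rangle-\tfrac12(d^*\alpha)^2+\tfrac12\langle dd^*\alpha,\alpha\rangle+\tfrac14\Delta|\alpha|^2 .
\]
This is the step requiring the most bookkeeping, and it is where we expect the main obstacle to lie: the signs of the $d^*\alpha$ terms must come out so that they eventually combine into $+\tfrac12(d^*\alpha)^2$.

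Next we use Theorem~\ref{alpha.norm.estimate}. From its first line, $\partial_t|\alpha|^2=-2\langle dd^*\alpha,\alpha\rangle+2\ric(\alpha,\alpha)-\tfrac12|\alpha|^4$, and the Bochner identity lets us trade $\langle dd^*\alpha,\alpha\rangle$ against $\Delta|\alpha|^2$, $|\nabla\alpha|^2$ and $\ric(\alpha,\alpha)$. Subtracting $\tfrac14\partial_t|\alpha|^2$, the goal is to verify
\[
2\langle\ric^\alpha,\ric\rangle-\tfrac12\ric(\alpha,\alpha)+\tfrac18|\alpha|^4=2|\ric^\alpha|^2,
\]
together with the reorganization of the $dd^*\alpha$ and $d^*\alpha$ terms into $\Delta R^\alpha+\tfrac12(d^*\alpha)^2$. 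The algebraic identity holds because $2|\ric^\alpha|^2=2\langle\ric^\alpha,\ric\rangle-\tfrac12\ric^\alpha(\alpha,\alpha)$ and $\ric^\alpha(\alpha,\alpha)=\ric(\alpha,\alpha)-\tfrac14|\alpha|^4$. As a sanity check, in the exact case $\alpha=du$ the result should match List's formula, with $d^*\alpha=-\Delta u$; this fixes the coefficient $\tfrac12$.

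For the lower bound, we use $|\ric^\alpha|^2\ge\tfrac1n(\operatorname{tr}\ric^\alpha)^2=\tfrac1n(R^\alpha)^2$ and drop the nonnegative term $\tfrac12(d^*\alpha)^2$. This gives
\[
\partial_t R^\alpha\ge\Delta R^\alpha+\tfrac2n(R^\alpha)^2 .
\]
On a closed manifold, the ODE comparison $\phi'=\tfrac2n\phi^2$ with initial value $\inf R^\alpha(0)$ shows that $\phi(t)=\phi_0/(1-\tfrac{2}{n}\phi_0 t)\ge-\tfrac{n}{2t}$. Applying the maximum principle for $\min_M R^\alpha$, or comparing with $-\tfrac{n}{2(t-\epsilon)}$ on $[\epsilon,T]$ and letting $\epsilon\to0$, gives $\inf R^\alpha(t)\ge -\tfrac{n}{2t}$.
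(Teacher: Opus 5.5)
The maximum-principle half of your proposal (Cauchy--Schwarz $|\ric^\alpha|^2\ge\frac1n(R^\alpha)^2$, dropping $\frac12(d^*\alpha)^2$, then ODE comparison) is exactly the paper's argument. For the evolution equation itself, the paper simply cites \cite{guo2025alpha}.

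\textbf{The gap is in your derivation of that equation, at the step you flagged.} The formula you ``aim for'' is wrong: every $\alpha$-derivative term has the reversed sign. Your ingredients are correct, but with $h=-2\ric+\frac12\alpha\otimes\alpha$ they give
\[
-\Delta(\operatorname{tr}h)=2\Delta R-\tfrac12\Delta|\alpha|^2,
\qquad
\operatorname{div}\operatorname{div}h=-\Delta R+\tfrac12(d^*\alpha)^2-\tfrac12\langle dd^*\alpha,\alpha\rangle+\tfrac14\Delta|\alpha|^2.
\]
Hence
\[
\partial_tR=\Delta R+2\langle\ric^\alpha,\ric\rangle+\tfrac12(d^*\alpha)^2-\tfrac12\langle dd^*\alpha,\alpha\rangle-\tfrac14\Delta|\alpha|^2 .
\]
Your signs are what you would get by inserting $-\frac12\alpha\otimes\alpha$ into the first two terms of the variation formula.

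This is not harmless. The two formulas differ by $|\nabla\alpha|^2+\ric(\alpha,\alpha)-(d^*\alpha)^2$. That quantity is a divergence, but it is not zero pointwise: for example, where $g$ is flat and $\alpha=d(x^1x^2)$ it equals $2$.

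Starting from your formula and subtracting $\frac14\partial_t|\alpha|^2$ gives $\Delta R^\alpha+2|\ric^\alpha|^2-\frac12(d^*\alpha)^2+\frac12\Delta|\alpha|^2+\langle dd^*\alpha,\alpha\rangle$. Bochner turns this into
\[
\Delta R^\alpha+2|\ric^\alpha|^2-\tfrac12(d^*\alpha)^2+|\nabla\alpha|^2+\ric(\alpha,\alpha),
\]
which is not the target, and no pointwise rearrangement fixes it. The plan is also internally inconsistent. Your algebraic identity uses up exactly the $-\frac12\ric(\alpha,\alpha)$ that comes from $-\frac14\partial_t|\alpha|^2$. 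So any use of Bochner, which produces another $\ric(\alpha,\alpha)$ and a $|\nabla\alpha|^2$, would spoil it.

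\textbf{With the corrected formula, no Bochner identity is needed.}
\begin{itemize}
\item The term $-\frac14\partial_t|\alpha|^2=\frac12\langle dd^*\alpha,\alpha\rangle-\frac12\ric(\alpha,\alpha)+\frac18|\alpha|^4$ cancels the $\langle dd^*\alpha,\alpha\rangle$ term exactly.
\item $\Delta R-\frac14\Delta|\alpha|^2=\Delta R^\alpha$.
\item The $+\frac12(d^*\alpha)^2$ term is already present with the correct sign.
\item Your identity $2\langle\ric^\alpha,\ric\rangle-\frac12\ric(\alpha,\alpha)+\frac18|\alpha|^4=2|\ric^\alpha|^2$ finishes the computation.
\end{itemize}
After this correction, the rest of your proposal goes through.
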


\begin{proof}
The evolution equation for $ R^{ \alpha}$ on $M$ is proved in \cite{guo2025alpha}, see also \cite[(2.16)]{lott2014ricci}. If $M$ is closed, then using the Cauchy--Schwarz inequality $|\ric^\alpha|^2\ge \frac{1}{n}(R^\alpha)^2$ and the maximum principle, we get $
\inf_{M\times\{t\}} R^{\alpha}\ge -\frac{n}{2t}$.
\end{proof}

We now discuss some estimates for the 2-dimensional torus. Similar results can be derived for surfaces of genus  at least 2 as in \cite{lott2014ricci}.

\begin{theorem}[Volume estimate]\label{volume.estimate}
	Let $(g(t),\alpha(t))$ be a solution to the $\alpha$-Ricci flow on $M=\mathbb{T}^2$ and set $c_0=\sup_{M\times\{0\}} |\alpha|^2$. Letting $V(t)=\vol(M,g(t))$, we have
	\[V(0)\le V(t) \le V(0)\sqrt{1+\frac{c_0t}{2}}.\]    
\end{theorem}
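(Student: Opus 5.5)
The plan is to differentiate the volume and combine the Gauss--Bonnet theorem with the $\alpha$-norm estimate of Theorem~\ref{alpha.norm.estimate}. Under \eqref{alpha_Ricci_flow} the volume form evolves by
\[
\frac{\partial}{\partial t}d\mu_{g(t)}=\frac12\operatorname{tr}_g\Big(-2\ric+\tfrac12\alpha\otimes\alpha\Big)d\mu=\Big(-R+\tfrac14|\alpha|^2\Big)d\mu .
\]
Integrating over $M=\mathbb T^2$ and using Gauss--Bonnet, $\int_M R\,d\mu=4\pi\chi(\mathbb T^2)=0$, we get
\[
V'(t)=\frac14\int_M|\alpha|^2_{g(t)}\,d\mu_{g(t)}\ \ge 0 .
\]
This gives the lower bound $V(t)\ge V(0)$ immediately.

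For the upper bound we bound the integrand pointwise. Theorem~\ref{alpha.norm.estimate} gives $|\alpha|^2\le \frac{c_0}{1+c_0t/2}$, so
\[
V'(t)\le \frac{c_0}{4(1+c_0t/2)}V(t),
\qquad\text{that is,}\qquad
\frac{d}{dt}\log V\le \frac12\cdot\frac{c_0/2}{1+c_0t/2}.
\]
Integrating from $0$ to $t$ yields $\log V(t)-\log V(0)\le \frac12\log(1+c_0t/2)$, which is the claimed bound $V(t)\le V(0)\sqrt{1+c_0t/2}$.

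There is essentially no obstacle. The only points to check are that the trace of the metric evolution is $-2R+\frac12|\alpha|^2$, so the factor in $\partial_t d\mu$ is $\frac14|\alpha|^2$, and that Gauss--Bonnet applies at each time, since $M$ stays a closed torus. Both are routine.
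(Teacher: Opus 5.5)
Your proposal is correct and follows essentially the same route as the paper: compute $V'(t)=\int_M(-R+\tfrac14|\alpha|^2)\,d\mu$, use Gauss--Bonnet to reduce it to $\tfrac14\int_M|\alpha|^2\,d\mu\ge0$, then apply Theorem~\ref{alpha.norm.estimate} to get $V'\le \frac{c_0}{4+2c_0t}V$ and integrate. Both the trace factor and the integration constant check out.
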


\begin{proof}
    By a direct computation and the Gauss-Bonnet theorem, we have
    \[V'(t)=\int (-R+\frac{1}{4}|\alpha|^2)dV_{g(t)}=\frac{1}{4}\int|\alpha|^2dV_{g(t)}.\]
    Applying Theorem~\ref{alpha.norm.estimate}, we get
    \[0\le V'(t)\le \frac{c_0}{4+2c_0t}V(t).\]
    Integration gives the claimed bounds.
\end{proof}

\begin{theorem}[Type III estimate]\label{type.iii.estimate}
    Let $(g(t),\alpha(t))$ be a solution to the $\alpha$-Ricci flow on $M=\mathbb{T}^2$. Then the flow exists for all $t\in [0,\infty)$ and there exists $C>0$ such that
	\[|R(x,t)|\le \frac{C}{1+t}.\]    
\end{theorem}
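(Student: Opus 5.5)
Following Lott--Sesum's treatment of the List flow on the torus, I will use a potential for the trace-free part of $R^\alpha$ together with Hamilton's $h$-function trick for surface Ricci flow. Write $R^\alpha=R-\frac14|\alpha|^2$. Gauss--Bonnet gives $\int_M R\,dV=0$, so the average of $R^\alpha$ is $r(t):=-\frac{1}{4V}\int|\alpha|^2dV$, and $|r(t)|\le \frac{C}{1+t}$ by Theorem~\ref{alpha.norm.estimate}.

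The first step is to solve $\Delta_{g(t)} f = R^\alpha - r$ with $\int f\,dV=0$. Since $\partial_t g=-2\ric^\alpha$ and in dimension two $\ric=\frac R2 g$, the metric evolution reads $\partial_t g=-Rg+\frac12\alpha\otimes\alpha$. This is not conformal, so the extra term $\frac12\alpha\otimes\alpha$ must be tracked. Next, compute the evolution of $h:=\Delta f+|\nabla f|^2$, or of the Lott--Sesum analogue built from $R^\alpha$ and $\nabla f$. The aim is a differential inequality of the form
\[
\partial_t h\le \Delta h-2\langle\nabla f,\nabla h\rangle - h^2\cdot(\text{const}) + r h + E,
\]
where the error $E$ involves $|\alpha|^2$, $|\nabla\alpha|^2$, $\langle\alpha,\nabla f\rangle^2$ and $(d^*\alpha)^2$. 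I will use the good term $-2|\nabla\alpha|^2$ from the evolution of $|\alpha|^2$ (Theorem~\ref{alpha.norm.estimate}) to absorb the $\alpha$-errors, possibly by adding a multiple of $|\alpha|^2$ to $h$. Combined with $|\alpha|^2\le C/(1+t)$, the maximum principle should give $R^\alpha\le h+r\le C/(1+t)$ up to decaying errors. The lower bound $R^\alpha\ge -\frac{1}{t}$ (with $n=2$) comes directly from Theorem~\ref{scalar.curvature.monotonicity}; for $t\le 1$, continuity from the initial data suffices. Adding back $\frac14|\alpha|^2\le C/(1+t)$ then gives $|R|\le C/(1+t)$ on the existence interval.

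For long-time existence I will argue by contradiction from the local theory of \cite{guo2025alpha}: if $T<\infty$ is maximal, then $|\mathrm{Rm}|$ must blow up, but in dimension two $|\mathrm{Rm}|\sim|R|$, which stays bounded by the above. Higher derivative (Shi-type) estimates for $(g,\alpha)$ under bounded curvature, plus the bound on $|\alpha|$, allow the flow to be extended. So $T=\infty$, and the bound holds on $[0,\infty)$.

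The main obstacle is the upper bound: handling the non-conformal term $\frac12\alpha\otimes\alpha$ in the evolution of the potential $f$ and of $h$. In particular, controlling $\partial_t\Delta$ introduces $\langle\alpha\otimes\alpha,\nabla^2 f\rangle$ and $\nabla\alpha$ terms. If these resist absorption, my fallback is an alternative: bound $R^\alpha$ from above via the evolution of $R^\alpha$ itself, using the integral bound $\int|R^\alpha|dV\le C$ and a parabolic Moser iteration on a scale $\sim\sqrt t$. This would exploit the volume lower bound (Theorem~\ref{volume.estimate}), which rules out local collapse on the universal cover at that scale.
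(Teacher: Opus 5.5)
Your proposal has a genuine gap: the upper bound on $R$ is never established, and your long-time existence step depends on it too. The lower bound $R^\alpha\ge -1/t$ and the logic ``two-sided bound on $R$ $\Rightarrow$ no finite-time singularity'' are fine. The problem is that Hamilton's $h$-trick needs the flow to be conformal. Only then does the potential defined by $\Delta f=R-r$ \emph{also} satisfy a local equation $\partial_t f=\Delta f+rf$, which is what makes $h=\Delta f+|\nabla f|^2$ obey a closed parabolic equation. For $\partial_t g=-Rg+\frac12\alpha\otimes\alpha$, differentiate $\Delta_{g(t)}f=R^\alpha-r$ and use the evolution of $R^\alpha$. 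You get
\[
\Delta\bigl(\partial_t f-R^\alpha\bigr)=-\tfrac{R}{4}|\alpha|^2+\tfrac{1}{8}|\alpha|^4+\tfrac{1}{2}(d^*_g\alpha)^2+\tfrac{1}{2}\nabla^2 f(\alpha,\alpha)-\tfrac{1}{2}(d^*_g\alpha)\langle\alpha,\nabla f\rangle+rR-r',
\]
so $\partial_t f=R^\alpha+\psi$, where $\psi$ is defined only through a Poisson equation. The evolution of $h$ then contains $\nabla\psi$ and $\nabla^2\psi$. These are nonlocal, so they cannot be absorbed pointwise by $-2|\nabla\alpha|^2$ or by adding multiples of $|\alpha|^2$. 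Controlling them by elliptic estimates would need uniform geometry that a collapsing torus does not have.

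Even for pure Ricci flow on $\mathbb T^2$, the identity is $\partial_t h=\Delta h-2|M|^2$, with $M$ the trace-free Hessian of $f$. There is no $-ch^2$ term, so it gives only $R\le C$, not $R\le C/(1+t)$. The decay needs further weighted quantities such as $t|\nabla f|^2+f^2$, each of which again uses $\partial_t f=\Delta f$.

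Your fallback does not close the gap either. In two dimensions $\int_M|R^\alpha|\,dV$ is scale invariant, so it is the critical quantity for the quadratic reaction term in $\partial_t R^\alpha=\Delta R^\alpha+2|\ric^\alpha|^2+\frac12(d^*_g\alpha)^2$. A bound on it cannot rule out concentration, since a cigar-shaped region carries total curvature $4\pi$, and Moser iteration needs a supercritical norm.

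Also, $V(t)\ge V(0)$ gives no lower bound on the volumes of $\sqrt t$-balls in the universal cover. Balls of radius $\sqrt t$ in $M$ have area at most $V(t)\lesssim\sqrt t$. In the paper, both the noncollapsing of the universal cover (Theorem~\ref{develop}) and the diameter bound (Theorem~\ref{diameter.bound}) are derived \emph{from} the Type III estimate, so using them here is circular.

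What is missing is the blow-up argument the paper adapts from Lott--Sesum:
\begin{enumerate}
\item Pick $(x_i,t_i)$ maximizing $t(T_i-t)|R|$ and rescale by $Q_i=|R(x_i,t_i)|$.
\item Get an injectivity radius bound as in Lott--Sesum and pass to a complete eternal limit. In the limit $\alpha_\infty=0$, since $Q_i^{-1}|\alpha|^2\le 2/(Q_it_i+t)\to0$, and $R_\infty\ge0$, from $R^\alpha\ge-1/t$.
\item The curvature maximum is attained, so Hamilton's classification gives the cigar, with $\int R\,dV=4\pi$.
\item Gauss--Bonnet gives the contradiction. Since $R\ge-1/t$ and $V(t)\le C(1+\sqrt t)$, you have $\int_{M\setminus S_i}R\,dV\ge -V(t_i)/t_i\to0$ outside a region $S_i$ with $\int_{S_i}R\,dV\ge 3\pi$, contradicting $\int_M R\,dV=0$.
\end{enumerate}
Finite-time singularities are excluded in the same way: the blow-up limit is a $\kappa$-solution with $\alpha_\infty=0$, hence a round $S^2$, which is impossible on $\mathbb T^2$.

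The integral quantity that actually matters is $\int_M|R|\,dV\le 2V(t)/t\to0$. Turning that into a pointwise bound requires this compactness-and-classification step, not Moser iteration.
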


\begin{proof}
    The proof closely follows that of \cite[Proposition 2.10]{lott2014ricci}, using Theorem~\ref{alpha.norm.estimate} in place of \cite[(2.15), (2.19)]{lott2014ricci}. We provide a brief sketch here.

    We first show that there is no finite-time singularity. Suppose, for the sake of contradiction, that there is $T<\infty$ such that the flow is singular at $T$. Then by \cite{guo2025alpha}, we have $\lim_{t\to T^-}\sup_{M\times\{t\}}|R|=\infty$. We can choose a sequence $(x_i,t_i)\in M\times [0,T)$ such that $Q_i:=|R(x_i,t_i)|=\sup _{M\times[0,t_i]}|R(x,t)|\to\infty$. Define the rescaled solutions $g_i(t)=Q_i g(t_i+Q_i^{-1}t)$ and $\alpha_i(t)=\alpha(t_i+Q_i^{-1}t)$. By \cite{guo2025alpha}, $(M\times[-Q_it_i,0],(x_i,0),g_i(t),\alpha_i(t))$ converges in the pointed Cheeger-Gromov topology to $(M_\infty\times(-\infty,0],(m_\infty,0),g_\infty(t),\alpha_\infty(t))$, where $(M_\infty,g_{\infty}(t))$ is a $\kappa$-solution in the sense of \cite[\S 11.2]{perelman2002entropy} and $\alpha_\infty=0$. Hence the limit must be $(S^2,g_{S^2})$. This contradicts the assumption that $M=\mathbb{T}^2$.
    
    It remains to show that $|R(x,t)|\le Ct^{-1}$, for $t\geq 1$.
    Suppose otherwise. Then \[\limsup_{t\to\infty}\sup_{M\times\{t\}}t|R|=\infty.\] Choose a sequence $\{T_i\}_{i=1}^\infty$ with $\lim_{i\to\infty}T_i=\infty$, and let $(x_i,t_i)\in M\times[0,T_i]$ be such that 
    \[t_i(T_i-t_i)|R(x_i,t_i)|=\sup_{M\times [0,T_i]}t(T_i-t)|R(x,t)|.\]
    Setting $Q_i=|R(x_i,t_i)|$, we consider the rescaled solutions
    \[g_i(t)=Q_i g(t_i+Q_i^{-1}t),\quad \alpha_i(t)=\alpha(t_i+Q_i^{-1}t).\]
    Then $Q_it_i\to\infty$, and $(g_i(t),\alpha_i(t))$ satisfies the $\alpha$-Ricci flow on a time interval $(A_i,\Omega_i)$, where $\lim_{i\to\infty}A_i=-\infty$ and $\lim_{i\to\infty}\Omega_i=\infty$. This can be seen as in \cite[Chapter 8, 5.1]{chow2004ricci}.
    Since $|R_i(x_i,0)|=1$, we can redefine $A_i,\Omega_i$ such that there exists a sequence $\{\gamma_i\}_{i=1}^\infty$, with $\lim_{i\to\infty}\gamma_i=1$ satisfying
    \[\sup_{M\times [A_i,\Omega_i]}|R_i(x,t)|\le \gamma_i.\]
    Following the idea of the proof in \cite[Lemma 2.11]{lott2014ricci}, and applying Theorem~\ref{alpha.norm.estimate},
    there is an $\epsilon>0$ such that $\inj _{(M,g_i(0))}(x_i)\ge \epsilon$ for all $i\in\N$.
    Taking the pointed limit of $(M\times[A_i,\Omega_i],(x_i,0),g_i(t),\alpha_i(t))$, we obtain a limit $(M_\infty\times\R,~(m_\infty,0),g_\infty(t),\alpha_\infty(t))$. By Theorem~\ref{alpha.norm.estimate},
    \[
    \sup_M|\alpha_i(t)|_{g_i(t)}^2
    =Q_i^{-1}\sup_M|\alpha(t_i+Q_i^{-1}t)|_{g(t_i+Q_i^{-1}t)}^2
    \le\frac{2}{Q_it_i+t}\longrightarrow0
    \]
    uniformly on compact time intervals, so $\alpha_\infty=0$. Theorem~\ref{scalar.curvature.monotonicity} also gives $R_i(t)\ge-\frac{1}{Q_it_i+t}$, hence $R_\infty\ge0$. Moreover, we have
    \[|R_i(x_i,0)|=1.\]
    In conclusion, $(M_\infty,g_\infty(t))$ is a complete eternal solution to the Ricci flow with uniformly bounded curvature. Since the spacetime maximum of $R$ is achieved at some point, \cite[Theorem 9.4]{chow2006hamilton} implies that $(M_\infty,g_\infty(t))$ is the cigar soliton. 
    In particular 
    \[\int_{M_\infty}R_{g_\infty(0)}dV_{g_\infty(0)}=4\pi.\]
    For sufficiently large $i$, there exists a subset $S_i\subset M$ such that
    \[\int_{S_i}R_{g(t_i)}dV_{g(t_i)}\ge 3\pi.\]
    On the other hand, by Theorem \ref{scalar.curvature.monotonicity}
    \[\int_{M\setminus S_i}R_{g(t_i)}dV_{g(t_i)}\ge -\frac{1}{t_i} \vol_{g(t_i)}(M\setminus S_i)\ge -\frac{V(t_i)}{t_i}\ge -\frac{C(1+\sqrt{t_i})}{t_i}\to0.\]
    Thus, for sufficiently large $i$, we obtain
    \[\int_MR_{g(t_i)}dV_{g(t_i)}\ge 2\pi,\]
    which contradicts the fact that $\chi(M)= 0$.
\end{proof}

\begin{theorem}[Systolic inequality]\label{growth.of.length.of.shortest.curve}
    Let $(g(t),\alpha(t))$ be a solution to the $\alpha$-Ricci flow on $M=\mathbb{T}^2$.
	For a homology class $\sigma\in H_1(M,\Z)$, let $[\alpha](\sigma):=\int_\sigma\alpha$. Define $L_\sigma(t):=\inf\{L(\gamma,g(t)):[\gamma]\in \sigma\}$ and
    $L(t)=\inf \{L_\sigma(t):0\neq \sigma\in H_1(M,\Z)\}$. Then the Dini derivatives of $L_\sigma(t)$ satisfy
    \begin{equation}\label{D^+L_sigma.upper.bound}
        D^+L_\sigma(t):=\limsup_{\Delta t\to0+}\frac{L_\sigma(t+\Delta t)-L_\sigma(t)}{\Delta t}\le\int_0^L-\frac{1}{2}R+\frac{1}{4}(\alpha(\gamma'))^2ds
    \end{equation}
    and
    \[D_-L_\sigma(t):=\liminf_{\Delta t\to0+}\frac{L_\sigma(t)-L_\sigma(t-\Delta t)}{\Delta t}\ge\int_0^L-\frac{1}{2}R+\frac{1}{4}(\alpha(\gamma'))^2ds.\]
    Furthermore, $L_\sigma$ satisfies the following growth bound:
    \begin{equation}\label{growth.L_sigma}
        L_\sigma(t)^2\ge \frac{[\alpha](\sigma)^2}{2}t+L_\sigma(0)^2.
    \end{equation}
    In particular, $L(t)$ is nondecreasing in $t$.
\end{theorem}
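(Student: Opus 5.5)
We follow the argument for curve-shortening along Ricci-type flows, as in \cite{lott2014ricci}. First we derive the Dini derivative bounds. Fix $t$ and let $\gamma$ be a closed $g(t)$-geodesic realizing $L_\sigma(t)$, parametrized by arclength on $[0,L]$. It exists by compactness of $M$ and standard minimization in a free homotopy class; on $\mathbb T^2$ homology and free homotopy classes agree. Since $L_\sigma(t+\Delta t)\le L(\gamma,g(t+\Delta t))$, we have
\[
D^+L_\sigma(t)\le \frac{d}{dt}L(\gamma,g(t))=\int_0^L \frac12\,\partial_tg(\gamma',\gamma')\,ds=\int_0^L\Big(-\ric(\gamma',\gamma')+\frac14\alpha(\gamma')^2\Big)ds .
\]
In dimension two $\ric=\frac R2 g$, which gives \eqref{D^+L_sigma.upper.bound}. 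For the lower bound on $D_-$ we use $L_\sigma(t-\Delta t)\le L(\gamma,g(t-\Delta t))$ with the same $\gamma$, and the same derivative computation. The flow is smooth on $[0,\infty)$ by Theorem~\ref{type.iii.estimate}, so the difference quotients converge uniformly.

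Next comes the growth bound, which is the main step. The obstacle is the curvature term $-\frac12\int_\gamma R\,ds$, which has no sign. We cannot simply discard it; instead we use the lower bound on $R^\alpha$. By Theorem~\ref{scalar.curvature.monotonicity} with $n=2$, $R\ge \frac14|\alpha|^2-\frac1t$. This alone is too lossy, since it gives a $-\frac{L}{2t}$ error and the $|\alpha|^2$ term has the wrong sign.

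I would try instead to exploit the geodesic/minimizing property more efficiently, via the first variation in the normal direction and the Gauss--Bonnet type identity along $\gamma$. The natural alternative is to avoid $R$ entirely by working in the integrated form. We have $[\alpha](\sigma)=\int_\gamma\alpha(\gamma')ds$, so Cauchy--Schwarz gives
\[
\int_0^L\alpha(\gamma')^2ds\ge \frac{[\alpha](\sigma)^2}{L}.
\]
Thus, if the curvature contribution were nonnegative, we would get $D^+(L_\sigma^2)\ge \frac{[\alpha](\sigma)^2}{2}$ in the appropriate (lower Dini) sense, and integrating would give \eqref{growth.L_sigma}. To handle $R$ I would first try the following. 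On a torus the minimizing geodesic in a nontrivial class lifts to a line in the universal cover, and the cylinder cover $\tilde M/\langle\sigma\rangle$ contains $\gamma$ as a closed minimizing geodesic. Integrating $R$ against the second variation $\int(|\phi'|^2-\frac R2\phi^2)\ge0$ with $\phi\equiv1$ gives $\int_\gamma R\,ds\le 0$ exactly. So the curvature term contributes a nonnegative amount, $-\frac12\int_\gamma R\ge0$. This requires stability of $\gamma$ as a minimizer among nearby curves in the class, which holds.

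Combining, we use the $D_-$ inequality, which is the one controlling growth from below: $D_-L_\sigma\ge \frac{[\alpha](\sigma)^2}{4L_\sigma}$. Hence $D_-(L_\sigma^2)\ge \frac{[\alpha](\sigma)^2}{2}$. Since $L_\sigma$ is locally Lipschitz in $t$ (the metrics are uniformly equivalent on compact time intervals), a lower Dini derivative bound integrates to give \eqref{growth.L_sigma}. Finally, $L(t)$ is an infimum over $\sigma$ of nondecreasing functions. It is attained by finitely many classes on compact time intervals, so $L(t)$ is nondecreasing.
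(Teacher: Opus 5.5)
Your proposal is correct and follows essentially the same route as the paper. You bound $L_\sigma(t\pm\Delta t)$ by the length of a fixed minimizing closed geodesic $\gamma$ at the perturbed time to get both Dini bounds, use stability of $\gamma$ (second variation with $\phi\equiv1$) to get $\int_\gamma R\,ds\le0$, apply Cauchy--Schwarz to obtain $D_-L_\sigma\ge [\alpha](\sigma)^2/(4L_\sigma)$, and integrate; the exploratory detour through the lower bound on $R^\alpha$ is unnecessary and can be dropped.
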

\begin{proof}
    Let $\gamma$ be a closed geodesic representing the homology class $\sigma$ such that $L(\gamma,g(t))=L_\sigma(t)=:L$. Bounding $L_{\sigma}(t+\Delta t)$ by the length of $\gamma$ at time $t+\Delta t$, we obtain
    \[\limsup_{\Delta t\to0+}\frac{L_\sigma(t+\Delta t)-L_\sigma(t)}{\Delta t}\le\frac{1}{2}\int_0^L\frac{\partial g}{\partial t}(\gamma',\gamma')ds=\int_0^L-\frac{1}{2}R+\frac{1}{4}(\alpha(\gamma'))^2ds.\]
    Similarly, bounding $L_{\sigma}(t-\Delta t)$ by the length of $\gamma$ at time $t-\Delta t$ gives
    \[D_-L_\sigma(t):=\liminf_{\Delta t\to0+}\frac{L_\sigma(t)-L_\sigma(t-\Delta t)}{\Delta t}\ge\int_0^L-\frac{1}{2}R+\frac{1}{4}(\alpha(\gamma'))^2ds.\]
    Applying the Cauchy-Schwarz inequality gives
	\[|[\alpha](\sigma)|=\left|\int_\gamma\alpha\right|=\left|\int_0^{L} \alpha(\gamma')ds\right|\le L_\sigma(t)^{1/2}(\int \alpha(\gamma')^2ds)^{1/2}.\]
    Because $\gamma$ is a stable closed geodesic, it follows that
    \[\int_{\gamma}R(\gamma(s))ds\le 0.\] Combining the above, we deduce that
	\[D_-L_\sigma(t)\ge \int_0^L-\frac{1}{2}R+\frac{1}{4}(\alpha(\gamma'))^2ds\ge \frac{[\alpha](\sigma)^2}{4}L_\sigma(t)^{-1}.\]
    Solving this differential inequality yields
	\[L_\sigma(t)^2\ge\frac{[\alpha](\sigma)^2}{2}t+L_\sigma(0)^2.\]
    In particular, $L_\sigma(t)$ is non-decreasing in $t$. Since $L(t)=\inf_{\sigma\neq0}L_\sigma(t)$, we conclude that $L(t)$ is also non-decreasing.
\end{proof}
\begin{theorem}[Diameter bound]\label{diameter.bound}
	Let $(g(t),\alpha(t))$ be a solution to the $\alpha$-Ricci flow on $M=\mathbb{T}^2$. Then the diameter satisfies
	\[\diam(M,g(t)) \le C(\sqrt{t}+1).\]
\end{theorem}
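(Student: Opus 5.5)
The bound will come from comparing the area of $(M,g(t))$ with a systole-type quantity, using the scale-invariant bounds already available: curvature is $O(1/t)$ by Theorem~\ref{type.iii.estimate}, and the volume grows at most like $\sqrt t$ by Theorem~\ref{volume.estimate}. First I reduce to $t\ge1$; on $[0,1]$ the diameter is bounded by compactness and smoothness of the flow. For $t\ge 1$ I pass to the rescaled metric $h=t^{-1}g(t)$. For $h$ we have $|K_h|=\tfrac12|R_h|\le C$ by Theorem~\ref{type.iii.estimate}, and $\vol(M,h)=t^{-1}V(t)\le C t^{-1/2}\le C$ by Theorem~\ref{volume.estimate}. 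So it suffices to show $\diam(M,h)\le C$ uniformly in $t$.

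For a flat torus, diameter can be large while area stays small, for example a long thin torus. The Gauss--Bonnet/stable geodesic structure and the fact that all nontrivial curves have length at least $L(t)$ must therefore be used. The plan is a covering argument.

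Take a minimizing geodesic $\gamma$ in $(M,h)$ of length $D=\diam(M,h)$, and place points $x_1,\dots,x_N$ on $\gamma$ spaced at $h$-distance $2r$ apart, with $N\approx D/(2r)$. The balls $B_h(x_j,r)$ are then disjoint. By curvature comparison with $|K_h|\le C$, each ball of radius $r\le r_0(C)$ lifts to the universal cover with area at least $c r^2$. The obstacle is that on $M$ itself the ball area may be small if the injectivity radius is tiny. I handle this in two steps.

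First, the systole growth bound $L_\sigma(t)^2\ge L_\sigma(0)^2$ (Theorem~\ref{growth.of.length.of.shortest.curve}) only gives $\operatorname{sys}(g(t))\ge c$, that is, $\operatorname{sys}(h)\ge c t^{-1/2}$. By Cheeger--Gromov--Taylor type estimates with $|K_h|\le C$, this yields $\vol_h B_h(x,r)\ge c\, r\cdot \operatorname{inj}_h(x)\gtrsim r\, t^{-1/2}$. Summing over the disjoint balls gives
\[
N\, r\, t^{-1/2}\lesssim \vol(M,h)\lesssim t^{-1/2},
\]
so $N\lesssim 1/r$ and hence $D\lesssim 1$. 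This matches the expected $\sqrt{t}$ scaling and is exactly what is needed.

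The key step is the volume lower bound $\vol_h(B_h(x,r))\ge c\, r\,\operatorname{inj}_h(x)$ for $r\le r_0$. I would prove it in the universal cover. The ball $B_h(x,r)$ contains the image of a thin tube of width $\sim\operatorname{sys}(h)$ around a segment of length $r$, and this tube injects into $M$ as long as its width is below the systole. To justify this, I argue that a short geodesic loop through a point of the tube has length at least $\operatorname{sys}(h)$, since on a torus every geodesic loop is homotopically nontrivial when curvature is bounded and loops are short. The conjugate radius is at least $\pi/\sqrt C$, so loops shorter than it are essential.

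With the lower bound in hand, the volume comparison completes the estimate, with $C$ depending only on $g(0)$ and $\alpha(0)$. I expect this injectivity/volume step to be the main technical point.
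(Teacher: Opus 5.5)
Your overall architecture is sound: rescale to $h=t^{-1}g(t)$, pack a minimizing geodesic of length $D$ with disjoint balls, and play $\vol(M,h)\lesssim t^{-1/2}$ against $\operatorname{sys}(h)\gtrsim t^{-1/2}$. It amounts to re-deriving the inequality $\diam(M,g(t))\le C\,V(t)/L(t)$ that the paper imports from \cite[Claim 2.15]{lott2014ricci}.

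However, the step you single out as the main technical point rests on a false statement. A conjugate-radius bound $\pi/\sqrt C$ only makes $\exp_x$ a local diffeomorphism on that ball; it does not make short geodesic loops essential. Tori with $|K|\le 1$ can carry arbitrarily short contractible closed geodesics. For example, one can attach a unit-size bubble to a flat torus through a neck with profile $r(z)=\epsilon\cosh z$, for which $-1\le K<0$; the neck circle has length $2\pi\epsilon$ and bounds the bubble. There $\operatorname{inj}\sim\epsilon$ while the systole stays of order one, so $\operatorname{inj}_h\gtrsim\operatorname{sys}(h)$ fails. Balls of fixed radius centered on the neck also have area $O(\epsilon)$ even in the universal cover, so your claim that such balls have area $\ge cr^2$ upstairs is unjustified too. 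Separately, non-injectivity of your tube produces a broken loop made of two normal segments and a piece of the core segment, not a geodesic loop, and that loop is short only if the core segment is minimizing. So bounded curvature and small volume, used the way you use them, do not close the argument.

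The missing ingredient is the one the paper's proof rests on: the total curvature tends to zero, $V(t)\sup_M|R(\cdot,t)|\to0$. In your normalization this reads $\sup_M|K_h|\,\vol(M,h)\le Ct^{-1/2}$, and combined with Gauss--Bonnet it excludes short contractible loops. If $\operatorname{inj}_h(x)<\pi/\sqrt C$, Klingenberg's lemma gives a geodesic loop at $x$ of length $2\operatorname{inj}_h(x)$ formed by two minimizing geodesics, hence simple. If it were null-homotopic, it would bound an embedded disk $D\subset M$ with $\int_D K_h\,dA=2\pi-\epsilon_x>\pi$, where $\epsilon_x\in(-\pi,\pi)$ is the exterior angle at $x$. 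This is impossible once $\sup_M|K_h|\,\vol(M,h)<\pi$.

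Hence, for $t\ge T_0$, using $L(t)\ge L(0)$ from Theorem~\ref{growth.of.length.of.shortest.curve}, you get $\operatorname{inj}_h\ge\min\{\pi/\sqrt C,\tfrac12\operatorname{sys}(h)\}\ge i_0:=\tfrac12L(0)\,t^{-1/2}$. Then no tubes are needed. Roughly $D/(2i_0)$ disjoint balls of radius $i_0$ centered on the minimizing geodesic are embedded disks of area $\ge c\,i_0^2$ by G\"unther's comparison ($K_h\le C$). This gives $D\,i_0\lesssim\vol(M,h)\lesssim t^{-1/2}$ and so $D\le C$; for $t\le T_0$ use compactness. Your opening remark that Gauss--Bonnet must be used is right, but it never actually enters your argument.
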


\begin{proof}
    Combining Theorem~\ref{volume.estimate} and Theorem~\ref{type.iii.estimate} yields
	\[\lim_{t\to\infty}V(t)\sup_{x\in M} |R(x,t)|=0.\]
	It then follows from \cite[Claim 2.15]{lott2014ricci} that \[\diam(M,g(t))\le CL(t)^{-1}V(t)\le CL(0)^{-1}V(t)\le C(\sqrt{t}+1).\]
\end{proof}

\begin{proof}[Proof of Theorem~\ref{t:2d_estimates}]
    The desired conclusion follows directly from combining Theorem~\ref{volume.estimate}, Theorem~\ref{type.iii.estimate}, and Theorem~\ref{diameter.bound}.
\end{proof}

\section{Collapsing of the 2D torus}\label{s:collapsing torus}
It follows from Theorem~\ref{volume.estimate}, Theorem~\ref{type.iii.estimate}, and Theorem~\ref{diameter.bound} that for each fixed \(t\), the sequence of manifolds \(\{(M,g_s(t))\}_{s\in(0,\infty)}\) collapses with bounded curvature and diameter. Consequently, the injectivity radius of \((M,g_s(t))\) tends to zero as \(s\to\infty\). Therefore, the standard compactness theorem for the \(\alpha\)-Ricci flow established in \cite{guo2025alpha} is not directly applicable. To extract a subsequential limit as \(s\to\infty\), one is naturally led to consider the Cheeger--Fukaya--Gromov collapsing theory \cite{cheeger1992nilpotent}. 

In the context of the Ricci flow, Lott \cite[Theorem 5.12]{lott2007long} proved a compactness theorem without an injectivity radius lower bound, in which the limits are Ricci flows on Riemannian groupoids. An analogous compactness theorem for the \(\alpha\)-Ricci flow can be established in a manner parallel to \cite{lott2007long}. For the purposes of the present work, however, it is more convenient to work on the universal cover. Readers interested in the groupoid perspective are referred to \cite{guo2025alpha}.
\subsection{Collapsing of the time slices}
First, we consider the convergence of the sequence $\{(M,g_{s_i}(t))\}_{i=1}^\infty$ for a fixed $t$.  Without loss of generality, we set $t=1$. 
We have the following developing theorem:
\begin{theorem}\label{develop}
Let $(M_i,g_i)$ be a sequence of $2$-tori  satisfying, for each $i$,
\begin{itemize}
    \item[(a)] $|\operatorname{Rm}(g_i)|\le K$,
    \item[(b)] $\vol (M_i,g_i)\to0$,
    \item[(c)] $\operatorname{diam}(M_i,g_i)\leq 1$.
\end{itemize}
Assume that there exists a subsequence, still denoted by $(M_i, g_i)$, such that
\begin{align*}
   \begin{tikzcd}[ampersand replacement=\&]
         \& (M_i,g_i) \arrow[r, "\text{GH}"] 
         \& (X, d)
\end{tikzcd}.
\end{align*}
Then:
\begin{enumerate}[label=(\roman*)]
    \item $(X,d)$ is either $S^1$ with diameter bounded by $1$ or a point $\{\mathrm{pt}\}$.
    \item The universal cover is noncollapsing, in particular, up to a subsequence, we have
    \begin{align*}
        \begin{tikzcd}[ampersand replacement=\&]
         \& (\tilde{M_i},\tilde{g_i}, p_i) \arrow[r, "\text{pt } C^{1,\alpha}"] 
         \& (\mathbb{R}^2, g_\infty, p)
    \end{tikzcd}
    \end{align*}
\end{enumerate}
for some $C^{1,\alpha}$ metric $g_\infty$ on $\R^2$.
\end{theorem}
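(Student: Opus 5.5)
Part (i) follows from Cheeger--Fukaya--Gromov collapsing theory \cite{cheeger1992nilpotent,fukaya1988boundary}. Under (a)--(c), a subsequence of $(M_i,g_i)$ converges to a compact Alexandrov-type space $X$ with $\dim X<2$, since the volume goes to zero. For large $i$, each $M_i$ carries an $N$-structure whose orbits are the fibers of an almost Riemannian submersion onto $X$ away from its singular set. With $\dim X\in\{0,1\}$, $X$ is a compact one-dimensional space or a point. A one-dimensional limit of surfaces with bounded curvature is a circle or a segment, and here we invoke the classification of bounded-curvature collapsing limits of $\mathbb{T}^2$ from \cite{katz2020torus}. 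A segment would force the fibration to have a Klein-bottle-type (non-orientable, singular-end) structure, which is incompatible with $\mathbb{T}^2$. Hence $X$ is $S^1$ or $\{\mathrm{pt}\}$. The diameter bound passes to the limit because GH convergence preserves diameter, so $\diam X\le 1$.

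For (ii), the plan is to show that the universal covers have a uniform lower injectivity-radius bound and then apply Cheeger--Gromov $C^{1,\alpha}$ compactness. By (a) and the Rauch comparison theorem, the conjugate radius of $(\tilde M_i,\tilde g_i)$ is at least $\pi/\sqrt K$. It therefore suffices to rule out short geodesic loops at every point of $\tilde M_i$. On a simply connected surface with $K$-bounded curvature, a geodesic loop of length below a fixed $\epsilon(K)$ would lie in a ball where $\exp_p$ is a local diffeomorphism. Such a loop would lift to the tangent space as a closed path, which is impossible for a short geodesic loop since its lift is a nonclosed segment. This is Cheeger's lemma in local form. More robustly, I would use the Cheeger--Fukaya--Gromov statement directly. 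Near each $p_i$, the local fundamental group (the image of short loops in $\pi_1(B(p_i,\rho),p_i)$ generated via the pseudo-group of $\exp_{p_i}^{-1}$ on the tangent ball) is virtually nilpotent and consists of loops that are homotopically nontrivial in $M_i$. This is the key injectivity claim: since $M_i=\mathbb{T}^2$ is aspherical and the $N$-structure orbits are circles or all of $\mathbb{T}^2$ (tori), the orbit inclusion is $\pi_1$-injective. The orbits are essential circles, because a null-homotopic orbit circle would bound a disk and the orbit would then be a contractible injective-radius-small region contradicting the bounded-curvature local structure. Consequently, the local fundamental group injects into $\pi_1(M_i)$. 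Lifting to $\tilde M_i$ then kills all such short loops, giving $\inj_{\tilde g_i}\ge \iota(K)>0$.

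With $|\operatorname{Rm}|\le K$ and $\inj\ge\iota$ on the complete surfaces $\tilde M_i$, Cheeger--Gromov compactness gives pointed $C^{1,\alpha}$ subconvergence to a complete $C^{1,\alpha}$ surface $(M_\infty,g_\infty,p)$. The limit is diffeomorphic to $\mathbb{R}^2$. Each $\tilde M_i\cong\mathbb{R}^2$, and the limit is a noncompact simply connected surface, since $\pi_1(M_i)$ acts cocompactly with displacements tending to be arbitrarily small relative to scale. Noncompactness also follows from the infinite deck group orbits of unbounded extent. A noncompact, simply connected, complete surface is diffeomorphic to $\mathbb{R}^2$.

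The main obstacle is the injectivity of local fundamental groups into $\pi_1(M_i)$: justifying that short geodesic loops in $M_i$ are never null-homotopic. I would first try the direct argument that any null-homotopic loop of length $<\epsilon(K)$ lifts to a closed loop in $\tilde M_i$ lying in a ball of radius $<\pi/(2\sqrt K)$, where the exponential map is injective, contradicting its being a geodesic loop.
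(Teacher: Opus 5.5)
Your part (i) matches the paper's argument: Fukaya's circle/segment/point trichotomy, with Katz's classification excluding the segment. Your reduction of (ii) to excluding short geodesic loops in $\tilde M_i$, via the conjugate radius bound $\pi/\sqrt K$ and Klingenberg's lemma, is also correct.

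The gap is in actually excluding those loops. Your first argument and the ``direct argument'' at the end do not work. With only $|\operatorname{Rm}|\le K$, the map $\exp_p$ on the ball of radius $\pi/\sqrt K$ is merely a local diffeomorphism, so a short loop need not lift to a closed path in $T_p\tilde M_i$. Moreover, injectivity of $\exp$ on balls of radius $<\pi/(2\sqrt K)$ in $\tilde M_i$ is precisely the injectivity-radius bound you are trying to prove.

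The underlying claim is in fact false for general simply connected surfaces. Take a surface of revolution $dr^2+f(r)^2d\theta^2$ with a neck $f(r)=\epsilon\cosh r$ that flares out to unit size and is then capped off. It is simply connected, has curvature bounded independently of $\epsilon$, and contains a closed geodesic of length $2\pi\epsilon$, which by Klingenberg's long homotopy lemma can only be contracted through long loops. So simple connectivity plus a curvature bound cannot suffice. What is needed is a reason why null-homotopies of short loops in $\tilde M_i$ can be carried out through short loops.

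Your ``more robust'' CFG route assumes exactly this. The reason you give for the orbits being essential (the bounded-curvature local structure) is wrong: the neck circle above is a null-homotopic orbit of a bounded-curvature collapsed region. On $\mathbb{T}^2$, essentiality holds only for topological reasons, such as the homotopy exact sequence of the fibration $M_i\to S^1$, or the impossibility of a nonsingular circle foliation of a disk.

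More seriously, you pass from ``the orbit inclusion is $\pi_1$-injective'' to ``every geodesic loop of length $<2\delta$ at every point of $M_i$ is essential''. That step requires identifying the local pseudogroup at each point with a neighborhood in $\pi_1$ of the orbit, compatibly with the maps into $\pi_1(M_i)$. Your ``consequently'' skips this, and it is the entire content.

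The paper supplies it through a controlled-contraction property:
\begin{itemize}
\item In the circle case, the CFG invariant metric $h_i$ with $e^{-\epsilon}g_i\le h_i\le e^{\epsilon}g_i$ lifts, after removing the flat connection term, to $dx^2+f_i(x)^2dy^2$ on $\tilde M_i\cong\R^2$. Every based loop then contracts, linearly in $y$ and then in $x$, through loops of $\tilde g_i$-length at most $e^{\epsilon}$ times its own.
\item In the point case, one rescales by $\diam(M_i,g_i)^{-2}$ and lands either in the circle case or near a flat torus.
\end{itemize}
A geodesic loop of length $2r<2\delta$ in $\tilde M_i$ would then contract through loops shorter than $2\pi/\sqrt K$, contradicting Klingenberg's long homotopy lemma.

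The same property is what proves the limit is simply connected. Your remark about small displacements does not do this, since pointed limits of simply connected surfaces need not be simply connected: long capped unit cylinders converge to $S^1\times\R$.
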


\begin{proof}
To prove (i), we apply
\cite[Proposition 11.5]{fukaya1990hausdorff} to see that $(X,d)$
can only be a circle, an interval, or a point.
By \cite[Theorem 4.1]{katz2020torus}, the interval case is ruled out.
The diameter estimate follows directly from
Gromov--Hausdorff convergence.

To prove (ii), we first establish a controlled contraction property
on the universal covers. Fix $\epsilon\in(0,1/2)$.
We claim that 
for all sufficiently large $i$, every loop
$\gamma$ in $\tilde M_i$, based at a point $q$, admits a based
contraction $\{\gamma_s\}_{s\in[0,1]}$ satisfying $\gamma_0=\gamma$, $\gamma_1(t)=q$ for all $t\in [0,1]$ and
\begin{equation}\label{e:controlled.loop.contraction}
    L_{\tilde g_i}(\gamma_s)
    \le e^\epsilon L_{\tilde g_i}(\gamma)
    \qquad\text{for every }s\in[0,1].
\end{equation}

Suppose first that $X$ is a circle. By the \cite{cheeger1992nilpotent}, for sufficiently
large $i$ there is an invariant metric $h_i$ on the circle
fibration $M_i\to S^1$ satisfying $e^{-\epsilon}g_i\le h_i\le e^\epsilon g_i$.
After lifting to the universal cover, choosing arclength on the
lifted base, and removing the connection term, we obtain global
coordinates $(x,y)\in\mathbb R^2$ in which $\tilde h_i=dx^2+f_i(x)^2dy^2$.
Write $q=(x_0,y_0)$. First contract the $y$-coordinate of $\gamma$
linearly toward $y_0$, keeping its $x$-coordinate fixed. Then
contract the remaining $x$-coordinate linearly toward $x_0$.
Both stages fix $q$ and do not increase $\tilde h_i$-length.
Consequently,
\[
    L_{\tilde g_i}(\gamma_s)
    \le e^{\epsilon/2}L_{\tilde h_i}(\gamma_s)
    \le e^{\epsilon/2}L_{\tilde h_i}(\gamma)
    \le e^\epsilon L_{\tilde g_i}(\gamma),
\]
which proves \eqref{e:controlled.loop.contraction}.

If $X$ is a point, let $d_i=\operatorname{diam}(M_i,g_i)\to0$
and normalize the metrics by $\hat g_i=d_i^{-2}g_i$.
Then
\[
    \operatorname{diam}(M_i,\hat g_i)=1,
    \qquad
    |\operatorname{Rm}(\hat g_i)|\le Kd_i^2\longrightarrow0.
\]
After passing to a subsequence, $(M_i,\hat g_i)\xrightarrow{GH}(S^1,d)$ or $(M_i,\hat g_i)\xrightarrow{C^{1,\alpha}}(\mathbb{T}^2,g_\infty)$.
In the former case, the preceding invariant-metric approximation
applies. In the latter case, $g_\infty$ is flat, so $\hat g_i$ is bilipschitz to a fixed flat metric $g_\infty$ for sufficiently
large $i$. Thus $\hat g_i$ satisfies \eqref{e:controlled.loop.contraction}. Rescaling preserves the length ratio in
\eqref{e:controlled.loop.contraction}, and thus the claim is proved.

We first use \eqref{e:controlled.loop.contraction} to obtain a
uniform injectivity-radius bound. Choose $\delta>0$ such that $e^\epsilon\delta<\frac{\pi}{\sqrt K}$.
Suppose that $\operatorname{inj}_{\tilde g_i}(q)=r<\delta$
for some $q\in\tilde M_i$. The curvature bound implies that
there are no conjugate points at distance less than
$\pi/\sqrt K$. Thus Klingenberg's cut-locus lemma gives a
nonconstant geodesic loop based at $q$ of length $2r$.
By \eqref{e:controlled.loop.contraction}, it contracts through
based loops of length at most $2e^\epsilon r<\frac{2\pi}{\sqrt K},$
contradicting Klingenberg's long homotopy lemma.
Hence $\operatorname{inj}_{\tilde g_i}(q)\ge\delta$
    for every $q\in\tilde M_i$.

The universal covers are therefore noncollapsing, and standard
pointed compactness gives, after passing to a subsequence, $(\tilde M_i,\tilde g_i,p_i)
    \xrightarrow{pt\  C^{1,\alpha}}(M_\infty,g_\infty,p_\infty)$.

By \eqref{e:controlled.loop.contraction}, any loop in $M_\infty$
is approximated by loops in $\tilde M_i$ whose based contractions
remain in a uniformly bounded ball. Pulling these contractions
back through the convergence maps shows that the original loop
is null-homotopic. Hence $M_\infty$ is simply connected.

Minimizing geodesic rays in the universal covers pass to a ray
in $M_\infty$, so the limit is noncompact. Being also simply
connected, $M_\infty$ is diffeomorphic to $\mathbb R^2$.
This completes the proof.
\end{proof}

\subsection{Convergence of the flow on the universal cover}
As established in the preceding discussion, each time slice converges to $S^1$ or $\{\mathrm {pt}\}$ and the universal cover exhibits continuous symmetry. To capture the parabolic flow information, we need to establish convergence of the flows on the universal cover. Thanks to Theorem~\ref{develop}, we can work directly on the universal cover. We have the following corollary.
\begin{corollary}\label{c:convergence_CFG}
    Let $(g(t),\alpha(t))$ be a solution to the $\alpha$-Ricci flow on $M=\mathbb{T}^2$. For any sequence $s_i\to\infty$, there exists a subsequence, still denoted by $s_i$, and a limit space $(X,d)$ that is either $(S^1, d)$ or $\{\mathrm{pt}\}$, such that the following convergence holds. Choose $p \in M$ and a lift $\tilde{p} \in \tilde{M}\cong \R^2$ such that  $\pi(\tilde p)=p$, where $\pi:\tilde M\to M$ is the covering map. We have pointed $C^\infty$ convergence of the flow on $(0,\infty)$:
        \begin{align*}
        \begin{tikzcd}[ampersand replacement=\&]
         \& (\tilde M,\tilde g_{s_i}(t),\tilde\alpha_{s_i}(t), p) \arrow[r, "\text{pt } C^{\infty}"] 
         \& (\R^2, \tilde g_\infty(t),\tilde\alpha_\infty(t), p).
    \end{tikzcd}
    \end{align*}
     Moreover, the time $1$ slice has the following equivariant convergence diagram:
    \begin{center}
    \begin{tikzcd}
  & (\tilde M,\tilde{g}_{s_i}(1), \tilde\alpha_{s_i}(1),G_i,\tilde{p}) \arrow[r,  "\text{pt eq }C^\infty"] \arrow[d, "\pi"]
   & (\mathbb{R}^2,\tilde{g}_\infty(1),\tilde\alpha_\infty(1), G_\infty,\tilde{p}_\infty) \arrow[d,  "\pi_\infty"]\\
  & (M, g_{s_i}(1)) \arrow[r, "\text{GH}"]
   & (X,d),
\end{tikzcd}
\end{center}
where $G_i =\pi_1(M_i)\cong\mathbb{Z}\times\mathbb{Z}$ acts isometrically on $(\tilde M,\tilde{g}_{s_i}(1))$, $\tilde{g}_\infty(1)$ is some metric on $\mathbb{R}^2$, and $G_\infty\subset \operatorname{Isom}(\tilde{g}_\infty(t))$ preserves $\tilde \alpha_\infty(t)$  for all $t\in(0,\infty)$. Furthermore, $G_\infty\cong \R\times\Z$ if $(X,d)=(S^1,d)$, and $G_\infty\cong\R\times \R$ if $(X,d)=\{\mathrm {pt}\}$.
\end{corollary}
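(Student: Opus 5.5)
Fix $t_0>0$ and combine Theorem~\ref{t:2d_estimates} with Theorem~\ref{develop}, so that for $s\ge1$ the slices $(M,g_s(t))$ with $t$ in any compact $[a,b]\subset(0,\infty)$ have uniformly bounded curvature $|R(g_s(t))|\le C/t$. They also have diameter at most $C(\sqrt t+1/\sqrt s)\le C'$ and volume $O(s^{-1/2})$. After normalizing the diameter at $t=1$ (harmless, since the bounds are uniform), Theorem~\ref{develop} applied to $\{(M,g_{s_i}(1))\}$ gives a subsequence with $(M,g_{s_i}(1))\xrightarrow{GH}(X,d)$, where $X$ is $S^1$ or a point. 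It also gives a uniform injectivity radius lower bound $\delta$ on $(\tilde M,\tilde g_{s_i}(1))$.

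\textbf{Step 1 (injectivity radius at all times).} Promote the time-1 injectivity bound to all $t\in[a,b]$. Since $|\partial_t g_s|\le |R|+\frac14|\alpha_s|^2\le C(a)$ on $[a,b]$ (using $|\alpha_s|^2=s|\alpha(st)|^2\le C/t$ by Theorem~\ref{alpha.norm.estimate}), the metrics $\tilde g_s(t)$ are uniformly bilipschitz to $\tilde g_s(1)$. The same is true for $t\in[a,1]$. Combined with the curvature bounds, volume ratios of unit balls are bounded below. The Cheeger–Gromov–Taylor estimate then gives $\operatorname{inj}\ge\delta(a,b)$ on $\tilde M$.

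\textbf{Step 2 (derivative estimates and compactness).} Shi-type local derivative estimates for the $\alpha$-Ricci flow (from \cite{guo2025alpha}) bound all $|\nabla^k\mathrm{Rm}|$ and $|\nabla^k\alpha_s|$ on $[a,b]$ in terms of curvature and $|\alpha_s|$. The Hamilton-type compactness theorem for $\alpha$-Ricci flows in \cite{guo2025alpha} then yields pointed $C^\infty$ convergence on $\tilde M$ over every compact time interval. A diagonal argument over $a\to0$, $b\to\infty$ gives the limit flow $(\R^2,\tilde g_\infty(t),\tilde\alpha_\infty(t))$ on $(0,\infty)$. The limit space is $\R^2$ by Theorem~\ref{develop}(ii), and the smooth convergence upgrades its $C^{1,\alpha}$ conclusion.

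\textbf{Step 3 (equivariance).} The deck groups $G_i=\pi_1(M)$ act by isometries of $\tilde g_{s_i}(t)$ for every $t$ and preserve $\tilde\alpha_{s_i}(t)$. Using Arzelà–Ascoli on the elements of $G_i(j)$, which are uniformly Lipschitz and move $\tilde p$ a bounded distance, Fukaya's equivariant convergence \cite{fukaya1986theory} gives, after a further subsequence, a closed limit group $G_\infty\subset\operatorname{Isom}(\tilde g_\infty(1))$ and the diagram with $\R^2/G_\infty=X$. The same limiting isometries are isometries of $\tilde g_\infty(t)$ and preserve $\tilde\alpha_\infty(t)$ for all $t$, because smooth convergence holds uniformly in time and the approximating maps are time-independent. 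To identify $G_\infty$: it is a closed abelian group, being a limit of abelian groups, acting on $\R^2$ with quotient $X$. The collapse forces $\dim G_\infty\ge1$, since the orbits of $G_i$ become $\epsilon$-dense in collapsed directions. If $X=S^1$, the quotient is one-dimensional, so the identity component is $\R$ acting freely and $G_\infty/G_\infty^0\cong\Z$, giving $\R\times\Z$. If $X$ is a point, $G_\infty$ is transitive on $\R^2$, hence two-dimensional abelian, so $G_\infty\cong\R^2$ acting simply transitively. In particular, in that case $\tilde g_\infty(t)$ is homogeneous and flat.

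\textbf{Main obstacle.} I expect the main obstacle to be Step 3. Excluding compact factors ($S^1$-components) and torsion in $G_\infty$ requires freeness and properness of the limiting action. I would handle this via the nilpotent Killing structure of \cite{cheeger1992nilpotent}, whose local fiber groups are tori covering the orbit directions, together with the simply connected $\R^2$ limit from Theorem~\ref{develop}.
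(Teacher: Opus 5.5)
Your compactness and equivariance steps essentially follow the paper. You use noncollapsing of the universal covers from Theorem~\ref{develop}, the compactness theorem of \cite{guo2025alpha}, Fukaya's equivariant limit, and the fact that the deck group is time-independent. Your point case (abelian and transitive, hence trivial isotropy and $G_\infty\cong\R^2$) is also the paper's argument. A few small remarks:
\begin{itemize}
\item Step~1 is harmless but not needed, because the compactness theorem only uses the injectivity-radius bound at one time.
\item Theorem~\ref{develop} presupposes the Gromov--Hausdorff convergent subsequence; that subsequence comes from Gromov's precompactness theorem.
\item To pass $\gamma_i^*\tilde g_i(t)=\tilde g_i(t)$ and $\gamma_i^*\tilde\alpha_i(t)=\tilde\alpha_i(t)$ to the limit, you need $C^\infty_{\mathrm{loc}}$ convergence of $\gamma_i$, not just the $C^0$ convergence Arzel\`a--Ascoli gives. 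The paper obtains it from $\gamma_i\circ\exp_p=\exp_{\gamma_i(p)}\circ d\gamma_i|_p$.
\end{itemize}

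The genuine gap is the circle case. Your sentence ``the quotient is one-dimensional, so the identity component is $\R$ acting freely and $G_\infty/G_\infty^0\cong\Z$'' is exactly what must be proved, and you only announce a plan via the nilpotent Killing structure.

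The paper closes this step as follows. It uses \eqref{e:controlled.loop.contraction} to identify Fukaya's local fundamental pseudogroups with restrictions of the deck action, so $G_\infty$ is locally isomorphic to their limit pseudogroup $H$. Then \cite[Proposition~11.5]{fukaya1990hausdorff} gives that $H$ acts freely, because $X$ is a codimension-one manifold. Hence $\R^2\to S^1$ is a principal $G_\infty$-bundle. The homotopy sequence, with $\pi_1(\R^2)=0$, $\pi_1(S^1)=\Z$ and $\pi_2(S^1)=0$, gives $\pi_0(G_\infty)\cong\Z$ and $G_\infty^0$ one-dimensional and simply connected. Therefore $G_\infty\cong\R\times\Z$.

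You also list properness as an obstacle, but it is automatic: $G_\infty$ is closed in $\operatorname{Isom}(\R^2,\tilde g_\infty(1))$. In fact freeness can be obtained without collapsing theory:
\begin{itemize}
\item By the slice theorem, a neighborhood of $[x]$ in $X$ is homeomorphic to a neighborhood of $0$ in $\nu_x/(G_\infty)_x$.
\item Since $X\cong S^1$ is a $1$-manifold without boundary, every orbit is one-dimensional and $(G_\infty)_x$ acts trivially on the normal line. A $0$-dimensional orbit would give a $2$-dimensional quotient or a half-line, and a reflection on the normal line would give an endpoint.
\item By commutativity, each $k\in(G_\infty)_x$ fixes the orbit $G_\infty x$ pointwise. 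So $dk_x=\mathrm{id}$, and hence $k=\mathrm{id}$.
\end{itemize}
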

\begin{proof}
    By Theorem~\ref{alpha.norm.estimate} and Theorem~\ref{type.iii.estimate}, we have the estimates
\[
|\alpha_{s_i}(t)|_{g_{s_i}(t)}^2 \le \frac{c_0 s_i}{1 + \frac{c_0}{2} s_i t}, \qquad 
|R(g_{s_i}(t))| \le \frac{C s_i}{1+s_i t}.
\]
In particular, for any $T_0>0$, these yield uniform bounds on the curvature and on $|\alpha|$ over the time interval $[T_0,\infty)$. Therefore, the noncollapsing result in Theorem \ref{develop} and the compactness theorem in~\cite{guo2025alpha} apply and give the smooth pointed convergence on the universal cover.

    By Theorem~\ref{type.iii.estimate} and Gromov's compactness theorem \cite[Theorem 5.3]{gromov2007metric}, there exists a metric space $(X,d)$ such that $(M, g_{s_i}(1))\to (X,d)$ in the Gromov--Hausdorff topology. By Theorem~\ref{volume.estimate} and Theorem~\ref{develop}, we know that $(X,d)$ is either $(S^1, d)$ or a point $\{\mathrm{pt}\}$. The equivariant convergence and the fact that $G_\infty$ acts by isometries on $(\R^2,\tilde g_\infty(1))$ follow from \cite{fukaya1988boundary}.
Because $\tilde\alpha_{s_i}(1)$ is invariant under $G_i$, the equivariant smooth convergence on the universal cover implies that $\tilde \alpha_\infty(1)$ is invariant under $G_\infty$. 
    
    We would like to show that $G_\infty$ acts by isometries on $(\R^2,\tilde g_\infty(t))$ and preserves $\tilde\alpha_\infty(t)$ for any $t\in (0,\infty)$. Fix $\gamma_\infty\in G_\infty$. By equivariant convergence at
    time one, there exist $\gamma_i\in G_i$ converging locally uniformly
    to $\gamma_\infty$.
    Since the $\gamma_i$ are time-one isometries, their differentials
    are uniformly bounded on compact sets. The identity
    \[
    \gamma_i\circ\exp_p^{\tilde g_i(1)}
    =
    \exp_{\gamma_i(p)}^{\tilde g_i(1)}\circ d\gamma_i|_p
    \]
    and smooth convergence of the metrics upgrade this convergence
    to $C^\infty_{\mathrm{loc}}$. Passing to the limit in
    \[
    \gamma_i^*\tilde g_i(t)=\tilde g_i(t),
    \qquad
    \gamma_i^*\tilde\alpha_i(t)=\tilde\alpha_i(t)
    \]
    shows that $G_\infty$ preserves both $\tilde g_\infty(t)$ and
    $\tilde\alpha_\infty(t)$ for every $t>0$.
    
    We now show that $G_\infty$ is either $\R\times\Z$ or $\R\times\R$. For $\delta>0$ sufficiently small, consider the local fundamental group $\pi_1(M_i,x_i;\delta)$ of \cite[Definition 7.8]{fukaya1990hausdorff}. A globally null-homotopic based loop of length at most $2\delta$ lifts to a closed loop on $\tilde M_i$. By~\eqref{e:controlled.loop.contraction}, it contracts through based loops of length at most $2e^\epsilon\delta<4\delta$, so the projected contraction lies in $B(x_i,2\delta)$. Thus no globally trivial short loop remains locally nontrivial, and the local pseudogroup identifies with restrictions of the deck action of $\pi_1(M_i)$ on $\tilde M_i$. Let $H$ denote its limit pseudogroup.

    If $(X,d)=(S^1,d)$, then by \cite[Proposition 11.5]{fukaya1990hausdorff}, the action of $H$ is free since $X$ is a codimension one manifold. Because $G_\infty$ is locally isomorphic to $H$, the action of $G_\infty$ is free. Consequently, $\mathbb{R}^2$ is a principal $G_\infty$-bundle over $S^1$. A standard argument then shows $G_\infty \cong \mathbb{R}\times\mathbb{Z}$.
    
    If $(X,d)=\{\mathrm{pt}\}$, $G_\infty$ is abelian because it is the limit of abelian groups. Furthermore, since $X$ is a point, $G_\infty$ acts transitively on $\mathbb{R}^2$. Choosing a basepoint $p\in \mathbb{R}^2$, we have $\mathbb{R}^2\cong G_\infty/K$, where $K \subset G_\infty$ is the isotropy subgroup at $p$. For any $x\in \mathbb{R}^2$, choose $g\in G_\infty$ such that $g\cdot p=x$. Then for any $k\in K$, we have $k\cdot x=k\cdot(g\cdot p)=g\cdot(k\cdot p)=g\cdot p=x$. Thus, $k$ acts trivially on $\mathbb{R}^2$. It follows that $K=\{e\}$ is trivial, and therefore $G_\infty\cong \mathbb{R}\times \mathbb{R}$ as a Lie group.
\end{proof}


\begin{remark}\label{fiber bundle}
If $(X,d) = \{\mathrm{pt}\}$, then on the universal cover, $(\tilde{M}_\infty,\tilde{g}_\infty(t),\tilde\alpha_\infty(t))$ is $\mathbb{R}^2$-invariant. Hence, $(\tilde g_\infty(t),\tilde\alpha_\infty(t))$ is one of the solutions given in Lemma~\ref{l:R2.invariant.solution}.

If $(X,d) = (S^1, d)$, then $(\tilde{M}_\infty,\tilde{g}_\infty(t),\tilde\alpha_\infty(t))$ can be viewed as an $\alpha$-Ricci flow on an $\mathbb{R}$-bundle over $S^1$ i.e., we can write
\begin{align*}
\tilde g_\infty(t) =& u(\theta,t)d\theta^2 + G(\theta,t)(dv+A(\theta,t)d\theta)^2,\\
\tilde\alpha_\infty(t)=& a(\theta,t)d\theta+b(dv+A(\theta,t)d\theta)
\end{align*}
where $\partial_\theta$ and $\partial_v$ correspond to the $S^1$ and $\mathbb{R}$ direction respectively, and $u,A$ and $G$ are functions on $S^1\times(0,\infty)$. Using tools in Sections~\ref{s:condition monotonicity},~\ref{s:GHlimit_vs_rationality} and~\ref{s:dynamic stability},  we will show in Proposition~\ref{p:base.S^1.implies.universal.cover.expander} that $(\tilde g_\infty(t),\tilde \alpha_\infty(t))$ is the expander $(g^*(t),\alpha^*(t))$.
\end{remark}

\section{Conditional monotonicity of the \texorpdfstring{$\W_+$}{W+}-functional}\label{s:condition monotonicity}
\subsection{Locally \texorpdfstring{$\mathbb{R}^N$}{RN}-invariant \texorpdfstring{$\alpha$}{alpha}-Ricci flow}
Recall from Remark~\ref{fiber bundle} that if $(X,d)=(S^1,d)$, the blow-down limit of the $\alpha$-Ricci flow on the universal cover $\mathbb{R}^2$ descends to an $\mathbb{R}$-invariant $\alpha$-Ricci flow on an $\mathbb{R}$-bundle over $S^1$. 

In this section, following \cite[Section~4.1]{lott2010dimensional}, we consider a general twisted $\mathbb{R}^N$-bundle and the invariant metrics on it. Specifically, let $\mathbb{R}^N \to M \xrightarrow{\pi} B$ be a fiber bundle over a connected, oriented, compact base $B$, and let $\mathbb{R}^N \to E \xrightarrow{p} B$ be a flat vector bundle over $B$. Let $E \times_B M$ denote the fibered product $\coprod_{b \in B} (E_b \times M_b)$. Assume that there exists a smooth map $E \times_B M \to M$ such that over each base point $b \in B$, the fiber $E_b$ acts freely and transitively on $M_b$. Furthermore, assume that this action is compatible with the flat connection on $E$. By this, we mean that for any sufficiently small open set $U \subset B$ admitting a local trivialization $E|_U \cong U \times \mathbb{R}^N$, the preimage $\pi^{-1}(U)$ inherits a free $\mathbb{R}^N$-action, making it the total space of a principal $\mathbb{R}^N$-bundle over $U$. In this way, $M$ may be regarded as a principal $\mathbb{R}^N$-bundle over $B$ twisted by the flat vector bundle $E$. One can then define a connection $A$ on $M$ with the property that its restriction $A|_{\pi^{-1}(U)}$ is an $\mathbb{R}^N$-valued connection on the local principal bundle. We assume
throughout this section that $E$ admits a parallel positive
fiber density, or equivalently, that every holonomy
transformation $H$ satisfies $|\det H|=1$.

Let $s: U \to \pi^{-1}(U)$ be a local section over a coordinate chart $(U, \{x^\mu\}_{\mu=1}^n)$ of $B$, and let $\{e_i\}_{i=1}^N$ denote the standard basis for $\mathbb{R}^N$. On $\pi^{-1}(U)$, we adopt the local coordinates $(x^\mu, x^i)$ defined by $(x^\mu, x^i) \mapsto \exp\left(\sum_{i=1}^N x^i e_i\right) \cdot s(x^\mu)$, where $\{x^i\}_{i=1}^N$ are the fiber coordinates. Let $A^i(t) = A^i_\mu(t) dx^\mu$ denote the local connection 1-forms on $U$ pulled back via $s$. Suppose $\bar g(t)$ is a family of metrics and $\bar \alpha(t)$ is a family of closed 1-forms on $M$, both invariant under the action of $\mathbb{R}^N$. Then, in these local coordinates, they decompose as
\begin{equation}\label{invariant.flow}
\begin{aligned}
    \bar g(t) &= \sum_{i,j=1}^N G_{ij}(t)(dx^i+A^i(t))(dx^j+A^j(t))+\sum_{\mu,\nu=1}^n g_{\mu\nu}(t)dx^\mu dx^\nu, \\
    \bar\alpha(t) &= \sum_{\mu=1}^n a_\mu(t) dx^\mu + \sum_{i=1}^N b_i(t)(dx^i +A^i(t)),
\end{aligned}
\end{equation}
where $G_{ij}(t), A^i_\mu(t), g_{\mu\nu}(t), a_\mu(t)$, and $b_i(t)$ are smooth functions on $U \times (0,\infty)$. Following \cite[(4.6),(4.7)]{lott2007long}, we denote the components of the Ricci tensor of $(M,\bar g)$ by $\bar R_{ij}, \bar R_{i\mu}, \bar R_{\mu\nu}$, the scalar curvature of $(M,\bar g)$ by $\bar R$. We define the modified scalar curvature as $\bar R^{\bar \alpha}=\bar R-\frac{1}{4}|\bar \alpha|^2$. Let $F^i=dA^i$ denote the curvature 2-form of the connection $A^i$, whose components are $F_{\mu\nu}^i=\partial_\mu A_\nu^i-\partial_\nu A_\mu^i$.

We write $f_{,\mu_1\dots\mu_k}$ for the $\mu_1\dots\mu_k$-th coordinate derivatives of a function $f$ on $B$. Similarly, we denote by $T_{;\mu_1\dots\mu_k}$ the $\mu_1\dots\mu_k$-th covariant derivatives of a tensor $T$ on $B$, e.g., $a_{\mu;\nu}=a_{\mu,\nu}-a_{\gamma}\Gamma_{\mu\nu}^\gamma$, $F_{\mu\nu;\gamma}^i=F_{\mu\nu,\gamma}^i-F_{\delta\nu}^i\Gamma_{\mu\gamma}^\delta-F_{\mu\delta}^i\Gamma_{\nu\gamma}^\delta$, and $G_{ij;\mu\nu}=G_{ij,\mu\nu}-\Gamma_{\mu\nu}^\sigma G_{ij,\sigma}$, where $\Gamma_{\mu\nu}^\gamma$ denote the Christoffel symbols of $g_{\mu\nu}$ on $B$. Fix a point $b\in B$ at which we will be working. We choose the section $s$ such that $A^i(0,b)=0$. The Christoffel symbols of $\bar g(0)$ are then given by
\begin{equation}\label{christoffel.symbol}
\begin{aligned}
    \bar\Gamma_{ij}^k &= 0,\quad \bar\Gamma_{ij}^\mu=-\frac{1}{2}g^{\mu\nu}G_{ij,\nu},\quad \bar\Gamma_{i\mu}^j=\frac{1}{2}G^{jk}G_{ik,\mu},\\
    \bar\Gamma_{i\mu}^\nu &= \frac{1}{2}g^{\nu\gamma}G_{ij}F_{\mu\gamma}^j,\quad \bar \Gamma_{\mu\nu}^i=\frac{1}{2}( A_{\nu,\mu}^i+ A_{\mu,\nu}^i),\quad
    \bar \Gamma_{\mu\nu}^\gamma=\Gamma_{\mu\nu}^\gamma.
\end{aligned}
\end{equation}

We have the following reduction of the $\alpha$-Ricci flow system \eqref{alpha_Ricci_flow} for locally $\mathbb{R}^N$-invariant $(\bar g(t),\bar \alpha(t))$ as in \eqref{invariant.flow}.

\begin{proposition}\label{invariant.alpha.ricci.flow}
If $(\bar g(t),\bar \alpha(t))$ is of the form \eqref{invariant.flow} and is a solution to the $\alpha$-Ricci flow, then $g_{\mu\nu}, G_{ij}, A^i_\mu, a_{\mu}$, and $b_i$ satisfy the following equations:
\begin{equation}\label{e:invariant.alpha.ricci.flow}
\begin{aligned}
	\frac{\partial G_{ij}}{\partial t} &= g^{\mu\nu}G_{ij;\mu\nu}+\frac{1}{2}g^{\mu\nu}G^{kl}G_{kl,\mu}G_{ij,\nu}-g^{\mu\nu} G^{kl}G_{ik,\mu}G_{jl,\nu}-\frac{1}{2}g^{\mu\gamma}g^{\nu\delta}G_{ik}G_{jl}F_{\mu\nu}^kF_{\gamma\delta}^l+\frac{1}{2}b_ib_j, \\
	\frac{\partial A_\mu^i}{\partial t} &= -g^{\gamma\delta}F_{\mu\gamma;\delta}^i-g^{\gamma\delta}G^{ij}G_{jk,\gamma}F_{\mu\delta}^k-\frac{1}{2}g^{\gamma\delta}G^{kl}G_{kl,\gamma}F_{\mu\delta}^i+\frac{1}{2}a_\mu b_jG^{ij}, \\
	\frac{\partial g_{\mu\nu}}{\partial t} &= -2R_{\mu\nu}+G^{ij}G_{ij;\mu\nu}-\frac{1}{2}G^{ij}G_{jk,\mu}G^{kl}G_{il,\nu}+g^{\gamma\delta}G_{ij}F_{\mu\gamma}^iF_{\nu\delta}^j+\frac{1}{2}a_\mu a_\nu, \\
	\frac{\partial a_\mu}{\partial t} &= -(d d_{g,G}^*a)_{\mu}+b_ig^{\gamma\delta}F_{\mu\gamma;\delta}^i+b_ig^{\gamma\delta}G^{ij}G_{ik,\gamma}F_{\mu\delta}^k+\frac{1}{2}b_ig^{\gamma\delta}G^{kl}G_{kl,\gamma}F_{\mu\delta}^i-\frac{1}{2}a_\mu b_ib_jG^{ij}, \\
	\frac{\partial b_i}{\partial t} &= 0,
\end{aligned}
\end{equation}
where 
\[
d^*_{g,G}a = -g^{\mu\nu}a_{\mu;\nu}-\frac{1}{2}g^{\mu\nu}G^{ij}G_{ij,\nu}a_\mu.
\]
\end{proposition}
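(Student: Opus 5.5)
The strategy is to decompose the two flow equations \eqref{alpha_Ricci_flow} into components with respect to the invariant coframe $\{dx^\mu,\ \omega^i:=dx^i+A^i\}$. At the chosen point $b$ with $A^i(0,b)=0$ we have $\omega^i=dx^i$ and the Christoffel symbols \eqref{christoffel.symbol}, and we match coefficients. Lott's reduction of the Ricci flow for locally $\mathbb{R}^N$-invariant metrics \cite[(4.6),(4.7)]{lott2007long}, \cite[Section~4.1]{lott2010dimensional} already expresses $-2\bar R_{ij}$, $-2\bar R_{i\mu}$ and $-2\bar R_{\mu\nu}$ in terms of $G$, $F$ and $g$. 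This yields every term in the first three equations of \eqref{e:invariant.alpha.ricci.flow} except the $\alpha$-contributions. So the plan is to take those formulas as given and only track the additional term $\frac12\bar\alpha\otimes\bar\alpha$ together with the $\alpha$-equation.

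\textbf{Metric part.} Differentiating \eqref{invariant.flow} in time gives
\[
\partial_t\bar g=\partial_tG_{ij}\,\omega^i\omega^j+2G_{ij}\,\omega^i\,\partial_tA^j+\partial_tg_{\mu\nu}\,dx^\mu dx^\nu .
\]
Similarly, $\frac12\bar\alpha\otimes\bar\alpha$ has coframe components $\frac12 b_ib_j$, $\frac12 a_\mu b_i$ and $\frac12 a_\mu a_\nu$. Matching the $\omega^i\omega^j$ components gives $\partial_tG_{ij}=-2\bar R_{ij}+\frac12 b_ib_j$. Matching the mixed $\omega^i dx^\mu$ components gives $G_{ij}\partial_tA^j_\mu=-2\bar R_{i\mu}+\frac12 a_\mu b_i$, where $\bar R_{i\mu}$ is taken in the coframe. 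Multiplying by $G^{ij}$ produces the $\frac12 a_\mu b_jG^{ij}$ term. Matching the base components gives the $g_{\mu\nu}$ equation with the extra $\frac12 a_\mu a_\nu$. Here I must make sure that the mixed components of the Ricci tensor in the coordinate basis versus the coframe do not introduce spurious terms. Since $A(b)=0$ at the fixed instant, the two agree there, and $b$ is arbitrary.

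\textbf{Form part.} Closedness of $\bar\alpha$ means $d\bar\alpha=da+b_i\,dA^i+db_i\wedge\omega^i=0$. Taking the components with one fiber leg gives $db_i=0$, so each $b_i$ is locally constant. Next I compute $\bar d^*\bar\alpha=-\bar g^{AB}(\partial_B\bar\alpha_A-\bar\Gamma^C_{AB}\bar\alpha_C)$ using \eqref{christoffel.symbol}. Invariance kills the fiber derivatives. The terms $\bar g^{ij}\bar\Gamma^\mu_{ij}a_\mu$ give $-\frac12 g^{\mu\nu}G^{ij}G_{ij,\nu}a_\mu$. The terms $\bar\Gamma^k_{ij}=0$ and the symmetric $\bar\Gamma^i_{\mu\nu}$ paired with $b_i$ cancel against the $-A$ corrections because $A(b)=0$ and $F$ is antisymmetric. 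The result is that $\bar d^*\bar\alpha=d^*_{g,G}a$ is a function on $B$. Hence $-d\bar d^*\bar\alpha=-d(d^*_{g,G}a)$ has no $dx^i$ component.

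On the other hand, $\partial_t\bar\alpha=\partial_ta_\mu dx^\mu+\partial_tb_i\,\omega^i+b_i\,\partial_tA^i$. Matching the fiber components gives $\partial_tb_i=0$. Matching the base components gives
\[
\partial_ta_\mu=-(dd^*_{g,G}a)_\mu-b_i\,\partial_tA^i_\mu .
\]
Substituting the $A$-equation then produces exactly the listed $F$-terms with flipped sign and the term $-\frac12 a_\mu b_ib_jG^{ij}$.

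The main obstacle is bookkeeping. First, the Ricci components must be taken in the coframe rather than the coordinate frame. Second, the coupling $b_i\partial_tA^i$ in the $a$-equation must be handled correctly. Third, I must verify that the $\bar\Gamma$-terms in $\bar d^*\bar\alpha$ involving $b_i$ genuinely vanish. That last point is where I would recompute carefully, using the antisymmetry of $F$ and the trace $\bar\Gamma^j_{i\mu}$.
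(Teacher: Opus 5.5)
Your proposal is correct and follows essentially the same route as the paper: add the coframe components of $\frac12\bar\alpha\otimes\bar\alpha$ to Lott's reduced system, use $d\bar\alpha=0$ to get $b_{i,\mu}=0$, compute $d^*_{\bar g}\bar\alpha=d^*_{g,G}a$ at a point where $A=0$, and read off $\partial_t b_i=0$ and the $a_\mu$-equation from the fiber and base components of $\partial_t(a_\mu+b_iA^i_\mu)\,dx^\mu+\partial_t b_i\,dx^i=-d\,d^*_{g,G}a$. In that computation the $b_i$-terms cancel because the symmetric $g^{\mu\nu}$ is contracted against the antisymmetric part of $A^i_{\mu,\nu}$, and the trace $\bar\Gamma^j_{i\mu}$ you flag does not enter at all, since $\bar g^{i\mu}=0$ at that point.
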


\begin{proof}
The metric component of the $\alpha$-Ricci flow reads
\begin{align*}
	&\frac{d}{dt}\Big(G_{ij}(dx^i+A^i)(dx^j+A^j)+g_{\mu\nu}dx^\mu dx^\nu\Big) \\
    =&\frac{d}{dt}\bar g = -2\overline{\mathrm{Ric}} + \frac{1}{2}\bar \alpha\otimes\bar\alpha \\ 
	=&-2\bar R_{ij}(dx^i+A^i)(dx^j+A^j)-4\bar R_{i\mu}(dx^i+A^i)dx^\mu-2\bar R_{\mu\nu}dx^\mu dx^\nu \\
	&+\frac{1}{2}(a_\mu dx^\mu+b_i (dx^i +A^i))(a_\nu dx^\nu+b_j (dx^j +A^j)).
\end{align*}
The evolution equations for $\frac{\partial G_{ij}}{\partial t}$, $\frac{\partial A_\mu^i}{\partial t}$, and $\frac{\partial g_{\mu\nu}}{\partial t}$ follow from expanding both sides and comparing coefficients. Note that this requires the formulas for $\bar R_{ij}, \bar R_{i\mu}, \bar R_{\mu\nu}$ given in \cite[(4.6)]{lott2007long}. Equivalently, one can directly add the components of $\frac{1}{2}\bar\alpha\otimes\bar \alpha$ to the system in \cite[(4.9)]{lott2007long}.

Since $\bar \alpha$ is closed, we have
\begin{align*}
0 &= d\big(a_\mu dx^\mu+b_i (dx^i +A^i)\big) = a_{\mu, \nu}dx^\nu \wedge dx^\mu + b_{i,\mu} dx^\mu \wedge (dx^i+A^i) + b_i dA^i \\
&= (a_{\nu,\mu} + b_{i,\mu} A^i_\nu + \frac{1}{2}b_i F^i_{\mu\nu}) dx^\mu \wedge dx^\nu + b_{i,\mu} dx^\mu \wedge dx^i.
\end{align*}
Setting the coefficient of $dx^\mu \wedge dx^i$ to zero, we obtain 
\begin{equation}\label{db=0}
    b_{i,\mu}=0.
\end{equation} 
Hence, each $b_i$ is a constant on $B$. Using \eqref{christoffel.symbol} and \eqref{db=0}, we compute
\begin{align*}
	&d^*_{\bar g}\big(a_\mu dx^\mu+b_i (dx^i +A^i)\big) \\
	=&-g^{\mu\nu}(a_{\mu,\nu}+b_iA_{\mu,\nu}^i)+g^{\mu\nu}(\Gamma_{\mu\nu}^\gamma a_\gamma+\bar\Gamma_{\mu\nu}^ib_i)+G^{ij}\bar\Gamma_{ij}^\mu a_{\mu} \\ 
    =&-g^{\mu\nu}(a_{\mu;\nu}+b_iA^i_{\mu,\nu})+g^{\mu\nu }A_{\mu,\nu}^ib_i-\frac{1}{2}g^{\mu\nu}G^{ij}G_{ij,\nu}a_\mu \\
	=&-g^{\mu\nu}a_{\mu;\nu}-\frac{1}{2}g^{\mu\nu}G^{ij}G_{ij,\nu}a_\mu \\
	=&d^*_{g,G}a.
\end{align*}

The $1$-form component of the $\alpha$-Ricci flow reads
\begin{align*}
    &\frac{d}{dt}(a_\mu+b_iA^i_\mu) dx^\mu+\frac{db_i}{dt}dx^i \\
    =&\frac{d}{dt}\big(a_\mu dx^\mu+b_i (dx^i +A^i)\big) \\
    =&-d d^*_{\bar g}\big(a_\mu dx^\mu+b_i (dx^i +A^i)\big) \\
    =&-(d d^*_{g,G}a)_{\mu}dx^\mu,
\end{align*}
where the last equality follows from the fact that $d^*_{g,G}a$ is a function on $B$, meaning $d d^*_{g,G}a$ has no fiber components. Finally, substituting the evolution equation for $\frac{\partial A_\mu^i}{\partial t}$ into the above yields the equations for $\frac{\partial a_\mu}{\partial t}$ and $\frac{\partial b_i}{\partial t}$.
\end{proof}

From \eqref{db=0} and $\frac{\partial b_i}{\partial t}=0$, we obtain the following corollary. 
\begin{corollary}\label{fiber.constant.in.spacetime}
The fiber components $b_i$ of $\bar \alpha$ are constant in spacetime.
\end{corollary}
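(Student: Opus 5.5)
The plan is to read off the two facts already extracted in the proof of Proposition~\ref{invariant.alpha.ricci.flow}: spatial constancy on $B$ comes from closedness of $\bar\alpha$, and temporal constancy comes from the $1$-form evolution equation. Since $\bar\alpha$ is $\mathbb{R}^N$-invariant and written in the local form \eqref{invariant.flow}, each $b_i$ is a function on $U\times(0,\infty)$, so it suffices to show $b_{i,\mu}=0$ and $\partial_t b_i=0$.

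\textbf{Spatial constancy.} Expand $d\bar\alpha=0$ in the coframe $\{dx^\mu, dx^i+A^i\}$. The only term of type $dx^\mu\wedge dx^i$ is $b_{i,\mu}\,dx^\mu\wedge dx^i$, because $dA^i=F^i$ and $da_\mu$ are purely horizontal. This gives \eqref{db=0}, so each $b_i$ is locally constant on $U$ for each fixed $t$.

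\textbf{Temporal constancy.} Compare the $dx^i$ coefficients in $\partial_t\bar\alpha=-dd^*_{\bar g}\bar\alpha$. The left side has $dx^i$-coefficient $\partial_t b_i$. On the right, $d^*_{\bar g}\bar\alpha=d^*_{g,G}a$ is a fiber-invariant function, hence a function pulled back from $B$, so its differential has no $dx^i$ component. Therefore $\partial_t b_i=0$. Since $\partial_t b_i$ is also a locally defined function, this holds on $U\times(0,\infty)$.

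\textbf{Globalization.} Two points need care. First, the $b_i$ are components in a local trivialization, and on overlaps of charts they transform by the flat holonomy of $E$. Because the transition maps are locally constant, the statement $b_{i,\mu}=0$ is chart-independent, so $b$ is a parallel section of $E^*$; constancy then means constancy in the flat structure. Second, $b$ is constant in $t$ because the flat structure of $E$ is fixed in time. Combining these with the connectedness of $B$ gives constancy in spacetime: the $b_i$ are globally constant whenever we work in a flat trivialization, which is the case on the universal cover relevant here. I expect this globalization, rather than the two local computations, to be the only point requiring care.
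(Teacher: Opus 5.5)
Your proposal is correct and is essentially the paper's argument: the corollary is read off from $b_{i,\mu}=0$, obtained from $d\bar\alpha=0$ in \eqref{db=0}, together with $\partial_t b_i=0$, the $dx^i$-component of the $1$-form equation in the proof of Proposition~\ref{invariant.alpha.ricci.flow}. Your globalization remark is a sensible clarification the paper leaves implicit: $b$ is a parallel section of $E^*$, so it is literally constant only in a flat trivialization.
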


Following \cite[(4.10)]{lott2007long}, we modify the system \eqref{e:invariant.alpha.ricci.flow} by a Lie derivative with respect to $-\nabla\log\sqrt{\det G}$, and additionally add an exact form to the evolution equation for $A_\mu^i$. This yields the gauged system:
\begin{equation}\label{gauged.invariant.alpha.ricci.flow}
\begin{aligned}
	\frac{\partial G_{ij}}{\partial t} &= g^{\mu\nu}G_{ij;\mu\nu}-g^{\mu\nu} G^{kl}G_{ik,\mu}G_{jl,\nu}-\frac{1}{2}g^{\mu\gamma}g^{\nu\delta}G_{ik}G_{jl}F_{\mu\nu}^kF_{\gamma\delta}^l+\frac{1}{2}b_ib_j, \\
	\frac{\partial A_\mu^i}{\partial t} &= -g^{\gamma\delta}F_{\mu\gamma;\delta}^i-g^{\gamma\delta}G^{ij}G_{jk,\gamma}F_{\mu\delta}^k+\frac{1}{2}a_\mu b_jG^{ij}, \\
	\frac{\partial g_{\mu\nu}}{\partial t} &= -2R_{\mu\nu}+\frac{1}{2}G^{ij}G_{jk,\mu}G^{kl}G_{il,\nu}+g^{\gamma\delta}G_{ij}F_{\mu\gamma}^iF_{\nu\delta}^j+\frac{1}{2}a_\mu a_\nu, \\
	\frac{\partial a_\mu}{\partial t} &= -(d d_{g}^*a)_{\mu}+b_ig^{\gamma\delta}F_{\mu\gamma;\delta}^i+b_ig^{\gamma\delta}G^{ij}G_{ik,\gamma}F_{\mu\delta}^k-\frac{1}{2}a_\mu b_ib_jG^{ij}, \\
	\frac{\partial b_i}{\partial t} &= 0.
\end{aligned}
\end{equation}
\subsection{$\mathcal{W}_+$-functional and Monotonicity}
Motivated by \cite{lott2010dimensional}, in order to understand the long-time behavior of the $\alpha$-Ricci flow under the Type III estimate, we require a modified version of the $\mathcal{W}_+$-functional and its corresponding monotonicity formula. For the definition and geometric significance of the usual $\mathcal{W}_+$-functional, we refer the reader to \cite{feldman2005entropy} for the standard Ricci flow case, and to \cite{lott2010dimensional} for the invariant Ricci flow case; here, we focus primarily on the explicit computations.

\begin{definition}\label{W plus functional}
Given $f \in C^\infty(B)$ and $t > 0$, we define
\begin{align*}
	&\W_+(G_{ij}, A^i_\mu, g_{\mu\nu}, a_\mu, b_i, f, t) \\
	=& \int_B \Big[ t \Big( |\nabla f|^2 + R - \frac{1}{4}g^{\mu\nu}a_\mu a_\nu - \frac{1}{4}G^{ij}b_i b_j - \frac{1}{4}g^{\mu\nu}G^{ij}G_{jk,\mu}G^{kl}G_{li,\nu} \\
	&\quad - \frac{1}{4}g^{\mu\gamma}g^{\nu\delta}G_{ij}F_{\mu\nu}^i F_{\gamma\delta}^j \Big) - f + n \Big] (4\pi t)^{-n/2} e^{-f} \, d\vol_B.
\end{align*}
\end{definition}

Before establishing the monotonicity of the $\mathcal{W}_+$-functional, we first derive some of its quantitative properties. The following lemma provides a lower bound for the modified scalar curvature, parallel to the classical estimate for the Ricci flow.

\begin{lemma}\label{l:scalar.curvature.monotonicity}
Let $(\bar g(t), \bar \alpha(t))$ be a locally $\mathbb{R}^N$-invariant $\alpha$-Ricci flow on a compact base $B$ of dimension $n$ defined for $t \in [0,T]$. Then, for all $t \in (0,T]$, we have 
\[
\bar R^{\bar\alpha}_{\min}(t) \ge -\frac{n+N}{2t}
\]
where $\bar R^{\bar\alpha}_{\min}(t)=\min_B (\bar R(t)-\frac{1}{4}|\bar\alpha(t)|^2)$.
\end{lemma}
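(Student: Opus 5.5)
The lower bound follows from the maximum principle applied to the total space, using the evolution of $\bar R^{\bar\alpha}$ from Theorem~\ref{scalar.curvature.monotonicity}. Since $(\bar g(t),\bar\alpha(t))$ is an $\alpha$-Ricci flow on the $(n+N)$-dimensional manifold $M$, that theorem gives the pointwise identity
\[
\frac{\partial \bar R^{\bar\alpha}}{\partial t}=\Delta_{\bar g}\bar R^{\bar\alpha}+2|\overline{\ric}^{\bar\alpha}|^2+\frac12(d^*_{\bar g}\bar\alpha)^2 .
\]
Its derivation is purely local, so it holds on $M$ even though $M$ is noncompact.

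The key observation is that $\bar R^{\bar\alpha}$ is invariant under the local $\R^N$-action. Both $\bar g$ and $\bar\alpha$ are invariant, so $\bar R^{\bar\alpha}$ is constant on fibers and descends to a function $\rho(x,t)$ on the compact base $B$. Invariance is preserved under the flat twisting by $E$, because the holonomy acts by isometries of the fiber structure.

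Next I would compute $\Delta_{\bar g}$ applied to a fiber-constant function $\rho$. Using \eqref{christoffel.symbol}, or the fact that $\bar g$ is a Riemannian submersion with volume element $\sqrt{\det G}\,d\vol_B$ (with $\det G$ well defined by the parallel fiber density assumption), this gives
\[
\Delta_{\bar g}\rho=\Delta_g\rho+\tfrac12 g^{\mu\nu}G^{ij}G_{ij,\mu}\rho_{,\nu}.
\]
This is a drift Laplacian on $B$, whose first-order term vanishes at a spatial minimum. Applying Cauchy--Schwarz on the $(n+N)$-dimensional tangent space gives $|\overline{\ric}^{\bar\alpha}|^2\ge\frac{1}{n+N}(\bar R^{\bar\alpha})^2$, and dropping the nonnegative $(d^*\bar\alpha)^2$ term leaves
\[
\partial_t\rho\ge \Delta_g\rho+\tfrac12\langle\nabla\log\det G,\nabla\rho\rangle+\tfrac{2}{n+N}\rho^2 .
\]

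Now run the standard maximum principle on the compact $B$. The minimum $\rho_{\min}(t)$ is Lipschitz and satisfies $D^-\rho_{\min}\ge\frac{2}{n+N}\rho_{\min}^2$ wherever $\rho_{\min}<0$. Comparing with the ODE solution $-\frac{n+N}{2(t-\epsilon)}$, which blows up to $-\infty$ at $t=\epsilon$, on $[\epsilon,T]$ and then letting $\epsilon\to0$ yields $\rho_{\min}(t)\ge-\frac{n+N}{2t}$.

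The main obstacle is justifying the use of the maximum principle, since $M$ is noncompact and Theorem~\ref{scalar.curvature.monotonicity} only gives the bound on closed manifolds. The reduction to $B$ handles this: the evolution inequality descends to a parabolic inequality for $\rho$ on the closed manifold $B$. This requires checking that the Laplacian of a fiber-constant function involves only base derivatives and the $\log\det G$ drift, with no contribution from the connection $A$. That holds because $\bar\Gamma^\mu_{ij}$ and $g^{\mu\nu}\bar\Gamma^\gamma_{\mu\nu}$ are the only Christoffel symbols entering the trace.
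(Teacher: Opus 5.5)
Your proposal is correct and follows the same route as the paper: take the evolution equation for $\bar R^{\bar\alpha}$ on the total space, apply Cauchy--Schwarz in dimension $n+N$, use that $\bar R^{\bar\alpha}$ descends to $B$, where $\bar\Delta$ becomes the drift Laplacian $\Delta+\frac12 g^{\mu\nu}G^{ij}G_{ij,\mu}\partial_\nu$, and run the maximum principle on the compact base. Your ODE comparison and your explicit remark that reducing to $B$ is what gets around the noncompactness of the total space simply spell out steps the paper leaves implicit.
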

\begin{proof}
Theorem \ref{scalar.curvature.monotonicity} gives
\[
\begin{aligned}
\frac{\partial \bar R^{\bar \alpha}}{\partial t}
&=\bar\Delta \bar R^{\bar \alpha}
+2\left|\overline{\operatorname{Ric}}
-\frac14\bar\alpha\otimes\bar\alpha\right|_{\bar g}^2
+\frac12|d_{\bar g}^*\bar\alpha|^2\ge \bar\Delta \bar R^{\bar \alpha}+\frac{2}{n+N}(\bar R^{\bar \alpha})^2.
\end{aligned}
\]
Since $\bar R^{\bar\alpha}$ descends to $B$, we have
\[
\bar\Delta\bar R^{\bar\alpha}
=\Delta\bar R^{\bar\alpha}
+\frac12g^{\mu\nu}G^{ij}G_{ij,\mu}\bar R^{\bar\alpha}_{,\nu}.
\] Since $B$ is
compact, the maximum principle therefore yields
\[
\min_B \bar R^{\bar \alpha}(t)\ge-\frac{n+N}{2t}.
\]
\end{proof}

\begin{lemma}\label{volume.ratio.lower.bound.by.W_+}
If $(4\pi t)^{-n/2}\int_B e^{-f}\, d\vol_B = 1$, then 
\begin{align*}
    \W_+(G_{ij}, A^i_\mu, g_{\mu\nu}, a_\mu, b_i, f, t) 
    &\ge t{\bar R^{\bar\alpha}}_{\min} + n + \frac{n}{2}\log(4\pi) - \log\big(t^{-n/2}\vol(B, g_{\mu\nu}(t))\big) \\
    &\ge \frac{n-N}{2} + \frac{n}{2}\log(4\pi) - \log\big(t^{-n/2}\vol(B, g_{\mu\nu}(t))\big).
\end{align*}
\end{lemma}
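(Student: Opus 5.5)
The plan is to compare the integrand of $\W_+$ with the scalar curvature of the total space, and then handle the remaining $f$-terms with Jensen's inequality. The first step is to identify the curvature combination in Definition~\ref{W plus functional} with the modified scalar curvature of $(\bar g,\bar\alpha)$ plus a nonnegative term. Taking the trace of the Ricci components in \cite[(4.6)]{lott2007long} gives, for a locally $\R^N$-invariant metric,
\[
\bar R = R - G^{ij}G_{ij;\mu}{}^{\mu} - \tfrac14 g^{\mu\nu}G^{ij}G_{jk,\mu}G^{kl}G_{li,\nu} - \tfrac14 g^{\mu\nu}G^{ij}G_{ij,\mu}G^{kl}G_{kl,\nu} - \tfrac14 g^{\mu\gamma}g^{\nu\delta}G_{ij}F^i_{\mu\nu}F^j_{\gamma\delta}.
\]
Using \eqref{invariant.flow} and $A(b)=0$, we also have $|\bar\alpha|^2 = g^{\mu\nu}a_\mu a_\nu + G^{ij}b_ib_j$. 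The total-derivative terms $-G^{ij}G_{ij;\mu}{}^\mu - \frac14|\nabla\log\det G|^2$ are exactly the terms $-2\Delta\log\sqrt{\det G}-|\nabla\log\sqrt{\det G}|^2$. Because the functional is integrated against $e^{-f}d\vol_B$ rather than the total-space measure, I expect them to be absorbed by $|\nabla f|^2$ after integrating by parts, up to a nonnegative square; this is the same mechanism as in Lott's invariant $\W_+$ computation. If that absorption fails, I would instead use Lemma~\ref{l:scalar.curvature.monotonicity} only through the pointwise inequality that the bracket is at least $\bar R^{\bar\alpha}$ after dropping $|\nabla f|^2\ge0$. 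In either version the intended conclusion is
\[
\int_B t\Big(|\nabla f|^2+\cdots\Big)(4\pi t)^{-n/2}e^{-f}\,d\vol_B \;\ge\; t\,\bar R^{\bar\alpha}_{\min}.
\]
This uses the normalization $(4\pi t)^{-n/2}\int_B e^{-f}=1$.

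The second step handles the term $\int_B(n-f)(4\pi t)^{-n/2}e^{-f}$. Set $d\mu=(4\pi t)^{-n/2}e^{-f}d\vol_B$, which is a probability measure. Then
\[
-\int_B f\,d\mu=\int_B \log\frac{d\mu}{d\vol_B}\,d\mu+\frac n2\log(4\pi t).
\]
By Jensen's inequality (nonnegativity of relative entropy with respect to the normalized volume),
\[
\int_B \log\frac{d\mu}{d\vol_B}\,d\mu\ge -\log\vol(B).
\]
So this term is at least $n+\frac n2\log(4\pi)-\log\big(t^{-n/2}\vol(B,g_{\mu\nu}(t))\big)$, and adding it to the first step gives the first inequality. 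The second inequality then follows by inserting Lemma~\ref{l:scalar.curvature.monotonicity}: $t\bar R^{\bar\alpha}_{\min}\ge-\frac{n+N}{2}$, and $n-\frac{n+N}{2}=\frac{n-N}{2}$.

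The main obstacle is the first step, namely getting the exact sign and constants when matching the $\W_+$ integrand to $\bar R^{\bar\alpha}$. The term $-\tfrac14 G^{ij}b_ib_j$ and the $a$-term must match $-\tfrac14|\bar\alpha|^2$, which they do. The $\log\det G$ terms are the delicate part: they are why I rely on the assumption that the holonomy preserves the fiber density, so that $\det G$ is a well-defined function on $B$ and the integration by parts against $e^{-f}$ is legitimate.
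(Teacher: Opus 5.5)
Your proposal is correct and follows the paper's proof. With $\psi=\log\sqrt{\det G}$ (so your term $G^{ij}G_{ij;\mu}{}^{\mu}$ must be read as $\Delta\log\det G=2\Delta\psi$), integrating $2\Delta\psi$ by parts against $e^{-f}$ turns the bracket exactly into $|\nabla f+\nabla\psi|^2+\bar R^{\bar\alpha}$; the paper writes down this identity, drops the square, and then applies Jensen and Lemma~\ref{l:scalar.curvature.monotonicity} just as you do. So the absorption you anticipate holds with no remainder, and you should discard the fallback: without the integration by parts the bracket is not pointwise $\ge\bar R^{\bar\alpha}$, since $\Delta\psi$ has no sign.
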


\begin{proof}
Using the expression for the integrand of $\mathcal{W}_+$, we have
\begin{align*}
    &\W_+(G_{ij}, A^i_\mu, g_{\mu\nu}, a_\mu, b_i, f, t) \\
	= &\int_B \Big[ t \Big( \big|\nabla f + \nabla\log\sqrt{\det(G_{ij})}\big|^2 + \bar R^{\bar \alpha} \Big) - f + n \Big] (4\pi t)^{-n/2}e^{-f}\, d\vol_B \\
    \ge& t{\bar R^{\bar\alpha}}_{\min} + n - \int_B f (4\pi t)^{-n/2}e^{-f}\, d\vol_B \\
    \ge& t{\bar R^{\bar\alpha}}_{\min} + n + \frac{n}{2}\log(4\pi) - \log\big(t^{-n/2}\vol(B, g_{\mu\nu}(t))\big),
\end{align*}
where the final inequality follows from applying Jensen's inequality to the probability measure $\frac{d\vol_B}{\vol(B, g_{\mu\nu}(t))}$ using the convex function $\phi(x) = x\log x$ evaluated at the density function $\vol(B, g_{\mu\nu}(t))(4\pi t)^{-n/2}\exp\big({-f}\big)$. Combining this estimate with Lemma~\ref{l:scalar.curvature.monotonicity} yields the second inequality in the statement.
\end{proof}

Now, we are ready to prove the monotonicity formula.
\begin{theorem}[Monotonicity formula]\label{t:evolution.of.w+.functional}
Under the flow equations (\ref{gauged.invariant.alpha.ricci.flow}) and
\begin{equation}\label{e:f_equation}
	\frac{\partial f}{\partial t}=-\Delta f+|\nabla f|^2-(R-\frac{1}{4}g^{\mu\nu}G^{ij}G_{jk,\mu}G^{kl}G_{il,\nu}-\frac{1}{2}g^{\mu\nu}g^{\gamma\delta}G_{ij}F_{\mu\gamma}^iF_{\nu\delta}^j-\frac{1}{4}g^{\mu\nu}a_\mu a_\nu+\frac{n}{2t})
\end{equation}
we have
\begin{align*}
	&\frac{d}{dt}\W_+(G_{ij},A^i_\mu,g_{\mu\nu},a_\mu,b_i,f,t)\\
	=&\frac{t}{2} \int_B|g^{\mu\nu}G_{ij;\mu\nu}-g^{\mu\nu}G^{kl}G_{ik,\mu}G_{jl,\nu}-\frac{1}{2}g^{\mu\gamma}g^{\nu\delta}G_{ik}G_{jl}F_{\mu\nu}^kF_{\gamma\delta}^l-g^{\mu\nu}G_{ij,\mu}f_{,\nu}+\frac{1}{2}b_ib_j|^2(4\pi t)^{-n/2}e^{-f}\, d\vol_B\\
	&+t\int_B|g^{\gamma \delta}F_{\mu\gamma;\delta}^i+g^{\gamma\delta}G^{ij}G_{jk,\gamma}F_{\mu\delta}^k-g^{\gamma\delta}f_{,\gamma}F_{\mu\delta}^i-\frac{1}{2}b_ka_\mu G^{ik}|^2(4\pi t)^{-n/2}e^{-f}\, d\vol_B\\
	&+2t\int_B|R_{\mu\nu}-\frac{1}{4}a_\mu a_\nu-\frac{1}{4}G^{ij}G_{jk,\mu}G^{kl}G_{li,\nu}-\frac{1}{2}g^{\gamma\delta}G_{ij}F_{\mu\gamma}^iF_{\nu\delta}^j+f_{;\mu\nu}+\frac{1}{2t}g_{\mu\nu}|^2(4\pi t)^{-n/2}e^{-f}\, d\vol_B\\
	&+\frac{t}{2}\int_B|d_g^*a+i_{\nabla f}a|^2(4\pi t)^{-n/2}e^{-f}\, d\vol_B+\frac{1}{4}\int_Bg^{\mu\gamma}g^{\nu\delta}G_{ij}F_{\mu\nu}^iF_{\gamma\delta}^j(4\pi t)^{-n/2}e^{-f}\, d\vol_B\\
	&-\frac{1}{4}\int _BG^{ij}b_ib_j(4\pi t)^{-n/2}e^{-f}\, d\vol_B.
    \end{align*}
\end{theorem}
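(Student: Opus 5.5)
The plan is to reduce to Lott's computation for invariant Ricci flow \cite{lott2010dimensional} and track only the extra terms produced by $\bar\alpha$. Write the weighted measure $d\mu=(4\pi t)^{-n/2}e^{-f}d\vol_B$. First I will check that \eqref{e:f_equation} preserves $\int_B d\mu$. Under \eqref{gauged.invariant.alpha.ricci.flow}, the trace $g^{\mu\nu}\partial_t g_{\mu\nu}$ equals $-2R+\frac12|G^{-1}dG|^2+|F|_G^2+\frac12|a|^2$ (with the obvious contractions). Hence $\partial_t(d\vol_B)$ cancels against the $-\partial_t f - \frac{n}{2t}$ factor up to a divergence $\Delta e^{-f}$, exactly as in the Ricci flow case. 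This makes $\frac{d}{dt}\W_+$ a pure integral of pointwise variations against $d\mu$.

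Second, I will split the integrand of $\W_+$ as $\W_+^{\mathrm{Lott}}-\frac t4\int(|a|^2+G^{ij}b_ib_j)\,d\mu$. The first piece is the invariant $\W_+$ of \cite[Section 4]{lott2010dimensional}, while the $\alpha$-terms perturb its evolution.

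I will compute the variation of $\W_+^{\mathrm{Lott}}$ under a general variation $(\delta G,\delta A,\delta g,\delta f)$ in Lott's first-variation form. Schematically it is $-\int t\langle \delta g, \mathrm{Ric}_f+\frac{1}{2t}g\rangle$, plus the $G$-part paired with $\frac12(\text{Lott's }G\text{-operator}-G_{,\mu}f^{,\mu})$, plus the $A$-part paired with $(\text{Lott's }A\text{-operator} - f\text{-term})$, plus $\int(\dots)\delta f$. Lott's gauged flow is precisely the negative gradient direction, so it yields the sum of squares; Lott also obtains the residual $\frac14\int|F|^2_G\,d\mu$ from the $t$-dependence.

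Now I substitute our flow, which is Lott's flow plus the perturbations $\frac12 b_ib_j$, $\frac12 a_\mu b_jG^{ij}$, $\frac12 a_\mu a_\nu$ and the $f$-equation change $\frac14|a|^2$. Each perturbation pairs linearly with Lott's gradient. Then I add $\frac{d}{dt}$ of the new $\alpha$-terms, using the $a_\mu$ equation in \eqref{gauged.invariant.alpha.ricci.flow}, $\partial_t b_i=0$ (Corollary~\ref{fiber.constant.in.spacetime}), and $\partial_t G^{ij}=-G^{ik}\partial_tG_{kl}G^{lj}$. The goal is to show the cross terms complete the squares in the statement. In the $G$-square, the $+\frac12 b_ib_j$ should combine with the $-\partial_t(G^{ij})b_ib_j$ term. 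In the $A$-square, the $-\frac12 b_k a_\mu G^{ik}$ should arise from $\partial_t a_\mu$'s $F$-terms, since $\partial_t(a_\mu+b_iA^i_\mu)=-(dd^*_ga)_\mu$. In the $\mathrm{Ric}$-square, $-\frac14 a_\mu a_\nu$ should appear. Finally, $\frac t2|d^*_ga+i_{\nabla f}a|^2$ should come from integrating $-\frac t2\langle a,dd^*a\rangle$ by parts against $e^{-f}$.

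The leftover zeroth-order terms come from $\frac{d}{dt}(t\cdot)$ and the $\frac{1}{2t}g$ normalization. These are $-\frac14\int G^{ij}b_ib_j\,d\mu$ together with the $|a|^2$ term, which must cancel against the $\frac{1}{2t}\mathrm{tr}(\frac14 a\otimes a)$ cross term from the Ricci square.

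The main obstacle is this bookkeeping. Expanding the three squares produces terms like $\frac14 b_ib_jG^{ik}G^{jl}b_kb_l$, $\frac14|a|^2G^{ij}b_ib_j$ and $\frac1{16}|a|^4$, which must all be matched exactly. The key matching is between the $\frac12 a_\mu b_ib_jG^{ij}$ term in $\partial_ta_\mu$ and the $\frac14|a|^2 G^{ij}b_ib_j$ produced by squaring the $A$-bracket. I would first verify everything in the simplest case $n=N=1$, $F=0$ (the case used later), where the identities are short, and then redo the general case by organizing all contractions via the Christoffel symbols \eqref{christoffel.symbol}.
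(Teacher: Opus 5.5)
Your strategy (first variation of $\W_+$, substitute the flow, complete squares) is the paper's. Your bookkeeping for the $G$-square and for the zeroth-order terms checks out. For the latter, note that the $-\frac12\int_B|a|^2\,d\mu$ cross term of the Ricci square is matched by $-\frac14\int_B|a|^2\,d\mu$ from the explicit factor $t$ plus another $-\frac14\int_B|a|^2\,d\mu$ from the $\frac14|a|^2$ in $\partial_tf$; it is not a cancellation.

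The gap is in how you treat $\bar\alpha$ relative to the $f$-gauge. Lott's sums of squares, and your claim that $\frac{d}{dt}\W_+$ is a pure integral of pointwise variations against $d\mu$, hold only after the whole system is modified by the Lie derivative along the horizontal lift $X$ of $-\nabla f$. Preserving $\int_B d\mu$ is not enough. In that gauge one has $\frac12 g^{\mu\nu}\partial_tg_{\mu\nu}-\partial_tf-\frac{n}{2t}=0$ pointwise, and $\partial_tA^i_\mu$ contains $g^{\gamma\delta}f_{,\delta}F^i_{\mu\gamma}$. You then take $\partial_ta_\mu$ from \eqref{gauged.invariant.alpha.ricci.flow}, i.e.\ $\partial_t(a+b_iA^i)=-dd^*_ga$. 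That is not gauge-consistent with the other equations, and the squares do not close. Weighted integration by parts gives only
\[
\frac t2\int_B\langle a,dd^*_ga\rangle\,d\mu=\frac t2\int_B d^*_ga\,\bigl(d^*_ga+i_{\nabla f}a\bigr)\,d\mu ,
\]
so you are short by $\frac t2\int_B(i_{\nabla f}a)(d^*_ga+i_{\nabla f}a)\,d\mu$. In addition, the cross term of the $A$-square is short by $\frac t2\int_B b_i\,g^{\mu\nu}g^{\gamma\delta}f_{,\delta}F^i_{\mu\gamma}a_\nu\,d\mu$, because the $F$-terms in the unmodified $\partial_ta_\mu$ lack the $f$-part of $\partial_tA^i_\mu$.

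The missing step is to apply the same Lie derivative to $\bar\alpha$. This is legitimate because $\W_+$ is diffeomorphism invariant. Since $\bar\alpha$ is closed and $i_X\bar\alpha=-i_{\nabla f}a$, one has $\mathcal{L}_X\bar\alpha=-d(i_{\nabla f}a)$. Hence $\partial_ta_\mu$ acquires $-(d\,i_{\nabla f}a)_\mu-b_ig^{\gamma\delta}f_{,\delta}F^i_{\mu\gamma}$, and
\[
\partial_ta+b_i\,\partial_tA^i=-d\bigl(d^*_ga+i_{\nabla f}a\bigr).
\]
The two shortfalls above are exactly $-\frac t2\int_B\langle\delta a,a\rangle\,d\mu$, where $\delta a$ is this correction. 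With the identity in hand, $-\frac t2\int_B\langle\partial_ta,a\rangle\,d\mu$ splits into $\frac t2\int_B|d^*_ga+i_{\nabla f}a|^2\,d\mu$ plus $\frac t2\int_B b_i\langle\partial_tA^i,a\rangle\,d\mu$. The latter supplies the other half of the $A$-square cross term together with $\frac t4\int_B|a|^2G^{ij}b_ib_j\,d\mu$. After this fix your computation coincides with the paper's, which uses precisely this identity.
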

\begin{proof}
We denote the time derivatives by $\dot{G}=\frac{d}{dt}G$, $\dot{A}=\frac{d}{dt}A$, $\dot{g}=\frac{d}{dt}g$, $\dot{a}=\frac{d}{dt}a$, and $\dot{f}=\frac{d}{dt}f$, and we write $u=(4\pi t)^{-n/2}e^{-f}$. Differentiating with respect to $t$ and integrating by parts to eliminate derivatives on the time-differentiated terms as in \cite[Proposition 5.3]{kleiner2008notes} and \cite[Proposition 4.64]{lott2010dimensional}, we obtain
\begin{equation}\label{e:compute.d/dt.W+}
    \begin{aligned}
	&\frac{d}{dt}\W_+(G_{ij},A_\mu^i,g_{\mu\nu},a_\mu,b_i,f,t)\\
	=&\frac{t}{2}\int_B \dot G_{kl}G^{ik}G^{jl}\left(g^{\mu\nu}G_{ij;\mu\nu}-g^{\mu\nu} G^{pq}G_{ip,\mu}G_{jq,\nu}-\frac{1}{2}g^{\mu\gamma}g^{\nu\delta}G_{ip}G_{jq}F_{\mu\nu}^pF_{\gamma\delta}^q\right.\\
    & \quad\left.-g^{\mu\nu}G_{ij,\mu}f_{,\nu}+\frac{1}{2}b_ib_j\right)u\, d\vol_B-\frac{t}{2}\int_B (\dot a_\nu+b_j\dot A^j_\nu) a_\mu g^{\mu\nu} u\, d\vol_B   \\
    &+t\int_B \dot A_\nu^j g^{\mu\nu}G_{ij}(-g^{\gamma\delta}F_{\mu\gamma;\delta}^i-g^{\gamma\delta}G^{im}G_{mk,\delta}F_{\mu\gamma}^k+g^{\gamma\delta}f_{,\delta}F_{\mu\gamma}^i+\frac{1}{2}a_\mu b_kG^{ik} )u\, d\vol_B\\
    &+t\int_B \dot g^{\mu\nu}\left(R_{\mu\nu}-\frac{1}{4}a_\mu a_\nu-\frac{1}{4}G_{jk,\mu}G^{ij}G_{il,\nu}G^{kl}-\frac{1}{2}g^{\gamma\delta}G_{ij}F_{\mu\gamma}^iF_{\nu\delta}^j+f_{;\mu\nu}\right)u\, d\vol_B\\
    &+\int_B(\frac{1}{2}g^{\mu\nu}\dot g_{\mu\nu}-\dot f-\frac{n}{2t})\left[t\left(2\Delta f-|\nabla f|^2+R-\frac{1}{4}g^{\mu\nu}a_\mu a_\nu-\frac{1}{4}G^{ij}b_ib_j\right.\right.\\
	& \quad\left.\left.-\frac{1}{4}g^{\mu\nu}G^{ij}G_{jk,\mu}G^{kl}G_{li,\nu}-\frac{1}{4}g^{\mu\gamma}g^{\nu\delta}G_{ij}F_{\mu\nu}^iF_{\gamma\delta}^j\right)-f+n\right]u\, d\vol_B\\
    &+\int_B \left(|\nabla f|^2+R-\frac{1}{4}g^{\mu\nu}a_\mu a_\nu-\frac{1}{4}G^{ij}b_ib_j-\frac{1}{4}g^{\mu\nu}G^{ij}G_{jk,\mu}G^{kl}G_{li,\nu}\right.\\
    & \quad \left.-\frac{1}{4}g^{\mu\gamma}g^{\nu\delta}G_{ij}F_{\mu\nu}^iF_{\gamma\delta}^j\right)u\, d\vol_B-\int_B \dot f u\, d\vol_B
    \end{aligned}
\end{equation}
Modifying the equations \eqref{gauged.invariant.alpha.ricci.flow} and \eqref{e:f_equation} by a Lie derivative with respect to $-\nabla f$,  we get
\begin{equation}\label{gauged.invariant.alpha.ricci.flow.with.f}
    \begin{aligned}
	\frac{\partial G_{ij}}{\partial t}=&g^{\mu\nu}G_{ij;\mu\nu}-g^{\mu\nu} G^{kl}G_{ik,\mu}G_{jl,\nu}-\frac{1}{2}g^{\mu\gamma}g^{\nu\delta}G_{ik}G_{jl}F_{\mu\nu}^kF_{\gamma\delta}^l+\frac{1}{2}b_ib_j-g^{\mu\nu}G_{ij,\mu}f_{,\nu}\\
	\frac{\partial A_\mu^i}{\partial t}=&-g^{\gamma\delta}F_{\mu\gamma;\delta}^i-g^{\gamma\delta}G^{ij}G_{jk,\gamma}F_{\mu\delta}^k+\frac{1}{2}a_\mu b_jG^{ij}+g^{\gamma\delta}f_{,\delta}F_{\mu\gamma}^i\\
	\frac{\partial g_{\mu\nu}}{\partial t}=&-2R_{\mu\nu}+\frac{1}{2}G^{ij}G_{jk,\mu}G^{kl}G_{il,\nu}+g^{\gamma\delta}G_{ij}F_{\mu\gamma}^iF_{\nu\delta}^j+\frac{1}{2}a_\mu a_\nu-2f_{;\mu\nu}\\
	\frac{\partial a_\mu}{\partial t}=&-(d(d_{g}^*a+i_{\nabla f}a))_{\mu}+b_ig^{\gamma\delta}F_{\mu\gamma;\delta}^i+b_ig^{\gamma\delta}G^{ij}G_{ik,\gamma}F_{\mu\delta}^k-\frac{1}{2}a_\mu b_ib_jG^{ij}-b_{i}g^{\gamma\delta}f_{,\delta}F_{\mu\gamma}^i\\
	\frac{\partial b_i}{\partial t}=&0\\
	\frac{\partial f}{\partial t}=&-\Delta f-(R-\frac{1}{4}g^{\mu\nu}G^{ij}G_{jk,\mu}G^{kl}G_{il,\nu}-\frac{1}{2}g^{\mu\nu}g^{\gamma\delta}G_{ij}F_{\mu\gamma}^iF_{\nu\delta}^j-\frac{1}{4}g^{\mu\nu}a_\mu a_\nu+\frac{n}{2t}).
\end{aligned}
\end{equation}
Since \eqref{gauged.invariant.alpha.ricci.flow} and \eqref{e:f_equation} differ from \eqref{gauged.invariant.alpha.ricci.flow.with.f} only by a Lie derivative term, and since $\W_+$ is invariant under pullback by diffeomorphisms of $B$, the evolution of $\mathcal{W}_+$ is the same. Notice that from \eqref{gauged.invariant.alpha.ricci.flow.with.f}, we have $\dot a+b_i\dot A^i=-d(d_{g}^*a+i_{\nabla f}a)$. Substituting \eqref{gauged.invariant.alpha.ricci.flow.with.f} into \eqref{e:compute.d/dt.W+} and integrating by parts yields the desired monotonicity formula.
\end{proof}
\begin{remark}
    The terms on the right-hand side of Theorem~\ref{t:evolution.of.w+.functional} are nonnegative, except for the last one, which depends on the fiber component of $\alpha$. We will show in Section~\ref{s:no.fiber} that for blow-down limits of an $\alpha$-Ricci flow on $\mathbb{T}^2$ when $[\alpha]$ is rational, the fiber component vanishes in the limit.
\end{remark}

\begin{definition}
    We say that a locally $\R^N$-invariant $\alpha$-Ricci flow $(\bar g(t), \bar\alpha(t))$ \textit{has no fiber component} if
    $b_i=0$ for all $i=1,\dots,N$, where $b_i$ are defined in \eqref{invariant.flow}. Note that by Corollary~\ref{fiber.constant.in.spacetime}, each $b_i$ is a constant in spacetime.
\end{definition}

\begin{corollary}[Conditional monotonicity]\label{c:conditional.monotonicity}
    Let $(M, \bar g(t), \bar\alpha(t))$ be a locally $\R^N$-invariant $\alpha$-Ricci flow defined on the time interval $(0,\infty)$. Suppose $\bar \alpha(t)$ has no fiber component and that $f$ is a solution to \eqref{e:f_equation} on $(0,\infty)$. Then $\frac{d}{dt}\W_+\ge0$.

    If, in addition, $\W_+$ is constant, then the following properties hold:
    
    (i) $F_{\mu\nu}^i=0$.
	
	(ii) $\det(G_{ij})$ is constant in spacetime.
    
    (iii) $G_{ij}$, $A_\mu^i$, $a_\mu$ are constant in time and $g_{\mu\nu}(t)=tg_{\mu\nu}(1)$.
	
	(iv) The following equations hold
    \begin{align*}
		g^{\mu\nu}G_{ij;\mu\nu}-g^{\mu\nu}G^{kl}G_{ik,\mu}G_{jl,\nu}=&0,\\
		R_{\mu\nu}-\frac{1}{4}a_\mu a_\nu-\frac{1}{4}G^{ij}G_{jk,\mu}G^{kl}G_{il,\nu}+\frac{1}{2t}g_{\mu\nu}=&0.
	\end{align*}
    
    (v) The $1$-form $a$ is harmonic on $B$:
\[
da=0,\qquad d_g^*a=0.
\]
\end{corollary}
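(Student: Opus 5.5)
Setting $b_i=0$ in the formula of Theorem~\ref{t:evolution.of.w+.functional} removes the only term that can be negative. Every other term on the right-hand side is the integral of a squared norm against the positive measure $(4\pi t)^{-n/2}e^{-f}\,d\vol_B$. The one exception is $\frac14\int_B g^{\mu\gamma}g^{\nu\delta}G_{ij}F^i_{\mu\nu}F^j_{\gamma\delta}\,u\,d\vol_B$, which is also nonnegative because $G_{ij}$ is positive definite. Hence $\frac{d}{dt}\W_+\ge0$. The only care needed is to justify that the computation applies on $(0,\infty)$ with $f$ solving \eqref{e:f_equation}, for instance by noting that $u=(4\pi t)^{-n/2}e^{-f}$ solves a conjugate heat equation and stays positive.

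If $\W_+$ is constant, every term in the formula vanishes identically, and I would extract (i)--(v) from the individual terms. The $F$-term gives $F^i_{\mu\nu}=0$, which is (i). With $b=0$ and $F=0$, the first square gives $g^{\mu\nu}G_{ij;\mu\nu}-g^{\mu\nu}G^{kl}G_{ik,\mu}G_{jl,\nu}-g^{\mu\nu}G_{ij,\mu}f_{,\nu}=0$. The third square gives the $f$-modified Ricci equation, and the fourth gives $d^*_ga+i_{\nabla f}a=0$.

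For (ii), I would take the trace of the $G$-equation with $G^{ij}$. Setting $\phi=\log\det G$ gives $\Delta\phi-\langle\nabla f,\nabla\phi\rangle=0$, since the quadratic terms cancel: $G^{ij}G_{ij;\mu\nu}-G^{ij}G^{kl}G_{ik,\mu}G_{jl,\nu}=(\log\det G)_{;\mu\nu}$. By the maximum principle on compact $B$, $\phi$ is spatially constant. The gauged equation for $G_{ij}$ together with the vanishing square shows that, in the gauge \eqref{gauged.invariant.alpha.ricci.flow.with.f}, $\dot G_{ij}=0$, so $\det G$ is constant in time as well.

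The main obstacle is removing $f$ to obtain (iii)--(v). The vanishing Ricci square says $g$ is a gradient soliton with potential $f$, and the $a$- and $G$-equations involve $\nabla f$. I would try to show that $f$ is constant using the standard compact-expander trick. Taking the trace of the Ricci equation and combining it with the $f$-evolution, which is forced to match, gives an identity of the form $\Delta f-|\nabla f|^2+\dots=$ const. Integrating the weighted divergence identities against $e^{-f}$ on compact $B$ should then show $\nabla f=0$, in the same way compact steady or expanding Ricci solitons are Einstein. I expect to need the term $\frac14|a|^2$ and the $G$-derivative terms to enter with the right sign; if the direct argument fails, I would instead use Perelman-style integration of $|{\rm Hess} f|^2$.

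Once $f$ is constant, the remaining conclusions follow directly. The gauged flow becomes $\dot G=0$, $\dot A=0$ (since $F=0$ and $b=0$), and $\dot g_{\mu\nu}=g_{\mu\nu}/t$, so $g(t)=t\,g(1)$. Also $\dot a=-dd^*_ga$ with $d^*_ga=0$, giving $\dot a=0$. This is (iii). The two equations in (iv) are the vanishing squares with $f$ constant. For (v), $da=0$ comes from closedness of $\bar\alpha$ via the $dx^\mu\wedge dx^\nu$ component in the proof of Proposition~\ref{invariant.alpha.ricci.flow} with $b=0$, and $d_g^*a=0$ is the fourth square.
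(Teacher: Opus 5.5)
Your treatment of the monotonicity, of the individual vanishing terms, of (ii), and of (iii)--(v) once $f$ is known to be constant matches the paper. For (ii) you apply the strong maximum principle to $\Delta\phi-\langle\nabla f,\nabla\phi\rangle=0$, where the paper integrates against $\phi e^{-f}$; both work.

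The gap is the step you flag yourself. You never show that $f$ is constant, and the mechanism you describe does not produce the identity you need. With $F=0$, $b=0$ and $\nabla\log\det G=0$, the traced Ricci equation reads $\bar R^{\bar\alpha}+\Delta f+\frac n{2t}=0$. Inserting this into \eqref{e:f_equation} gives only $\partial_t f=|\nabla f|^2$. That says $f$ is transported along the self-similar motion and carries no spatial information.

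An identity such as $\Delta f-|\nabla f|^2-f/t=C(t)$, from which the max/min argument for compact expanders would force $f$ to be constant, does hold here. But it comes only from the contracted second Bianchi identity on $B$, combined with a divergence computation for $\frac14a_\mu a_\nu$ and $\frac14G^{ij}G_{jk,\mu}G^{kl}G_{li,\nu}$. That computation uses $da=0$, $d_g^*a+i_{\nabla f}a=0$ and the drift-harmonic equation for $G$; with these, the $a$- and $G$-contributions cancel exactly, so there is no sign issue. None of this appears in your proposal, and the ``Perelman-style'' fallback is not specified.

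The paper closes this step with tools already available: self-similarity plus the evolution equation for $\bar R^{\bar\alpha}$.
\begin{enumerate}
\item By \eqref{evolve.by.L.nabla.f}, the flow moves by diffeomorphisms generated by $\nabla f$ together with the scaling $\partial_t g_{\mu\nu}=g_{\mu\nu}/t$. Hence $\partial_t\bar R^{\bar\alpha}=\langle\nabla f,\nabla\bar R^{\bar\alpha}\rangle-\bar R^{\bar\alpha}/t$.
\item Equate this with the evolution equation of Theorem~\ref{scalar.curvature.monotonicity}, where here $\bar\Delta=\Delta$, $\bar R_{i\mu}=0$ and $G^{ij}\bar R_{ij}=0$. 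Completing the square with $\frac1{2t}g_{\mu\nu}$ gives an elliptic identity. Its minimum principle yields $\bar R^{\bar\alpha}+\frac n{2t}\ge0$.
\item Integrating the trace identity gives $\int_B(\bar R^{\bar\alpha}+\frac n{2t})\,d\vol_B=-\int_B\Delta f\,d\vol_B=0$. Hence $\bar R^{\bar\alpha}\equiv-\frac n{2t}$, so $\Delta f=0$ and $f$ is constant.
\end{enumerate}
This also disposes of your sign worry: the needed positivity is $2|\overline{\mathrm{Ric}}^{\bar\alpha}|^2+\frac12(d^*\bar\alpha)^2\ge0$. Lemma~\ref{l:scalar.curvature.monotonicity} alone gives only $-\frac{n+N}{2t}$, which is not enough. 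The sharp constant $\frac n{2t}$ needs $G^{ij}\bar R_{ij}=0$, so that only the base block enters the square while $|\bar R_{ij}|^2$ is simply kept as a nonnegative term.
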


\begin{proof}
    Since $\bar \alpha(t)$ has no fiber component, $\frac{d}{dt}\W_+\ge0$ follows from Theorem~\ref{t:evolution.of.w+.functional}.
    
    If $\W_+$ is constant, then by Theorem~\ref{t:evolution.of.w+.functional} we have 
    \begin{align}
        &F_{\mu\nu}^i=0,\label{W+.F.mu.nu.=0}\\
	    &g^{\mu\nu}G_{ij;\mu\nu}-g^{\mu\nu}G^{kl}G_{ik,\mu}G_{jl,\nu}-g^{\mu\nu}G_{ij,\mu}f_{,\nu}=0,\label{W+.Gij=0}\\
        &R_{\mu\nu}-\frac{1}{4}a_\mu a_\nu-\frac{1}{4}G^{ij}G_{jk,\mu}G^{kl}G_{li,\nu}-\frac{1}{2}g^{\gamma\delta}G_{ij}F_{\mu\gamma}^iF_{\nu\delta}^j+f_{;\mu\nu}+\frac{1}{2t}g_{\mu\nu}=0,\label{W+g.mu.nu.=0}\\
        &d_g^*a+i_{\nabla f}a=0.\label{W+d^*_fa=0}
	\end{align}
	Multiplying \eqref{W+.Gij=0} by $G^{ij}$ and summing over indices gives 
	\[\Delta\log \det (G_{ij})-\langle \nabla \log \det (G_{ij}),\nabla f\rangle=0.\]
	Integrating the left-hand side against $\log(\det (G_{ij}) )e^{-f}\, d\vol_B$ and applying the integration by parts formula, we get $\int_B|\nabla \log\det(G_{ij})|^2 e^{-f}\, d\vol_B=0$. Thus
    \begin{equation}\label{nabla.log.det.G=0}
	    G^{ij}G_{ij,\mu}=(\nabla\log\det (G_{ij}))_\mu=0.
	\end{equation} Hence $\det (G_{ij})$ is a constant in space. In particular
	\begin{equation}\label{Delta.log.det.G=0}
	    G^{ij}(g^{\mu\nu}G_{ij;\mu\nu}-g^{\mu\nu}G_{ik,\mu}G^{kl}G_{lj,\nu})=\Delta \log\det(G_{ij})=0.
	\end{equation}
    Plugging \eqref{W+.F.mu.nu.=0}, \eqref{W+d^*_fa=0}, \eqref{nabla.log.det.G=0} and \eqref{Delta.log.det.G=0} into \eqref{e:invariant.alpha.ricci.flow}, we have
    \begin{equation}\label{evolve.by.L.nabla.f}
        \frac{\partial G_{ij}}{\partial t}=g^{\mu\nu}G_{ij,\mu}f_{,\nu},\quad\frac{\partial A^i_\mu}{\partial t}=0,\quad \frac{\partial g_{\mu\nu}}{\partial t}=2f_{;\mu\nu}+\frac{1}{t}g_{\mu\nu},\quad\frac{\partial a_\mu}{\partial t}=(di_{\nabla f}a)_\mu.
    \end{equation}
	In particular
    $\frac{\partial}{\partial t}\log\det G_{ij}=G^{ij}\frac{\partial G_{ij}}{\partial t}=0$ by \eqref{nabla.log.det.G=0}. Hence $\det (G_{ij})$ is constant in spacetime. Moreover, from (\ref{evolve.by.L.nabla.f}) the flow evolves by a diffeomorphism generated by $\nabla f$ and a scaling of $g_{\mu\nu}$. Thus we have
	\[\frac{\partial \bar R^{\bar \alpha}}{\partial t}=\langle\nabla f,\nabla \bar R^{\bar \alpha}\rangle-\frac{\bar R^{\bar \alpha}}{t}.\]
Since $F=0$, $b=0$, and $\det G$ is spatially constant,
$\bar R_{i\mu}=0$,
$\bar\Delta\bar R^{\bar\alpha}
=\Delta\bar R^{\bar\alpha}$, and
$d_{\bar g}^*\bar\alpha=d_g^*a$.
The evolution equation consequently gives
\[
\frac{\partial\bar R^{\bar\alpha}}{\partial t}
=
\Delta\bar R^{\bar\alpha}
+2|\bar R_{ij}|^2
+2\left|\bar R_{\mu\nu}-\frac14a_\mu a_\nu\right|^2
+\frac12(d_g^*a)^2.
\]
Combining this with
\[
\frac{\partial\bar R^{\bar\alpha}}{\partial t}
=
\langle\nabla f,\nabla\bar R^{\bar\alpha}\rangle
-\frac{\bar R^{\bar\alpha}}{t}
\]
and completing the square yields
\begin{equation}\label{equation.for.R.alpha+n/2t}
\begin{aligned}
&\Delta\left(\bar R^{\bar\alpha}+\frac{n}{2t}\right)
+2|\bar R_{ij}|^2
+2\left|\bar R_{\mu\nu}-\frac14a_\mu a_\nu
+\frac{1}{2t}g_{\mu\nu}\right|^2\\
&\qquad
+\frac12(d_g^*a)^2
-\frac1t\left(\bar R^{\bar\alpha}+\frac{n}{2t}\right)
=
\left\langle\nabla f,
\nabla\left(\bar R^{\bar\alpha}+\frac{n}{2t}\right)
\right\rangle.
\end{aligned}
\end{equation}
At a negative minimum of
$\bar R^{\bar\alpha}+\frac{n}{2t}$, the left-hand side
would be strictly positive while the right-hand side vanishes.
Thus we must have $\min_B \bar R^{\bar \alpha}+\frac{n}{2t}\ge 0$.
	
    On the other hand, plugging (\ref{W+.F.mu.nu.=0}) and (\ref{Delta.log.det.G=0}) into \cite[(4.6),(4.7)]{lott2007long}, we have 
    \begin{align*}
        \bar R_{ij}=&-\frac{1}{2}g^{\mu\nu}G_{ij;\mu\nu}+\frac{1}{2}g^{\mu\nu}G^{kl}G_{ik,\mu}G_{jl,\nu},\quad\bar R_{i\mu}=0,\\
	\bar R_{\mu\nu}=&R_{\mu\nu}-\frac{1}{4}G^{ij}G_{jk,\mu}G^{kl}G_{il,\nu},\\
	\bar R=&R-\frac{1}{4}g^{\mu\nu}G^{ij}G_{jk,\mu}G^{kl}G_{il,\nu}.
    \end{align*}
	Tracing (\ref{W+g.mu.nu.=0}) by $g^{\mu\nu}$, we get 
    \begin{equation}\label{R.mu.f=0}
	    \bar R^{\bar \alpha}+\Delta f+\frac{n}{2t}=R-\frac{1}{4}|a|^2-\frac{1}{4}g^{\mu\nu}G^{ij}G_{jk,\mu}G^{kl}G_{il,\nu}+\Delta f+\frac{n}{2t}=0.
	\end{equation}
	Then by the divergence theorem, we have \[\int_B (\bar R^{\bar \alpha} +\frac{n}{2t})\, d\vol_B=-\int_B\Delta f\, d\vol_B=0.\]
	Since $\min_B\bar R^{\bar \alpha}+\frac{n}{2t}\ge0$, we have $\bar R^{\bar \alpha}+\frac{n}{2t}\equiv0$. Hence, by (\ref{R.mu.f=0}) and the fact that $B$ is closed, we conclude that $f$ is a constant. Due to (\ref{evolve.by.L.nabla.f}), we have (iii). By \eqref{W+d^*_fa=0}, we have (v). By (\ref{equation.for.R.alpha+n/2t}), we have $\bar R_{\mu\nu}-\frac{1}{4}a_\mu a_\nu+\frac{1}{2t}g_{\mu\nu}=\bar R_{ij}=0$ which gives the equations in (iv).
\end{proof}

\subsection{Asymptotic limits via \texorpdfstring{$\mathcal{W}_+$}{W+}-monotonicity}
In this subsection, we apply the conditional monotonicity of the $\mathcal{W}_+$-functional to study the asymptotic limits of locally $\R^N$-invariant $\alpha$-Ricci flows. We first introduce the following natural concept of convergence of locally $\R^N$-invariant $\alpha$-Ricci flow.
\begin{definition}\label{d:convergence.invariant.alpha.RF}
    Let $\{(M_i, g_i(t),\alpha_i(t), p_i)\}_{i=1}^\infty$ be a sequence of locally $\R^N$-invariant $\alpha$-Ricci flow solutions defined on the time interval $t \in (0,\infty)$. We say that
\[ \lim_{i\to\infty} (M_i, g_i(t),\alpha_i(t), p_i)= (M_\infty, g_\infty(t),\alpha_\infty(t), p_\infty)
\]
if there are:

\begin{enumerate}[label=(\roman*)]
    \item A sequence of open subsets $\{U_j\}_{j=1}^\infty$ of $B_\infty$, each containing $p_\infty$, such that any compact subset of $B_\infty$ eventually lies in all $U_j$, and
    
    \item Open subsets $V_{i,j} \subset B_i$ containing $p_i$ and isomorphisms
    \[
    \phi_{i,j} : \pi_\infty^{-1}(U_j) \to \pi_i^{-1}(V_{i,j})
    \]
    sending $\pi_\infty^{-1}(p_\infty)$ to $\pi_i^{-1}(p_i)$, such that
    
    \item For all $j$,
    \[
    \lim_{i \to \infty} \phi_{i,j}^* g_i(t) = g_\infty(t),\lim_{i \to \infty} \phi_{i,j}^* \alpha_i(t) = \alpha_\infty(t)
    \]
    smoothly on $\pi_\infty^{-1}(U_j) \times \left[ j^{-1},\,  j\right]$.
\end{enumerate}

If $B_\infty$ is compact, then we can remove the reference to basepoints.

\end{definition}

\begin{corollary}\label{alpha.expander}
    Let $(\bar g(t),\bar \alpha(t))$ be a locally $\R^N$-invariant $\alpha$-Ricci flow with a compact base $B$ defined on the time interval $(0,\infty)$ and $\lim_{i\to\infty} (\bar g_{s_i}(t),\bar\alpha_{s_i} (t))=(\hat g(t),\hat \alpha(t))$ for some sequence $s_i\to\infty$ or $s_i\to0$. If $\bar\alpha(t)$ has no fiber component and $(\hat g(t),\hat \alpha(t))$ is locally $\R^N$-invariant with a compact base $\hat B$ defined for $(0,\infty)$, then, writing $(\hat g(t),\hat \alpha(t))=(\hat G_{ij}( t),\hat A_\mu^i( t),\hat g_{\mu\nu}( t),\hat a_\mu( t),0)$, we have 
	
	(i) $\hat F_{\mu\nu}^i=0$,
	
	(ii) $\det(\hat G_{ij})$ is constant in spacetime,
    
    (iii) $\hat G_{ij}$, $\hat A_\mu^i$, $\hat a_\mu$ are constant in time and $\hat g_{\mu\nu}(t)=t\hat g_{\mu\nu}(1)$,
	
	(iv) The following equations hold
    \begin{align}
		\hat g^{\mu\nu}\hat G_{ij;\mu\nu}-\hat g^{\mu\nu}\hat G^{kl}\hat G_{ik,\mu}\hat G_{jl,\nu}=&0,\label{alpha.expander.G.ij}\\
		\hat R_{\mu\nu}-\frac{1}{4}\hat a_\mu \hat a_\nu-\frac{1}{4}\hat G^{ij}\hat G_{jk,\mu}\hat G^{kl}\hat G_{il,\nu}+\frac{1}{2t}\hat g_{\mu\nu}=&0.\label{alpha.expander.R.mu.nu}
	\end{align}
    
    (v) The $1$-form $\hat a$ is harmonic on $\hat B$:
\[
d\hat a=0,\qquad d_{\hat g}^*\hat a=0.
\]
\end{corollary}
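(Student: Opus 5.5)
The plan is the standard Perelman/Lott argument: a monotone quantity that is bounded becomes constant along blow-down or blow-up limits. Fix a reference time and solve the backward equation \eqref{e:f_equation} for $f$ on the base $B$, ending at a large time. Then use Corollary~\ref{c:conditional.monotonicity} to get monotonicity of $\W_+$ for $(\bar g(t),\bar\alpha(t))$; this applies because $\bar\alpha$ has no fiber component. Take
\[
\mathcal{W}_+(t):=\sup_{T>t}\W_+\big(\cdot,f_T(t),t\big),
\]
where $f_T$ solves \eqref{e:f_equation} backward from time $T$ with normalized initial data. Two facts from Theorem~\ref{t:evolution.of.w+.functional} and Corollary~\ref{c:conditional.monotonicity} do most of the work.

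\emph{Scale invariance.} $\W_+$ and \eqref{e:f_equation} are invariant under the parabolic rescaling $(g_{\mu\nu},t)\mapsto(s^{-1}g_{\mu\nu}(st),t)$, with $G_{ij}$, $A$, $a$, $b$ unchanged, since $|a|^2_g$ scales like $R$. So the value of $\W_+$ for the rescaled flow at time $t$ equals the value for the original flow at time $st$.

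\emph{Monotonicity and boundedness.} Since $b_i=0$, $\frac{d}{dt}\W_+\ge0$. For an upper bound I would use the standard argument: choose $f$ as a test function, or bound $\W_+$ above by the constant-$f$ value, using $|a|$ and $R$ bounds controlled by the convergence to a compact-base limit. For a lower bound, Lemma~\ref{volume.ratio.lower.bound.by.W_+} works once $t^{-n/2}\mathrm{vol}(B,g_{\mu\nu}(t))$ is controlled along the sequence, which follows because the limit $\hat B$ is compact. Hence $\lim_{t\to\infty}\W_+$ (or $\lim_{t\to0}$) exists and is finite.

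Next, pass to the limit. For the rescaled flows $(\bar g_{s_i},\bar\alpha_{s_i})$ take the corresponding minimizers/solutions $f_i$ of \eqref{e:f_equation}. They have uniform parabolic estimates from the smooth convergence in Definition~\ref{d:convergence.invariant.alpha.RF} and the compactness of $\hat B$. A subsequence converges to a solution $\hat f$ on $\hat B\times(0,\infty)$. By scale invariance, $\W_+(\hat g,\hat\alpha,\hat f,t)=\lim_i\W_+(\bar g,\bar\alpha,f,s_it)$, which is the same limit $L$ for every $t$. So $\W_+$ is constant along the limit flow, and the limit still has $\hat b=0$ because $b_i$ is constant and unchanged by rescaling. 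Applying the rigidity part of Corollary~\ref{c:conditional.monotonicity} to $(\hat g,\hat\alpha,\hat f)$ then gives (i)--(v) directly.

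The main obstacle is making $\mathcal{W}_+$ well defined and continuous under the limit. The infinite-time backward solution of \eqref{e:f_equation} must have uniform bounds so that the $f_i$ converge and the value of $\W_+$ passes to the limit without losing mass, as in \cite[Section 4]{lott2010dimensional}. I would follow Lott's approach: define $\W_+$ via the infimum over normalized $f$ at each time. This gives a quantity that is nondecreasing, scale invariant, and upper semicontinuous under smooth convergence with compact base. I would then check that a minimizer exists on the compact limit base and that the equality case of the monotonicity formula propagates to the limit.
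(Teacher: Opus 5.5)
Your architecture is the paper's. With $b=0$ the functional $\W_+$ is nondecreasing and scale invariant, so on the limit its value at time $t$ equals $\lim_i$ of the original value at time $s_it$, which does not depend on $t$. The rigidity half of Corollary~\ref{c:conditional.monotonicity} then gives (i)--(v). However, the step you call the main obstacle is the actual content of the proof, and neither substitute you offer works as written.

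\emph{The supremum.} For $\sup_{T>t}\W_+(f_T(t),t)$, monotonicity is not established. For $t_1<t_2$ and $T\in(t_1,t_2]$ you only get that $\W_+(f_T(t_1),t_1)$ is bounded by the value of $\W_+$ at the normalized terminal datum at time $T$, and that value need not be bounded by the quantity at $t_2$. Bounding by constant-$f$ values is an argument suited to an infimum, not a supremum. It would also require control of the flow at all large times, while the hypothesis controls the flow only at times $s_it$.

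\emph{The infimum.} For $\mu_+$, upper semicontinuity under smooth convergence yields only $\mu_+(\hat g(t),t)\ge\lim_i\mu_+(\bar g(s_it),s_it)$, which is the wrong inequality for constancy. You would need lower semicontinuity (uniform control of minimizers). Alternatively, you could flow the time-$t_2$ minimizers of the rescaled flows backward and show that the integrated monotonicity defect over $[t_1,t_2]$ tends to zero. Note also that minimizers taken at different times do not form a solution of \eqref{e:f_equation}, so a limit of ``minimizers $f_i$'' is not an admissible $\hat f$ for the rigidity statement.

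\emph{The missing construction.} Work with $u=(4\pi t)^{-n/2}e^{-f}$, which solves the linear equation \eqref{e:u_equation} and has conserved mass $\int_B u\,d\vol_B$.
\begin{enumerate}[label=(\arabic*)]
\item Solve \eqref{e:u_equation} on intervals $[\delta_j,t_j]$ exhausting $(0,\infty)$, with terminal value $1/\vol(B,g_{\mu\nu}(t_j))$.
\item Nonnegativity, unit mass, and heat-kernel smoothing from $L^1$ terminal data give $C^\infty$ bounds on each $[\delta,T]$.
\item A diagonal subsequence produces one solution $\bar u$ on all of $(0,\infty)$. It is positive by a strong maximum principle/Harnack argument, so it defines one global $\bar f$.
\item Along $\bar f$ the functional $\W_+$ is monotone, so $\W_{+,\infty}$ exists in the extended reals with no a priori bounds.
\item The rescalings $u_i(t)=s_i^{n/2}\bar u(s_it)$ are unit-mass solutions for the rescaled flows, and the same smoothing gives a positive limit $\hat u$ on $\hat B$.
\item The value of $\W_+$ on the limit with $\hat f$ at time $t$ equals $\lim_i\W_+(\bar f(s_it),s_it)=\W_{+,\infty}$ for every $t$. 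Finiteness of $\W_{+,\infty}$ is thus an output, not an input.
\end{enumerate}
The uniform estimates must be for $u_i$, not $f_i$, since $f_i$ is uncontrolled where $u_i$ is small.

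Also, $G_{ij}$ must be rescaled together with $g_{\mu\nu}$, as in the paper. With $G_{ij}$ and $A$ held fixed, the $F$-term of $\W_+$ is not scale invariant.
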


\begin{proof}
By Corollary~\ref{c:conditional.monotonicity}, it suffices to construct a solution to \eqref{e:f_equation} along $(\hat g(t), \hat \alpha(t))$ on $(0,\infty)$ for which $\W_+$ remains constant. We follow similar ideas in \cite[Proposition 4.39]{lott2010dimensional}.

We write $u=(4\pi t)^{-n/2}e^{-f}$. Then \eqref{e:f_equation} is equivalent to 
\begin{equation}\label{e:u_equation}
    \left(\frac{\partial }{\partial t}+\Delta-R+\frac{1}{4}g^{\mu\nu}G^{ij}G_{jk,\mu}G^{kl}G_{il,\nu}+\frac{1}{2}g^{\mu\nu}g^{\gamma\delta}G_{ij}F_{\mu\gamma}^iF_{\nu\delta}^j+\frac{1}{4}g^{\mu\nu}a_\mu a_\nu\right)u=0.
\end{equation}
Note that if $u$ is a solution to \eqref{e:u_equation}, then $\int_B u  \,d\vol_B$ is constant in $t$. Choose $\delta_j\downarrow0$ and $t_j\uparrow\infty$ with $\delta_j<t_j$, so that $[\delta_j,t_j]$ exhausts $(0,\infty)$. Let $\tilde{u}_j(t)$ be a solution to \eqref{e:u_equation} on the interval $[\delta_j,t_j]$ with terminal condition
\[
\tilde{u}_j(t_j) = \frac{1}{\operatorname{vol}(B, g_{\mu\nu}(t_j))}.
\]
For any $0<\delta<T$, we claim that a subsequence of the $\tilde{u}_j$'s converges smoothly on the time interval $[\delta,T]$.

To see this, at time $T+1$ we know that if $\delta_j<\delta$ and $t_j\ge T+1$ then $\tilde{u}_j(T+1) \ge 0$ and
\[
\int_B \tilde{u}_j(T+1)\, d\vol_B = 1.
\]
Solving the conjugate heat equation with terminal data at time $T+1$, and restricting the solution to the time interval $[\delta,T]$, gives a smoothing operator from the space of terminal data
\[
\left\{ \tilde{u} \in L^1(B) : \tilde{u} \ge 0,\ \int_B \tilde{u}\, d\vol_B(T+1) = 1 \right\}
\]
to $C^\infty([\delta,T]\times B)$ by the heat kernel estimate \cite{chau2011pseudolocality}. Thus we have the derivative bounds needed to extract a subsequence of the $\tilde{u}_j$'s that converges smoothly on $[\delta,T]$. By a diagonal argument over $[1/k,k]$, $k\ge2$, we can extract a subsequence of the $\tilde{u}_j$'s that converges smoothly on compact subsets of $(0,\infty)$ to a nonzero solution $\bar{u}(t)$ of \eqref{e:u_equation}, defined for $t \in (0,\infty)$.

One can show, as in \cite[Proof of Proposition 7.5]{kleiner2008notes}, that $\bar{u}(t) > 0$. If $\bar f(t)$ is given by
\[
\bar u(t) = (4\pi t)^{-n/2} e^{-\bar{f}(t)},
\]
then $\W_+(\bar G_{ij},\bar A^i_\mu,\bar g_{\mu\nu},\bar a_\mu,0,\bar f,t)$ is non-decreasing in $t$. We write
\[
\W_{+,\infty} = \lim_{t \to \infty} \W_+(\bar G_{ij}(t), \bar A^i_{\mu}(t), \bar g_{\mu\nu}(t),\bar a_\mu(t),0, \bar f(t),t),
\]
if we are in the case $s_i\to\infty$ and 
\[
\W_{+,\infty} = \lim_{t \to0} \W_+(\bar G_{ij}(t), \bar A^i_{\mu}(t), \bar g_{\mu\nu}(t),\bar a_\mu(t),0, \bar f(t),t),
\]
if we are in the case $s_i\to0$.
Note that $\W_{+,\infty}$ is possibly $\pm\infty$ for the moment.

Next, we put 
$(\bar g_{s_i}(t),\bar \alpha_{s_i}(t))=(s_i^{-1}\bar G_{ij}(s_i t),\bar A_\mu^i(s_i t),s_i^{-1}\bar g_{\mu\nu}(s_i t),\bar a_\mu(s_i t),0)$ and $u_i(t) = s_i^{n/2}\bar{u}(s_it)$. By assumption, $\lim_{i \to \infty} (\bar g_{s_i}(t),\bar\alpha_{s_i}(t)) = (\hat g(t),\hat\alpha(t))$ in the sense of Definition~\ref{d:convergence.invariant.alpha.RF}. Then by the same smoothing argument as above, there is a subsequence of $\{u_i(t)\}_{i=1}^\infty$ that converges smoothly on compact subsets of $(0,\infty)$ to a solution $\hat u(t)$ on $\hat B$. Define $\hat f(t)$ by
\[
\hat u(t) = (4\pi t)^{-n/2}e^{-\hat f(t)}.
\]
Then after passing to a subsequence of $s_i$,
\begin{equation}
\begin{aligned}
&\W_+(\hat G_{ij}(t), \hat A^i_{\mu}(t), \hat g_{\mu\nu}(t),\hat a_\mu(t),0, \hat f(t),t)\\
=& \lim_{i \to \infty} \W_+(s_i^{-1}\bar G_{ij}(s_i t),\bar A_\mu^i(s_i t),s_i^{-1}\bar g_{\mu\nu}(s_i t),\bar a_\mu(s_i t),0,\bar f(s_i t),t) \\
=& \lim_{i \to \infty} \W_+(\bar G_{ij}(s_i t),\bar A_\mu^i(s_i t),\bar g_{\mu\nu}(s_i t),\bar a_\mu(s_i t),0,\bar f(s_i t),s_it) \\
=& \W_{+,\infty}
\end{aligned}
\end{equation}

This shows that $\W_{+,\infty}< \infty$ and that
\[
\W_+(\hat G_{ij}(t), \hat A^i_{\mu}(t), \hat g_{\mu\nu}(t),\hat a_\mu(t),0, \hat f(t),t)
\]
is constant in $t$. This finishes the proof.
\end{proof}

When the base is $S^1$, the argument of Corollary~\ref{alpha.expander} applies under the following weaker convergence assumption.
\begin{corollary}\label{1d.alpha.expander}
Let $(\bar g(t),\bar\alpha(t))$ be a locally $\R^N$-invariant $\alpha$-Ricci flow on $S^1$, defined for $t\in(0,\infty)$, with trivial linear holonomy and no fiber component. Let $s_i\to0$ or $s_i\to\infty$. After time-independent identifications of the bases and constant linear changes of the fiber coordinates, write
\[
\begin{aligned}
g_i(t):=s_i^{-1}\bar g(s_it)
&=u_i(t)(dx^0)^2+(G_i)_{jk}(t)(dx^j+A_i^j(t))(dx^k+A_i^k(t)),\\
\alpha_i(t):=\bar\alpha(s_it)&=a_i(t)\,dx^0.
\end{aligned}
\]
Suppose that $\hat G>0$, $\hat u>0$, and
\[
(G_i,u_i,a_i)\longrightarrow(\hat G,\hat u,\hat a)
\quad\text{in }C^\infty(S^1\times[\delta,T]),
\qquad 0<\delta<T<\infty.
\]
Then, choosing $x^0$ to be arclength at time one, a constant linear change of the fiber coordinates, and an orientation of $S^1$, we have
\[
\hat G_{ij}=\delta_{ij},\qquad \hat u(t)=t,\qquad \hat a=\sqrt2.
\]
In particular, for any time-independent flat connection $\hat A$, the metric and $1$-form determined by $(\hat G,\hat A,\hat u,\hat a,0)$ lift to a flow isometric to $(g^*(t),\alpha^*(t))$ on the universal cover.
\end{corollary}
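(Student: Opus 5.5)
The plan is to rerun the argument of Corollary~\ref{alpha.expander} with base $B=S^1$ ($n=1$), noting that on a one-dimensional base the connection enters nowhere. Since $\dim B=1$, every curvature $F^i_{\mu\nu}$ vanishes identically, and $\mathcal W_+$ (Definition~\ref{W plus functional}) involves only $G,u,a,f$. The same holds for the gauged system \eqref{gauged.invariant.alpha.ricci.flow} and the $f$-equation \eqref{e:f_equation}. Hence neither the monotonicity nor the limiting argument needs $A_i$ to converge. Only $C^\infty$ convergence of $(G_i,u_i,a_i)$ is used, and this is the hypothesis we are given.

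First I construct the positive conjugate heat solution $\bar u$ on $S^1\times(0,\infty)$ exactly as in Corollary~\ref{alpha.expander}, taking terminal data at $t_j\to\infty$, using heat-kernel smoothing, and applying a diagonal argument. Because $b=0$, Corollary~\ref{c:conditional.monotonicity} shows that $\mathcal W_+(t)$ is nondecreasing. I then rescale, $u_i(t)=s_i^{1/2}\bar u(s_it)$. The coefficients of \eqref{e:u_equation} depend only on $(G_i,u_i,a_i)$, which converge smoothly on $[\delta,T]$, so I can extract a limit $\hat u$. By scale invariance, $\mathcal W_+$ along the limit equals the monotone limit $\mathcal W_{+,\infty}$. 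This value is finite and constant in $t$. The \textbf{main obstacle} is justifying that the limit flow $(\hat G,\hat u,\hat a)$ still solves the reduced system, so that the rigidity statement applies. I would argue that the evolution equations for $(G,u,a)$ with $n=1$ do not involve $A$, so they pass to the limit termwise.

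Next I apply the rigidity part of Corollary~\ref{c:conditional.monotonicity}. It gives that $\hat G$ and $\hat a$ are time independent and $\hat u(t)=t\hat u(1)$. Part (v) says $\hat a\,dx^0$ is harmonic, and on $S^1$ this means $\hat a/\sqrt{\hat u}$ is constant in arclength. Taking $x^0$ to be arclength at time one gives $\hat u(1)=1$ and $\hat a$ constant. On a one-dimensional base $R_{\mu\nu}=0$, so the second equation of (iv) becomes
\[
-\tfrac14\hat a^2-\tfrac14\operatorname{tr}\bigl((\hat G^{-1}\hat G')^2\bigr)+\tfrac12=0 .
\]
The first equation of (iv) says that $\hat G^{-1}\hat G'$ is constant along $S^1$. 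Trivial holonomy makes $\hat G$ periodic, so $\hat G^{-1}\hat G'=0$; otherwise $\hat G=\hat G(0)e^{x^0X}$ would fail to be periodic, since $\operatorname{tr}X^2>0$ for symmetric-like $X$ (indeed $\hat G^{-1/2}\hat G'\hat G^{-1/2}$ is symmetric). Therefore $\hat G$ is constant, and the displayed equation forces $\hat a^2=2$. Choosing the orientation gives $\hat a=\sqrt2$. A constant linear change of fiber coordinates then makes $\hat G_{ij}=\delta_{ij}$.

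Finally, since $F=0$, any time-independent flat $\hat A$ is locally exact. On the universal cover, $\hat A=d\phi$ for some vector-valued $\phi$, and the substitution $x^j\mapsto x^j+\phi^j$ removes it. This yields $\hat g(t)=t(dx^0)^2+\sum_j(dx^j)^2$ and $\hat\alpha=\sqrt2\,dx^0$, which is $(g^*(t),\alpha^*(t))$ with $\theta=x^0$ and $v=x$.
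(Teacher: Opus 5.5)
Your proposal is correct and follows essentially the same route as the paper. Since $F=b=0$, the connection drops out of the reduced equations, the $f$-equation and $\mathcal W_+$, so the conjugate-heat/entropy argument of Corollary~\ref{alpha.expander} runs on $(G_i,u_i,a_i)$ alone; then $(\hat G^{-1}\hat G')'=0$ with periodicity gives $\hat G$ constant, and the Ricci equation gives $\hat a^2=2$. Two small remarks: the paper reads off $\hat a^2=2$ pointwise from \eqref{alpha.expander.R.mu.nu} without using the harmonicity in (v), and your ``scale invariance'' step tacitly needs $\mathcal W_+$ to be unchanged by the $i$-dependent constant linear fiber changes, which the paper states explicitly and which holds because $G$ enters only through $\operatorname{tr}((G^{-1}G')^2)$.
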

\begin{proof}
Since $\dim B=1$ and $b=0$, we have $F=0$, and the equations for $G,u,a$, equation~\eqref{e:f_equation}, and $\W_+$ are independent of $A$. Moreover, $\W_+$ is unchanged by constant linear changes of the fiber coordinates. Thus the conjugate heat construction and passage to the limit in the proof of Corollary~\ref{alpha.expander} apply to the assumed convergence of $(G_i,u_i,a_i)$: the limiting $\W_+$ is constant, and we obtain~\eqref{alpha.expander.G.ij},~\eqref{alpha.expander.R.mu.nu}, $\partial_t\hat G=\partial_t\hat a=0$, and $\hat u(t)=t\hat u(1)$. Choose $x^0$ to be arclength at time one, so that $\hat u(t)=t$. By triviality of the linear holonomy, $\hat G_{ij}$ and $\hat a$ are periodic functions of $x^0$.
    Then equation (\ref{alpha.expander.G.ij}) becomes
    \[\hat G_{ij}''-\hat G^{kl}\hat G_{ik}'\hat G_{jl}'=0\] where $'=\partial_{0}$.
    In other words, $(\hat G^{-1}\hat G')'=0$. Thus $\hat G^{-1}\hat G'=X$ for some constant matrix $X$. Integrating $\hat G'=\hat GX$, we get $\hat G(x^0)=\hat G(0)\exp(x^0X)$. We can choose coordinates such that $\hat G(0)=I$. Then $\hat G'(0)=X$ is symmetric and in particular, $X$ is diagonalizable and has real eigenvalues. Since $\hat G$ is periodic in $x^0$, we must have $X=0$ and hence $\hat G=I$.
    Equation (\ref{alpha.expander.R.mu.nu}) now reads
	\[
		-\frac{1}{4}\hat a^2+\frac{1}{2}=0.
	\]
	Hence adjusting the orientation of $S^1$ if necessary, we have $\hat a=\sqrt 2$.
Finally, on the universal cover, write $\hat A^i=d\psi^i$; the time-independent change $y^i=x^i+\psi^i$ removes $\hat A$ and gives the asserted isometry.
\end{proof}

\begin{remark}\label{r:connection.convergence}
Corollary~\ref{1d.alpha.expander} does not require $A_i\to\hat A$. Even though $F_i=0$, normalization of the fiber coordinates may make $A_i$ unbounded, so convergence of $(G_i,u_i,a_i)$ need not imply convergence in the sense of Definition~\ref{d:convergence.invariant.alpha.RF}. In the groupoid setting, different flat connections with the same $G$ and base metric give equivalent Riemannian groupoids by~\cite[Lemma~5.11]{lott2010dimensional}, although the global bundle metrics need not be isometric. To avoid groupoid language, we use convergence of $(G_i,u_i,a_i)$ directly; since $F=b=0$, these are precisely the coefficients needed in the entropy argument.
\end{remark}
Now, we show that if a blow-down limit has no fiber component, then it must be the expander given in Example~\ref{example.noncompact invariant expander}. We follow the argument of Lott \cite[Propositions 6.8 and 6.11]{lott2010dimensional}. One important ingredient is the stability of $(g^*(t),\alpha^*(t))$ among locally $\R^N$-invariant $\alpha$-Ricci flows on $S^1\times\R^N$ with trivial linear holonomy and no fiber component which is proved in Appendix~\ref{s:dynamic stability}; see Theorem~\ref{stability} and Corollary~\ref{neighborhood.around.expander}. We first introduce the definitions needed to apply these results.

Let $\bar g=u(dx^0)^2+G_{ij}(dx^i+A^i)(dx^j+A^j)$ and $\bar\alpha= adx^0$ be a locally $\R^N$-invariant metric and closed 1-form over $S^1$. We fix orientation of $S^1$ such that $\int_{S^1}\bar\alpha=\sqrt2\ell$ , where $\ell>0$. Write $S^1=\R/\ell\Z$. Consider the tuple \[\mathbf{\bar g}=( G, u,\eta)\text{ where }\bar\alpha=(\sqrt2+\partial_0\eta)\,dx^0.\] Here $\bar\alpha$ determines $\eta$ up to an additive constant.  The tuple corresponding to $(g^*(t),\alpha^*(t))$ at time one is $\mathbf g^*=(I,1,0)$.

\begin{definition}\label{tuple.equivalence}
The equivalence class $[\mathbf{\bar g}]$ consists of the tuples
\[
\left(
S^T( G\circ\phi)S,\,
( u\circ\phi)(\phi')^2,\,
\eta\circ\phi+\sqrt2(\phi-x^0)-c
\right),
\]
where $S\in\operatorname{GL}(N,\R)$, $c\in\R$, and $\phi$ is a smooth orientation-preserving diffeomorphism of $S^1$. In the expression $\phi-x^0$, we use a degree-one lift of $\phi$ to $\R$, denoted by the same symbol, satisfying $\phi(x^0+\ell)=\phi(x^0)+\ell$. Thus $\phi-x^0$ is $\ell$-periodic and defines a real-valued function on $S^1$. Changing the lift adds an integer multiple of $\ell$, which is absorbed into the constant $c$. The change in $\eta$ is obtained by
\[
\phi^*\bar\alpha
=\sqrt 2d\phi+d(\eta\circ\phi)=\left(\sqrt2+\partial_0\bigl(\eta\circ\phi
+\sqrt2(\phi-x^0)-c\bigr)\right)dx^0.
\]
For a finite-dimensional vector space $V$ and nonintegral $r>0$, let $h^r(S^1;V)$ be the closure of $C^\infty(S^1;V)$ in $C^r(S^1;V)$, with the inherited norm. Set $\mathbb V=\operatorname{Sym}_N\times\R\times\R$, where $\operatorname{Sym}_N$ is the space of real symmetric $N\times N$ matrices. For nonintegral $r>1$, define
\begin{equation}\label{eq:distance.to.expander.class}
 d_r([\mathbf{\bar g}],[\mathbf g^*])
 =\inf_{\mathbf v\in[\mathbf{\bar g}]}
 \|\mathbf v-\mathbf g^*\|_{h^r(S^1;\mathbb V)}.
\end{equation}
Fix $0<\rho<\theta<1$. For $\delta,\epsilon>0$, define
\[
\begin{aligned}
\mathcal V_\delta
&=\{[\mathbf{\bar g}]:d_{1+\theta}([\mathbf{\bar g}],[\mathbf g^*])<\delta\},\\
\mathcal U_\epsilon
&=\{[\mathbf{\bar g}]:d_{1+\rho}([\mathbf{\bar g}],[\mathbf g^*])<\epsilon\}.
\end{aligned}
\]
We have
\[
d_r([\mathbf{\bar g}],[\mathbf g^*])=0
\quad\Longleftrightarrow\quad
[\mathbf{\bar g}]=[\mathbf g^*].
\]
Indeed, suppose that representatives corresponding to $(S_j,\phi_j,c_j)$ converge to $(I,1,0)$ in $h^r$. Since $G>0$ and $u>0$, this convergence implies that $S_j$ and $S_j^{-1}$ are bounded and $0<c\le\phi_j'\le C$ for constants independent of $j$. After normalizing the degree-one lifts by $\phi_j(0)\in[0,\ell]$ and passing to a subsequence, $S_j\to S\in\operatorname{GL}(N,\R)$ and $\phi_j\to\phi$ in $C^1$, where $\phi$ is a smooth orientation-preserving circle diffeomorphism. Since each $\phi_j$ is onto, convergence of the first component gives $S^T G S=I$ at every point. Write $ a=\sqrt2+\partial_0\eta$. Convergence of the second and third components gives $ a>0$ and
\[
\left(\frac{ a^2}{ u}\right)\circ\phi=2,
\qquad\text{hence}\qquad
 u=\frac12 a^2.
\]
Thus $y=x^0+\eta(x^0)/\sqrt2$ defines an orientation-preserving diffeomorphism of $S^1$, in which $\bar\alpha=\sqrt2\,dy$ and the base metric is $dy^2$. Together with the constant fiber change $S$, this gives the tuple $(I,1,0)$ after choosing the additive constant. The converse follows directly from~\eqref{eq:distance.to.expander.class}. In particular,
\[
\bigcap_{\epsilon>0}\mathcal U_\epsilon=\{[\mathbf g^*]\}.
\]
\end{definition}
    \begin{proposition}
        \label{p:base.S^1.implies.universal.cover.expander}
	Let $(g(t),\alpha(t))$ be a solution to the $\alpha$-Ricci flow on $M=\mathbb{T}^2$. Let $s_i\to\infty$  be a sequence such that $(M,g_{s_i}(1))\xrightarrow{GH} (S^1,d)$ and $(\tilde g_{s_i}(t),\tilde \alpha_{s_i}(t))\xrightarrow{C^\infty}(\tilde g_\infty(t),\tilde\alpha_\infty(t))$ with no fiber component. Then $(\tilde g_\infty(t),\tilde\alpha_\infty(t))=(g^*(t),\alpha^*(t))$.
    \end{proposition}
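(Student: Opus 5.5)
Following Lott's argument for \cite[Propositions 6.8 and 6.11]{lott2010dimensional}, the plan is to use the limit flow itself to produce a second blow-down, identify that second limit via the entropy, and then transfer the conclusion back through the stability results of Appendix~\ref{s:dynamic stability}.

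\emph{Step 1: set up the limit as an invariant flow.} By Corollary~\ref{c:convergence_CFG} and Remark~\ref{fiber bundle}, $(\tilde g_\infty(t),\tilde\alpha_\infty(t))$ is $G_\infty\cong\R\times\Z$-invariant. The quotient by the $\Z$ factor is therefore a locally $\R$-invariant $\alpha$-Ricci flow on an $\R$-bundle over $S^1$, with trivial linear holonomy since $G_\infty$ is abelian, and with no fiber component by hypothesis. We write it as $u(dx^0)^2+G(dx^1+A)^2$, $\bar\alpha=a\,dx^0$. Because $\bar\alpha$ is $G_\infty$-invariant and closed with $b=0$, the period $\int_{S^1}\bar\alpha$ is finite. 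For $t\in(0,\infty)$ the data $(G,u,a)$ are smooth with $G,u>0$, so the class $[\mathbf{\bar g}(t)]$ of Definition~\ref{tuple.equivalence} is defined after rescaling the base to length $\ell$.

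\emph{Step 2: blow down the limit again and identify the result.} Take a sequence $\sigma_k\to\infty$ and rescale $\bar g$ by $\sigma_k^{-1}\bar g(\sigma_k t)$. The Type III bounds $|R|\le C/t$ and $|\alpha|^2\le C/t$ pass to the limit flow, and its base length grows like $\sqrt t$. After normalizing the fiber coordinate by a constant and choosing $x^0$ as arclength at time one, this gives $C^\infty(S^1\times[\delta,T])$ bounds on $(G_k,u_k,a_k)$. I expect this to be the delicate point: one needs uniform two-sided bounds on $G_k$ after a constant rescaling of the fiber, which I would get from Harnack-type control of $\log G$ along the base, since $\partial_0\log G$ is bounded by curvature. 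Given these bounds, a subsequence converges. Corollary~\ref{1d.alpha.expander} then identifies the limit tuple as $(I,t,\sqrt2)$, so $[\mathbf{\bar g}_{\sigma_k}(1)]\to[\mathbf g^*]$ in $d_{1+\theta}$. In particular, for large $T$ the class $[\mathbf{\bar g}_T(1)]$ lies in $\mathcal V_\delta$.

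\emph{Step 3: stability and time reversal.} By Theorem~\ref{stability} and Corollary~\ref{neighborhood.around.expander}, once the flow enters $\mathcal V_\delta$ it stays in $\mathcal U_\epsilon$ for all later times. This gives forward closeness only, and we need it at every $t$. So the next step is to use that $(\tilde g_\infty,\tilde\alpha_\infty)$ is itself a blow-down limit: $\tilde g_\infty(t)=\lim \tilde g_{s_i}(t)$. For each fixed $t_0$ and each $\epsilon$, the approximating flows $s_i^{-1}g(s_it)$ at times $\tau t_0$ with $\tau\ll1$ correspond to the original flow at times $s_i\tau t_0\to\infty$. Arguing by contradiction, suppose $d_{1+\rho}([\mathbf{\bar g}_\infty(t_0)],[\mathbf g^*])\ge\epsilon_0$. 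The flow at earlier times, which are still large original times, would then have to avoid $\mathcal V_\delta$ on long intervals. Blowing those times down as in Step 2 produces a limit that is again the expander, and hence lies in $\mathcal V_\delta$, a contradiction. The conclusion is $[\mathbf{\bar g}_\infty(t)]\in\mathcal U_\epsilon$ for every $\epsilon$, hence $=[\mathbf g^*]$ by $\bigcap_\epsilon\mathcal U_\epsilon=\{[\mathbf g^*]\}$.

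\emph{Step 4: conclude.} A flow whose class is $[\mathbf g^*]$ at each time equals $(g^*(t),\alpha^*(t))$ up to a diffeomorphism and a fiber rescaling; the $A$ term is removed as in Corollary~\ref{1d.alpha.expander}. The scale is pinned because $\int\bar\alpha$ together with the base length determines $u=t$. Lifting to $\R^2$ then gives $(\tilde g_\infty,\tilde\alpha_\infty)=(g^*,\alpha^*)$.
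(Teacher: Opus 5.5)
Your Step~3 fails because the argument runs in the wrong time direction. Corollary~\ref{neighborhood.around.expander} only propagates closeness to $[\mathbf g^*]$ forward in time. So knowing from Step~2 that $[\mathbf{\bar g}_T(1)]\in\mathcal V_\delta$ for large $T$ says nothing about $[\mathbf{\bar g}_{t_0}(1)]$ at a fixed $t_0$.

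Your contradiction argument is either circular or incomplete, depending on how ``blowing those times down'' is read:
\begin{enumerate}
\item If it means taking a new subsequential limit of the original flow on $\mathbb T^2$ at the times $s_it'$, then asserting that this limit is the expander is exactly the proposition you are proving. Moreover, the tuple classes and Corollary~\ref{neighborhood.around.expander} apply only to invariant flows, not to the original flow.
\item If it means taking a blow-up $\lim_k\tau_k^{-1}\bar g(\tau_kt)$, $\tau_k\to0$, of the limit flow and then applying Step~2 to it, you first need this blow-up to exist over a non-collapsed circle. That requires $t^{-1/2}\vol(B,u(t)d\theta^2)\ge c>0$ as $t\to0$, which you never establish. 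The Type~III estimates only give the upper bound $\le C$.
\end{enumerate}
The same problem already affects Step~2: the claim that the base length ``grows like $\sqrt t$'' has only the upper bound. A lower bound via $L(t)\ge\sqrt{t/2}\,\bigl|\int_{S^1}\bar\alpha\bigr|$ would need the base period of $\bar\alpha$ to be nonzero. That is not available a priori; it is part of the conclusion.

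The missing idea is to apply $\mathcal W_+$ to the limit flow itself near $t=0$. Because the limit has no fiber component, you can build a normalized solution of \eqref{e:f_equation} on $(0,\infty)$ as in Corollary~\ref{alpha.expander}. Corollary~\ref{c:conditional.monotonicity} and Lemma~\ref{volume.ratio.lower.bound.by.W_+} then give
\[
\tfrac12\log(4\pi)-\log\bigl(t^{-1/2}\vol(B,u(t)d\theta^2)\bigr)\le\mathcal W_+(t)\le\mathcal W_+(1),\qquad 0<t\le1.
\]
This yields exactly the missing lower bound $c\le t^{-1/2}\vol(B,u(t)d\theta^2)$ on $(0,1]$, and it shows that $\lim_{t\to0}\mathcal W_+$ is finite.

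Consequently the blow-ups $t_i^{-1}\bar g(t_it)$ with $t_i\to0$ converge to a flow with constant $\mathcal W_+$; your curvature-based control of $\log G$ handles the fiber. Corollary~\ref{1d.alpha.expander}, in the case $s_i\to0$, identifies this limit as the expander. Hence $[\mathbf{\bar g}_{t_i}(1)]\in\mathcal V_\delta$ for large $i$. Since $T/t_i\ge1$ for each fixed $T>0$, Corollary~\ref{neighborhood.around.expander} applied forward from time $t_i$ gives $[\mathbf{\bar g}_T(1)]\in\mathcal U_\epsilon$ for every $T$ and every $\epsilon$.

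So your blow-down in Step~2 and the contradiction in Step~3 should be replaced by this blow-up as $t\to0$. Your Step~4 then essentially matches the paper's ending: $G'=0$, $u=\frac t2a^2$, and the gauged equations force $\partial_tG=\partial_ta=\partial_tA=0$.
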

\begin{proof}
By Corollary~\ref{c:convergence_CFG}, the limit is $\R\times\Z$-invariant and descends to an $\R$-invariant flow $(\bar g(t),\bar\alpha(t))$ over $B=S^1$. Write its coefficients as $G,u,a$, with base metric $u(t)d\theta^2$ and $\bar\alpha(t)=a(t)d\theta$. By Theorems~\ref{diameter.bound},~\ref{alpha.norm.estimate}, and~\ref{type.iii.estimate},
\begin{equation}\label{type.3.estimates.blow.down}
\diam(B,u(t)d\theta^2)\le C\sqrt t,\qquad
|R(\bar g(t))|\le \frac Ct,\qquad
|\bar\alpha(t)|^2\le \frac2t.
\end{equation}
As in the proof of Corollary~\ref{alpha.expander}, construct a normalized solution of~\eqref{e:f_equation} on compact positive-time intervals and pass to a diagonal limit on $(0,\infty)$. Lemma~\ref{volume.ratio.lower.bound.by.W_+} and Corollary~\ref{c:conditional.monotonicity} give
\[
\frac12\log(4\pi)-\log\bigl(t^{-1/2}\vol(B,u(t)d\theta^2)\bigr)
\le\W_+(t)\le\W_+(1),\qquad 0<t\le1.
\]
Consequently, by~\eqref{type.3.estimates.blow.down},
\begin{equation}\label{eq:base.length.backward.bounds}
0<c\le t^{-1/2}\vol(B,u(t)d\theta^2)
=2t^{-1/2}\diam(B,u(t)d\theta^2)\le C,
\qquad 0<t\le1.
\end{equation}
In particular, $\W_+$ is bounded from below on $(0,1]$.

Let $t_i\to0$ and write $(g_i(t),\alpha_i(t))=(t_i^{-1}\bar g(t_it),\bar\alpha(t_it))$. By \eqref{eq:base.length.backward.bounds} we choose base arclength $\theta\in\R/\ell_i\Z$ and normalize the fiber coordinate so that
\[
g_i(1)=d\theta^2+G_i(\theta)(dv+A_i(\theta)d\theta)^2,
\qquad \alpha_i(1)=a_i(\theta)d\theta,
\qquad G_i(0)=1,\qquad c\le\ell_i\le C.
\]
These base identifications and fiber normalizations are fixed in time; in the following time-one estimates, we write $G_i=G_i(1)$ and $a_i=a_i(1)$.
We have
\begin{equation}\label{scalar.in.terms.of.Gi}
    R_i:=R(g_i(1))=
-(\log G_i)''-\frac12((\log G_i)')^2.
\end{equation}
By~\eqref{type.3.estimates.blow.down}, $|R(g_i(1))|\le C$. Evaluating \eqref{scalar.in.terms.of.Gi} at minimum and maximum of the periodic function $(\log G_i)'$ and integrating from $\theta=0$ gives
\[
| (\log G_i)' |\le2\sqrt C,
\qquad C_1^{-1}\le G_i\le C_1.
\]
The smoothing estimates in~\cite{guo2025alpha}, used in Corollary~\ref{c:convergence_CFG}, give bounds on all covariant derivatives of $R_i$ and $\alpha_i$ on bounded lifted balls. Locally choose the fiber coordinate so that $A_i=0$. Since $\theta$ is base arclength, $\partial_\theta$ is then a unit vector field with $\nabla_{\partial_\theta}\partial_\theta=0$. By \eqref{eq:base.length.backward.bounds}, we have
\[
|\partial_\theta^k R_i|\le C_k,\qquad
|a_i|=|\alpha_i|\le\sqrt2,\qquad
\partial_\theta^k a_i
=(\nabla^k\alpha_i)(\partial_\theta,\ldots,\partial_\theta),
\qquad |\partial_\theta^k a_i|\le C_k.
\]
The identity
\[
G_i''=\frac{(G_i')^2}{2G_i}-R_iG_i
\]
then bounds every derivative of $G_i$ inductively. After parametrizing the bases with a common period $\ell$, the base coefficient at time one is $u_i(1)=(\ell_i/\ell)^2$. Together with these time-one bounds, the metric distortion bounds from~\eqref{type.3.estimates.blow.down}, equations~\eqref{e:invariant.alpha.ricci.flow}, and the positive-time smoothing estimates in~\cite{guo2025alpha} give, after passing to a diagonal subsequence with $\ell_i\to\ell_\infty>0$,
\[
(G_i(t),u_i(t),a_i(t))\longrightarrow(G_\infty(t),u_\infty(t),a_\infty(t))
\quad\text{in }C^\infty(S^1\times[\delta,T]),
\qquad 0<\delta<T<\infty.
\]

By Corollary~\ref{1d.alpha.expander}, we can choose coordinates so that $G_\infty(t)=1$, $u_\infty(t)=t$ and $a_\infty(t)=\sqrt2$. At time one, $(G_i(1),u_i(1),a_i(1))\longrightarrow(1,1,\sqrt2)$ in $C^\infty(S^1)$. Since $\bar\alpha$ evolves by an exact form, and $\alpha_i(1)=\bar\alpha(t_i)$, we can write $\eta_i'=a_i(1)-\sqrt2$ where $\eta_i\in C^\infty(S^1)$ satisfies $\int_0^\ell\eta_i\,d\theta=0$. Then we have $(G_i(1),u_i(1),\eta_i)\longrightarrow(1,1,0)$ in $h^{1+\theta}(S^1)$.
For $s>0$, write the blow-down tuple as
\[
\mathbf{\bar g}_s(t)
=\bigl(s^{-1}G(st),s^{-1}u(st),\eta(st)\bigr),
\qquad a=\sqrt2+\partial_0\eta.
\]
The constant fiber change $S=\sqrt s\,I$ gives
\[
[\mathbf{\bar g}_s(1)]=[(G(s),s^{-1}u(s),\eta(s))].
\] Given $\epsilon>0$, choose $\delta=\delta(\epsilon)>0$ from Corollary~\ref{neighborhood.around.expander}. The preceding convergence implies $[\mathbf{\bar g}_{t_i}(1)]\in\mathcal V_\delta$ for all sufficiently large $i$. Fix $T>0$. Since $t_i\to0$, we have $T/t_i\ge1$ for all sufficiently large $i$. The blow-down by $t_i$, followed by the blow-down by $T/t_i$, is the blow-down by $T$. Therefore Corollary~\ref{neighborhood.around.expander} gives
\[
[\mathbf{\bar g}_T(1)]
=[(\mathbf{\bar g}_{t_i})_{T/t_i}(1)]
\in\mathcal U_\epsilon.
\]
Letting $\epsilon\downarrow0$ gives $[\mathbf{\bar g}_T(1)]=[\mathbf g^*]$ for every $T>0$.
Hence, in the original coordinates,
\[
G'=0,\qquad u(t)=\frac t2a(t)^2.
\]
Substituting into~\eqref{gauged.invariant.alpha.ricci.flow} gives
\[
\partial_t G=\partial_t a=\partial_t A=0.
\]
Thus, on the universal cover, $(\bar g(t),\bar\alpha(t))$ is isometric to $(g^*(t),\alpha^*(t))$.
\end{proof}

\section{No fiber component when \texorpdfstring{$[\alpha]$}{[alpha]} is rational}\label{s:no.fiber}
In view of Corollary~\ref{alpha.expander}, the monotonicity of $\mathcal{W}_+$ holds when the fiber component of the blow-down $\bar \alpha$ vanishes. In this section, we show that when $[\alpha]$ is rational, the fiber component of any blow-down vanishes, and thus we have the monotonicity formula.

In the following sections, we always assume $(g(t), \alpha(t))$ is a solution to the $\alpha$-Ricci flow on $M=\mathbb{T}^2$. We first use Cheeger-Fukaya-Gromov theory to get generators of $\pi_1(M)$ whose lengths are bounded.

\begin{lemma}\label{l:collapsing.generators}
    Suppose there exists a sequence $s_i\to\infty$ such that $(M,g_{s_i}(1))\xrightarrow{GH} (S^1,d)$. Then there are closed curves $\gamma_i,\sigma_i$ such that $[\gamma_i],[\sigma_i]$ generate $\pi_1(M)$ and $L(\gamma_i,g_{s_i}(1))\to0$ and $L(\sigma_i,g_{s_i}(1))\to L(S^1,d)$.
\end{lemma}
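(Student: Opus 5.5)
We would use the circle-fibration structure from Cheeger--Fukaya--Gromov, exactly as in the proof of Theorem~\ref{develop}(ii). Fix $\epsilon_i\downarrow 0$. For large $i$, \cite{cheeger1992nilpotent} gives a circle fibration $f_i:M\to S^1$ that is an $\epsilon_i$-Gromov--Hausdorff approximation. It also gives an invariant metric $h_i$ with $e^{-\epsilon_i}g_{s_i}(1)\le h_i\le e^{\epsilon_i}g_{s_i}(1)$, for which the fibers are orbits of a local isometric circle action. By the standard structure theory, the fibers have intrinsic diameter, hence length, at most $C\epsilon_i$. We take $\gamma_i$ to be a fiber $f_i^{-1}(x_0)$. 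Then $L(\gamma_i,g_{s_i}(1))\le e^{\epsilon_i}L(\gamma_i,h_i)\to 0$. Since $M=\mathbb T^2$ is the total space of a circle bundle over $S^1$, the long exact homotopy sequence shows that the fiber class $[\gamma_i]$ is primitive and nontrivial. Indeed $\pi_2(S^1)=0$ gives injectivity of $\pi_1(\text{fiber})\to\pi_1(M)$, and the quotient is $\pi_1(S^1)\cong\Z$, which is torsion-free.

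Next, we construct $\sigma_i$ as a horizontal lift. As in the proof of Theorem~\ref{develop}, we lift to the universal cover and remove the connection term, writing $\tilde h_i=dx^2+f_i(x)^2dy^2$. Here $x$ is arclength on the lifted base, which has period $L_i:=L(S^1,h_i\text{-base})$. A horizontal curve $x\mapsto(x,y_0)$, $0\le x\le L_i$, projects to a curve in $M$ whose endpoints lie on the same fiber. We close it up with a fiber arc of length at most $C\epsilon_i$ and call the result $\sigma_i$. Then $f_i\circ\sigma_i$ has degree one, so $[\gamma_i],[\sigma_i]$ generate $\pi_1(M)$ by the exact sequence $0\to\Z[\gamma_i]\to\pi_1(M)\to\pi_1(S^1)\to0$. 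Its length satisfies $L(\sigma_i,g_{s_i}(1))\le e^{\epsilon_i}(L_i+C\epsilon_i)$.

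It remains to show $L_i\to L(S^1,d)$, and this is the main obstacle. The issue is that the fibration-induced base length must be related to the Gromov--Hausdorff limit length. For the upper bound, the quotient map $(M,h_i)\to(S^1,\text{base metric})$ is a Riemannian submersion, and $f_i$ is an $\epsilon_i$-GH approximation. Hence the base metric circles converge in GH to $(S^1,d)$, and the lengths of circles with length metrics are continuous under GH convergence (length $=2\,$diam). This gives $L_i\to L(S^1,d)$, and therefore $\limsup L(\sigma_i)\le L(S^1,d)$.

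For the lower bound, a curve of degree one in the base has projected length at least $L_i$. The submersion is distance nonincreasing, so $L(\sigma_i,h_i)\ge L_i$. Hence $\liminf L(\sigma_i,g_{s_i}(1))\ge L(S^1,d)$, and we conclude $L(\sigma_i,g_{s_i}(1))\to L(S^1,d)$.
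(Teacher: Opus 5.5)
Your proposal is correct and uses essentially the same construction as the paper: a fiber of the Cheeger--Fukaya--Gromov circle fibration gives $\gamma_i$, a horizontal lift closed up by a short fiber arc gives $\sigma_i$, and generation follows from the degree-one projection. The only real difference is in controlling the length of $\sigma_i$: the paper takes the fibration to be an $\epsilon_i$-almost Riemannian submersion onto $(S^1,d)$ itself via \cite[Theorem 2.6]{cheeger1992nilpotent}, which gives $L(\sigma_i,g_{s_i}(1))\le(1+\epsilon_i)L(S^1,d)+\epsilon_i$ directly and makes your GH-continuity step $L_i\to L(S^1,d)$ unnecessary, while your explicit lower bound and exact-sequence argument for generation spell out what the paper leaves implicit.
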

\begin{proof}
    By \cite[Theorem 2.6]{cheeger1992nilpotent}, for some sequence $\epsilon_i\to0$, there exists an $\epsilon_i$-almost Riemannian submersion $F_i:(M,g_{s_i}(1))\to (S^1,d)$ such that $\diam F_i^{-1}(\theta)\le \epsilon_i$ for any $\theta\in S^1$.  Let $\gamma_i=F_i^{-1}(\theta)\cong S^1$ be one of the fibers of $F_i$. Then $L(\gamma_i,g_{s_i}(1))\to0$. Let $\sigma_{1,i}$ be the integral curve of the horizontal distribution of $F_i:M\to S^1$ such that $\sigma_{1,i}(0)=x\in M$ and $\sigma_{1,i}(1)\in F_i^{-1}(F_i(x))$. Let $\sigma_{2,i}$ be a shortest curve in $F_i^{-1}(F_i(x))$ connecting $\sigma_{1,i}(1)$ and $x$. Let $\sigma_i=\sigma_{1,i}\cup \sigma_{2,i}$. Then by \cite[Theorem 2.6]{cheeger1992nilpotent}, we have $L(\sigma_i,g_{s_i}(1))\le (1+\epsilon_i)L(S^1,d)+\epsilon_i$.  Since $M=\mathbb{T}^2$ is orientable, $F_i$ is a trivial fibration. Since $F_{i*}[\sigma_i]=[S^1]$, we have $[\gamma_i],[\sigma_i]$ generate $\pi_1(M)$.
\end{proof}

Next, we show that when $[\alpha]$ is rational, the fiber of the fibration given by collapsing has zero pairing with $[\alpha]$.
\begin{lemma}\label{l:zero.pairing.with.fiber}
    Suppose the cohomology class $[\alpha]$ is rational and there exists a sequence $s_i\to\infty$ such that $(M,g_{s_i}(1))\xrightarrow{GH} (S^1,d)$. Let $\gamma_i,\sigma_i$ be as in Lemma~\ref{l:collapsing.generators}. Then for $i$ sufficiently large, we have $[\alpha](\gamma_i)=0$.
\end{lemma}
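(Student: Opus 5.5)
The plan is to combine the systole growth bound of Theorem~\ref{growth.of.length.of.shortest.curve} with the discreteness of periods in the rational case from Lemma~\ref{action.on.level.sets}. Since $[\alpha]$ is rational, every period $[\alpha](\sigma)$ with $\sigma\in H_1(M,\Z)$ lies in $\Z\cdot\ell([\alpha])$. So if $[\alpha](\gamma_i)\neq0$, then $|[\alpha](\gamma_i)|\ge \ell([\alpha])>0$. This uniform gap is the key use of rationality.

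Suppose, for contradiction, that $[\alpha](\gamma_i)\neq 0$ along a subsequence. Let $\tau_i=[\gamma_i]\in H_1(M,\Z)$. Theorem~\ref{growth.of.length.of.shortest.curve} gives
\[
L_{\tau_i}(s_i)^2\ge \frac{[\alpha](\tau_i)^2}{2}s_i+L_{\tau_i}(0)^2\ge \frac{\ell([\alpha])^2}{2}s_i .
\]
Rescaling to $g_{s_i}(1)=s_i^{-1}g(s_i)$, this becomes
\[
L(\gamma_i,g_{s_i}(1))^2\ge s_i^{-1}L_{\tau_i}(s_i)^2\ge \frac{\ell([\alpha])^2}{2}.
\]
Lemma~\ref{l:collapsing.generators}, however, gives $L(\gamma_i,g_{s_i}(1))\to0$. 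This is a contradiction, so $[\alpha](\gamma_i)=0$ for all $i$ large.

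The main point to check is that Theorem~\ref{growth.of.length.of.shortest.curve} applies to the class of $\gamma_i$, which varies with $i$. The bound holds for every $\sigma\in H_1(M,\Z)$ with constants independent of $\sigma$: the term $L_\sigma(0)^2\ge0$ can simply be dropped. Hence the varying class causes no problem. We also need $\tau_i\neq0$, but this is automatic, since $[\gamma_i],[\sigma_i]$ generate $\pi_1(M)$; in any case $\tau_i=0$ would force $[\alpha](\gamma_i)=0$ trivially.

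Finally, the pairing $[\alpha](\gamma_i)=\int_{\gamma_i}\alpha(t)$ does not depend on $t$, because the cohomology class is invariant under the flow. So there is no ambiguity about the time at which it is evaluated.
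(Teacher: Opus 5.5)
Your proposal is correct and follows essentially the same argument as the paper. The systole growth bound of Theorem~\ref{growth.of.length.of.shortest.curve} gives $|[\alpha](\gamma_i)|\le\sqrt2\,L(\gamma_i,g_{s_i}(1))\to0$, and the discreteness of periods $\Z\cdot\ell([\alpha])$ in the rational case then forces $[\alpha](\gamma_i)=0$. The only difference is that you invoke the period gap before the growth bound rather than after, which is merely a reordering.
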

\begin{proof} 
    Suppose for contradiction that there exists a sequence of indices $i \to \infty$ such that $[\alpha](\gamma_i)\neq0$. Then Theorem~\ref{growth.of.length.of.shortest.curve} applied to the flow $(g(t),\alpha(t))$ gives $L_{[\gamma_i]}(t)^2\ge\frac{1}{2}[\alpha](\gamma_i)^2 t+L_{[\gamma_i]}(0)^2$. Hence $L_{[\gamma_i]}(s_i)\ge \frac{1}{\sqrt 2}|[\alpha](\gamma_i)|\sqrt {s_i}$. In particular, $L(F_{i}^{-1}(\theta),g_{s_i}(1))\ge s_i^{-1/2}L_{[\gamma_i]}(s_i)\ge \frac{1}{\sqrt 2}|[\alpha](\gamma_i)|>0$. By Lemma~\ref{l:collapsing.generators}, we have $L(F_{i}^{-1}(\theta),g_{s_i}(1))\to0$, forcing $|[\alpha](\gamma_i)| \to 0$. By Lemma~\ref{action.on.level.sets}, since $[\alpha]$ is rational, if $|[\alpha](\gamma_i)|<\ell([\alpha])$, then $[\alpha](\gamma_i)=0$. This contradicts $[\alpha](\gamma_i)\neq0$.
\end{proof}

\begin{theorem}[No fiber component]\label{no_fiber_component} 
Suppose $[\alpha]$ is rational and there is a sequence $s_i\to\infty$ such that $(M,g_{s_i}(1))\allowbreak\xrightarrow{GH} (S^1,d)$ and $(\tilde{g}_{s_i}(t), \tilde{\alpha}_{s_i}(t))\xrightarrow{C^\infty}(\tilde g_\infty(t), \tilde \alpha_\infty(t))$. Then $(\tilde g_\infty(t), \tilde \alpha_\infty(t))$ has no fiber component.
\end{theorem}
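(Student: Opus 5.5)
The plan is to identify the fiber component $b$ of $\tilde\alpha_\infty$ with a limit of periods of $\alpha$ along short fiber loops, and then use Lemma~\ref{l:zero.pairing.with.fiber} to show those periods vanish. By Corollary~\ref{c:convergence_CFG}, the limit $(\tilde g_\infty(t),\tilde\alpha_\infty(t))$ on $\R^2$ is invariant under $G_\infty\cong\R\times\Z$. The $\R$-factor acts freely, and its orbits are the fibers $v\mapsto v+c$ in the coordinates of Remark~\ref{fiber bundle}. In particular, $\tilde\alpha_\infty(t)=a\,d\theta+b(dv+A\,d\theta)$ with $b$ constant in spacetime (Corollary~\ref{fiber.constant.in.spacetime}). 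So it suffices to show $b=0$ at $t=1$. Equivalently, writing $\tilde\alpha_\infty(1)=du_\infty$ on $\R^2$, we need $u_\infty(\eta p)-u_\infty(p)=0$ for every $\eta$ in the identity component $\R\subset G_\infty$: this difference equals $\int_0^c b\,dv=bc$ along the fiber orbit from $p$ to $\eta p=p+c\,\partial_v$.

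\textbf{Step 1: approximating fiber translations by deck transformations.} Fix $\eta\in G_\infty$ in the $\R$-factor with $d_{g_\infty(1)}(\eta p,p)\le j$. Equivariant convergence (Definition~\ref{d:equivariant.pointed.cinfty.convergence}) gives $\psi_{i,j}(\eta)=:\gamma_i\in G_i=\pi_1(M)$ with $\phi_{i,j}^{-1}\circ\gamma_i\circ\phi_{i,j}\to\eta$ near $p$. Write $u_i$ for a primitive of $\tilde\alpha_{s_i}(1)$ on $\tilde M$, normalized by $u_i(\tilde p)=0$. Since $\phi_{i,j}^*\tilde\alpha_{s_i}(1)\to\tilde\alpha_\infty(1)$ smoothly on compact sets, $u_i\circ\phi_{i,j}\to u_\infty$ locally uniformly. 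Hence
\[
u_\infty(\eta p)-u_\infty(p)=\lim_{i\to\infty}\bigl(u_i(\gamma_i\tilde p)-u_i(\tilde p)\bigr)=\lim_{i\to\infty}[\alpha](\gamma_i),
\]
by Lemma~\ref{action.on.level.sets}, using that $\alpha$ evolves by an exact form, so $\alpha(s_i)$ has the same periods as $\alpha$.

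\textbf{Step 2: the approximating elements are fiber classes.} I need $[\gamma_i]$ to be a multiple of the short fiber class from Lemma~\ref{l:collapsing.generators}. Write $\gamma_i=\gamma_{f,i}^{m_i}\sigma_i^{n_i}$ in the generators of that lemma. The projection $\pi_\infty$ collapses $\R$-orbits to points, so $\pi(\gamma_i\tilde p)$ and $\pi(\tilde p)$ become GH-close in $S^1$; in fact the displacement in the base direction tends to $0$. Under the almost Riemannian submersion $F_i$, the class $\sigma_i^{n_i}$ wraps $n_i$ times around a circle of length close to $L(S^1,d)>0$. The curve from $\tilde p$ to $\gamma_i\tilde p$ has length at most about $j+1$, so $|n_i|$ is bounded. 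If $n_i\neq0$, the base displacement would be bounded below by roughly $L(S^1,d)$ up to the winding, which contradicts the convergence to a fiber translation. This is the step I expect to be delicate. The cleanest route is to lift $F_i$ to $\tilde F_i:\tilde M\to\R$ and compare it with the limit projection $\R^2\to\R$, the universal cover of $S^1$. Then $\tilde F_i(\gamma_i\tilde p)-\tilde F_i(\tilde p)=n_iL+o(1)$ must tend to the base displacement of $\eta$, which is $0$. So $n_i=0$ for large $i$.

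\textbf{Step 3: conclusion.} With $n_i=0$ we get $[\alpha](\gamma_i)=m_i[\alpha](\gamma_{f,i})=0$ for large $i$ by Lemma~\ref{l:zero.pairing.with.fiber}. Step 1 then gives $u_\infty(\eta p)=u_\infty(p)$ for all $\eta$ in the $\R$-factor. Hence $b\,c=0$ for all $c$, so $b=0$, and the limit has no fiber component.
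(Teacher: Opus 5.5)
Your argument is correct in outline, but it runs in the opposite direction from the paper's. The paper starts from powers $[\gamma_i]^{n_i}$ of the collapsing fiber class. It uses Fukaya's local-fundamental-group argument to choose $n_i$ so that the displacements converge into $(0,\delta]$. It then extracts a limit $\gamma_\delta\in G_\infty$, which must be a pure $\partial_v$-translation because $\delta<\operatorname{inj}(S^1,d)$. This gives $n_i[\alpha](\gamma_i)\to bh\neq 0$, contradicting Lemma~\ref{l:zero.pairing.with.fiber}. In the paper's route the approximating deck transformations are fiber classes by construction, and the work goes into producing a \emph{nontrivial} limit. You instead fix an arbitrary fiber translation $\eta$ and get approximants $\gamma_i=\psi_{i,j}(\eta)$ directly from the definition of equivariant convergence. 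The price is your Step~2, where you must show that these $\gamma_i$ lie in the fiber subgroup.

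Step~2 as written is under-justified. The remark that $\pi(\gamma_i\tilde p)$ and $\pi(\tilde p)$ become close carries no information: they coincide, since $\gamma_i$ is a deck transformation. Comparing $\tilde F_i\circ\phi_{i,j}$ with the lifted limit projection for translations of size up to $j$ also needs an argument. You would have to show that $F_i$ agrees, up to an isometry of $S^1$, with the GH approximation induced by $\phi_{i,j}$, and you have not done this.

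You can bypass this by taking $|c|$ small. We have $\tilde F_i\circ\gamma=\tilde F_i+\deg(F_{i*}[\gamma])\,L(S^1,d)$ exactly, and $\tilde F_i$ is $e^{\epsilon_i}$-Lipschitz. So $d(\gamma_i\tilde p,\tilde p)<e^{-\epsilon_i}L(S^1,d)$ forces degree zero. Exactness of $\pi_1(\text{fiber})\to\pi_1(M)\to\pi_1(S^1)$ then makes $[\gamma_i]$ a multiple of the fiber class. A single small $c\neq 0$ suffices to get $b=0$.

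In fact, Step~1 alone already finishes the proof, without Step~2 or Lemma~\ref{l:zero.pairing.with.fiber}, and hence without the systolic bound. By Lemma~\ref{action.on.level.sets}, every $[\alpha](\gamma_i)$ lies in the discrete group $\ell([\alpha])\Z$. So its limit $bc$ does too, for every $c\in\R$, which forces $b=0$. This is the same mechanism as Step~3 in the proof of Theorem~\ref{t:GH_limit_vs_rationality}.
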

\begin{proof}
Suppose for the sake of contradiction that the limit has a nonzero fiber component. Then, by Corollary~\ref{fiber.constant.in.spacetime}, $\tilde{\alpha}_\infty(\partial_v)(\theta, v, t) = b \neq 0$ for any $(\theta, v, t) \in \R^2 \times (0,\infty)$.

By \cite[Theorem 2.6]{cheeger1992nilpotent}, for $i$ sufficiently large, we have a smooth fibration map $F_i: (M, g_{s_i}(1)) \to (S^1, d)$ which is an $\epsilon_i$-almost Riemannian submersion. Since $M = \mathbb{T}^2$ is orientable and $F_i: M \to S^1$ is a fibration with fiber $S^1$, the fibration $F_i$ must be trivial.

Choose $0<\delta < \operatorname{inj}(S^1,d)$ sufficiently small. By the controlled-contraction argument in the proof of Corollary~\ref{c:convergence_CFG}, globally trivial short loops contract in the $2\delta$-ball, so the local fundamental group $\pi_1(M,x_i;\delta)$ identifies with restrictions of the deck action of $\pi_1(M)$ on $\tilde M$. Let $\gamma_i$ be the fiber of $F_i$ as in Lemma~\ref{l:collapsing.generators} and \ref{l:zero.pairing.with.fiber}. Then, by the same argument as in \cite[Proposition 11.5]{fukaya1990hausdorff}, there exists $n_i \in \N$ such that $0 < \lim_{i\to\infty} d\big([\gamma_i]^{n_i}(p), p\big) \le \delta$ for $p \in \tilde{M}$.

After passing to a subsequence, we may assume that $[\gamma_i]^{n_i} \to \gamma_\delta \in G_\infty$. Consequently, we have $0 < d(\gamma_\delta(p), p) \le \delta$. Since $\delta < \operatorname{inj}(S^1,d)$, $\gamma_\delta$ must be a translation purely in the $\partial_v$ direction.

Normalize and orient $v$ so that the metric on the fiber through $p$ at time one is $dv^2$ and $\gamma_\delta$ translates in the positive direction. Let $h>0$ be the intrinsic distance on this fiber from $p$ to $\gamma_\delta(p)$. Let $\eta_1$ be the curve along the $\R$-orbit from $p$ to $\gamma_\delta(p)$, and let $\eta_2$ be the shortest geodesic with respect to $\tilde{g}_\infty(1)$ from $\gamma_\delta(p)$ to $[\gamma_i]^{n_i}(p)$. Defining $\eta=\eta_1\cup\eta_2$, we have
\begin{align*}
        n_i[\alpha](\gamma_i) &= \int_{\pi(\eta)} \alpha_{s_i} = \int_{\eta} \tilde{\alpha}_{s_i} = \int_{\eta} \tilde{\alpha}_\infty + o(1) \\
        &= \int_{\eta_1} \tilde{\alpha}_\infty + \int_{\eta_2} \tilde{\alpha}_\infty + o(1) \\
        &= bh + \mathcal{E}_i,
    \end{align*}
where the error term $\mathcal{E}_i$ is bounded by
    \[
        |\mathcal{E}_i| \le \sup_{\R^2}|\tilde{\alpha}_\infty|_{\tilde{g}_\infty} d\big(\gamma_\delta(p), [\gamma_i]^{n_i}(p)\big) + o(1).
    \]Since $d\big(\gamma_\delta(p), [\gamma_i]^{n_i}(p)\big) \to 0$ as $i \to \infty$, the error term $\mathcal{E}_i$ vanishes in the limit. Thus, for $i$ sufficiently large, $[\alpha](\gamma_i) \neq 0$. This contradicts Lemma~\ref{l:zero.pairing.with.fiber}.
\end{proof}

\section{Gromov--Hausdorff limit and rationality of \texorpdfstring{$[\alpha]$}{[alpha]}}\label{s:GHlimit_vs_rationality}
In this section we characterize the rationality of $[\alpha]$ in terms of the Gromov--Hausdorff limit of $(M, g_{s_i}(1))$, under the assumption that the universal covers converge to the $\alpha$-expander $(g^*(t),\alpha^*(t))$ with no fiber component. 

The following lemma establishes that if convergence on the universal cover to $(g^*(t), \alpha^*(t))$ holds along the full sequence, then any element $\gamma \in \pi_1(M)$ that is bounded at some sufficiently large time remains bounded for all later times.
\begin{lemma}\label{l:bounded.generator}
Suppose
\[
(\tilde g_s(t),\tilde\alpha_s(t))
\xrightarrow{C^\infty}(g^*(t),\alpha^*(t))
\quad\text{as }s\to\infty.
\]
Identify $\pi_1(M)$ with the group of deck transformations of the
universal cover and fix a basepoint $0\in\tilde M$. For every $L>0$,
there exist $s_0=s_0(L)>0$ and $L'=\sqrt 8 L>0$ such that for every
$s_1\ge s_0$ and $\gamma\in\pi_1(M)$ satisfying
\[
d_{\tilde g_{s_1}(1)}\bigl(\gamma(0),0\bigr)\leq L
\]
we have
\[
\sup_{s\geq s_1}d_{\tilde g_s(1)}\bigl(\gamma(0),0\bigr)\leq L'.
\]
\end{lemma}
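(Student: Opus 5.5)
We argue by comparing with the model picture on the limit. In the limit $(g^*(1),\alpha^*(1))=(dv^2+d\theta^2,\sqrt2\,d\theta)$, a translation by a vector $w=w_\theta\partial_\theta+w_v\partial_v$ has time-one length $|w|$. Under the further blow-down $s\mapsto \lambda s$ the rescaled model is $g^*_\lambda(1)=\lambda^{-1}dv^2+d\theta^2$ (after the $v$-reparametrization of Proposition~\ref{p:canonical_alpha_expander_attractor}), so the $\theta$-component of a deck displacement is frozen while the $v$-component shrinks. The $\theta$-component is controlled topologically: for a deck transformation $\gamma$, Lemma~\ref{action.on.level.sets} gives $u(\gamma(0))-u(0)=[\alpha](\gamma)$, which is independent of $s$. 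In the limit $u$ corresponds to $\sqrt2\,\theta$, so the $\theta$-displacement is $[\alpha](\gamma)/\sqrt2$. The plan is to show that, at any scale $s\ge s_1$, the displacement splits into a $\theta$-part of size about $|[\alpha](\gamma)|/\sqrt2\le L$ and a $v$-part that can only shrink. The constant $\sqrt 8=2\sqrt2$ leaves room for factors $\sqrt2$ and $2$ of error.

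First, by the $C^\infty$ convergence, choose $s_0$ so that for $s\ge s_0$ the metric $\tilde g_s(t)$ on the ball of radius $3L$ (say) about $0$, for $t\in[1/2,2]$ or a fixed window, is within a factor $\sqrt2$ of $g^*(t)$, and $\tilde\alpha_s$ is close to $\alpha^*$. Given $\gamma$ with $d_{\tilde g_{s_1}(1)}(\gamma(0),0)\le L$, integrate $\tilde\alpha_{s_1}(1)$ along a minimizing geodesic to get $|[\alpha](\gamma)|\le |\alpha|L\le (\sqrt2+o(1))L$. This bounds the frozen $\theta$-part at every later scale by roughly $L$.

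Second, control the transverse part by a continuity/bootstrap argument in $s$. Let $s_2=\sup\{s\ge s_1: d_{\tilde g_{s'}(1)}(\gamma(0),0)\le L' \text{ for } s'\in[s_1,s]\}$. The distance is continuous in $s$. Since $\tilde g_s(1)=(s'/s)\tilde g_{s'}(s/s')$ and the flow equation gives $\partial_t g=-2\mathrm{Ric}+\frac12\alpha\otimes\alpha$ with $|\mathrm{Ric}|\le C/t$, lengths change in a controlled way. The key inequality to aim for is this: along the geodesic realizing the distance, $\frac{d}{ds}$ of $s^{-1}L_{\tilde g(s)}^2$ is at most $\frac{1}{2s}(\int\alpha(\dot\gamma))^2$ minus a curvature error. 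Equivalently, use the explicit limit form: in $g^*$, $d^2_{g^*_{\lambda}(1)}=w_\theta^2+\lambda^{-1}w_v^2$ is nonincreasing in $\lambda$. On the window where the distance stays $\le L'$, the convergence to $g^*$ on the ball of radius $L'$ applies. This gives, at scale $s$, a squared distance at most about $2(w_\theta^2+w_v^2)$ evaluated from $s_1$, hence at most about $2\cdot 2L^2$ up to the $\sqrt2$ metric comparison. That yields $d\le\sqrt8 L$ strictly, up to choosing $s_0$ larger, so $s_2=\infty$.

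The main obstacle is making the monotonicity of the transverse component rigorous, since convergence to $g^*$ is only along $s\to\infty$ for each fixed time window and not uniform in ratios $s/s_1$. I would handle this by stepping: compare scale $s$ with scale $2s$ using the time window $t\in[1,2]$ of $\tilde g_s$, where $\tilde g_{2s}(1)=\frac12\tilde g_s(2)$. In $g^*$ the map $g^*(1)\mapsto \frac12 g^*(2)=\frac12dv^2+d\theta^2$ shrinks the $v$-part and fixes the $\theta$-part. The issue is that each step incurs a multiplicative error, and these errors must not accumulate. To avoid that, I would track the pair of quantities $(|[\alpha](\gamma)|,\,\text{transverse length})$, with the first exactly invariant and the second decreasing by a definite factor $1/\sqrt2$ per step up to error $\epsilon$. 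The transverse length is then bounded by a geometric series, independent of the number of steps, and combining the two parts gives the bound $L'$.
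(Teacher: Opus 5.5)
Your final stepping scheme is essentially the paper's proof. The paper assumes $\varepsilon$-closeness to $(g^*,\alpha^*)$ on $B_R\times[R^{-1},R]$ holding uniformly for all $s\ge s_0$, which is exactly where full-family convergence enters. At each scale it controls $\theta_0$ through the invariant $[\alpha](\gamma)$, with $|[\alpha](\gamma)|\le\sqrt2 L$. It then iterates the single recursion $D_{sR}(\gamma)^2\le 4L^2+\tfrac12 D_s(\gamma)^2$ on the total squared distance. This keeps $\{D^2\le 8L^2\}$ invariant and never compares a transverse coordinate across charts at different scales.

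One caution: the comparison must be $\varepsilon$-close with $\varepsilon$ small (the paper takes $\varepsilon<\tfrac18$), not merely within a factor $\sqrt2$ as in your first paragraph. A factor-$2$ distortion of squared lengths at both ends of a step would swamp the factor-$\tfrac12$ gain on the transverse part.
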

\begin{proof}
Let $\varepsilon\in (0,\frac{1}{3})$ and $R>1$, to be chosen below. By smooth convergence, there exists $s_0=s_0(\varepsilon,R)>0$ such that $(\tilde g_s(t),\tilde\alpha_s(t))$ are $\varepsilon$-close to $(g^*(t),\alpha^*(t))$ for any
$s\geq s_0$ on $B_R(g^*(1))\times [R^{-1},R]$. We use the
coordinates $(\theta,v)$ from \eqref{eq:flat_expander_model}.
Write
\[
D_s(\gamma)=d_{\tilde g_s(1)}(\gamma(0),0).
\]For any $\gamma$ satisfying $D_{s_1}(\gamma)\le L$, Theorem \ref{alpha.norm.estimate} gives
\[
|[\alpha](\gamma)|
\le \|\alpha_{s_1}(1)\|_{L^\infty(g_{s_1}(1))}L
\le \sqrt{2}L.
\]

Let $s\geq s_1$ be such that $D_s(\gamma)^2<(1-\varepsilon)R^2$.  We  
write $\gamma(0)=(\theta_0,v_0)$. Then $(v_0^2+\theta_0^2)\le (1-\varepsilon)^{-1}D_s(\gamma)^2< R^2$ and thus $(\theta_0,v_0)\in B_R(g^*(1))$.
Metric comparison
at times $1$ and $1\le r\le R$ gives
\begin{align}
d_{\tilde g_s(1)}\bigl(\gamma(0),0\bigr)^2
&\geq (1-\varepsilon)(v_0^2+\theta_0^2),
\label{eq:estimate.g_s(1)}\\
d_{\tilde g_s(r)}\bigl(\gamma(0),0\bigr)^2
&\leq (1+\varepsilon)(v_0^2+r\theta_0^2).
\label{eq:estimate.g_s(R)}
\end{align}

Because $\tilde\alpha_s(t)$ and $\alpha^*$ are closed, their integrals
depend only on the homotopy class. Integrating along the straight segment
from $0$ to $\gamma(0)$ therefore yields
\[
\left|[\alpha](\gamma)-\sqrt{2}\,\theta_0\right|
\leq \varepsilon\sqrt{v_0^2+\theta_0^2},
\]
where $[\alpha](\gamma)=\int_\gamma\tilde\alpha_s(t)$ is independent of
$s$ and $t$. Consequently,
\begin{equation}\label{eq:theta}
\theta_0^2
\leq [\alpha](\gamma)^2
+\varepsilon(v_0^2+\theta_0^2)\le 2L^2+\varepsilon(v_0^2+\theta_0^2).
\end{equation}
The scaling identity $\tilde g_{sR}(1)=R^{-1}\tilde g_s(R)$, together
with \eqref{eq:estimate.g_s(1)}--\eqref{eq:theta}, gives
\begin{equation}\label{eq:dg_sR(1)}
\begin{aligned}
    d_{\tilde g_{sR}(1)}\bigl(\gamma(0),0\bigr)^2
\le& (1+\varepsilon)(\frac{v_0^2}{R}+\theta_0^2)\\
\le&(1+\varepsilon) (\frac{d_{\tilde g_s(1)}\bigl(\gamma(0),0\bigr)^2}{(1-\varepsilon)R}+2L^2+\varepsilon(v_0^2+\theta_0^2))\\
\le&(1+\varepsilon)2L^2+\frac{1+\varepsilon}{1-\varepsilon} (R^{-1}+\varepsilon)d_{\tilde g_s(1)}\bigl(\gamma(0),0\bigr)^2\\
\end{aligned}
\end{equation}
Choosing $\varepsilon<\frac{1}{8}$ and $R^2=R(L)^2>\max \{64,(1-\varepsilon)^{-1}8L^2\}$, we obtain
\begin{equation}
D_{sR}(\gamma)^2
\leq 4L^2
+\frac12D_s(\gamma)^2.
\end{equation}
 and thus since $8L^2\le (1-\varepsilon)R^2$, we have
\begin{equation}\label{eq:consequence.of.iteration}
    D_s(\gamma)^2\le 8L^2\Longrightarrow D_{sR}(\gamma)^2\le 8L^2.
\end{equation}

 By \eqref{eq:estimate.g_s(1)} and \eqref{eq:estimate.g_s(R)}, we have for $1\le r\le R$
\[
D_{s_1r}(\gamma)^2\le (1+\varepsilon)(\frac{v_0^2}{r}+\theta_0^2)\le (1+\varepsilon)(v_0^2+\theta_0^2)\le \frac{1+\varepsilon}{1-\varepsilon}D_{s_1}(\gamma)^2\le 2L^2.
\]
Thus $\sup_{s\in [s_1,s_1R]} D_{s}(\gamma)^2\le 2L^2<8L^2$. Applying \eqref{eq:consequence.of.iteration} iteratively gives for any $k\in\N$
\[\sup_{s\in [s_1R^k,s_1R^{k+1}]}D_{s}(\gamma)^2\le 8L^2,\] that is
\[\sup_{s\ge s_1}D_{s}(\gamma)^2\le 8L^2.\]This proves the claim.
\end{proof}

We now establish the characterization of the rationality of $[\alpha]$ in terms of the Gromov--Hausdorff limit. We recall that $d([\alpha])$ denotes the smooth metric on $S^1$ whose
total length is $\ell([\alpha])/\sqrt 2$ where $\ell([\alpha])$ is defined in Definition~\ref{d:prmitive length}.

\begin{theorem}\label{t:GH_limit_vs_rationality}
Let $s_i\to\infty$ be a sequence such that $(M,g_{s_i}(1))\xrightarrow{GH}(X,d)$, and suppose that, on the universal cover,
\[
(\tilde g_{s_i}(t),\tilde\alpha_{s_i}(t))
\xrightarrow{C^\infty}
\bigl(g^*(t),\alpha^*(t)\bigr).
\]
Then the following hold:
\begin{enumerate}[label=(\roman*)]
    \item If $(X,d)=\{\mathrm{pt}\}$, then $[\alpha]$ is irrational.
    \item If $[\alpha]$ is rational, then $(X,d)=(S^1,d([\alpha]))$.
    \item If, in addition, $(\tilde g_s(t),\tilde\alpha_s(t))\xrightarrow{C^\infty}(g^*(t),\alpha^*(t))$ as $s\to\infty$ with no fiber component, then $(X,d)=(S^1,d)$ implies that $[\alpha]$ is rational. Consequently, by (ii), $(X,d)=(S^1,d([\alpha]))$.
\end{enumerate}
\end{theorem}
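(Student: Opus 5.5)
The approach is to exploit the equivariant convergence of Corollary~\ref{c:convergence_CFG}, where the limit is $(g^*(1),\alpha^*(1))$ on $\R^2$ with $\alpha^*=\sqrt2\,d\theta$. The key input is that deck transformations shift the primitive $u$ of $\tilde\alpha$ by the periods $[\alpha](\gamma)$ (Lemma~\ref{action.on.level.sets}). In the limit, any $\gamma_\infty\in G_\infty$ obtained as a limit of $\gamma_i\in\pi_1(M)$ with uniformly bounded displacement then satisfies
\[
\sqrt2\,\bigl(\theta(\gamma_\infty p)-\theta(p)\bigr)=\lim_i [\alpha](\gamma_i).
\]
This follows by integrating $\tilde\alpha_{s_i}(1)\to\alpha^*$ along a bounded path from $p$ to $\gamma_i p$, exactly as in the proof of Theorem~\ref{no_fiber_component}. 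So the $\theta$-displacements realized by $G_\infty$ are limits of periods of $[\alpha]$. Since $G_\infty$ preserves $\alpha^*$ and $g^*(1)$, it acts by translations.

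For (i), suppose $X$ is a point. Then $G_\infty\cong\R\times\R$ acts transitively, so it contains a translation with arbitrarily small nonzero $\theta$-shift $\epsilon$. By part (iii)(b) of the equivariant convergence, such a translation is approximated by $\psi_i(\eta)\in\pi_1(M)$ of bounded displacement. Hence there are periods $[\alpha](\gamma_i)$ converging to $\sqrt2\epsilon\neq0$ with $|\sqrt2\epsilon|<\ell([\alpha])$. If $[\alpha]$ were rational, the periods would form $\Z\cdot\ell([\alpha])$ by Lemma~\ref{action.on.level.sets}, a discrete set, which is a contradiction. Hence $[\alpha]$ is irrational.

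For (ii), assume $[\alpha]$ is rational. By (i), $X=S^1$ and $G_\infty\cong\R\times\Z$. The $\R$ factor must be the $\partial_v$-translations: a one-parameter translation subgroup with nonzero $\theta$-component would again produce small nonzero periods, which is impossible. The $\Z$ factor is generated by a translation $\tau$ with $\theta$-shift $\lambda>0$, chosen modulo $\partial_v$. Its lift $\gamma_i$ has period $\sqrt2\lambda$ for large $i$, since periods lie in $\Z\ell$ and converge; hence $\sqrt2\lambda\in\ell\Z_{>0}$. Conversely, pick $\sigma\in\pi_1(M)$ with $[\alpha](\sigma)=\ell$. I expect the main obstacle to be controlling the displacement of $\sigma$ uniformly in $i$, so as to get a limit in $G_\infty$ with shift $\ell/\sqrt2$. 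To handle this, use Lemma~\ref{l:collapsing.generators}: $\pi_1(M)$ is generated by a short fiber $\gamma_i$, which has zero period by Lemma~\ref{l:zero.pairing.with.fiber}, and by $\sigma_i$ of bounded length. Then $[\alpha](\sigma_i)=\pm\ell$, because the periods of $\gamma_i,\sigma_i$ generate $\Z\ell$ and the period of $\gamma_i$ vanishes. Passing to a limit of $\sigma_i$ gives an element of $G_\infty$ with $\theta$-shift $\ell/\sqrt2$. This forces $\lambda=\ell/\sqrt2$, and $X=\R^2/G_\infty$ is the circle of length $\ell([\alpha])/\sqrt2$, i.e.\ $(S^1,d([\alpha]))$.

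For (iii), assume $X=S^1$ along $s_i$ and full-sequence convergence, and suppose for contradiction that $[\alpha]$ is irrational. With $\sigma_i$ as above, its displacement is bounded by some $L$ at time $s_i$. Lemma~\ref{l:bounded.generator} then bounds its displacement by $\sqrt8L$ for all $s\ge s_i$. Fixing one such $i$ gives a fixed element $\sigma$. By irrationality, choose the generator $\gamma_i$ (the short fiber) with nonzero period, and fix such a $\gamma$ at time $s_j$; it is short, hence bounded for all later $s$ again by Lemma~\ref{l:bounded.generator}. Now the fixed generators $\gamma,\sigma$ have bounded displacement for all large $s$. By Kronecker's theorem (Lemma~\ref{dense}), the combinations $m\gamma+n\sigma$ have periods dense in $\R$. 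The estimate $\theta_0\approx[\alpha]/\sqrt2$ from the proof of Lemma~\ref{l:bounded.generator} then shows that $v$-displacements shrink under blow-down, like $v_0^2/R$, while the $\theta$-displacements are fixed by the periods. Passing to the limit, $G_\infty$ contains translations with $\theta$-shifts dense in $\R$, so $\R^2/G_\infty$ is a point. This contradicts $X=S^1$. Hence $[\alpha]$ is rational, and (ii) identifies the metric as $d([\alpha])$.
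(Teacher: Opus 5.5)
Your arguments for (i) and (ii) are sound. In (ii) you identify the $\R$-factor of $G_\infty$ as $\R\partial_v$ directly from discreteness of the period group $\Z\,\ell([\alpha])$. That is a legitimate shortcut for the paper's appeal to Theorem~\ref{no_fiber_component}. One minor point there: preserving $g^*(1)$ and $\alpha^*$ does not by itself force translations, since $(\theta,v)\mapsto(\theta+a,-v+b)$ preserves both. Use that $G_\infty$ consists of limits of orientation-preserving deck transformations.

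The genuine gap is the last step of (iii). You infer that because $G_\infty$ contains translations with $\theta$-shifts dense in $\R$, the quotient $\R^2/G_\infty$ is a point. That is false. The group $G_\infty=\R\partial_\theta\oplus\Z\,L\partial_v$ is isomorphic to $\R\times\Z$ and realizes every $\theta$-shift, yet its quotient is a circle of length $L$. Your own shrinking estimate leads straight to this configuration. It shows $\gamma_\infty,\sigma_\infty$ are pure $\theta$-translations with rationally independent shifts, so closedness gives $\R\partial_\theta\subseteq G_\infty$. Then $G_\infty\cong\R\times\Z$ forces $G_\infty^0=\R\partial_\theta$, which is exactly the circle configuration above, with $\alpha^*$ nonvanishing on the $G_\infty^0$-orbits. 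Nothing you wrote excludes it. Limits of finitely many fixed deck transformations only produce a subgroup of $G_\infty$. The identity component and the $\Z$-generator are realized by deck elements that vary with $i$ (compare Remark~\ref{r:S1.not.imply.rational}).

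The missing ingredient is a hypothesis of (iii) you never use: the limit along $s_i$ has no fiber component. This means $\alpha^*=\sqrt2\,d\theta$ vanishes on the $G_\infty^0$-orbits, which forces $G_\infty^0=\R\partial_v$. You can then finish in either of two ways.
\begin{enumerate}
\item Keep the shrinking estimate: $G_\infty\supseteq\R\partial_\theta\oplus\R\partial_v$ is then all translations, contradicting $G_\infty\cong\R\times\Z$.
\item Drop the shrinking estimate: the $\theta$-shifts of $G_\infty$ form a discrete group $\lambda\Z$. Equivalently, $G_\infty$ acts on the level sets of $u^*=\sqrt2\,\theta$ through $G_\infty/G_\infty^0\cong\Z$. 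But the limits of the fixed elements $m\gamma+n\sigma$ realize the dense set of shifts $[\alpha](m\gamma+n\sigma)/\sqrt2$. This is the paper's Step 6 combined with Step 4.
\end{enumerate}

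Also take $\gamma$ and $\sigma$ from the same index (or at least check they are independent in $\pi_1(M)\cong\Z^2$). Kronecker's theorem needs rationally independent periods, and irrationality of $[\alpha]$ supplies that only for independent classes.
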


\begin{proof}

\textbf{Step 1}: Setup.

As in Example~\ref{example.noncompact invariant expander}, in local coordinates of $\R^2$, we have $g^*(t) = dv^2 + td\theta^2$ and  $\alpha^*(t)=\sqrt 2 d\theta$. We define $u^*=\sqrt 2\theta\in C^\infty(\mathbb{R}^2)$ so that $\alpha^* = du^*$. We have the following equivariant convergence:
\begin{center}
    \begin{tikzcd}
  & (\tilde{M},\tilde{g}_{s_i}(1), \tilde\alpha_{s_i}(1), G_{s_i}, p) \arrow[r,  "pt\ eq\ C^\infty"] \arrow[d, "\pi"]
   & (\R^2,g^*(1), \alpha^*(1), G_\infty, p_\infty) \arrow[d,  "\pi_\infty"]\\
  & (M, g_{s_i}(1)) \arrow[r, "GH"]
   & (X,d),
\end{tikzcd}
\end{center}
where $(X, d)$ is either $(S^1, d)$ or a point and $G_\infty = \mathbb{R}\times\mathbb{Z}$ or $\mathbb{R}\times\mathbb{R}$, respectively, by Corollary~\ref{c:convergence_CFG}. For each $i$, we write $ g_i:= g_{s_i}(1)$, $\tilde g_i:=\tilde g_{s_i}(1)$, $\alpha_i:=\alpha_{s_i}(1)$ and $\tilde\alpha_i:=\tilde\alpha_{s_i}(1)$. Note, for each $i$, there exists $u_i \in C^\infty(\mathbb{R}^2)$, such that $du_{i} = \tilde\alpha_i$. We normalize these functions such that $u_{i}(0) = 0$. By Lemma~\ref{action.on.level.sets}, we know that $G_i$ acts on the collection of level sets $\{u_i^{-1}(r):r\in \R\}$.

\textbf{Step 2}: Passing the action of $G_i$ on level sets to the limit.

Smooth pointed convergence and the normalization $u_i(0)=u^*(0)=0$ give $u_i\to u^*$ in $C^\infty_{\mathrm{loc}}$. If $\gamma_i\to\gamma\in G_\infty$, then $[\alpha](\gamma_i)=u_i(\gamma_i(x))-u_i(x)\to u^*(\gamma(x))-u^*(x)$. The limit is independent of $x$, so $G_\infty$ acts on the level sets of $u^*$.

\textbf{Step 3}: We prove (i). If $(X,d)=\{\mathrm{pt}\}$, then $[\alpha]$ is irrational.

By Corollary~\ref{c:convergence_CFG}, $G_\infty=\R^2$ which acts transitively on $\R^2$. Thus, for any $\lambda\in \R$, there is $\gamma\in G_\infty$ such that $u^*(\gamma(x))-u^*(x)=\lambda$. Let $\gamma_i\in G_i$ be such that $\gamma_i\to \gamma$. Then $u_i(\gamma_i(x))-u_i(x)=[\alpha](\gamma_i)$ converges to $\lambda$ as $i\to\infty$. Since the action of $G_i$ on $\{u_i^{-1}(r):r\in \R\}$ is independent of $i$, the action of $G_i$ on $\{u_i^{-1}(r):r\in \R\}$ is not discrete. By Lemma~\ref{action.on.level.sets}, $[\alpha]$ is irrational. In particular, we know that if $[\alpha]$ is rational, then $(X,d)=(S^1,d)$.

\textbf{Step 4}: Identifying the $\R$ component of $G_\infty$ when $[\alpha]$ is rational.

By Step 3 and Theorem~\ref{no_fiber_component}, $(X,d)=(S^1,d)$ and the limit $(g^*(t),\alpha^*(t))$ has no fiber component. By Corollary~\ref{c:convergence_CFG}, $G_\infty \cong \R\times\Z$. Since $\alpha^* = \sqrt{2} \, d\theta$, the fiber direction cannot contain a $\partial_\theta$ component, which implies that $\partial_v$ must correspond to the fiber direction. Let $H_\infty = \big\{(\theta,v) \mapsto (\theta,v+v_1) : v_1 \in \R \big\} \le G_\infty$ be the $\R$-subgroup generated by $\partial_v$. We then observe that for any point $x \in \R^2$, if $u^*(x) = r$, the entire orbit $H_\infty \cdot x$ coincides precisely with the level set $(u^*)^{-1}(r)$.

\textbf{Step 5}: We prove (ii). If $[\alpha]$ is rational then $(X,d)=(S^1,d([\alpha]))$.

By Lemmas~\ref{l:collapsing.generators} and \ref{l:zero.pairing.with.fiber}, there exist closed curves $\gamma_i$ and $\sigma_i$ such that $[\gamma_i], [\sigma_i]$ generate $\pi_1(M)$, $L(\gamma_i, g_i) \to 0$, $L(\sigma_i, g_i) \to L(S^1, d)$, and $[\alpha](\gamma_i) = 0$. Since $[\gamma_i], [\sigma_i]$ generate $\pi_1(M)$, Lemma~\ref{action.on.level.sets} implies that $u_i([\sigma_i](x))-u_i(x)=[\alpha](\sigma_i)=\ell([\alpha])$. By Lemma~\ref{action.on.level.sets}, the action of $G_i$ on $\{u_i^{-1}(r):r\in\R\}$ is generated by $r\mapsto r+\ell([\alpha])$ which is independent of $i$. Thus by Step 2, the action of $G_\infty$ on $\{(u^*)^{-1}(r):r\in \R\}$ is a subgroup of $\langle r\mapsto r+\ell([\alpha])\rangle$. 
We now show that the action is the full group. Since $d_{g_i}([\sigma_i](x),x)\le L(\sigma_i,g_i)$ is uniformly bounded, passing to a subsequence yields $[\sigma_i]\to \sigma_\infty\in G_\infty$. Thus we have $u^*(\sigma_\infty(x))-u^*(x)=\ell([\alpha])$. Therefore, the action of $G_\infty$ on $\{(u^*)^{-1}(r):r\in \R\}$ is generated by $ r\mapsto r+\ell([\alpha])$. By Step 4, the orbit $H_\infty \cdot x$ coincides with the level set $(u^*)^{-1}(r)$. Thus $S^1$ has length $\ell([\alpha])/\sqrt 2$.

\textbf{Step 6}: We prove (iii).

    Suppose $L=L(S^1,d)>0$. By Lemma~\ref{l:collapsing.generators}, there exist generators $[\gamma],[\sigma]\in \pi_1(M)$ such that for some $s_1>s_0(2L)$ given in Lemma~\ref{l:bounded.generator}, we have $d_{g_{s_1}(1)}([\gamma](0),0)\le 2L$, $d_{g_{s_1}(1)}([\sigma](0),0)\le 2L$. By Lemma~\ref{l:bounded.generator}, we have $d_{g_s(1)}([\gamma](0),0)\le L'$ and $d_{g_{s}(1)}([\sigma](0),0)\le L'$ for any $s\ge s_1$. Then $G_i\ni[\gamma]\to \gamma_\infty\in G_\infty$ and $G_i\ni[\sigma]\to \sigma_\infty\in G_\infty$. Since $[\gamma],[\sigma]$ are generators, the action of $G_i$ on $\{u_i^{-1}(r):r\in \R\}$ then fully passes to the limit action of $G_\infty$ on $\{(u^*)^{-1}(r):r\in \R\}$. Thus if $(g^*(t),\alpha^*(t))$ has no fiber component and $(X,d)=(S^1,d)$, then by the same argument as Step 4, the action of $G_\infty$ on $\{(u^*)^{-1}(r):r\in \R\}$ is discrete and so is the action of $G_i$ on $\{u_i^{-1}(r):r\in \R\}$. Thus $[\alpha]$ is rational by Lemma \ref{action.on.level.sets}.
\end{proof}

\begin{remark}
Theorem~\ref{t:GH_limit_vs_rationality} is an analog of \cite[Proposition~6.2]{lott2010dimensional}. In the 3D Type III Ricci flow with diameter bounds studied by Lott \cite{lott2010dimensional}, there is a one-to-one correspondence between the expander on the universal cover and the Gromov--Hausdorff limit of the blow-down. In our setting, however, this one-to-one correspondence fails because the same expander $(g^*(t), \alpha^*(t))$ can yield different Gromov--Hausdorff limits. Indeed, as shown in Example~\ref{example.noncompact invariant expander}, $(g^*(t), \alpha^*(t))$ can be quotiented by either an $\mathbb{R}\times\mathbb{Z}$ or an $\mathbb{R} \times \mathbb{R}$ group action to obtain the corresponding base limit. The information determining which quotient is realized is precisely encoded in the cohomology class $[\alpha]$, as manifested in Lemma~\ref{action.on.level.sets} and the proof above.
\end{remark}
\begin{remark}\label{r:S1.not.imply.rational}
       One cannot conclude that $[\alpha]$ is rational merely from the fact that, along a subsequence, the Gromov--Hausdorff limit is $(S^1, d)$ and the limit on the universal cover $(g^*(t+c), \alpha^*(t+c))$ has no fiber component. Indeed, the action of $\pi_1(M)$ on $\{u_i^{-1}(r):r\in\mathbb{R}\}$ may be lost in the limit, allowing a sequence of dense actions to converge to a discrete one. For example, in $(\theta,v)$ coordinates, consider the lattices generated by $(\frac{\sqrt 2}{i^2},\frac{1}{i})$ and $(1,0)$. The corresponding action on $\{(u^*)^{-1}(r):r\in\mathbb{R}\}$ is dense for every $i$, but the lattices converge to the group of integer $\theta$-translations and arbitrary $v$-translations, isomorphic to $\mathbb{R}\times\mathbb{Z}$, which gives a discrete action in the limit. Thanks to Lemma~\ref{l:bounded.generator}, when full-sequence convergence on the universal cover holds, such pathologies cannot occur.
       \end{remark}

\section{Proof of Theorem~\ref{t:main.GH_limit} (i) and Corollary~\ref{c:klein.bottle}}\label{s:proof.of.main.theorem.(i)}
We now complete the proof of Theorem~\ref{t:main.GH_limit} (i) and Corollary~\ref{c:klein.bottle}. First, we rule out the trivial flat model for the rational case.

\begin{lemma}\label{rational.class.alpha.nonzero}
    Suppose $[\alpha]$ is rational. Furthermore, suppose that there exists a sequence $s_i\to\infty$ such that $(\tilde{g}_{s_i}(t),\tilde{\alpha}_{s_i}(t))\xrightarrow{C^\infty}(\tilde{g}_\infty(t),\tilde{\alpha}_\infty(t))$. Then $(\tilde{g}_\infty(t),\tilde{\alpha}_\infty(t))$ cannot be $(g_{\mathrm{Euc}},0)$.
\end{lemma}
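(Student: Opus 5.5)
We argue by contradiction: if the limit were $(g_{\mathrm{Euc}},0)$, then $\tilde\alpha_\infty\equiv0$. We would then show that some deck transformation has nonzero period yet stays at bounded distance from the basepoint, which is impossible.

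\textbf{Step 1: a generator that stays bounded.} Pass to a further subsequence so that Corollary~\ref{c:convergence_CFG} applies and $(M,g_{s_i}(1))\xrightarrow{GH}(X,d)$ with $(X,d)$ either $(S^1,d)$ or $\{\mathrm{pt}\}$. In either case $\operatorname{diam}(M,g_{s_i}(1))\le C$ by Theorem~\ref{diameter.bound}. It follows that $\pi_1(M)$ is generated by deck transformations $\gamma$ with $d_{\tilde g_{s_i}(1)}(\gamma(p),p)\le 2C+\epsilon$. This is the standard fact that a group acting cocompactly is generated by the elements that move a basepoint by at most twice the diameter plus a small constant. Fix a basis for each $i$ with this property.

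\textbf{Step 2: a nonzero period that is bounded away from zero.} Since $[\alpha]\ne0$, at least one element $\gamma_i$ of this basis has $[\alpha](\gamma_i)\ne0$. Because $[\alpha]$ is rational, Lemma~\ref{action.on.level.sets} gives $[\alpha](\gamma_i)\in\Z\cdot\ell([\alpha])$, so $|[\alpha](\gamma_i)|\ge\ell([\alpha])>0$.

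\textbf{Step 3: the period tends to zero, a contradiction.} Write $\tilde\alpha_{s_i}(1)=du_i$. By Lemma~\ref{action.on.level.sets},
\[
[\alpha](\gamma_i)=u_i(\gamma_i(p))-u_i(p)=\int_{c_i}\tilde\alpha_{s_i}(1),
\]
where $c_i$ is a $\tilde g_{s_i}(1)$-minimizing segment from $p$ to $\gamma_i(p)$. This segment has length at most $2C+1$ and so lies in a fixed ball $B(p,2C+1)$. Pointed smooth convergence to $(g_{\mathrm{Euc}},0)$ gives $\sup_{B(p,2C+1)}|\tilde\alpha_{s_i}(1)|\to0$, hence $|[\alpha](\gamma_i)|\to0$. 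This contradicts Step 2.

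\textbf{Alternative route.} The bound $|\alpha_{s}(1)|^2\le 2$ from Theorem~\ref{alpha.norm.estimate} by itself gives only boundedness of the periods, not smallness, so the convergence $\tilde\alpha_{s_i}\to0$ is the essential input. If that local convergence were in doubt, a second route is available. By Theorem~\ref{growth.of.length.of.shortest.curve}, $L_{[\gamma_i]}(s_i)\ge|[\alpha](\gamma_i)|\sqrt{s_i/2}$, so $d_{\tilde g_{s_i}(1)}(\gamma_i(p),p)\ge\ell([\alpha])/\sqrt2$. Each deck transformation with nonzero period therefore moves $p$ a definite amount. One could then combine this with the fact that the limit group $G_\infty$ preserves $u^*$-level sets, as in Step 2 of Theorem~\ref{t:GH_limit_vs_rationality}, where $u^*$ would be constant in the Euclidean case.

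\textbf{Main obstacle.} The delicate point is Step 1: guaranteeing a uniformly bounded-displacement generator with nonzero period. Since $[\alpha]\ne0$, not all generators can have zero period. The danger is only that short-displacement generators might all have zero period, and the generating-set argument excludes this cleanly.
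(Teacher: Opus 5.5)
Your proof is correct. It runs the paper's argument in the dual direction.

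The paper fixes $p\in u_i^{-1}(\ell/2)$ and $q\in u_i^{-1}(0)$, and notes that every lift $[\gamma](q)$ of $\pi(q)$ lies in $u_i^{-1}(\ell\Z)$. Integrating $du_i=\tilde\alpha_i$ then gives a \emph{diameter lower bound} $\operatorname{diam}(M,g_i)\ge \ell/(2\|\tilde\alpha_i\|_{L^\infty(\tilde g_i)})$. For this to blow up, the paper needs $\sup_{\tilde M}|\tilde\alpha_i|_{\tilde g_i}\to0$ globally. It obtains this from local convergence, deck invariance, and the fact that every point of $M$ has a lift in a fixed ball.

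You instead prove a \emph{period upper bound}. You use the standard lemma that deck transformations moving the basepoint by at most twice the diameter plus $\epsilon$ generate $\pi_1(M)$. This yields a short deck transformation with nonzero period. Integrating along a minimizing segment inside a fixed ball forces that period to $0$, which contradicts $|[\alpha](\gamma_i)|\ge\ell([\alpha])$.

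Both proofs use the same inputs: the uniform diameter bound, $[\alpha](\pi_1(M))=\Z\cdot\ell([\alpha])$, and $\tilde\alpha_{s_i}(1)\to0$ on bounded sets. Your version needs smallness of $\tilde\alpha$ only on one fixed ball, but it relies on the generating-set lemma. The paper's version avoids that lemma at the cost of upgrading local smallness to a global sup.

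One wording fix: the bounded-displacement elements form a generating set, not necessarily a basis of $\Z^2$. This is harmless, because the nonzero homomorphism $[\alpha]:\pi_1(M)\to\R$ cannot vanish on every element of a generating set. The Gromov--Hausdorff subsequence in Step~1 and the ``alternative route'' are not needed.
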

\begin{proof}
    Suppose for contradiction that $(\tilde g_\infty(t),\tilde\alpha_\infty(t))=(g_{\mathrm{Euc}},0)$. For each $i$, we write $ g_i:= g_{s_i}(1)$, $\tilde g_i:=\tilde g_{s_i}(1)$, $\alpha_i:=\alpha_{s_i}(1)$ and $\tilde\alpha_i:=\tilde\alpha_{s_i}(1)$.
    On $\tilde M$, there exists a function $u_i\in C^\infty(\tilde M)$ such that $\tilde \alpha_i=du_i$. Since $[\alpha]$ is nonzero, the function $u_i:\R^2\to \R$ is surjective. 
    
    Set $\ell=\ell([\alpha])$ and choose $p\in u_i^{-1}(\ell/2)$ and $q\in u_i^{-1}(0)$. By Lemma~\ref{action.on.level.sets}, $u_i([\gamma](q))\in\ell\Z$ for every deck transformation $[\gamma]$. Integrating $du_i=\tilde\alpha_i$ along a curve $c$ from $p$ to $[\gamma](q)$ gives $\ell/2\le L(c)\|\tilde\alpha_i\|_{L^\infty(\tilde g_i)}$. Taking the infimum over these curves and deck transformations yields
    \begin{equation}\label{lower.bound.of.diameter}
        \operatorname{diam}(M,g_i)\ge d_{g_i}(\pi(p),\pi(q))
        \ge \frac{\ell}{2\|\tilde\alpha_i\|_{L^\infty(\tilde g_i)}}.
    \end{equation}
    As $i\to\infty$, smooth pointed convergence to
$(g_{\mathrm{Euc}},0)$ implies that
$|\tilde\alpha_i|_{\tilde g_i}^2\to0$ uniformly on bounded balls.
By Theorem \ref{diameter.bound}, every point of $M$ has a lift
within a fixed distance of the chosen basepoint in $\tilde M$.
Since $\tilde\alpha_i$ and $\tilde g_i$ are deck-invariant,
it follows that
$
\sup_{\tilde M}|\tilde\alpha_i|_{\tilde g_i}^2\to0
$. By \eqref{lower.bound.of.diameter}, this forces $\operatorname{diam}(M, g_i) \to \infty$, directly contradicting the uniform upper bound $\operatorname{diam}(M, g_i) \le C$ from Theorem~\ref{diameter.bound}.
\end{proof}
\begin{remark}
    The argument in Lemma~\ref{rational.class.alpha.nonzero} does not extend to the case where $[\alpha]$ is irrational. In that setting, the action of $\pi_1(M)$ on $\{u_i^{-1}(r):r\in\R\}$ has dense orbits so that there is no contradiction to the diameter bound Theorem~\ref{diameter.bound}.
\end{remark}
Now, we can prove Theorem~\ref{t:main.GH_limit} when $[\alpha]$ is rational.
\begin{proof}[Proof of Theorem~\ref{t:main.GH_limit} (i)]
    Suppose for contradiction that there exists a sequence $s_i\to\infty$ such that $(M,g_{s_i}(1))$ converges in Gromov--Hausdorff topology to a point. Then by Corollary~\ref{c:convergence_CFG} $(\tilde g_{s_i}(t),\tilde \alpha_{s_i}(t))\xrightarrow{C^\infty}(\tilde g_{\infty}(t),\tilde\alpha_\infty(t))$ where $(\tilde g_{\infty}(t),\tilde\alpha_\infty(t))$ is $\R^2$-invariant. By Lemma~\ref{l:R2.invariant.solution}, $(\tilde g_{\infty}(t),\tilde\alpha_\infty(t))$ is either $(g^*(t+c),\alpha^*(t+c))$ for some $c\ge 0$ or $(g_{\mathrm{Euc}},0)$. Then Lemma~\ref{rational.class.alpha.nonzero} shows that $(\tilde g_{s_i}(t),\tilde \alpha_{s_i}(t))\xrightarrow{C^\infty}(g^*(t+c),\alpha^*(t+c))$. We consider a further blow-down $(\tilde g_{\infty,s}(t),\tilde \alpha_{\infty,s}(t))$ and it converges to $(g^*(t),\alpha^*(t))$ as $s\to\infty$ by Proposition~\ref{p:canonical_alpha_expander_attractor}. For each fixed $r\ge1$, we have $r^{-1}g^*(r+c)=r^{-1}dv^2+(1+c/r)d\theta^2\le g^*(1+c)$, so the diameter of the limiting quotient cannot increase and the base limit remains a point. Equivariant convergence and a diagonal choice as $r\to\infty$ therefore preserve this point limit. Thus there exists $t_i\to\infty$ such that $(\tilde g_{t_i}(t),\tilde\alpha_{t_i}(t))\xrightarrow{C^\infty}(g^*(t),\alpha^*(t))$ and $(M,g_{t_i}(1))\xrightarrow{GH}\{\mathrm{pt}\}$. By Theorem~\ref{t:GH_limit_vs_rationality}, $[\alpha]$ is irrational. This is a contradiction. Therefore, for every sequence $s_i \to \infty$, $(M, g_{s_i}(1))$ converges in the Gromov--Hausdorff sense to $(S^1, d)$ for some metric $d$ on $S^1$. Hence by Proposition~\ref{p:base.S^1.implies.universal.cover.expander} and Theorem~\ref{no_fiber_component}, it follows that $(\tilde g_s(t),\tilde \alpha_s(t))\xrightarrow{C^\infty}(g^*(t),\alpha^*(t))$. By Theorem~\ref{t:GH_limit_vs_rationality} again, we conclude that $(M,g_{s}(1))\xrightarrow{GH} (S^1,d([\alpha]))$.
\end{proof}

The Klein bottle case can be proved using the same idea.
\begin{proof}[Proof of Corollary~\ref{c:klein.bottle}]
    Let $\pi:\mathbb{T}^2\to K$ be the two-sheeted cover of the Klein bottle by the 2-torus $\mathbb{T}^2$. Then $(\pi^*g(t),\pi^*\alpha(t))$ is an $\alpha$-Ricci flow on $\mathbb{T}^2$.

    If $[\alpha]=0$, Lott and Sesum's result~\cite{lott2014ricci} gives $(\mathbb T^2,t^{-1}\pi^*g(t))\allowbreak\xrightarrow{GH}\{\mathrm{pt}\}$ as well as $(\tilde g_s(t),\tilde\alpha_s(t))\allowbreak\xrightarrow{C^\infty}(g_{\mathrm{Euc}},0)$. Since $\diam(K,t^{-1}g(t))\le\diam(\mathbb T^2,t^{-1}\pi^*g(t))$, the Klein bottle also collapses to a point, proving (ii).

    Now suppose that $[\alpha]\neq0$. Then $[\pi^*\alpha]\neq0$, and since $K$ and $\mathbb{T}^2$ share the same universal cover, applying Theorem~\ref{t:main.GH_limit} to $(\pi^*g(t),\pi^*\alpha(t))$ gives $(\tilde g_s(t),\tilde\alpha_s(t))\xrightarrow{C^\infty}(g^*(t),\alpha^*(t))$.
    
    We now show that the Gromov--Hausdorff limit of $(K,g_s(1))$ is $(S^1,d([\alpha]))$. Let $[\gamma_1],[\gamma_2]$ be the generators of $\pi_1(K)$ such that $[\alpha]([\gamma_1])=\ell([\alpha])$ and $[\alpha]([\gamma_2])=0$. Here we used the fact that $[\gamma_2]$ is torsion in $H_1(K,\Z)$ and hence vanishes in $H_1(K,\R)$. The generators of $\pi_1(\mathbb{T}^2)$ are $[\gamma_1]^2$, $[\gamma_2]$. We have $[\pi^*\alpha]([\gamma_1]^2)=[\alpha]([\gamma_1]^2)=2\ell([\alpha])$, $[\pi^*\alpha]([\gamma_2])=[\alpha]([\gamma_2])=0$. Then $[\pi^*\alpha]$ is rational and we have $\ell([\pi^*\alpha])=2\ell([\alpha])$. By Theorem~\ref{t:main.GH_limit}, $G_i=\pi_1(\mathbb{T}^2)$ converges to $G_\infty=\R\times\Z$ which is generated by $\partial_v$ and $\theta\mapsto \theta+ \ell([\pi^*\alpha])/\sqrt {2}$.

    The bounded diameter of the torus cover allows us to choose $\tau_i$ in the left coset $[\gamma_1]G_i$ with uniformly bounded displacement, so a subsequence converges to an isometry $\tau$. The Klein-bottle relation gives $\tau_i[\gamma_2]^n\tau_i^{-1}=[\gamma_2]^{-n}$. By equivariant convergence and discreteness of the period subgroup, every fiber translation $T_h(\theta,v)=(\theta,v+h)$ is a limit of powers of $[\gamma_2]$; hence $\tau T_h\tau^{-1}=T_{-h}$. Since $\tau$ preserves $\alpha^*=\sqrt2\,d\theta$, it has the form $\tau(\theta,v)=(\theta+a,-v+r)$. The periods of $\tau_i$ are odd multiples of $\ell([\alpha])$, so $a$ is an odd multiple of $\ell([\alpha])/\sqrt2$. Composing with elements of $G_\infty$, we may take $a=\ell([\alpha])/\sqrt2$ and $r=0$. Thus the full limiting group is $\R\rtimes\Z$, generated by the fiber translations and this glide, and its quotient is the circle of length $\ell([\alpha])/\sqrt2$. This gives $(K,g_s(1))\xrightarrow{GH}(S^1,d([\alpha]))$, as claimed.
\end{proof}

\section{Proof of Theorem~\ref{t:main.GH_limit} (ii)}\label{s:proof.of.main.theorem.(ii)}
\begin{proof}[Proof of Theorem~\ref{t:main.GH_limit} (ii)]
    By Corollary~\ref{c:convergence_CFG}, after passing to a subsequence if necessary, at least one of the following three alternatives occurs:
\begin{enumerate}[label=(\arabic*)]
    \item $(M,g_s(1))\xrightarrow{GH} \{\mathrm{pt}\}$ as $s\to\infty$;

    \item There exists a sequence $s_i\to\infty$ such that $(M,g_{s_i}(1))\xrightarrow{GH}(S^1,d)$, and on the universal cover $(\tilde{g}_{s_i}(t),\tilde{\alpha}_{s_i}(t))\xrightarrow{C^\infty}(\tilde{g}_\infty(t),\tilde{\alpha}_\infty(t))$ with a nonzero fiber component;

    \item There exists a sequence $s_i\to\infty$ such that $(M,g_{s_i}(1))\xrightarrow{GH}(S^1,d)$, and on the universal cover $(\tilde{g}_{s_i}(t),\tilde{\alpha}_{s_i}(t))\xrightarrow{C^\infty}(\tilde{g}_\infty(t),\tilde{\alpha}_\infty(t))$ with no fiber component.
\end{enumerate}
    We will show that in each of the cases above, there exists a sequence $s_i' \to \infty$ such that $(M,g_{s_i'}(1))\xrightarrow{GH} \{\mathrm{pt}\}$ and $(\tilde{g}_{s_i'}(t), \tilde{\alpha}_{s_i'}(t)) \xrightarrow{C^\infty} (g^*(t), \alpha^*(t))$.

\noindent\textbf{Case 1}

    Suppose for contradiction that for any sequence  $s_i\to\infty$, $(\tilde g_{s_i}(t),\tilde\alpha_{s_i}(t))$ stays uniformly bounded away from $(g^*(t),\alpha^*(t))$. By Corollary~\ref{c:convergence_CFG} and that $(M,g_s(1))\xrightarrow{GH} \{\mathrm{pt}\}$, there is a convergent subsequence converging to $(\tilde g_\infty(t),\tilde \alpha_\infty(t))$ which is $\R^2$-invariant. By Lemma~\ref{l:R2.invariant.solution}, $(\tilde g_\infty(t),\tilde \alpha_\infty(t))$ is either $(g^*(t+c),\alpha^*(t+c))$ for some $c\ge0$ or $(g_{\mathrm {Euc}},0)$. 
    
    If $(\tilde g_\infty(t),\tilde \alpha_\infty(t))=(g^*(t+c),\alpha^*(t+c))$, then we consider a further blow-down sequence $(\tilde g_{\infty,s}(t),\tilde \alpha_{\infty,s}(t))$ and it converges to $(g^*(t),\alpha^*(t))$ as $s\to\infty$ by Proposition~\ref{p:canonical_alpha_expander_attractor}. As in the proof of Theorem~\ref{t:main.GH_limit}(i), there exists $t_i\to\infty$ such that $(\tilde g_{t_i}(t),\tilde\alpha_{t_i}(t))$ converges to $(g^*(t),\alpha^*(t))$. This contradicts our assumption. Thus  $(\tilde g_\infty(t),\tilde \alpha_\infty(t))=(g_{\mathrm {Euc}},0)$.
    
    By the above argument, for every sequence $s_i \to \infty$, there exists a subsequence such that $(\tilde{g}_{s_i}(t), \tilde{\alpha}_{s_i}(t)) \xrightarrow{C^\infty} (g_{\mathrm{Euc}}, 0)$. Consequently, the full family $(\tilde{g}_s(t), \tilde{\alpha}_s(t))$ converges smoothly to $(g_{\mathrm{Euc}}, 0)$ as $s \to \infty$. In particular, $|R(\tilde{g}_s(t))| \to 0$ and $|\tilde{\alpha}_s(t)|^2_{\tilde{g}_s(t)} \to 0$ in $C^\infty_{\mathrm{loc}}$ as $s \to \infty$. Since $\operatorname{diam}(M, g_s(1)) \to 0$ as $s \to \infty$, it follows that $s |R(g(s))| \to 0$ and $s |\alpha(s)|_{g(s)}^2 \to 0$ uniformly on $M$.

Therefore, given any $\epsilon > 0$, there exists $T > 0$ such that for all $t \ge T$,
\[
    |R(g(t))| \le \epsilon t^{-1} \quad \text{and} \quad |\alpha(t)|_{g(t)}^2 \le \epsilon t^{-1}.
\]
By Theorem~\ref{growth.of.length.of.shortest.curve}, for any class $\sigma \in H_1(M, \mathbb{Z})$, the length $L_\sigma(t)$ of the shortest geodesic representative satisfies
\[
    D^+ L_\sigma(t) \le \int_0^{L_\sigma(t)} \left( -\frac{1}{2} R + \frac{1}{4} (\alpha(\gamma'))^2 \right) ds \le \epsilon t^{-1} L_\sigma(t).
\]
Integrating this differential inequality from $T$ to $t$, we obtain
\begin{equation}\label{L.sigma<=t^epsilon}
    L_\sigma(t) \le \left( \frac{t}{T} \right)^\epsilon L_\sigma(T).
\end{equation}
On the other hand, equation \eqref{growth.L_sigma} in Theorem~\ref{growth.of.length.of.shortest.curve} asserts that
\begin{equation}\label{L.sigma>=t^1/2}
    L_\sigma(t)^2 \ge \frac{1}{2} [\alpha](\sigma)^2 (t-T) + L_\sigma(T)^2.
\end{equation}
Since $[\alpha] \neq 0$, we can choose $\sigma \in H_1(M, \mathbb{Z})$ such that $[\alpha](\sigma) \neq 0$. Choosing $\epsilon<\frac{1}{2}$, \eqref{L.sigma<=t^epsilon} contradicts \eqref{L.sigma>=t^1/2} for $t$ large.

\noindent\textbf{Case 2}

    We know that $(\tilde g_\infty(t), \tilde \alpha_\infty(t))$ reduces to an $\R$-invariant $\alpha$-Ricci flow on $S^1$. Writing $(\tilde g_\infty(t), \tilde \alpha_\infty(t))$ as in \eqref{invariant.flow}, the flow satisfies system \eqref{gauged.invariant.alpha.ricci.flow}, which takes the form
    \begin{align}
	\frac{\partial G}{\partial t}=&\Delta_uG- G^{-1}|\nabla_uG|^2+\frac{1}{2}b^2\label{eqn.G}\\
	\frac{\partial A}{\partial t}=&\frac{1}{2}a bG^{-1}\label{eqn.A}\\
	\frac{\partial u}{\partial t}=&\frac{1}{2}G^{-2}(G')^2+\frac{1}{2}a^2\label{eqn.u}\\
	\frac{\partial a}{\partial t}=&[u^{-1/2}(au^{-1/2})']'-\frac{1}{2}a b^2G^{-1}\label{eqn.a}.
\end{align}
where $f'=\frac{df}{d\theta}$, $\nabla_u,\Delta_u$ are the gradient and Laplacian with respect to the metric $ud\theta^2$ on $S^1$ and $b=\tilde\alpha_\infty(\partial_v)\neq0$ is a spacetime constant by assumption and Corollary~\ref{fiber.constant.in.spacetime}.

Throughout this argument, we assume $t\ge 1$ and $C$ is some constant depending on time-one data.
Applying the maximum principle on \eqref{eqn.G} we have $\frac{1}{2}b^2t+ m\le G\le \frac{1}{2}b^2t+ M$ where $m=\min_{ S^1}G(1)-\frac{1}{2}b^2$ and $M=\max_{ S^1}G(1)-\frac{1}{2}b^2$. Setting $w=\log G$, we compute
\[\frac{\partial w}{\partial t}=\Delta_uw+\frac{1}{2}b^2G^{-1}.\]
Setting $Z=|\nabla w|^2=u^{-1}w'^2=u^{-1}\frac{G'^2}{G^2}$, we obtain
\[\frac{\partial Z}{\partial t}=\Delta_u Z-2|\nabla^2w|^2-\frac{1}{2}Z^2-\frac{1}{2}u^{-1}a^2Z-b^2G^{-1}Z\le\Delta_u Z-\frac{b^2}{\frac{1}{2}b^2t+M}Z.\]
Then by the maximum principle, we have \[u^{-1}\frac{G'^2}{G^2}\le \max_{S^1} Z(t)\le\max_{S^1} Z(1)\left(\frac{1+2M/b^2}{t+2M/b^2}\right)^2.\]
Let $Q=|ad\theta|^2=a^2/u$. By \eqref{eqn.a} and \eqref{eqn.u}, we have
\[
	\frac{\partial Q}{\partial t}=\Delta_uQ-2|\nabla (ad\theta)|^2- b^2G^{-1}Q-\frac{\partial_t u}{u}Q\le \Delta_u Q-\frac{b^2}{\frac{1}{2}b^2t+M}Q.\]
Applying the maximum principle, we have 
\begin{equation}\label{e:estimate.Q}
    u^{-1}a^2\le \max_{S^1} Q(t)\le\max_{S^1} Q(1)\left(\frac{1+2M/b^2}{t+2M/b^2}\right)^2.
\end{equation}
Thus we have
\[\frac{\partial u}{\partial t}=\frac{1}{2}G^{-2}(G')^2+\frac{1}{2}a^2\le \frac{Cu}{(t+2M/b^2)^2}.\]
Solving the differential inequality, we have \[\max_{S^1} u(t)\le \max_{S^1}u(1)\exp [C(\frac{1}{1+2M/b^2}-\frac{1}{t+2M/b^2})].\]
Plugging this into \eqref{e:estimate.Q} we have $|a(t)|\le C (t+2M/b^2)^{-1}$. Putting the estimates for $a$ and $G$ back to \eqref{eqn.A}, we have
\[|\frac{\partial A}{\partial t}|\le \frac{C}{(t+2M/b^2)^2}.\]
This gives
\[|A(t)|\le |A(1)|+C(\frac{1}{1+2M/b^2}-\frac{1}{t+2M/b^2}).\] 

To summarize, we have 
\begin{equation}\label{estimates.component.fiber.component}
    G(t)\sim \frac{1}{2}b^2t,\quad A(t)=O(1),\quad u(t)=O(1),\quad a(t)=O(\frac{1
}{t}).
\end{equation}
Since the fibers are $G_\infty$-orbits, the quotient metric at time $t$
is $u(\theta,t)\,d\theta^2$.
For each fixed $T>0$, equivariant convergence gives $(M,(s_iT)^{-1}g(s_iT))\xrightarrow{GH}(S^1,T^{-1}u(\theta,T)\,d\theta^2)$ as $i\to\infty$.  By
\eqref{estimates.component.fiber.component}, $\diam(S^1,T^{-1}u(\theta,T)\,d\theta^2)\to0$ as $T\to\infty$. By a diagonal choice, select $T_j\to\infty$ and increasing indices $i_j$ such that $s_j'=s_{i_j}T_j$ satisfies $\operatorname{diam}(M,g_{s_j'}(1))\to0$ and
\[
\left|
|\tilde\alpha_{s_j'}(1)|_{\tilde g_{s_j'}(1)}^2(\tilde p)
-T_j|\tilde\alpha_\infty(T_j)|_{\tilde g_\infty(T_j)}^2(\tilde p)
\right|\longrightarrow0.
\]
Since the component estimates and the positive lower bound for $u$ give
\[
t|\tilde\alpha_\infty(t)|_{\tilde g_\infty(t)}^2
=
t\left(\frac{a(\theta,t)^2}{u(\theta,t)}
+\frac{b^2}{G(\theta,t)}\right)
\longrightarrow2
\]
uniformly in $\theta$, this diagonal choice also ensures $|\tilde\alpha_{s_j'}(1)|_{\tilde g_{s_j'}(1)}^2(\tilde p)\to2$.

By Corollary~\ref{c:convergence_CFG}, after passing to a subsequence,
the flows on the universal cover converge to an
$\mathbb R^2$-invariant solution. By
Lemma~\ref{l:R2.invariant.solution}, the limit is either
$(g_{\mathrm{Euc}},0)$ or $(g^*(t+c),\alpha^*)$ for some $c\ge0$.
The time-one norm limit excludes the former and, since
$
|\alpha^*|_{g^*(1+c)}^2=\frac{2}{1+c},
$
forces $c=0$ in the latter. Thus $(\tilde g_{s_j'}(t),\tilde\alpha_{s_j'}(t))
\xrightarrow{C^\infty}(g^*(t),\alpha^*(t))$, while $(M,g_{s_j'}(1))\xrightarrow{GH} \{\mathrm{pt}\}$.

\noindent\textbf{Case 3}

If there is a sequence $s_i\to\infty$ satisfying Case 2, then we are done.

Otherwise, we may assume that whenever $s_i\to\infty$ satisfies $(M,g_{s_i}(1))\xrightarrow{GH}(S^1,d)$ and $(\tilde g_{s_i}(t),\tilde\alpha_{s_i}(t))\xrightarrow{C^\infty}(\tilde g_\infty(t),\tilde\alpha_\infty(t))$, the limit has no fiber component. We are going to deduce a contradiction.

We claim that there exists $s_i\to\infty$ such that $(M,g_{s_i}(1))\xrightarrow{GH}\{\mathrm{pt}\}$. Suppose for contradiction that any $s_i\to\infty$ such that $(M,g_{s_i}(1))\xrightarrow{GH}(X,d)$, we have $X=S^1$. By Corollary \ref{c:convergence_CFG}, for any $s_i\to\infty$, there is a subsequence such that $(M,g_{s_i}(1))$ converges. Then by our assumption and Proposition~\ref{p:base.S^1.implies.universal.cover.expander} $(\tilde g_{s_i}(t),\tilde\alpha_{s_i}(t))\xrightarrow{C^\infty}(g^*(t),\alpha^*(t))$ with no fiber component. Then every subsequential limit on the universal cover is $(g^*(t),\alpha^*(t))$, so we get the full sequence convergence $(\tilde g_{s}(t),\tilde\alpha_{s}(t))\xrightarrow{C^\infty}(g^*(t),\alpha^*(t))$ with no fiber component. By Theorem~\ref{t:GH_limit_vs_rationality}(iii), $[\alpha]$ is rational, contradicting our assumption that $[\alpha]$ is irrational.

Thus we get a sequence $s_i\to\infty$ such that $(M,g_{s_i}(1))\xrightarrow{GH}\{\mathrm{pt}\}$. By Corollary~\ref{c:convergence_CFG} and Lemma~\ref{l:R2.invariant.solution}, after passing to a subsequence, 
\[
(\tilde g_{s_i}(t),\tilde\alpha_{s_i}(t))\xrightarrow{C^\infty}(\tilde g_\infty(t),\tilde\alpha_\infty(t))
=(g^*(t+c),\alpha^*(t+c))\quad\text{or}\quad(g_{\mathrm{Euc}},0).
\]
In the first case, Proposition~\ref{p:canonical_alpha_expander_attractor} and a further blow-down give the desired sequence for Theorem \ref{t:main.GH_limit}(ii), with the Gromov-Hausdorff limit being a point as argued in the proof of Theorem~\ref{t:main.GH_limit}(i). It remains to rule out the possibility that every sequence whose Gromov--Hausdorff limit is a point has $(g_{\mathrm{Euc}},0)$ as its limit on the universal cover.

Under this assumption, the only possible subsequential blow-down limits on the universal cover are $(g^*(t),\alpha^*(t))$ and $(g_{\mathrm{Euc}},0)$: the first occurs when the Gromov--Hausdorff limit downstairs is a circle, and the second when it is a point. We claim that these two limits cannot both occur. Indeed, by continuity of $(\tilde g_s(t),\tilde \alpha_s(t))$ in $s$, if both occurred, one could choose a sequence of $s_i\to\infty$ for which $(\tilde g_{s_i}(t),\tilde \alpha_{s_i}(t))$ stays  a definite distance away from both limits. By Corollary \ref{c:convergence_CFG}, there is a convergent subsequence $(\tilde g_{s_i}(t),\tilde \alpha_{s_i}(t))$ whose limit is neither $(g^*(t),\alpha^*(t))$ nor $(g_{\mathrm{Euc}},0)$. This is a contradiction. Thus $(\tilde g_s(t),\tilde \alpha_s(t))$ converges to one of $(g^*(t),\alpha^*(t))$ or $(g_{\mathrm{Euc}},0)$. Since there is already a subsequence whose Gromov-Hausdorff limit is a circle, the full family must converge to $(g^*(t),\alpha^*(t))$. Now Theorem~\ref{t:GH_limit_vs_rationality}(iii) again implies that $[\alpha]$ is rational, a contradiction.
\end{proof}

\appendix
\section{Stability of \texorpdfstring{$\R^N$}{RN}-invariant \texorpdfstring{$\alpha$}{alpha}-Ricci flow over \texorpdfstring{$S^1$}{S1}}\label{s:dynamic stability}

We consider stability of locally $\R^N$-invariant $\alpha$-Ricci flow with no fiber components around $(g^*(t),\alpha^*(t))$ on a trivial $\R^N$-bundle over $S^1$ with trivial linear holonomy, among perturbations fixing the cohomology class of $\bar\alpha$. This includes the invariant flows used in Proposition~\ref{p:base.S^1.implies.universal.cover.expander}. The argument follows the stability method of \cite{knopf2009convergence}.
One can also obtain similar result for bundles with nontrivial linear holonomy as in \cite[Section~5]{knopf2009convergence}.
Write
\[
\bar g=u(dx^0)^2+G_{ij}(dx^i+A^i dx^0)(dx^j+A^j dx^0),\qquad
\bar\alpha=a\,dx^0.
\]
All coefficients are periodic in $x^0\in\R/\ell\Z$. Choose the orientation so that $\ell:=\frac{1}{\sqrt2}\int_{S^1}\bar\alpha>0$. Since $F=b=0$, equation~\eqref{gauged.invariant.alpha.ricci.flow} gives $A(t)=A(1)$. We consider $\tau=\log t$, $\tilde G(\tau)=G(e^\tau)$, $\tilde u(\tau)=e^{-\tau}u(e^\tau)$, $\tilde a(\tau)=a(e^\tau)$ and writing $\tilde a=\sqrt2+\partial_0\tilde\eta$. Then after adding a time dependent constant to $\tilde\eta$, \eqref{gauged.invariant.alpha.ricci.flow} gives
\begin{equation}\label{rescaled.equation}
\begin{aligned}
\partial_\tau\tilde G_{ij}
 &=\tilde u^{-1}\left(\partial_0^2\tilde G_{ij}
 -\frac{\partial_0\tilde u}{2\tilde u}\partial_0\tilde G_{ij}
 -\tilde G^{kl}\partial_0\tilde G_{ik}\partial_0\tilde G_{jl}\right),\\
\partial_\tau\tilde u
 &=\frac12|\partial_0\tilde G|_{\tilde G}^2
 +\frac12(\sqrt2+\partial_0\tilde\eta)^2-\tilde u,\\
\partial_\tau\tilde\eta
 &=\tilde u^{-1}\partial_0^2\tilde\eta
 -\frac12\tilde u^{-2}\partial_0\tilde u(\sqrt2+\partial_0\tilde\eta).
\end{aligned}
\end{equation}
where $|\partial_0G|_G^2=G^{ik}G^{jl}\partial_0G_{ij}\partial_0G_{kl}$. We write $\mathbf{\tilde g}(\tau)=(\tilde G(\tau),\tilde u(\tau),\tilde\eta(\tau))$.

Adding the Lie derivatives with respect to the DeTurck vector field $X=X^0\partial_0$, where $X^0=3\partial_0\hat u/(2\hat u)$ gives
\begin{equation}\label{rescaled.gauged.equation}
\begin{aligned}
\partial_\tau\hat G_{ij}
 &=\hat u^{-1}\left(\partial_0^2\hat G_{ij}
 -\frac{\partial_0\hat u}{2\hat u}\partial_0\hat G_{ij}
 -\hat G^{kl}\partial_0\hat G_{ik}\partial_0\hat G_{jl}\right)
 +X^0\partial_0\hat G_{ij},\\
\partial_\tau\hat u
 &=\frac12|\partial_0\hat G|_{\hat G}^2
 +\frac12(\sqrt2+\partial_0\hat\eta)^2-\hat u
 +X^0\partial_0\hat u+2\hat u\partial_0X^0,\\
\partial_\tau\hat\eta
 &=\hat u^{-1}\partial_0^2\hat\eta
 -\frac12\hat u^{-2}\partial_0\hat u(\sqrt2+\partial_0\hat\eta)
 +X^0(\sqrt2+\partial_0\hat\eta).
\end{aligned}
\end{equation}
We write $\mathbf{\hat g}=(\hat G,\hat u,\hat \eta)$ and \eqref{rescaled.gauged.equation} as $\partial_\tau\mathbf{\hat g}=Q(\mathbf{\hat g})$.

If $\partial_\tau\varphi_\tau=-X\circ\varphi_\tau$, $\varphi_0=\mathrm{id}$ and $(\hat G,\hat u,\hat \eta)$ solves \eqref{rescaled.gauged.equation}, then
\begin{equation}\label{eq:base.pullback.stability}
\tilde G=\hat G\circ\varphi_\tau,\qquad
\tilde u=(\hat u\circ\varphi_\tau)(\varphi_\tau')^2,\qquad
\tilde\eta=\hat\eta\circ\varphi_\tau+\sqrt2(\tilde\varphi_\tau-x^0)
\end{equation}
solves~\eqref{rescaled.equation}. In the last formula, $\tilde\varphi_\tau:\R\to\R$  is a lift of $\varphi_\tau$ satisfying $\partial_\tau\tilde\varphi_\tau=-\tilde X\circ\tilde\varphi_\tau$ and $\tilde \varphi_0(x^0)=x^0$ and that $\tilde\varphi_\tau-x^0$ descends to a function on $S^1$.

\begin{proposition}\label{linearization}
The linearization of $Q(\mathbf{\hat g})$ at $\mathbf g=(G_*,1,c)$, where $c$ is a constant, is given by
\[
DQ(\mathbf {g})(h,v,\xi)=L_{\mathbf g}(h,v,\xi)
=(h'',\,3v''-v+\sqrt2\xi',\,\xi''+\sqrt2v').
\]
Its kernel is $\{(H,0,c):H^T=H\text{ is constant},\ c\text{ is constant}\}$.
Zero is a semisimple eigenvalue and all other eigenvalues have strictly negative real part.
\end{proposition}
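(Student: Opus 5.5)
The plan is to compute the linearization term by term, and then diagonalize it with Fourier series. The coefficients of $L_{\mathbf g}$ are constant, so each Fourier mode on $S^1=\R/\ell\Z$ is invariant, and the spectral statements reduce to a family of $1\times1$ and $2\times 2$ matrix computations.

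\textbf{Linearization.} Write $\hat G=G_*+\epsilon h$, $\hat u=1+\epsilon v$, $\hat\eta=c+\epsilon\xi$, and differentiate \eqref{rescaled.gauged.equation} at $\epsilon=0$. The background has $\partial_0 G_*=0$, $\partial_0\hat u=0$ and $\partial_0\hat\eta=0$. Hence every term quadratic in the derivatives drops out: $\hat G^{kl}\partial_0\hat G_{ik}\partial_0\hat G_{jl}$, $\frac{\partial_0\hat u}{2\hat u}\partial_0\hat G$, $|\partial_0\hat G|^2$, $X^0\partial_0\hat G$ and $X^0\partial_0\hat u$. The DeTurck field linearizes to $X^0\mapsto \tfrac32 v'$.

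\begin{itemize}
\item The $G$ equation gives $h''$.
\item In the $u$ equation, $\tfrac12(\sqrt2+\xi')^2$ contributes $\sqrt2\xi'$, the term $-\hat u$ contributes $-v$, and $2\hat u\,\partial_0X^0$ contributes $3v''$.
\item In the $\eta$ equation, $\hat u^{-1}\partial_0^2\hat\eta$ contributes $\xi''$, $-\tfrac12\hat u^{-2}\partial_0\hat u(\sqrt2+\partial_0\hat\eta)$ contributes $-\tfrac{\sqrt2}{2}v'$, and $X^0(\sqrt2+\partial_0\hat\eta)$ contributes $\tfrac{3\sqrt2}{2}v'$.
\end{itemize}

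Summing these gives $L_{\mathbf g}(h,v,\xi)=(h'',\,3v''-v+\sqrt2\xi',\,\xi''+\sqrt2 v')$.

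\textbf{Spectrum.} $L_{\mathbf g}$ is a constant-coefficient system whose principal part is $\operatorname{diag}(\partial_0^2,3\partial_0^2,\partial_0^2)$. It is therefore elliptic on $S^1$ and has compact resolvent on $h^{r}$ with domain $h^{r+2}$. Consequently its spectrum consists of eigenvalues, and the eigenfunctions are spanned by the Fourier modes $e^{ikx^0}$ with $k=2\pi m/\ell$, $m\in\Z$.

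\begin{itemize}
\item On the $h$ component the eigenvalue is $-k^2$. It vanishes only for $m=0$, where the eigenfunctions are the constant symmetric $H$.
\item On $(v,\xi)$ the mode-$k$ block is
\[
\begin{pmatrix}-3k^2-1 & \sqrt2\, ik\\ \sqrt2\, ik & -k^2\end{pmatrix},
\]
with trace $-4k^2-1<0$ and determinant $(3k^2+1)k^2+2k^2=3k^4+3k^2$.
\item For $k\ne0$ the determinant is positive and the trace negative, so both eigenvalues have strictly negative real part.
\item For $k=0$ the block is $\operatorname{diag}(-1,0)$. This gives $v$-eigenvalue $-1$ and a zero eigenvalue with constant eigenfunction $\xi=c$.
\end{itemize}

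This yields exactly the stated kernel. The zero eigenvalue occurs only in the $m=0$ mode. There the operator acts on constants $(H,v,c)$ by the diagonal map $(0,-v,0)$, so there is no Jordan block, and the zero eigenvalue is semisimple.

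\textbf{Main obstacle.} The computations themselves are routine. The step needing care is justifying that the spectrum in the little H\"older space $h^{1+\theta}$ is captured by Fourier modes. I would handle this through ellipticity and Schauder estimates, which give a compact resolvent. Any eigenfunction is then smooth, and since the coefficients are constant, its Fourier components are themselves eigenvectors of the finite blocks. This is enough for use in Theorem~\ref{stability} via the method of \cite{knopf2009convergence}.
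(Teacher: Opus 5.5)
Your proposal is correct, but it reaches the spectral conclusions by a different route than the paper. The linearization step is the same: both use $\delta X^0=\tfrac32 v'$ at the constant background and differentiate term by term.

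For the spectrum, the paper does not diagonalize. It uses the pairing $\int(\operatorname{tr}(G_*^{-1}h_1G_*^{-1}h_2)+v_1v_2+\xi_1\xi_2)\,dx^0$. Under this pairing the diagonal part $(h'',3v''-v,\xi'')$ is self-adjoint and the first-order coupling $(0,\sqrt2\xi',\sqrt2v')$ is anti-self-adjoint. Hence $\langle Lw,w\rangle=-\|h'\|^2-3\|v'\|^2-\|v\|^2-\|\xi'\|^2$, which gives the kernel at once. The paper then splits off the invariant complement of constants $(H,0,c)$, on which $L=0$. On the invariant subspace where $h$ and $\xi$ have mean zero, Poincar\'e gives a strictly negative bound. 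This excludes generalized zero eigenvectors there and yields semisimplicity, while compact resolvent from ellipticity supplies discreteness and the gap.

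Your Fourier-block computation instead uses the constant coefficients to obtain the spectrum explicitly. In fact your characteristic polynomial $\lambda^2+(4k^2+1)\lambda+3k^4+3k^2$ factors as $(\lambda+k^2+1)(\lambda+3k^2)$. So the eigenvalues are exactly $-k^2$ (on $h$), $-k^2-1$ and $-3k^2$, and the gap is $\min\{1,4\pi^2/\ell^2\}$. The factorization also makes your trace--determinant step unnecessary; that step is valid only because the complex block has real trace and determinant. Your reduction of eigenvectors of the H\"older realization to individual modes is sound. It also covers generalized eigenvectors, since $L^2w=0$ forces $L_m^2\hat w(m)=0$ for each mode $m$.

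The trade-off is this. The paper's energy argument avoids any diagonalization, and its splitting $L=L_{\mathrm{sa}}+F$ is reused in Step 1 of Theorem~\ref{stability} to obtain sectoriality. Your approach is more concrete and gives the spectral gap quantitatively.
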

\begin{proof}
At the fixed point, $X=0$ and $\delta X^0=\frac32v'$. Direct differentiation gives the displayed operator. With
\[
\langle(h_1,v_1,\xi_1),(h_2,v_2,\xi_2)\rangle
=\int_0^\ell\left(\operatorname{tr}(G_*^{-1}h_1G_*^{-1}h_2)+v_1v_2+\xi_1\xi_2\right)dx^0,
\]
the diagonal part is self-adjoint and the first-order coupling is anti-self-adjoint. Therefore
\[
\langle L_{\mathbf g}(h,v,\xi),(h,v,\xi)\rangle
=-\|h'\|^2-3\|v'\|^2-\|v\|^2-\|\xi'\|^2.
\]
This gives the kernel. The subspace where $h$ and $\xi$ have zero mean, with $v$ arbitrary, is invariant, and the Poincar\'e inequality gives a strictly negative bound there, excluding generalized zero eigenvectors on this subspace. Its invariant complement consists of $(H,0,c)$ with $H$ and $c$ constant, on which $L_{\mathbf g}=0$. Thus zero is a semisimple eigenvalue. Ellipticity on the circle gives compact resolvent and a spectral gap.
\end{proof}

\begin{definition}[Little-H\"older spaces]
With $h^r$ and $\mathbb V$ as in Definition~\ref{tuple.equivalence}, set
\[
\mathbb X_0=h^\rho(S^1;\mathbb V),\qquad
\mathbb X_1=h^{2+\rho}(S^1;\mathbb V).
\]
For $\frac12<\beta<1-\frac\rho2$, continuous interpolation gives
$\mathbb X_\beta=(\mathbb X_0,\mathbb X_1)_\beta
=h^{1+\theta}(S^1;\mathbb V)$, where $1+\theta=\rho+2\beta$ and $\rho<\theta<1$.
\end{definition}

\begin{theorem}[Stability]\label{stability}
Fix $\mathbf g=(G_*,1,0)$ and $0<\rho<\theta<1$ as above. There is a neighborhood $\mathcal U$ of $\mathbf g$ in $h^{1+\theta}(S^1;\mathbb V)$ such that every initial datum $\mathbf{\tilde g}(0)\in\mathcal U$ gives a global solution of~\eqref{rescaled.equation}. The corresponding solution of~\eqref{rescaled.gauged.equation} converges exponentially in $C^{2+\rho}$, for $\tau\ge1$, to an equilibrium
\[
\mathbf g_\infty=(G_\infty,1,c_\infty),\qquad G_\infty\text{ constant positive definite}.
\]
In the original normalized coordinates, $\mathbf{\tilde g}(\tau)$ converges exponentially in $C^{1+\rho}$ to
\[
\bigl(G_\infty,(\varphi_\infty')^2,
 c_\infty+\sqrt2(\varphi_\infty-x^0)\bigr),
\]
where $\varphi_\infty$ is an orientation-preserving $C^{2+\rho}$ base diffeomorphism. Moreover, $\tilde\alpha=(\sqrt2+~\tilde\eta')dx^0$ converges exponentially in $C^{1+\rho}$ for $\tau\ge1$ to $\sqrt2\,d\varphi_\infty$.
If the initial data are smooth, these convergences hold in every $C^k$ norm, with constants depending on higher initial norms.
\end{theorem}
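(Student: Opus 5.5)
The plan is to treat the gauged system \eqref{rescaled.gauged.equation} as a quasilinear parabolic evolution $\partial_\tau\mathbf{\hat g}=Q(\mathbf{\hat g})$ on the little-H\"older scale $\mathbb X_0\subset\mathbb X_\beta\subset\mathbb X_1$. We then apply the Simonett/Knopf center-manifold stability theorem for quasilinear equations with a manifold of equilibria (the ``normally stable'' case), as in \cite{knopf2009convergence}.

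The first step is to check the structural hypotheses. The principal part of $Q$ at any $\mathbf{\hat g}$ near $\mathbf g$ is the diagonal operator $(\hat u^{-1}h'',\,3\hat u^{-1}v'',\,\hat u^{-1}\xi'')$. The DeTurck term $X^0=3\partial_0\hat u/(2\hat u)$ was chosen precisely to make the $u$-equation strictly parabolic, and it contributes $2\hat u\partial_0X^0=3\hat u^{-1}\partial_0^2\hat u+\dots$. Hence for $\hat u$ close to $1$ the operator $DQ(\mathbf{\hat g})$ generates an analytic semigroup on $h^\rho$ with domain $h^{2+\rho}$, and $Q$ is smooth from $\mathbb X_\beta$ into the space of such generators plus lower order terms. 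This gives local well-posedness and parabolic smoothing in $\mathbb X_\beta=h^{1+\theta}$.

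The second step uses Proposition~\ref{linearization}. The equilibria near $\mathbf g$ form the finite-dimensional manifold
\[
\mathcal E=\{(G_\infty,1,c):G_\infty \text{ constant positive definite},\ c\in\R\}.
\]
Its tangent space equals $\ker L_{\mathbf g}$, zero is semisimple, and the rest of the spectrum lies in $\{\operatorname{Re}\lambda<0\}$. These are exactly the normal stability hypotheses. The theorem then produces a neighborhood $\mathcal U$ such that solutions starting in $\mathcal U$ exist for all $\tau\ge0$ and converge exponentially in $\mathbb X_1=h^{2+\rho}$, for $\tau\ge1$ after smoothing, to some $(G_\infty,1,c_\infty)\in\mathcal E$. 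The one point to verify is that $\mathbf g=(G_*,1,0)$ with a general constant $G_*$ is handled uniformly. This follows by conjugating with the constant fiber change, which leaves the equations invariant.

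The third step transfers the result to the ungauged flow \eqref{rescaled.equation} through \eqref{eq:base.pullback.stability}, and this is the main obstacle. The DeTurck field satisfies $X^0=\tfrac32\partial_0\hat u/\hat u$, and $\hat u\to1$ exponentially in $C^{2+\rho}$, so $\|X\|_{C^{1+\rho}}\le Ce^{-\kappa\tau}$. Integrating $\partial_\tau\varphi_\tau=-X\circ\varphi_\tau$ then shows that $\varphi_\tau$ converges exponentially in $C^{2+\rho}$, or at worst in $C^{1+\rho}$ owing to the composition loss, to an orientation-preserving diffeomorphism $\varphi_\infty$. The lifts $\tilde\varphi_\tau$ converge likewise, with $\tilde\varphi_\tau-x^0$ periodic. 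Substituting into \eqref{eq:base.pullback.stability} gives the following exponential convergences in $C^{1+\rho}$, the loss of one derivative coming from $\varphi_\tau'$ and from composition:
\[
\tilde G\to G_\infty,\qquad \tilde u\to(\varphi_\infty')^2,\qquad \tilde\eta\to c_\infty+\sqrt2(\varphi_\infty-x^0).
\]
Differentiating the last limit yields $\tilde\alpha\to\sqrt2\,d\varphi_\infty$.

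For smooth data, bootstrap via parabolic Schauder estimates on $[\tau-1,\tau+1]$, applied to the exponentially decaying difference from the equilibrium, to upgrade every $C^k$ norm, with constants depending on the higher initial norms.
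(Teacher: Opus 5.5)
The part of your proposal dealing with the gauged system follows essentially the paper's route. It uses analytic-semigroup/sectoriality of $L_{\mathbf g}$ on the little-H\"older scale, Simonett's center-manifold theory with the spectral information of Proposition~\ref{linearization}, and the fact that the family $(H,1,c)$ of equilibria has dimension $\dim\ker L_{\mathbf g}$. Your ``normal stability'' packaging is equivalent to the paper's Step~2, which identifies the center manifold with the fixed-point set. (A harmless slip: $2\hat u\,\partial_0X^0=3\partial_0^2\hat u+\dots$, so the principal coefficient of the $u$-equation is $3$, not $3\hat u^{-1}$.)

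The gap is in your third step, the return to \eqref{rescaled.equation}.

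\begin{itemize}
\item \textbf{Regularity is one derivative short.} You bound $X$ only in $C^{1+\rho}$. That controls the flow $\varphi_\tau$ only in $C^{1+\rho}$, hence $\varphi_\tau'$ only in $C^\rho$. Then $\tilde u=(\hat u\circ\varphi_\tau)(\varphi_\tau')^2$ converges only in $C^\rho$, and $\tilde\alpha=(\sqrt2+\tilde\eta')dx^0$ is not controlled in $C^{1+\rho}$. You also do not get $\varphi_\infty\in C^{2+\rho}$ as the statement requires. Your hedge ``or at worst in $C^{1+\rho}$'' is exactly the case that breaks the conclusion.

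\item \textbf{The interval $0\le\tau\le1$ is ignored.} The initial tuple lies only in $h^{1+\theta}$, so $X(0)=\tfrac32\partial_0\hat u/\hat u$ is merely $C^\theta$, not even Lipschitz. The exponential decay for $\tau\ge1$ therefore gives neither the existence of $\varphi_\tau$ from $\tau=0$ nor its $C^{2+\rho}$ regularity. You need the former for global existence of the \emph{ungauged} solution. Smoothing in the ungauged frame cannot compensate, because the $\tilde u$-equation in \eqref{rescaled.equation} is an ODE in $\tau$.
\end{itemize}

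What is missing is an integrable-in-time bound on $\|X(\tau)\|_{C^{2+\rho_1}}$ for some $\rho<\rho_1<\theta$. Let $d_0$ be the $h^{1+\theta}$-distance of the initial tuple to $\mathbf g$.

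\begin{itemize}
\item For $0<\tau\le1$, parabolic smoothing from $h^{1+\theta}$ into $C^{3+\rho_1}$ gives $\|X(\tau)\|_{C^{2+\rho_1}}\le Cd_0\tau^{-1+(\theta-\rho_1)/2}$. This is integrable precisely because $\rho_1<\theta$.
\item For $\tau\ge1$, the all-$C^k$ exponential convergence of $\mathbf{\hat g}$ gives $\|X(\tau)\|_{C^{2+\rho_1}}\le Cd_0e^{-\lambda\tau}$.
\end{itemize}

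Then $\int_0^\infty\|X\|_{C^{2+\rho_1}}\,d\tau<\infty$ yields, as in \cite[Lemma 3.5]{guenther2002stability}, an orientation-preserving $C^{2+\rho}$ diffeomorphism $\varphi_\infty$ with $\|\varphi_\tau-\varphi_\infty\|_{C^{2+\rho}}\le Ce^{-\lambda\tau}d_0$. Only after this do the pullback formulas \eqref{eq:base.pullback.stability} give the claimed $C^{1+\rho}$ convergence of $\mathbf{\tilde g}$ and of $\tilde\alpha$. Your argument never uses the hypothesis $\rho<\theta$; this short-time estimate is where it is needed.
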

\begin{proof}
Our proof mainly follows the strategy of \cite[Theorem 3]{knopf2009convergence}.  Fix $0<\sigma<\rho$ and set $\mathbb E_0=h^\sigma(S^1;\mathbb V)$ and $\mathbb E_1=h^{2+\sigma}(S^1;\mathbb V)$. Then $\mathbb X_1\subset\mathbb E_1\subset\mathbb X_0\subset\mathbb E_0$ continuously and densely. For $\frac12<\beta<1-\frac\rho2$, both $\mathbb X_\beta$ and $\mathbb E_\beta=(\mathbb E_0,\mathbb E_1)_\beta$ embed into $C^1$.

\noindent\textbf{Step 1.} By Proposition~\ref{linearization}, $L_{\mathbf g}=DQ(\mathbf g)=L_{\mathrm{sa}}+F$, where $L_{\mathrm{sa}}(h,v,\xi)=(h'',3v''-v,\xi'')$ and $F(h,v,\xi)=(0,\sqrt2\xi',\sqrt2v')$. The complexification $L_{\mathrm{sa}}^{\mathbb C}$ is strictly elliptic and self-adjoint with non-positive spectrum. By Schauder estimates, it is sectorial (see \cite[Definition 1]{knopf2009convergence}). Since $F^{\mathbb C}$ is bounded from $\mathbb X_\beta$ to $\mathbb X_0$ and from $\mathbb E_\beta$ to $\mathbb E_0$, \cite[Proposition 2.4.1]{lunardi2012analytic} implies that $L_{\mathbf g}^{\mathbb C}$ is sectorial on $\mathbb X_0$ and $\mathbb E_0$, with domains $\mathbb X_1$ and $\mathbb E_1$, respectively.

The principal part of $Q$ is diagonal with coefficients $u^{-1},3,u^{-1}$; the remaining terms depend smoothly on the variables and their first derivatives. Near $\mathbf g$, positivity and uniform parabolicity hold. With $\nu=(\rho-\sigma)/2$, we have $\mathbb X_0=(\mathbb E_0,\mathbb E_1)_\nu$ and $\mathbb X_\beta=(\mathbb E_0,\mathbb E_1)_{\beta+\nu}$, where $\beta+\nu<1$. Thus interpolation, the little-H\"older product estimates, and Schauder graph-norm equivalence verify the hypotheses of Simonett's theorem as in \cite[Lemmas 1 and 2]{knopf2009convergence}.

By Proposition~\ref{linearization}, zero is a semisimple eigenvalue and the remaining spectrum lies in the set $\{z:\operatorname{Re}z\le-\omega_0\}$ for some $\omega_0>0$. Simonett's theorem \cite{simonett1995center}, in the form of \cite[Theorem 4]{knopf2009convergence}, gives the spectral decomposition $\mathbb X_a=\mathbb X_a^{\mathrm s}\oplus\mathbb X_a^{\mathrm c}$ for $a\in[0,1]$, where $\mathbb X_a^{\mathrm c}=\ker L_{\mathbf g}$ has dimension $N(N+1)/2+1$. Write $\pi^{\mathrm s}:\mathbb X_a\to \mathbb X_a^{\mathrm s},\pi^{\mathrm c}:\mathbb X_a\to \mathbb X_a^{\mathrm c}$ for the corresponding projections. For each $r\in\mathbb N$, there is a locally invariant $C^r$ center manifold $\Gamma_{\mathrm{loc}}^r=\{z+\gamma_d^r(z):z\in B(\mathbb X_1^{\mathrm c},\pi^{\mathrm c}\mathbf g,d)\}$, with $\gamma_d^r:\mathbb X_a^{\mathrm c}\to \mathbb X_a^{\mathrm s}$ satisfying $\gamma_d^r(\pi^{\mathrm c}\mathbf g)=\pi^{\mathrm s}\mathbf g$ and $D\gamma_d^r(\pi^{\mathrm c}\mathbf g)=0$. For $0<\omega<\omega_0$, after decreasing $d$, its attraction estimate is
\begin{equation}\label{exponential.converge.to.center.manifold}
\begin{aligned}
&\|\pi^{\mathrm s}\mathbf{\hat g}(\tau)-\gamma_d^r(\pi^{\mathrm c}\mathbf{\hat g}(\tau))\|_{\mathbb X_1}\\
&\qquad\le C\tau^{\beta-1}e^{-\omega\tau}
\|\pi^{\mathrm s}\mathbf{\hat g}(0)-\gamma_d^r(\pi^{\mathrm c}\mathbf{\hat g}(0))\|_{\mathbb X_\beta},
\end{aligned}
\end{equation}
for $\tau>0$ as long as the solution remains in $B(\mathbb X_\beta,\mathbf g,d)$.

\noindent\textbf{Step 2.} We show that the center manifold coincides locally with the moduli space $\mathcal M(\mathbf g,d)$ of fixed points. If $\mathbf g'$ is a sufficiently close fixed point, its stationary trajectory and~\eqref{exponential.converge.to.center.manifold} imply $\pi^{\mathrm s}\mathbf g'=\gamma_d^r(\pi^{\mathrm c}\mathbf g')$. Thus $\mathbf g'\in\Gamma_{\mathrm{loc}}^r$.

Conversely, every tuple $(H,1,c)$, with $H$ constant positive definite and $c\in\R$, is a fixed point. Its projection onto $\ker L_{\mathbf g}$ parametrizes a neighborhood of $\pi^{\mathrm c}\mathbf g$. By the preceding inclusion, these fixed points fill the center-manifold graph, so $\gamma_d^r\equiv\pi^{\mathrm s}\mathbf g$. Thus, after shrinking the neighborhood, $\mathcal M(\mathbf g,d)=\Gamma_{\mathrm{loc}}^r=:\Gamma$, independently of $r$. Denote its graphing function by $\gamma$.

\noindent\textbf{Step 3.} By \cite[(2.2)]{knopf2009convergence}, $\mathbb X_\beta=h^{1+\theta}$ with $1+\theta=\rho+2\beta$. Set $d_0=\|\mathbf{\tilde g}(0)-\mathbf g\|_{\mathbb X_\beta}$ and fix $0<\lambda<\omega$ and $0<\epsilon<\delta<d$. For $d_0<\epsilon$ sufficiently small, local well-posedness and smooth dependence, together with positive-time smoothing \cite[Theorem 3.1 and Remark 3.3]{simonett1995center}, give $\sup_{0\le\tau\le1}\|\mathbf{\hat g}(\tau)-\mathbf g\|_{\mathbb X_\beta}+\|\mathbf{\hat g}(1)-\mathbf g\|_{\mathbb X_1}\le C_0d_0$.

For $\tau\ge1$, write $z(\tau)=\pi^{\mathrm c}\mathbf{\hat g}(\tau)$. Since $Q(z(\tau)+\gamma(z(\tau)))=0$, local Lipschitz continuity of $Q:\mathbb X_1\to\mathbb X_0$, equivalence of norms on the center space, and~\eqref{exponential.converge.to.center.manifold} give
\begin{equation}\label{exponential.decay.time.derivative}
\begin{aligned}
\|\partial_\tau z(\tau)\|_{\mathbb X_1}
&=\|\pi^{\mathrm c}\partial_\tau\mathbf{\hat g}(\tau)\|_{\mathbb X_1}\\
&=\|\pi^{\mathrm c}Q(\mathbf{\hat g}(\tau))\|_{\mathbb X_1}\\
&=\left\|\pi^{\mathrm c}\bigl[Q(\mathbf{\hat g}(\tau))
-Q(z(\tau)+\gamma(z(\tau)))\bigr]\right\|_{\mathbb X_1}\\
&\le C\|\pi^{\mathrm s}\mathbf{\hat g}(\tau)-\gamma(z(\tau))\|_{\mathbb X_1}\\
&\le Ce^{-\lambda\tau}d_0.
\end{aligned}
\end{equation}
Choosing $\epsilon\ll\delta$, integrating this estimate from time one, and combining it with \eqref{exponential.converge.to.center.manifold}, we obtain
\begin{equation}\label{eq:initial.space.stability}
\|\mathbf{\hat g}(\tau)-\mathbf g\|_{\mathbb X_\beta}
\le C d_0(1+\lambda^{-1})<\delta<d
\end{equation}
on every interval on which the solution remains in $B(\mathbb X_\beta,\mathbf g,d)$. This bound, together with local well-posedness \cite[Theorem 3.1]{simonett1995center} and positive-time parabolic estimates, gives existence for all $\tau\ge0$. Integrating~\eqref{exponential.decay.time.derivative} from $\tau$ to infinity yields a limit $z_\infty$ in the center space. Combining its exponential convergence in $\mathbb X_1$ with \eqref{exponential.converge.to.center.manifold} gives
\[
\mathbf{\hat g}(\tau)
=\pi^{\mathrm c}\mathbf{\hat g}(\tau)+\pi^{\mathrm s}\mathbf{\hat g}(\tau)
\longrightarrow z_\infty+\gamma(z_\infty)
=:\mathbf g_\infty=(G_\infty,1,c_\infty)\in\Gamma
\]
exponentially in $\mathbb X_1$ for $\tau\ge1$. Parabolic Schauder estimates then give $\|\mathbf{\hat g}(\tau)-\mathbf g_\infty\|_{C^k}\le C_ke^{-\lambda_k\tau}d_0$ for every $k$ and $\tau\ge1$.

To return to the original coordinates, choose $\rho<\rho_1<\theta$. Parabolic smoothing and the definition of $X$ give
\[
\|X(\tau)\|_{C^{2+\rho_1}}
\le Cd_0\begin{cases}
\tau^{-1+(\theta-\rho_1)/2},&0<\tau\le1,\\
e^{-\lambda\tau},&\tau\ge1.
\end{cases}
\]
The short-time bound is integrable. Standard estimates, as in \cite[Lemma 3.5]{guenther2002stability}, therefore give an orientation-preserving $C^{2+\rho}$ diffeomorphism $\varphi_\infty$ with $\|\varphi_\tau-\varphi_\infty\|_{C^{2+\rho}}\le Ce^{-\lambda\tau}d_0$ for $\tau\ge1$. The pullback formulas~\eqref{eq:base.pullback.stability} yield the asserted exponential $C^{1+\rho}$ convergence of $\mathbf{\tilde g}$. Separately, $\tilde\alpha=\varphi_\tau^*((\sqrt2+\partial_0\hat\eta)dx^0)$ converges exponentially in $C^{1+\rho}$ to $\sqrt2\,d\varphi_\infty$, using the $C^{2+\rho}$ convergence of $\hat\eta$ and $\varphi_\tau$. For smooth initial data, $X$ is smooth up to $\tau=0$, and the same argument applies in every $C^k$ norm, with constants depending on higher initial norms.
\end{proof}

We have the following corollary. Recall Definition~\ref{tuple.equivalence}.
\begin{corollary}\label{neighborhood.around.expander}
For every $\epsilon>0$ there exists $\delta=\delta(\epsilon)>0$ such that a smooth locally $\R^N$-invariant $\alpha$-Ricci flow $(\bar g(t),\bar \alpha(t))$ with trivial linear holonomy, no fiber component, and $\int_{S^1}\bar\alpha=\sqrt2\ell$ satisfies
\[
[\mathbf{\bar g}(1)]\in\mathcal V_\delta
\quad\Longrightarrow\quad
[\mathbf{\bar g}_s(1)]\in\mathcal U_\epsilon
\qquad\text{for all }s\ge1.
\]
Here $\mathbf{\bar g}(t)=(G(t),u(t),\eta(t))$, and $\mathbf{\bar g}_s(1)=(s^{-1}G(s),s^{-1}u(s),\eta(s))$.
\end{corollary}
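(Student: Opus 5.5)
The plan is to deduce Corollary~\ref{neighborhood.around.expander} from Theorem~\ref{stability}, using the gauge invariance built into the equivalence classes of Definition~\ref{tuple.equivalence}. The first step is to translate the statement into the rescaled variables. With $\tau=\log t$ we have $\tilde G(\tau)=G(e^\tau)$, $\tilde u(\tau)=e^{-\tau}u(e^\tau)$ and $\tilde\eta(\tau)=\eta(e^\tau)$, up to additive constants. Hence
\[
[\mathbf{\bar g}_s(1)]=[(s^{-1}G(s),s^{-1}u(s),\eta(s))]=[(G(s),s^{-1}u(s),\eta(s))]=[\mathbf{\tilde g}(\log s)],
\]
where the middle equality uses the constant fiber change $S=\sqrt s\,I$. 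So it suffices to show that the rescaled trajectory starting in $\mathcal V_\delta$ stays in $\mathcal U_\epsilon$ for all $\tau\ge0$.

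The second step handles the initial datum. If $[\mathbf{\bar g}(1)]\in\mathcal V_\delta$, pick a representative $\mathbf v=(S^T(G\circ\phi)S,(u\circ\phi)(\phi')^2,\eta\circ\phi+\sqrt2(\phi-x^0)-c)$ with $\|\mathbf v-(I,1,0)\|_{h^{1+\theta}}<\delta$. This representative is the time-one datum of the flow pulled back by the fixed diffeomorphism $\phi$, after a constant fiber change and a shift of $\eta$ by a constant. Each of these operations commutes with \eqref{rescaled.equation}, since $\eta$ enters only through $\partial_0\eta$. The pulled-back flow is again a smooth invariant flow with trivial holonomy, no fiber component and the same period. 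Moreover, its tuple at every time lies in the same class as the original one, because the same $(S,\phi,c)$ works at every time, with $c$ adjusted by the time-dependent constant. So we may assume without loss of generality that $\|\mathbf{\tilde g}(0)-\mathbf g\|_{h^{1+\theta}}<\delta$ with $\mathbf g=(I,1,0)$.

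The third step is to apply Theorem~\ref{stability} and control the whole trajectory. For $\delta$ small, Theorem~\ref{stability} gives global existence and smallness of the gauged solution $\mathbf{\hat g}$. Its proof gives $\sup_{\tau\ge0}\|\mathbf{\hat g}(\tau)-\mathbf g\|_{h^{1+\theta}}\le Cd_0$ by \eqref{eq:initial.space.stability}, together with $\|\mathbf{\hat g}(\tau)-\mathbf g_\infty\|\le Cd_0e^{-\lambda\tau}$ where $\mathbf g_\infty=(G_\infty,1,c_\infty)$ and $|G_\infty-I|,|c_\infty|\le Cd_0$. By \eqref{eq:base.pullback.stability}, $\mathbf{\tilde g}(\tau)$ and $\mathbf{\hat g}(\tau)$ lie in the same equivalence class, with $S=I$, $\phi=\varphi_\tau$ and $c=0$. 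Hence
\[
d_{1+\rho}([\mathbf{\tilde g}(\tau)],[\mathbf g^*])\le\|\mathbf{\hat g}(\tau)-\mathbf g\|_{h^{1+\rho}}\le C\|\mathbf{\hat g}(\tau)-\mathbf g\|_{h^{1+\theta}}\le Cd_0<\epsilon
\]
once $\delta<\epsilon/C$. This is exactly why the target set $\mathcal U_\epsilon$ is defined by a gauge-invariant distance in the weaker $h^{1+\rho}$ norm: no control on $\varphi_\tau$ is needed.

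The main obstacle is the uniformity of the constant $C$. We need $C$ to be independent of the particular flow and to depend only on $d_0$ measured in $h^{1+\theta}$, not on higher norms. The smoothness hypothesis is used only to guarantee classical solutions. I would rely on the fact that the Simonett center-manifold estimates \eqref{exponential.converge.to.center.manifold}--\eqref{eq:initial.space.stability} are stated in $\mathbb X_\beta=h^{1+\theta}$ with constants depending only on $\mathbf g$ and $d$. A secondary point is that the theorem is stated at $\mathbf g=(G_*,1,0)$ while the rescaled datum may have a nonzero mean in $\eta$; this is harmless, since $\eta$ is defined only modulo constants and the constant $c$ is absorbed into the class.
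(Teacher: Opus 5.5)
Your proposal is correct and follows essentially the same route as the paper: choose a representative within $\delta$ of $\mathbf g^*$ in $h^{1+\theta}$, run the rescaled and DeTurck-gauged flows from it, and use the pullback relation \eqref{eq:base.pullback.stability} together with the uniform bound \eqref{eq:initial.space.stability} to get $d_{1+\rho}([\mathbf{\tilde g}(\tau)],[\mathbf g^*])\le Cd_0$; then identify $[\mathbf{\bar g}_s(1)]=[\mathbf{\tilde g}(\log s)]$ using invariance of \eqref{rescaled.equation} under the class transformations and the fiber change $S=\sqrt s\,I$. Your remarks on the uniformity of $C$ and on absorbing the time-dependent constant in $\eta$ into the class make explicit points the paper leaves implicit, but they do not change the argument.
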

\begin{proof}
By~\eqref{eq:distance.to.expander.class}, choose a representative $\mathbf{\tilde g}_0$ of $[\mathbf{\bar g}(1)]$ such that
\[
d_0=\|\mathbf{\tilde g}_0-\mathbf g^*\|_{h^{1+\theta}(S^1;\mathbb V)}<\delta.
\]
Let $\mathbf{\tilde g}(\tau)$ and $\mathbf{\hat g}(\tau)$ solve~\eqref{rescaled.equation} and~\eqref{rescaled.gauged.equation}, respectively, with initial tuple $\mathbf{\tilde g}_0$ at logarithmic time $\tau=0$, corresponding to physical time $t=1$. By Theorem~\ref{stability},~\eqref{eq:initial.space.stability}, and~\eqref{eq:base.pullback.stability},
\[
\begin{aligned}
 d_{1+\rho}([\mathbf{\tilde g}(\tau)],[\mathbf g^*])
 &=d_{1+\rho}([\mathbf{\hat g}(\tau)],[\mathbf g^*])\\
 &\le\|\mathbf{\hat g}(\tau)-\mathbf g^*\|_{h^{1+\rho}}
 \le Cd_0<C\delta,\qquad \tau\ge0.
\end{aligned}
\]
The equality follows from the base pullback~\eqref{eq:base.pullback.stability}. The last bound follows from~\eqref{eq:initial.space.stability} and the inclusion $h^{1+\theta}\subset h^{1+\rho}$. Equation~\eqref{rescaled.equation} is invariant under the transformations defining the tuple classes, so $[\mathbf{\tilde g}(\tau)]=[(G(e^\tau),e^{-\tau}u(e^\tau),\eta(e^\tau))]$. For $s\ge1$, the constant fiber change $S=\sqrt s\,I$ gives
\[
[\mathbf{\bar g}_s(1)]
=[(G(s),s^{-1}u(s),\eta(s))]
=[\mathbf{\tilde g}(\log s)].
\]
Choose $\delta$ sufficiently small that Theorem~\ref{stability} applies and $C\delta<\epsilon$. The preceding estimate at $\tau=\log s$ then gives $[\mathbf{\bar g}_s(1)]\in\mathcal U_\epsilon$ for every $s\ge1$.
\end{proof}
\printbibliography

\end{document}